\newcommand\mnote[1]{\marginpar{\tiny #1}}
\numberwithin{equation}{section}
\newcommand{\bN}{\mathbb{N}}
\newcommand{\bQ}{\mathbb{Q}}
\newcommand{\bR}{\mathbb{R}}
\newcommand{\bZ}{\mathbb{Z}}
\newcommand{\sM}{\mathsf{M}}
\newcommand{\sN}{\mathsf{N}}
\newcommand{\sH}{\mathsf{H}}
\newcommand{\sI}{\mathsf{I}}
\newcommand\lra{\longrightarrow}
\newcommand\Diff{\mathrm{Diff}}
\newcommand\Emb{\mathrm{Emb}}
\newcommand\Bun{\mathrm{Bun}}
\newcommand\colim{\operatorname*{colim}}
\newcommand\hocolim{\operatorname*{hocolim}}
\newcommand\Ker{\operatorname*{Ker}}
\newcommand{\hcoker}{/\!\!/}
\newcommand{\Hmfld}{H}
\newcommand{\Mst}{\mathcal{M}^{\mathrm{st}}}
\newcommand{\Lk}{\mathrm{Lk}}
\newcommand{\Fr}{\mathrm{Fr}}
\newcommand{\R}{\bR}
\newcommand{\Int}{\mathrm{int}}
\newcommand{\Hom}{\mathrm{Hom}}
\newcommand{\Aut}{\mathrm{Aut}}
\renewcommand{\epsilon}{\varepsilon}
\newcommand{\MM}{\mathscr{M}}
\newcommand{\nin}{\not\in}
\newcommand{\hatfr}{^\mathrm{fr}}
\newcommand{\lCM}{\mathrm{lCM}}
\newcommand{\wCM}{\mathrm{wCM}}
\newcommand{\CircNum}[1]{\ooalign{\hfil\raise .00ex\hbox{\scriptsize #1}\hfil\crcr\mathhexbox20D}}
\mathchardef\ordinarycolon\mathcode`\:
\theoremstyle{plain}
\newtheorem{theorem}{Theorem}[section]
\newtheorem{proposition}[theorem]{Proposition}
\newtheorem{lemma}[theorem]{Lemma}
\newtheorem{corollary}[theorem]{Corollary}
\theoremstyle{definition}
\newtheorem{definition}[theorem]{Definition}
\theoremstyle{remark}
\newtheorem{remark}[theorem]{Remark}
\newtheorem*{remark*}{Remark}
\renewcommand{\#}{\sharp}
\title[Homological stability I]{Homological stability for moduli spaces of high dimensional manifolds. I}
\author{S{\o}ren Galatius} 
\email{galatius@stanford.edu}
\address{Department of Mathematics\\
  Stanford University\\
  Stanford CA, 94305}
\author{Oscar Randal-Williams}
\email{o.randal-williams@dpmms.cam.ac.uk}
\address{Centre for Mathematical Sciences\\
Wilberforce Road\\
Cambridge CB3 0WB\\
UK}
\dedicatory{Dedicated to Ulrike Tillmann}
\subjclass[2010]{57R90, 57R15, 57R56, 55P47}
\begin{document}
\begin{abstract}
  We prove a homological stability theorem for moduli spaces of simply-connected manifolds of dimension $2n > 4$, with respect to forming
  connected sum with $S^n \times S^n$. This is analogous to Harer's
  stability theorem for the homology of mapping class groups. Combined
  with previous work of the authors, it gives a calculation of the
  homology of the moduli spaces of manifolds diffeomorphic to
  connected sums of $S^n \times S^n$ in a range of degrees.
\end{abstract}
\maketitle

\section{Introduction and statement of results}\label{sec:intr-stat-results}

A famous result of Harer (\cite{H}) established \emph{homological
  stability} for mapping class groups of oriented surfaces.  For
example, if $\Gamma_{g,1}$ denotes the group of isotopy classes of
diffeomorphisms of an oriented connected surface of genus $g$ with one
boundary component, then the natural homomorphism $\Gamma_{g,1} \to
\Gamma_{g+1,1}$, given by gluing on a genus one surface with two
boundary components, induces an isomorphism in group homology $H_k(\Gamma_{g,1}) \to H_k(\Gamma_{g+1,1})$ as long as $k \leq (2g-2)/3$.  (Harer proved this for $k \leq (g+1)/3$, but the range was later improved by Ivanov (\cite{Ivanov}) and Boldsen (\cite{Boldsen}), see also \cite{R-WResolution}.)  This result can be interpreted in terms of moduli spaces of Riemann surfaces, and has lead to a wealth of research in topology and algebraic geometry.  In this paper we will prove an analogous homological stability result for moduli spaces of manifolds of higher (even) dimension.

\begin{definition}
  For a compact smooth
  manifold $W$, let $\Diff_\partial(W)$ denote the topological group of
  diffeomorphisms of $W$ restricting to the identity near its boundary.  The moduli space of manifolds of type $W$ is defined as the classifying space
  $\MM(W) = B\Diff_\partial(W)$.
\end{definition}

If we are given another compact smooth manifold $W'$ and a codimension zero embedding
$W \hookrightarrow W'\setminus \partial W'$ then we obtain a continuous
homomorphism $\Diff_\partial (W) \to \Diff_\partial (W')$ by
extending diffeomorphisms of $W$ by the identity diffeomorphism on the
cobordism $K = W' \setminus \Int(W)$.  The induced map of classifying
spaces shall be denoted
\begin{equation}\label{eq:GluingMap}
- \cup K : \MM(W) \lra \MM(W \cup_{\partial W} K).
\end{equation}
We shall give point-set models for $\MM(W)$ and the map~(\ref{eq:GluingMap}) in
Section \ref{sec:Resolutions}.

When $W$ is an orientable surface of genus $g$ with one boundary
component, and $W' = W \cup_{\partial W} K$ is also orientable of genus
$g+1$ with one boundary component, it can be shown that the
map~\eqref{eq:GluingMap} is equivalent to the map studied by
Harer, and hence it induces an isomorphism on homology in a range of degrees which increases with the genus of the surface.  Our main result is analogous to this, but for simply-connected manifolds of higher even dimension (although we exclude the case $2n=4$ for the usual reason: we shall need to use the Whitney trick).  We must first describe the analogue of genus which we will use.

In each dimension $2n$ we define manifolds
\begin{equation*}
  W_{g,1} = W_{g,1}^{2n} = D^{2n} \# g(S^n \times S^n),
\end{equation*}
the connected sum of $g$ copies of $S^n \times S^n$ with an open disc
removed, and if $W$ is a compact path-connected $2n$-manifold we define the number
 \begin{equation*}
   g(W) = \max\{g \in \bN \,\,\vert\,\, \text{there exists an embedding $W_{g,1} \hookrightarrow W$}\},
 \end{equation*}
which we call the \emph{genus} of $W$. Let $S$ be a manifold obtained by forming the connected sum of $[0,1]
\times \partial W$ with $S^n \times S^n$.  The corresponding gluing
map shall be denoted
\begin{equation*}
  s = - \cup S: \MM(W) \lra \MM(W \cup_{\partial W} S).
\end{equation*}
(If $\partial W$ is not path-connected then the diffeomorphism type of $S$ relative to $\{0\} \times \partial W$,
and hence the homotopy class of $s$, will depend on which path
component the connected sum is formed in.  The following theorem holds
for any such choice.)
\begin{theorem}\label{thm:main}
  For a simply-connected manifold $W$ of dimension $2n \geq 6$, the
  stabilisation map
\begin{equation*}
  s_*: H_k(\MM(W)) \lra H_k(\MM(W \cup_{\partial W} S))
\end{equation*}
is an isomorphism if $2k \leq g(W)-3$ and an epimorphism if $2k \leq g(W)-1$.
\end{theorem}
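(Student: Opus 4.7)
The plan is to follow the classical paradigm for proving homological stability via a highly-connected semi-simplicial resolution, developed by Quillen and adapted to mapping class groups by Harer, Ivanov, and others. Concretely, I would introduce an augmented semi-simplicial space $\K_\bullet(W) \to \MM(W)$ whose space of $p$-simplices parameterises ordered $(p+1)$-tuples of pairwise disjoint embeddings of $W_{1,1}$ into $W$, each meeting $\partial W$ in a standard collar of a fixed disc. Cutting out the image of such an embedding exhibits $W$ diffeomorphically as $W' \cup_{\partial W'} S$ for a manifold $W'$ with $g(W') = g(W)-1$, so the $p$-simplex space is homotopy equivalent to a moduli space of manifolds of genus $g(W)-p-1$ equipped with some auxiliary embedded data, and the face maps correspond (up to homotopy) to iterated stabilisation maps of the form~\eqref{eq:GluingMap}.

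The heart of the proof is a connectivity estimate: I would show that the augmentation $|\K_\bullet(W)| \to \MM(W)$ is approximately $\tfrac{g(W)-3}{2}$-connected. By a standard fibrewise argument this reduces to showing that the simplicial complex of disjointness classes of embedded $W_{1,1}$'s in a fixed $W$ has connectivity growing linearly in $g(W)$. To prove this one would take a map from a sphere of dimension below the critical one, pick smooth representatives in general position, and use the Whitney trick to remove excess intersections. The dimension hypothesis $2n \geq 6$ (so $n \geq 3$) is precisely what guarantees that Whitney discs have generically embedded interior, while the simply-connectedness of $W$ is used to kill the $\pi_1$-obstruction to finding such discs. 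This is the main obstacle of the proof, and the only place where the hypotheses enter essentially; it is the high-dimensional analogue of the connectivity theorems for arc complexes on surfaces, which in the $2$-dimensional case famously fails in the simply-connected setting and requires Harer's direct combinatorial argument.

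Finally, the connectivity estimate yields a first-quadrant spectral sequence $E^1_{p,q} = H_q(\K_p(W)) \Rightarrow H_{p+q}(\MM(W))$ converging in the stable range, whose $d^1$-differential is an alternating sum of face maps, each homotopic to a stabilisation map between moduli spaces of smaller-genus manifolds. Running the analogous spectral sequence for $\MM(W \cup_{\partial W} S)$ and comparing along $s$, I would induct on $g(W)$: the inductive hypothesis supplies iso/epi information for the lower-genus stabilisation maps appearing on the $E^1$-page, and a routine spectral sequence chase delivers the advertised ranges $2k \leq g(W)-3$ for isomorphism and $2k \leq g(W)-1$ for epimorphism. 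The base of the induction is trivial since the claimed range is empty when $g(W) \leq 2$. The only non-routine ingredient outside the connectivity estimate is verifying that the various forgetful maps between simplex spaces really do match up with the stabilisation maps, which amounts to unwinding the definitions together with a standard isotopy-extension argument.
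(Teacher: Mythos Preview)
Your high-level strategy is correct and matches the paper's, but two steps are materially different from what the paper does, and in one of them your description hides the real difficulty.

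\textbf{Connectivity.} Your sketch (``pick smooth representatives in general position, and use the Whitney trick to remove excess intersections'') is not enough. The Whitney trick lets you realise prescribed \emph{algebraic} intersection data by actual embeddings, but it does not by itself produce a nullhomotopy of a map $S^k \to |\K_\bullet(W)|$: you must supply new vertices to fill the interior of $D^{k+1}$. The paper does this by routing through an intermediate \emph{algebraic} complex $K^a(M)$, where $M = (\mathcal{I}^{\mathrm{fr}}_n(W),\lambda,\mu)$ is the quadratic module of framed immersed $n$-spheres with intersection and self-intersection forms. First one proves, by a Charney-style induction on the Witt index, that $K^a(M)$ is $\lfloor\tfrac{g-4}{2}\rfloor$-connected and locally Cohen--Macaulay. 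Then a simplexwise-injective approximation lemma (a generalised ``colouring lemma'') lets one extend a simplicial map $\partial I^{k+1}\to K^a(M)$ to $I^{k+1}$ with links preserved; only after this does one lift the new algebraic vertices to embeddings via the Whitney trick. Finally a separate bisemisimplicial argument upgrades the discrete complex to the topological one. None of this is visible in your sketch, and it is where most of the work lies.

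\textbf{Spectral sequence.} The paper does \emph{not} compare two spectral sequences along $s$. Instead it assembles all manifolds stably diffeomorphic to $W$ into one graded space $\mathcal{M}$, so that $s$ is a degree-$1$ self-map, and builds a single augmented resolution $X_\bullet \to \mathcal{M}$. The key step, which your outline omits entirely, is proving that \emph{all} face maps $d_0,\dots,d_p:X_p\to X_{p-1}$ are homotopic (via an explicit diffeomorphism of $W_{2,1}$ swapping two embedded $W_{1,1}$'s). This forces the alternating sum $d^1=\sum(-1)^i(d_i)_*$ to collapse to a single $(d_p)_*$ for even $p$, which is then identified with $s_*$. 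Your ``compare along $s$'' approach is a legitimate alternative and is used elsewhere in the literature, but it is not what happens here; if you pursue it you must check that $s$ extends to a map of resolutions compatible with the identifications $\K_p(W)\simeq \MM(\text{genus }g-p-1)$, which is its own bookkeeping exercise.
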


Our methods are similar to those used to prove many homological
stability results for homology of discrete groups, namely to use a
suitable action of the group on a simplicial complex.  For example,
Harer used the action of the mapping class group on the \emph{arc complex} to prove his homological stability result.  In our case the
relevant groups are not discrete, so we use a simplicial space
instead.

\subsection{Tangential structures and abelian coefficients}

In Section \ref{sec:TS} we shall generalise Theorem~\ref{thm:main} in
two directions. Firstly, we shall establish a version of
Theorem~\ref{thm:main} where $\MM(W)$ is replaced by a space of
manifolds equipped with certain tangential structures; secondly, we shall allow certain non-trivial systems of local coefficients.

Let $\gamma_{2n} \to BO(2n)$ denote the universal vector bundle.  
A \emph{tangential structure} for $2n$-dimensional manifolds is a fibration $\theta: B \to
BO(2n)$ with $B$ path-connected, classifying a vector bundle $\theta^*\gamma_{2n}$ over $B$. Examples include $B = EO(2n)/G \to BO(2n)$ for $G = SO(2n)$, corresponding to an orientation, $G = U(n)$, corresponding to an almost complex structure, or $G = \{1\}$, corresponding to a framing. A $\theta$-structure on a $2n$-dimensional manifold is a map of vector bundles $\hat{\ell}:TW \to \theta^*\gamma_{2n}$, and we then write $\ell : W \to B$ for the underlying map of spaces. If $\hat{\ell}_{\partial W} : TW\vert_{\partial W} \to \theta^*\gamma_{2n}$ is a given $\theta$-structure, then we write $\Bun_\partial^\theta(W;\hat{\ell}_{\partial W})$ for the space of bundle maps extending $\hat{\ell}_{\partial W}$. The group $\Diff_\partial(W)$ acts on this space, and we write
$$\MM^\theta(W;\hat{\ell}_{\partial W}) = \Bun_\partial^\theta(W;\hat{\ell}_{\partial W}) \hcoker \Diff_\partial(W)$$
for the Borel construction.  This space need not be path-connected, but if $\hat{\ell}_W \in \Bun_\partial^\theta(W;\hat{\ell}_{\partial W})$, then we write $\MM^\theta(W,\hat{\ell}_{W})$ for the path component of $\hat{\ell}_W$.

Our generalisation of Theorem~\ref{thm:main} to manifolds with tangential structures will replace the spaces $\MM(W)$ by $\MM^\theta(W,\hat{\ell}_{W})$. Before stating the theorem, we must explain the corresponding notion of genus and the analogue of the cobordism $S$.

\begin{definition}\label{defn:admissible}
Let us say that a $\theta$-structure $\hat{\ell} : TW_{1,1} \to \theta^*\gamma_{2n}$ is \emph{admissible} if there is a pair of orientation-preserving embeddings $e, f : S^n \times D^n \hookrightarrow W_{1,1} \subset S^n \times S^n$ whose cores $e(S^n \times \{0\})$ and $f(S^n \times \{0\})$ intersect transversely in a single point, such that each of the $\theta$-structures $e^*\hat{\ell}$ and $f^*\hat{\ell}$ on $S^n \times D^n$ extend to $\bR^{2n}$ for some orientation-preserving embeddings $S^n \times D^n \hookrightarrow \bR^{2n}$.  We then define the $\theta$-genus of a compact path-connected $\theta$-manifold $(W, \hat{\ell}_W)$ to be
\begin{equation*}
  g^\theta(W, \hat{\ell}_W) = \max\left\{g \in \bN \,\,\bigg|\,\, \parbox{20em}{there are $g$ disjoint copies of $W_{1,1}$ in $(W, \hat{\ell}_W)$,\\each with admissible $\theta$-structure}\right\}.
\end{equation*}
\end{definition}  

If $W$ contains a copy of $W_{g,1}$ such that $\ell_W\vert_{W_{g,1}} : W_{g,1} \to B$ is nullhomotopic, then in Proposition \ref{prop:reframing} we show that $g^\theta(W, \hat{\ell}_W) \geq g-1$, and if $n \neq 3,7$ this can be strengthened to $g^\theta(W, \hat{\ell}_W) \geq g$.  When $B$ is simply-connected, the number $g^\theta(W,\hat\ell_W)$ may be estimated in terms of characteristic numbers, with a constant error term depending only on $n$ and $H_n(B;\bZ)$, cf.\ Remark~\ref{remark:estimating-genus}.

In order to define the stabilisation map, we say that a $\theta$-structure $\hat{\ell}_S$ on $S$ is admissible if it is admissible in the sense above when restricted to $W_{1,1} \subset S$. Suppose furthermore that it restricts to $\hat{\ell}_{\partial W}$ on $\{0\} \times \partial W$, and write $\hat{\ell}_{\partial W}^\prime$ for its restriction to $\{1\} \times \partial W$. Then there is an induced stabilisation map
$$ s = - \cup (S, \hat{\ell}_S) : \MM^\theta(W,\hat{\ell}_{W}) \lra \MM^\theta(W \cup_{\partial W} S,\hat{\ell}_{W} \cup \hat{\ell}_S)$$
given by gluing on $S$ to $W$ and extending $\theta$-structures using $\hat{\ell}_S$.

We require two additional terms to describe our result. We say that $\theta$ is \emph{spherical} if $S^{2n}$ admits a $\theta$-structure, and we say that a local coefficient
system is \emph{abelian} if it has trivial monodromy along all nullhomologous loops.
  
\begin{theorem}\label{thm:mainTheta}
  For a simply-connected manifold $W$ of dimension $2n \geq 6$, a $\theta$-structure $\hat{\ell}_W$ on $W$, an admissible $\theta$-structure $\hat{\ell}_S$ on $S$,
and an abelian local coefficient system $\mathcal{L}$ on $\MM^\theta(W \cup_{\partial W} S,\hat{\ell}_{W \cup S})$, the stabilisation map
\begin{equation*}
  s_*: H_k(\MM^\theta(W,\hat{\ell}_{W});s^*\mathcal{L}) \lra H_k(\MM^\theta(W \cup_{\partial W} S,\hat{\ell}_{ W \cup S});\mathcal{L})
\end{equation*}
is 
\begin{enumerate}[(i)]
\item an epimorphism for $3k \leq g^\theta(W, \hat{\ell}_W)-1$ and an isomorphism for $3k \leq g^\theta(W, \hat{\ell}_W)-4$,

\item an epimorphism for $2k \leq g^\theta(W, \hat{\ell}_W)-1$ and an isomorphism for $2k \leq g^\theta(W, \hat{\ell}_W)-3$, if $\theta$ is spherical and $\mathcal{L}$ is constant.
\end{enumerate}
\end{theorem}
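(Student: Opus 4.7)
My plan is to follow the classical strategy for homological stability: replace the discrete-group action on a simplicial complex by an action of the topological group $\Diff_\partial(W)$ on a suitable semi-simplicial space. I would define a semi-simplicial space $K_\bullet^\theta(W,\hat{\ell}_W)$ whose space of $p$-simplices consists of tuples of $(p+1)$ disjoint admissible embeddings $(W_{1,1}, \hat{\ell}_i) \hookrightarrow (W,\hat{\ell}_W)$ meeting a fixed collar of $\partial W$ in a standard way, with face maps forgetting one of the handles. The Borel construction $K_\bullet^\theta(W,\hat{\ell}_W) /\!\!/ \Diff_\partial(W)$ sits naturally augmented over $\MM^\theta(W,\hat{\ell}_W)$ and gives rise to a spectral sequence which, assuming a connectivity estimate for $\|K_\bullet^\theta\|$ in terms of $g^\theta(W,\hat{\ell}_W)$, computes the stated stability range.

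The principal obstacle is this high-connectivity estimate. I would prove it by a parametrised surgery argument: given a map $S^k \to \|K_\bullet^\theta\|$, represent it by a smooth family of disjoint candidate handles, then fill in a $(k+1)$-disc by successively removing excess intersections via the Whitney trick (here $2n \geq 6$ and simple connectivity of $W$ are crucial, as in high-dimensional surgery theory) and by promoting plain $W_{1,1}$-summands to admissible ones. The promotion step uses the definition of $g^\theta$ and the ``extend to $\bR^{2n}$'' clause in Definition~\ref{defn:admissible}, which identifies the remaining obstruction with a stably trivial $\theta$-bundle. For case~(ii), one exploits the extra symmetry available when $\theta$ is spherical: both $S^n$-cores of a handle are $\theta$-null, so the two factors may be interchanged, and when $\mathcal{L}$ is constant this symmetry yields an extra unit of connectivity in the resolution and hence the improved range.

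Given the connectivity estimate, the $p$-th space of the Borel construction identifies with $\MM^\theta(W \setminus \sqcup_{p+1} W_{1,1}, \hat{\ell}')$ for the induced $\theta$-structure on the new boundary, and removing one admissible $W_{1,1}$ decreases $g^\theta$ by exactly one. This sets up an induction on $g^\theta(W,\hat{\ell}_W)$: in the spectral sequence with $E^1_{p,q} = H_q(\MM_p; \mathcal{L}_p)$ converging to $0$ in the prescribed range, the inductive hypothesis controls all but one entry on each diagonal, and the $d^1$-differential reduces via the alternating-sum cancellation of~\cite{R-WResolution} to the single face map $s_*$, yielding the claimed epimorphism and isomorphism ranges. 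The abelian-coefficient hypothesis enters exactly here: to compare $\mathcal{L}_p$ with the pullback of $\mathcal{L}$ along iterated face maps, one needs that the monodromy around any loop which threads through a single deleted $W_{1,1}$ vanishes, and such loops bound in the added handle and are therefore nullhomologous in $\MM^\theta(W \cup_{\partial W} S, \hat{\ell}_{W \cup S})$, so triviality of monodromy along nullhomologous loops suffices.
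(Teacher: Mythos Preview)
Your overall architecture---a semisimplicial resolution by tuples of standard/admissible copies of $W_{1,1}$, a spectral sequence, and induction on $g^\theta$---matches the paper's. But two steps are not right as stated.

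\textbf{The connectivity estimate.} You propose to prove high connectivity of $\|K_\bullet^\theta\|$ by ``parametrised surgery'': represent a sphere by a family of handles and remove intersections by the Whitney trick. The paper does not do this, and it is unclear how one would. The Whitney trick is not a parametrised statement; it takes as input algebraic intersection data and produces an isotopy, and there is no evident mechanism for filling a $(k+1)$-disc of configurations directly. The paper instead routes through an \emph{algebraic} complex $K^a(M)$ built from the quadratic module $(\mathcal{I}_n^{\mathrm{fr}}(W),\lambda,\mu)$ of framed immersed $n$-spheres, proves its high connectivity by an inductive link argument (Theorem~\ref{thm:Charney}), and then lifts a nullhomotopy simplex-by-simplex from $K^a$ to the discrete geometric complex $K^\delta$ using the Whitney trick at individual vertices (Lemma~\ref{lemthm:conn-K-delta}). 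A separate microfibration argument then passes from $K^\delta$ to the topological $K_\bullet$. Your plan omits this algebraic intermediary entirely, and without it there is no source for the nullhomotopy you need to lift.

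\textbf{The distinction between cases (i) and (ii).} You attribute the improved range in the spherical case to ``an extra unit of connectivity in the resolution''. This is not what happens: the connectivity of the resolution is the same in both cases. The difference lies in the spectral sequence analysis of $d^1 = \sum (-1)^i (d_i)_*$. One needs the face maps $d_i$ and $d_{i+1}$ to induce the same map on homology, so that the alternating sum collapses. When $\theta$ is spherical, a diffeomorphism swapping two adjacent copies of $W_{1,1}$ can be covered by a homotopy of $\theta$-structures (Lemma~\ref{lem:SymmetricThetaStr}), giving $d_i \simeq d_{i+1}$ directly. For general $\theta$ this fails; the paper instead builds a diffeomorphism supported in a $W_{3,1}$ (Lemma~\ref{lem:BraidMove}) which cycles \emph{three} copies and can always be covered compatibly, but only shows $(d_i d_{p+1})_* = (d_{i+1} d_{p+1})_*$. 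This forces one to precompose with an extra face map before cancelling, which costs one stabilisation and degrades the slope from $1/2$ to $1/3$. The abelian hypothesis on $\mathcal{L}$ enters precisely here: the homotopy between $d_i \circ h(-,y)$ and $d_{i+1} \circ h(-,y)$ traces out a loop in the target whose monodromy twists the coefficients; the $W_{3,1}$ construction is arranged so that this loop becomes a commutator (hence nullhomologous) after two further stabilisations, and abelianness of $\mathcal{L}$ then kills it. Your description of the abelian hypothesis as controlling ``loops which thread through a single deleted $W_{1,1}$'' does not capture this mechanism.
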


For example, consider the tangential structure $\theta : BU(3) \to BO(6)$. If $(W, \hat{\ell}_W)$ is an almost complex 6-manifold (with non-empty boundary), and $e : W_{g,1} \to W$ is an embedding, then $\ell_W \circ e : W_{g,1} \to BU(3)$ is nullhomotopic because $W_{g,1} \simeq \vee^{2g} S^3$ and $\pi_3(BU(3))=0$. Thus $g^\theta(W, \hat{\ell}_W) \geq g-1$. Furthermore, $S^6$ admits an almost complex structure so $\theta$ is spherical. So for any admissible
$\theta$-structure $\hat{\ell}_S$ on $S$, the stabilisation map
$$s : \MM^\theta(W,\hat{\ell}_{W}) \lra \MM^\theta(W \cup_{\partial W} S,\hat{\ell}_{ W \cup S})$$
induces an isomorphism on integral homology in degrees up to $\tfrac{g-4}{2}$.
  
In the sequel \cite{HomStabII} to this paper we prove an
analogue of Theorem~\ref{thm:mainTheta} where the manifold $S$ is replaced
by a more general cobordism $K$, satisfying that $(K,\partial W)$ is
$(n-1)$-connected.  The theorem proved there includes the case
where $W \cup_{\partial W} K$ is a closed manifold.


\subsection{Stable homology}

If we have a sequence $\hat{\ell}_{S_1}, \hat{\ell}_{S_2}, \ldots$ of admissible $\theta$-structures on $S$ such that $\hat{\ell}_{S_1}\vert_{\{0\} \times \partial W} = \hat{\ell}_W\vert_{\partial W}$ and $\hat{\ell}_{S_i}\vert_{\{1\} \times \partial W} = \hat{\ell}_{S_{i+1}}\vert_{\{0\} \times \partial W}$ for all $i$, then the manifold $W \cup gS$ given by the composition of $W$ and $g$ copies of the cobordism $S$ has a $\theta$-structure $\hat{\ell}_{W \cup gS} = \hat{\ell}_W \cup \hat{\ell}_{S_1} \cup \cdots \cup \hat{\ell}_{S_g}$, and there are maps
$$- \cup (S, \hat{\ell}_{S_{g+1}}) : \MM^\theta(W \cup gS, \hat{\ell}_{W \cup gS}) \lra \MM^\theta(W \cup (g+1)S, \hat{\ell}_{W \cup (g+1)S}).$$
In this situation the homology of the limiting space
$$\hocolim_{g \to \infty} \MM^\theta(W \cup gS, \hat{\ell}_{W \cup gS})$$
can be described in homotopy-theoretic terms for any $W$ and any $\theta$, from which explicit calculations are quite feasible. In many cases this description is given in \cite{GR-W2}, and we shall describe the general case in \cite[Sections 1.2 and 7]{HomStabII}. Here we shall focus on the interesting special case $W = D^{2n}$ and $\theta = \mathrm{Id}_{BO(2n)}$, in which case $\MM^\theta(W \cup gS , \hat{\ell}_{W \cup gS}) = \MM(W_{g,1}) = B\Diff_\partial(W_{g,1})$.

The boundary of $W_{g,1}$ is a sphere, so $S = ([0,1] \times S^{2n-1}) \# (S^n \times S^n)$ and
hence there is a diffeomorphism $W_{g,1} \cup_{\partial W_{g,1}} S \approx W_{g+1,1}$ relative to their already identified boundaries. Theorem \ref{thm:main} (or Theorem \ref{thm:mainTheta} for abelian coefficients) therefore implies the following.

\begin{corollary}\label{cor:main}
  For $2n \geq 6$ and an abelian coefficient system $\mathcal{L}$ on $\MM(W_{g+1,1})$, the stabilisation map 
  \begin{equation*}
    s_* : H_k(\MM(W_{g,1});s^*\mathcal{L}) \lra H_k(\MM(W_{g+1,1});\mathcal{L})
  \end{equation*}
  is an epimorphism for $3k \leq g-1$ and an isomorphism for $3k \leq g-4$. If $\mathcal{L}$ is constant, then it is an epimorphism for $2k \leq g-1$ and an isomorphism for $2k \leq g-3$.
\end{corollary}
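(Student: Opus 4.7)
The plan is to deduce Corollary \ref{cor:main} directly from Theorem \ref{thm:mainTheta} by specialising the tangential structure to $\theta = \mathrm{Id}_{BO(2n)}$. For this choice the space $\Bun_\partial^\theta(W; \hat{\ell}_{\partial W})$ is weakly contractible for every compact $W$: a $\theta$-structure is a bundle map $TW \to \gamma_{2n}$, equivalently a section of the associated bundle over $W$ with fibre $EO(2n)$, which has contractible fibres. Consequently $\MM^\theta(W, \hat{\ell}_W)$ is canonically weakly equivalent to $\MM(W) = B\Diff_\partial(W)$, and local systems on one pull back to local systems on the other in the obvious way.

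Next, I would identify the stabilisation tower. Since $\partial W_{g,1} \cong S^{2n-1}$ is connected, the cobordism $S = ([0,1] \times S^{2n-1}) \# (S^n \times S^n)$ is well-defined up to diffeomorphism rel $\partial$, and $W_{g,1} \cup_\partial S \approx W_{g+1,1}$ by the very definition of connected sum. Thus the stabilisation map of Theorem \ref{thm:mainTheta}, applied to the simply-connected manifold $W_{g,1}$, realises the map of Corollary \ref{cor:main}.

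To apply Theorem \ref{thm:mainTheta} two preconditions must be checked. First, the genus bound $g^\theta(W_{g,1}, \hat{\ell}) \geq g$: the connected sum decomposition of $W_{g,1}$ exhibits $g$ tautologically disjoint copies of $W_{1,1}$, each carrying its standard pair of dual $S^n \times D^n$ tubular neighbourhoods in $S^n \times S^n$, whose cores meet once transversely. Admissibility of the induced $\theta$-structures is automatic for $\theta = \mathrm{Id}$: the space of $\theta$-structures on $\bR^{2n}$ restricting to a prescribed structure on an embedded $S^n \times D^n$ is nonempty by the contractibility noted above, so each of the two required extensions exists. Second, $\theta$ is spherical: $S^{2n}$ carries the $\theta$-structure classifying $TS^{2n}$.

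With these checks in place, Theorem \ref{thm:mainTheta}(i) delivers the general bound --- epimorphism for $3k \leq g - 1$, isomorphism for $3k \leq g - 4$ --- and Theorem \ref{thm:mainTheta}(ii), available because $\theta$ is spherical, strengthens this for constant $\mathcal{L}$ to epimorphism for $2k \leq g - 1$, isomorphism for $2k \leq g - 3$. No genuine obstacle is anticipated; the only step requiring care is making the admissibility verification self-contained, so as to avoid any circular appeal when identifying $\MM^\theta$ with $\MM$.
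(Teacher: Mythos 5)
Your proposal is correct and follows essentially the same route as the paper: specialise $\theta=\mathrm{Id}_{BO(2n)}$, observe that $\Bun^\theta_\partial$ is weakly contractible so $\MM^\theta(W,\hat\ell_W)\simeq B\Diff_\partial(W)$, note the diffeomorphism $W_{g,1}\cup_\partial S\approx W_{g+1,1}$, and then quote Theorem~\ref{thm:mainTheta}. The paper states this very tersely (and, somewhat confusingly, cites Theorem~\ref{thm:main} rather than Theorem~\ref{thm:mainTheta}, even though only the latter covers abelian coefficients); your write-up supplies exactly the checks the paper leaves implicit --- admissibility of every $\theta$-structure, sphericity, and the bound $g^\theta(W_{g,1},\hat\ell)\geq g$ --- all of which are correctly handled by the contractibility of the space of bundle maps to the universal bundle.
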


It is an immediate consequence of this corollary and \cite[\S 3]{R-WPerfect} that the mapping class group $\pi_1(\MM(W_{g,1}))$ has perfect commutator subgroup as long as $g \geq 7$.

\begin{remark}
  Independently, Berglund and Madsen (\cite{BerglundMadsen}) have
  obtained a result similar to this corollary, for rational cohomology in
  the range $2k \leq \min(2n-6, g-6)$.
\end{remark}
\begin{remark}
  In the earlier preprint \cite{OldVersion} we considered only the
  manifolds $W_{g,1}$, rather than the more general manifolds of
  Theorem~\ref{thm:main}.  Although the present paper entirely
  subsumes \cite{OldVersion}, the reader mainly interested in
  Corollary~\ref{cor:main} may want to consult the preprint for a streamlined text adapted to that special case (at least if they are only concerned with constant coefficients).
\end{remark}

By the universal coefficient theorem, stability for homology implies
stability for cohomology; in the surface case, Mumford
(\cite{Mumford}) conjectured an explicit formula for the stable
rational cohomology, which in our notation asserts that a certain ring
homomorphism
\begin{equation*}
  \bQ[\kappa_1, \kappa_2, \dots] \lra H^*(\MM(W^2_{g,1});\bQ)
\end{equation*}
is an isomorphism for $g \gg *$.  Mumford's conjecture was proved in a
strengthened form by Madsen and Weiss (\cite{MW}).

Corollary~\ref{cor:main} and our previous paper \cite{GR-W2} allow us
to prove results analogous to Mumford's conjecture and the
Madsen--Weiss theorem for the moduli spaces $\MM(W_{g,1})$ with
$2n \geq 6$. The analogue of the Madsen--Weiss theorem for these
spaces concerns the homology of the limiting space 
$\MM(W_\infty) = \hocolim_{g \to \infty} \MM(W_{g,1})$.
 There is a certain
infinite loop space $\Omega^\infty MT\theta^n$ associated to the tangential structure $\theta^n : BO(2n)\langle n \rangle \to BO(2n)$ given by the $n$-connected cover,
and a map
$$\alpha : \MM(W_\infty) \lra \Omega^\infty MT\theta^n$$
given by a parametrised form of the Pontryagin--Thom construction, and in \cite[Theorem 1.1]{GR-W2} we proved that $\alpha$ induces an isomorphism between the homology of $\MM(W_\infty)$ and the homology of the basepoint component of $\Omega^\infty MT\theta^n$. In \cite{GR-WAb} we used this to compute $H_1(\MM(W_{g,1});\bZ)$ for $g \geq 5$. 

It is easy to calculate the rational cohomology ring of a component of $\Omega^\infty MT\theta^n$, and hence of $\MM(W_{g,1})$ in the range of degrees given by Corollary~\ref{cor:main}.  The result is Corollary~\ref{cor:higher-Mumford} below, which is a higher-dimensional analogue of Mumford's conjecture. Recall that for each $c \in
H^{k+2n}(BSO(2n))$ there is an associated cohomology class $\kappa_c \in
H^{k}(\Omega^\infty MT\theta^n)$. Pulling it back via $\alpha$ and
all the stabilisation maps $\MM(W_{g,1}) \to \MM(W_{\infty})$ defines
classes $\kappa_c \in H^k(\MM(W_{g,1}))$ for all $g$, sometimes called
``generalised Mumford--Morita--Miller classes''. These can also be described in terms of fibrewise integration, see e.g.\ \cite[\S 1.1]{GR-WDetect}. The following result is our higher-dimensional analogue of Mumford's conjecture.

\begin{corollary}\label{cor:higher-Mumford}
Let $2n \geq 6$ and let $\mathcal{B}\subset H^*(BSO(2n);\bQ)$ be the set of monomials in the classes $e, p_{n-1}, \dots, p_{\lceil \frac{n+1}4 \rceil}$, of degree greater than $2n$.  Then the induced map
  \begin{equation*}
    \bQ[\kappa_c \,\,|\,\, c \in \mathcal{B}] \lra H^*(\MM(W_{g,1});\bQ)
  \end{equation*}
is an isomorphism in degrees satisfying $2* \leq g-3$.
\end{corollary}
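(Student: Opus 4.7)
The plan is to combine Corollary~\ref{cor:main}, the stable homology computation \cite[Theorem 1.1]{GR-W2}, and a standard rational cohomology calculation for infinite loop spaces. Starting from Corollary~\ref{cor:main} with constant rational coefficients and dualising via the universal coefficient theorem over $\bQ$, the iterated stabilisation map induces an isomorphism $H^*(\MM(W_\infty);\bQ) \to H^*(\MM(W_{g,1});\bQ)$ in degrees $2* \leq g-3$. By \cite[Theorem 1.1]{GR-W2}, the Pontryagin--Thom map $\alpha : \MM(W_\infty) \to \Omega^\infty_0 MT\theta^n$ is a homology equivalence and hence a rational cohomology isomorphism. Thus the problem reduces to identifying $H^*(\Omega^\infty_0 MT\theta^n;\bQ)$ as the polynomial ring $\bQ[\kappa_c \,|\, c \in \mathcal{B}]$.

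Since $MT\theta^n$ is bounded below, its basepoint component is rationally a product of Eilenberg--MacLane spaces, so
\[
H^*(\Omega^\infty_0 MT\theta^n;\bQ) \;\cong\; \mathrm{Sym}_\bQ\!\Big(\bigoplus_{k>0} H^k(MT\theta^n;\bQ)\Big),
\]
and any vector-space basis of the positive-degree cohomology of $MT\theta^n$ yields a set of polynomial generators. The class $\kappa_c$, for $c$ a monomial in the Pontryagin and Euler classes, corresponds under this identification to $c$ viewed in $H^*(BSO(2n)\langle n\rangle;\bQ)$ via the Thom isomorphism $H^k(MT\theta^n;\bQ) \cong H^{k+2n}(BSO(2n)\langle n\rangle;\bQ)$ for the oriented virtual bundle $-\theta^{n,*}\gamma$.

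The last step is to compute $H^*(BSO(2n)\langle n\rangle;\bQ)$. Rationally, $BSO(2n) \simeq K(\bQ,4) \times \cdots \times K(\bQ,4(n-1)) \times K(\bQ,2n)$, with factors corresponding to $p_1,\ldots,p_{n-1},e$, and passing to the $n$-connected cover kills those factors of dimension at most $n$. The surviving generators are the Pontryagin classes $p_i$ with $4i > n$, i.e.\ $i \geq \lceil (n+1)/4\rceil$, together with the Euler class, so
\[
H^*(BSO(2n)\langle n\rangle;\bQ) \;\cong\; \bQ\bigl[e,\,p_{\lceil (n+1)/4\rceil},\ldots,p_{n-1}\bigr].
\]
A rational basis of the degree-$>2n$ part of this polynomial ring is precisely $\mathcal{B}$, so the classes $\kappa_c$ with $c \in \mathcal{B}$ give algebraically independent polynomial generators of the stable cohomology, producing the required isomorphism in the range $2* \leq g-3$. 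There is no serious obstacle in the argument; the conceptual point that makes the indexing set $\mathcal{B}$ come out the way it does is the identification of the tangential structure $\theta^n$ appearing in \cite[Theorem 1.1]{GR-W2} as the $n$-connected cover of $BSO(2n)$, which forces precisely the stated truncation of the Pontryagin generators.
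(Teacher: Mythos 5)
Your proof is correct and fills in exactly the computation the paper alludes to when it says it is ``easy to calculate the rational cohomology ring of a component of $\Omega^\infty MT\theta^n$'': iterating Corollary~\ref{cor:main}, dualising over $\bQ$, applying \cite[Theorem 1.1]{GR-W2}, and then computing $H^*(\Omega^\infty_0 MT\theta^n;\bQ)$ via the standard rational splitting of a connective-enough spectrum into Eilenberg--MacLane spectra together with the Thom isomorphism for $-\theta^{n,*}\gamma$ and the identification $H^*(BSO(2n)\langle n\rangle;\bQ) \cong \bQ[e,p_{\lceil (n+1)/4\rceil},\ldots,p_{n-1}]$. This is the same route the paper has in mind, so there is nothing to add.
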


For example, if $2n = 6$, the set $\mathcal{B}$ consists of monomials in $e$, $p_1$ and $p_2$, and therefore $H^*(\MM(W_{g,1}^{6});\bQ)$ agrees for $\ast \leq \tfrac{g-3}{2}$ with a polynomial ring in variables of degrees 2, 2, 4, 6, 6, 6, 8, 8, 10, 10, 10, 10, 12, 12, \dots.

\subsection{Acknowledgements}

The authors would like to thank Nathan Perlmutter for bringing to our attention an oversight in an earlier proof of Lemma \ref{lemthm:conn-K-delta}, and Michael Weiss for useful comments on a draft of this paper.
S.~Galatius was partially supported by NSF grants DMS-1105058 and
DMS-1405001, and the European Research Council (ERC) under the European
Union's Horizon 2020 research and innovation programme (grant
agreement No  682922). O.~Randal-Williams was supported by EPSRC grant EP/M027783/1 and the Herchel Smith Fund, and both authors were supported by ERC Advanced Grant
No.~228082, and the Danish National Research Foundation through the
Centre for Symmetry and Deformation. 


\section{Techniques}\label{sec:techniques}

In this section we collect the technical results needed to establish
high connectivity of certain simplicial spaces which will be relevant for the proof of Theorem \ref{thm:main}.  The main results
are Theorem~\ref{thm:simplex-wise-injective} and
Corollary~\ref{cor:serre-microf-connectivity}.

\subsection{Cohen--Macaulay complexes}\label{sec:CM}

Recall from \cite[Definition 3.4]{HW} that a simplicial complex $X$ is called
\emph{weakly Cohen--Macaulay} of dimension $n$ if it is
$(n-1)$-connected and the link of any $p$-simplex is
$(n-p-2)$-connected.  In this case, we write $\wCM(X) \geq n$. We shall
also say that $X$ is \emph{locally weakly Cohen--Macaulay} of
dimension $n$ if the link of any $p$-simplex is $(n-p-2)$-connected
(but no global connectivity is required on $X$ itself).  In this case
we shall write $\lCM(X) \geq n$.

\begin{lemma}\label{lem:wcm-of-link}
  If $\lCM(X) \geq n$ and $\sigma < X$ is a $p$-simplex,
  then $\wCM(\Lk(\sigma)) \geq n-p-1$.
\end{lemma}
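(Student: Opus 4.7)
The plan is to verify the two defining conditions of $\wCM(\Lk(\sigma)) \geq n-p-1$ directly from the definition, namely that $\Lk(\sigma)$ is $(n-p-2)$-connected, and that for every $q$-simplex $\tau$ of $\Lk(\sigma)$, its link inside $\Lk(\sigma)$ is $((n-p-1)-q-2) = (n-p-q-3)$-connected.

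First, since $\lCM(X) \geq n$, applying the local condition to the $p$-simplex $\sigma$ of $X$ itself gives that $\Lk_X(\sigma)$ is $(n-p-2)$-connected. This handles the global connectivity requirement.

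Second, I would exploit the standard identity for links of joins in simplicial complexes: for any $q$-simplex $\tau$ of $\Lk_X(\sigma)$, the vertices of $\tau$ are all joinable with those of $\sigma$ in $X$, so $\sigma * \tau$ is a genuine $(p+q+1)$-simplex of $X$, and one has
\begin{equation*}
\Lk_{\Lk_X(\sigma)}(\tau) \;=\; \Lk_X(\sigma * \tau).
\end{equation*}
Applying the hypothesis $\lCM(X) \geq n$ to the $(p+q+1)$-simplex $\sigma * \tau$ then shows this common link is $(n - (p+q+1) - 2) = (n - p - q - 3)$-connected, which is precisely the bound required.

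Combining these two observations completes the verification, and there is no genuine obstacle: the only point worth writing carefully is the identity $\Lk_{\Lk_X(\sigma)}(\tau) = \Lk_X(\sigma * \tau)$, which is a direct consequence of the definitions of link and join.
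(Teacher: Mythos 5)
Your proof is correct and follows essentially the same argument as the paper: applying the $\lCM$ hypothesis to $\sigma$ itself for the global connectivity of $\Lk(\sigma)$, and using the identity $\Lk_{\Lk_X(\sigma)}(\tau) = \Lk_X(\sigma * \tau)$ together with the $\lCM$ hypothesis on the $(p+q+1)$-simplex $\sigma * \tau$ for the local condition. Nothing to add.
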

\begin{proof}
  By assumption, $\Lk(\sigma)$ is $((n-p-1)-1)$-connected.  If $\tau <
  \Lk(\sigma)$ is a $q$-simplex, then
  \begin{equation*}
    \Lk_{\Lk(\sigma)}(\tau) = \Lk_X(\sigma \ast \tau)
  \end{equation*}
  is $((n-p-1) - q - 2)$-connected, since $\sigma \ast \tau$ is a
  $(p+q+1)$-simplex, and hence its link in $X$ is
  $(n-(p+q+1)-2)$-connected.
\end{proof}

\begin{definition}
  Let us say that a simplicial map $f: X \to Y$ of simplicial
  complexes is \emph{simplexwise injective} if its restriction to each
  simplex of $X$ is injective, i.e.\ the image of any $p$-simplex of
  $X$ is a (non-degenerate) $p$-simplex of $Y$.
\end{definition}
\begin{lemma}
  Let $f: X \to Y$ be a simplicial map between simplicial complexes.  Then
  the following conditions are equivalent.
  \begin{enumerate}[(i)]
  \item\label{item:1} $f$ is simplexwise injective,
  \item\label{item:2} $f(\Lk(\sigma)) \subset \Lk(f(\sigma))$ for
    all simplices $\sigma < X$,
  \item\label{item:3} $f(\Lk(v)) \subset \Lk(f(v))$ for all vertices
    $v \in X$,
  \item \label{item:4} The image of any 1-simplex in $X$ is a
    (non-degenerate) 1-simplex in $Y$.
  \end{enumerate}
\end{lemma}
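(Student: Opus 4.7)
My plan is to prove the equivalence by closing a short cycle of implications, namely $(\ref{item:1}) \Rightarrow (\ref{item:2}) \Rightarrow (\ref{item:3}) \Rightarrow (\ref{item:4}) \Rightarrow (\ref{item:1})$. Each step is elementary and follows directly from the definitions of simplicial map, simplexwise injectivity, and the link of a simplex.

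For $(\ref{item:1}) \Rightarrow (\ref{item:2})$, I would fix a simplex $\sigma < X$ and a simplex $\tau \in \Lk(\sigma)$, so that $\sigma \ast \tau$ is a simplex of $X$ with vertex set the disjoint union of those of $\sigma$ and $\tau$. Simplexwise injectivity applied to $\sigma \ast \tau$ says that $f$ sends its vertices to distinct vertices spanning a simplex of $Y$. Hence $f(\sigma)$ and $f(\tau)$ are disjoint simplices of $Y$ whose join is a simplex, which is exactly the statement $f(\tau) \in \Lk(f(\sigma))$. The implication $(\ref{item:2}) \Rightarrow (\ref{item:3})$ is then just the special case where $\sigma$ is a vertex.

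For $(\ref{item:3}) \Rightarrow (\ref{item:4})$, I would take a $1$-simplex $\{v,w\} < X$; then $w \in \Lk(v)$, so by hypothesis $f(w) \in \Lk(f(v))$, which in particular forces $f(v) \neq f(w)$, so that $\{f(v), f(w)\}$ is a non-degenerate $1$-simplex of $Y$. Finally, for $(\ref{item:4}) \Rightarrow (\ref{item:1})$, I would take a $p$-simplex $\sigma < X$ with vertices $v_0, \dots, v_p$; any two distinct vertices $v_i, v_j$ span a $1$-simplex in $X$, so by $(\ref{item:4})$ their images are distinct in $Y$, and the restriction $f\vert_\sigma$ is injective.

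The whole argument is purely formal, and I do not anticipate a genuine obstacle; the only point worth checking carefully is the step $(\ref{item:1}) \Rightarrow (\ref{item:2})$, where one must use that $\Lk(\sigma)$ consists of simplices $\tau$ disjoint from $\sigma$ with $\sigma \ast \tau$ a simplex, so that the join makes sense and simplexwise injectivity on $\sigma \ast \tau$ is exactly the input needed.
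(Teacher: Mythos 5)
Your proof is correct and follows essentially the same cycle of implications as the paper, $(\text{i}) \Rightarrow (\text{ii}) \Rightarrow (\text{iii}) \Rightarrow (\text{iv}) \Rightarrow (\text{i})$, with the same arguments at each step. The only cosmetic difference is in $(\text{i}) \Rightarrow (\text{ii})$: you take an arbitrary simplex $\tau \in \Lk(\sigma)$ and apply simplexwise injectivity to $\sigma \ast \tau$, while the paper checks the containment on vertices $v \in \Lk(\sigma)$; both are valid, and your version is arguably slightly more explicit about why the image of a higher-dimensional simplex of $\Lk(\sigma)$ lands in $\Lk(f(\sigma))$.
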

\begin{proof}\mbox{} 

  \noindent (\ref{item:1}) $\Rightarrow$~(\ref{item:2}). If $\sigma =
  \{v_0, \dots, v_p\}$ and $v \in \Lk(\sigma)$, then $\{v,v_0, \dots,
  v_p\} < X$ is a simplex, and therefore $\{f(v), f(v_0), \dots,
  f(v_p)\} < Y$ is a simplex.  Since $f$ is simplexwise injective, we
  must have $f(v) \not\in f(\sigma)$, so $f(v) \in \Lk(f(\sigma))$.

  \noindent (\ref{item:2}) $\Rightarrow$~(\ref{item:3}). Trivial.

  \noindent (\ref{item:3}) $\Rightarrow$~(\ref{item:4}).  If $\sigma =
  \{v_0, v_1\}$ is a 1-simplex, then $v_1 \in \Lk(v_0)$ so $f(v_1) \in
  \Lk(f(v_0))$, but then $\{f(v_0), f(v_1)\}$ is a 1-simplex.

  \noindent (\ref{item:4}) $\Rightarrow$~(\ref{item:1}). Let $\sigma =
  \{v_0, \dots, v_p\} < X$ be a $p$-simplex and assume for
  contradiction that $f\vert_\sigma$ is not injective.  This means that
  $f(v_i) = f(v_j)$ for some $i \neq j$, but then the restriction of
  $f$ to the 1-simplex $\{v_i, v_j\}$ is not injective.
\end{proof}

The following theorem generalises the ``colouring lemma'' of Hatcher
and Wahl (\cite[Lemma 3.1]{HW}), which is the special case where $X$ is
a simplex.  The proof given below is an adaptation of theirs.

\begin{theorem}\label{thm:simplex-wise-injective}
Let $X$ be a simplicial complex with $\lCM(X) \geq n$, $f: \partial I^n \to |X|$ be a map and $h : I^n \to \vert X \vert$ be a nullhomotopy. If $f$ is simplicial with
  respect to a PL triangulation $\partial I^n \approx |L|$, then this triangulation extends to a PL triangulation $I^n \approx |K|$ and $h$ is homotopic relative to $\partial I^n$ to a simplicial map $g : |K| \to |X|$ such that
\begin{enumerate}[(i)]
\item\label{it:simplex-wise-injective:1}  for each vertex $v \in K \setminus L$, the star $v *\Lk_K(v)$ intersects $L$ in a single (possibly empty) simplex, and

\item\label{it:simplex-wise-injective:2} for each vertex $v \in K \setminus L$, $g(\Lk_K(v)) \subset \Lk_X(g(v))$.
\end{enumerate}  
 In particular, $g$ is simplexwise
  injective if $f$ is.
\end{theorem}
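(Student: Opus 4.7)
The plan is to adapt the classical ``colouring lemma'' strategy of Hatcher--Wahl \cite[Lemma 3.1]{HW}: first replace $h$ by a simplicial map via simplicial approximation, then make purely combinatorial subdivisions to arrange condition (i), and finally modify the values of the map at interior vertices one at a time to achieve condition (ii), using the local weak Cohen--Macaulay property to find suitable replacement vertices.

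\textbf{Step 1: Simplicial approximation.} By the relative simplicial approximation theorem for PL maps, after passing to a sufficiently fine subdivision of some PL triangulation of $I^n$ extending $L$, we obtain a triangulation $|K_0|\approx I^n$ (agreeing with $L$ on $\partial I^n$) and a simplicial map $g_0 : |K_0|\to |X|$ that is homotopic to $h$ rel $\partial I^n$. Thus from now on we may assume $h = g_0$ is simplicial.

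\textbf{Step 2: Arranging (i).} This is a combinatorial adjustment of $K_0$ that does not alter the map on vertices of $L$. By barycentrically subdividing those simplices of $K_0$ that are not contained in $L$ (a finite number of times, starting with the top-dimensional ones), we may ensure that for every vertex $v\in K\setminus L$, the closed star $v*\Lk_K(v)$ meets $|L|$ in a single (possibly empty) simplex. One then still has a simplicial map $g_1:|K|\to|X|$ extending $f$. Up to further subdivision we may in addition assume that for each interior vertex $v$, the link $\Lk_K(v)$ is a PL sphere of dimension $\leq n-1$ (or a PL disk if $v$ lies close to $\partial I^n$).

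\textbf{Step 3: Arranging (ii) vertex-by-vertex.} Order the vertices $v_1, v_2, \ldots$ of $K\setminus L$ and modify $g$ at one vertex at a time. At stage $i$, let $L_i = \Lk_K(v_i)$, a PL sphere (or disk) of dimension $d-1 \leq n-1$, and let $\phi_i = g|_{L_i} : L_i \to X$. Because $g$ extends over $v_i*L_i$, the map $\phi_i$ is nullhomotopic in $|X|$. We now invoke the following consequence of the local weak Cohen--Macaulay hypothesis: there exists a vertex $w \in X$ and a simplicial extension of $\phi_i$ (after possibly further subdividing the star of $v_i$ in $K$) to $v_i * L_i \to \overline{\mathrm{st}}_X(w) \subset |X|$, i.e.\ such that $\phi_i(L_i) \subset \Lk_X(w)$. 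Informally, since $\lCM(X)\geq n$ and $d-1\leq n-1$, one inductively chooses such a $w$ by using the high connectivity of links $\Lk_X(\sigma)$ for simplices $\sigma$ in the image of $\phi_i$ (this is the analogue in our generality of the inductive step in Hatcher--Wahl). Set $g(v_i) = w$; the two simplicial extensions over the star $v_i*L_i$ are homotopic rel $L_i$ in $|X|$ because $\overline{\mathrm{st}}_X(w)$ is contractible and $|X|$ is $(n-1)$-connected, so the global homotopy class (rel $\partial I^n$) is unaffected. Repeating for all $v_i$, we obtain the required map $g$.

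\textbf{Main obstacle.} The delicate point is keeping the modifications consistent: when we modify $g$ at $v_i$, we must not spoil property (ii) already achieved at any previously processed $v_j$ lying in $\Lk_K(v_i)$. This is handled by noting that the requirement at $v_j$ depends on the values $g(u)$ for $u\in\Lk_K(v_j)$; since the new value $g(v_i)$ is by construction in $\Lk_X(g(u))$ for every $u$ adjacent to $v_i$ (because $\{g(v_i)\}*g(L_i)$ is a subcomplex of $X$), the property $g(v_i)\in\Lk_X(g(v_j))$ holds, which is exactly what is needed for (ii) at $v_j$. Condition (i) is preserved because Step~3 only changes vertex \emph{images} and the combinatorial subdivisions in Step~3 only affect the interior of stars of interior vertices, hence the intersection with $|L|$ is unchanged. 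Finally, the simplexwise injectivity of $g$ (assuming $f$ is simplexwise injective) follows by induction on the number of vertices of a simplex $\sigma<K$ that lie outside $L$: if $\sigma\subset L$ then $g(\sigma)=f(\sigma)$ is non-degenerate by hypothesis, and otherwise we write $\sigma=v*\tau$ with $v\in K\setminus L$, use (ii) to conclude $g(v)\notin g(\tau)$, and apply the inductive hypothesis to $\tau$.
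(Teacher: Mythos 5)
Your Steps 1 and 2 are fine and agree with the paper, but Step~3 contains two unrecoverable gaps, and they stem from the same misconception about what the $\lCM$ hypothesis gives you.

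First, you claim that for each interior vertex $v_i$ one can find a vertex $w$ of $X$ with $g(\Lk_K(v_i)) \subset \Lk_X(w)$ (after subdivision), and you attribute this to the local weak Cohen--Macaulay property. This is not true: $g(\Lk_K(v_i))$ is an essentially arbitrary subcomplex of $X$ and need not lie in the link of any vertex, nor in the link of any simplex; $\lCM(X)\geq n$ says something only about the connectivity of $\Lk_X(\sigma)$ for simplices $\sigma$ that are already given. To use it you would need to \emph{already know} that $g(\Lk_K(v_i))$ is contained in some $\Lk_X(\sigma)$, which is precisely condition (ii) you are trying to establish, so your "inductive choice of $w$" is circular. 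Second, you justify the homotopy rel $L_i$ between the old and new extensions over $v_i*L_i$ by asserting that "$|X|$ is $(n-1)$-connected." This is false: $\lCM(X)\geq n$ imposes \emph{no} global connectivity on $X$ (that is the whole point of the distinction between $\lCM$ and $\wCM$ in Section 2.1), and indeed the theorem is applied in the paper to complexes that are not known to be connected. Even if it held, $(n-1)$-connectivity would be one degree short of what you would need to compare two extensions over an $n$-ball.

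The paper's proof is \emph{not} vertex-by-vertex; it is an induction on the dimension $n$, and at each stage it isolates a \emph{bad simplex} $\sigma$ (one containing a $1$-simplex $\{v,v'\}$ with $h(v)=h(v')$) of maximal dimension not contained in $\partial I^n$. Maximality is exactly what forces $h(\Lk_K(\sigma))\subset\Lk_X(h(\sigma))$ with no extra work, because any failure would produce a larger bad simplex. One then applies the inductive hypothesis to $h|_{\Lk_K(\sigma)} : \partial I^{n-p}\to\Lk_X(h(\sigma))$, using Lemma~2.1 to get $\wCM(\Lk_X(h(\sigma)))\geq n-p$ — this is where the global connectivity you were missing comes from, inside the link rather than in $X$ itself — and replaces the triangulation and map on the closed star of $\sigma$. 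This both supplies the "target vertex/simplex" you were looking for (namely $h(\sigma)$) and gives a clean termination argument (the number of top-dimensional bad simplices outside $\partial I^n$ strictly decreases). Your homotopy issue is also resolved because both the old and new maps on the star of $\sigma$ land in the contractible set $h(\sigma)*\Lk_X(h(\sigma))$. Without the maximal-bad-simplex device, I do not see how to repair your vertex-at-a-time scheme.
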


\begin{proof}
  We proceed by
  induction on $n$, the case $n=0$ being clear.  By the simplicial
  approximation theorem we may change the map $h$ by a homotopy
  relative to the boundary, after which $h: I^n \to |X|$ is simplicial
  with respect to some PL triangulation $I^n \approx |K|$ extending
  the given triangulation on $\partial I^n$. After barycentrically
  subdividing $K$ relative to $L$ twice,
  (\ref{it:simplex-wise-injective:1}) is satisfied.
  
  Let us say that a simplex $\sigma < K$ is \emph{bad} if every vertex
  $v \in \sigma$ is contained in a 1-simplex $(v,v') \subset \sigma$
  with $h(v) = h(v')$.  If all bad simplices are contained in
  $\partial I^n$, we are done.  If not, let $\sigma < K$ be a bad
  simplex not contained in $\partial I^n$, of maximal dimension $p$.
  We may then write $I^n = A \cup B$ for subsets
  \begin{align*}
    A &= \sigma * \Lk_K(\sigma)\\
    B &= I^n \setminus \Int(A)\\
    A \cap B &= (\partial \sigma) * \Lk_K(\sigma),
  \end{align*}
  which are also subcomplexes with respect to the triangulation
  $I^n \approx |K|$.  We shall describe a procedure which changes the
  triangulation of $A$ as well as the map $h\vert_{A}$ in a way that
  strictly decreases the number of bad simplices of dimension $p$ and
  not contained in $\partial I^n$, creates no bad simplices of higher dimension, and preserves the
  property~(\ref{it:simplex-wise-injective:1}).  This will complete
  the proof, since we may eliminate all bad simplices not contained in
  $\partial I^n$ in finitely many steps.

  Badness of $\sigma$ implies $p > 0$, and we must also have
  $h(\Lk_K(\sigma)) \subset \Lk_X(h(\sigma))$, since otherwise we could
  join a vertex of $\Lk_K(\sigma)$ to $\sigma$ and get a bad simplex of
  strictly larger dimension.  Now $|\sigma| \subset |K| \approx I^n$,
  so $h$ restricts to a map
  \begin{equation*}
    \partial I^{n-p} \approx |\Lk_K(\sigma)| \lra |\Lk_X(h(\sigma))|.
  \end{equation*}
  The image $h(\sigma)$ is a simplex of dimension at most $p-1$, since
  otherwise $h|_\sigma$ would be injective (in fact $h(\sigma)$ must have dimension
  at most $(p-1)/2$ by badness).  Then $|\Lk_X(h(\sigma))|$ is
  $(n-(p-1)-2)$-connected since we assumed $\lCM(X) \geq n$, and in
  fact Lemma~\ref{lem:wcm-of-link} gives
  \begin{equation*}
    \wCM(\Lk_X(h(\sigma))) \geq n - (p-1) - 1 = n - p.
  \end{equation*}
  Therefore, $h|_{|\Lk_K(\sigma)|}$ extends to a map
  \begin{equation*}
    \tilde h : C(|\Lk_K(\sigma)|) \lra |\Lk_X(h(\sigma))|
  \end{equation*}
  and we may apply the induction hypothesis to this map. It follows that there is a PL triangulation $C(|\Lk_K(\sigma)|) \approx \vert \tilde{K} \vert$ satisfying (i) and extending the triangulation of $|\Lk_K(\sigma)|$, and a simplicial map $\tilde{g} : \tilde{K} \to \Lk_X(h(\sigma))$ extending $h\vert_{\Lk_K(\sigma)}$ and satisfying (ii). In
  particular, the star of each vertex in
  $\tilde{K} \setminus \Lk_K(\sigma)$ intersects $\Lk_K(\sigma)$ in a
  single simplex, and all bad simplices of $\tilde g$ are in
  $\Lk_K(\sigma) \subset \tilde{K}$ (but in fact there cannot be any, by maximality of $\sigma$).  The composition
  \begin{equation*}
    A = |\sigma \ast \Lk_K(\sigma)| \approx |(\partial \sigma) \ast (C
    \Lk_K(\sigma))| \xrightarrow{(h|_{\partial \sigma}) * \tilde g}
    |h(\partial \sigma) * \Lk_X(h(\sigma))| \subset |X|
  \end{equation*}
  agrees with $h$ on $\partial A = (\partial \sigma) * \Lk_K(\sigma)$
  and may therefore be glued to $h\vert_{B}$ to obtain a new map
  $h': I^n \to |X|$ which is simplicial with respect to
  a new triangulation $I^n \approx |K'|$ which on $B \subset I^n$ agrees
  with the old one and on $A$ comes from the triangulation
  $|\partial \sigma| \ast C(|\Lk_K(\sigma)|) \approx |(\partial \sigma) \ast \tilde{K}|$.  In particular all vertices of $K$ are also vertices of $K'$ and we call these the ``old'' vertices.

  We claim that this new triangulation of $I^n$ still
  satisfies~(\ref{it:simplex-wise-injective:1}).  There are two cases
  to consider depending on whether $v \in A \setminus \partial A$ or
  $v \in B$ and we first consider the case where
  $v \in A \setminus \partial A$.  Such a vertex is necessarily
  interior to the triangulation $C(|\Lk_K(\sigma)|) \approx |\tilde K|$
  and hence its star intersects $\Lk_K(\sigma)$ in a single simplex
  $\tau < \Lk_K(\sigma)$ and hence intersects
  $\partial A = A \cap B \approx |(\partial \sigma) * \Lk_K(\sigma)|$ in
  $|(\partial \sigma) * \tau|$.  The intersection of $|L|$ and
  $|\sigma * \Lk_K(\sigma)|$ is also a single simplex (since the
  intersection of $L$ and the star of a vertex of $\sigma$ is), of the form
  $\iota_0 * \iota_1 < (\partial \sigma) * \Lk_K(\sigma)$.  Hence
  the intersection of $L$ and the star of $v \in K'$ is
  $\iota_0 *(\iota_1 \cap \tau)$ which is again a
  simplex.

  In the second case where $v \in K'$ is in $B$, it was also a vertex
  in the old triangulation of $I^n$ by $K$, and its link in $K'$
  consists of old vertices which were in its link in $K$, along with
  some new vertices which are interior to $A$ and hence not in
  $\partial I^n$, but no old vertices which were not in the link of
  $v$ in $K$ (since that would violate
  condition~(\ref{it:simplex-wise-injective:1}) for the triangulation
  $C(|\Lk_K(\sigma)|) \approx |\tilde{K}|$).  Hence the star of $v \in K$ will
  intersect $L$ in a face of its old intersection, which is still a
  single simplex.
\end{proof}

\begin{proposition}\label{prop:epi}
  Let $X$ be a simplicial complex, and $Y \subset X$ be a full
  subcomplex.  Let $n$ be an integer with the property that for each
  $p$-simplex $\sigma < X$ having no vertex in $Y$, the complex $Y \cap \Lk_X(\sigma)$ is
  $(n-p-1)$-connected.  Then the inclusion $|Y| \hookrightarrow |X|$
  is $n$-connected.
\end{proposition}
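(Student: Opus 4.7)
The plan is to show vanishing of the relative homotopy groups $\pi_k(|X|,|Y|) = 0$ for every $k \leq n$; equivalently, that every map $f : (D^k, \partial D^k) \to (|X|,|Y|)$ is homotopic rel $\partial D^k$ to one with image in $|Y|$. By the relative simplicial approximation theorem I may assume $f$ is simplicial with respect to a PL triangulation $|K| \approx D^k$ extending a triangulation $|L| \approx \partial D^k$; fullness of $Y$ together with $f(|L|) \subset |Y|$ implies that every vertex of $L$ maps into the vertex set $Y^{(0)}$. I will call a simplex $\sigma < K$ \emph{bad} if no vertex of $\sigma$ maps into $Y^{(0)}$, equivalently if $f(\sigma)$ has no vertex in $Y$. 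Since $Y$ is a full subcomplex, the absence of bad simplices of any dimension (in particular of bad vertices) is equivalent to $f(|K|) \subset |Y|$. The argument then proceeds by lexicographic induction on the pair $(d,N)$, where $d$ is the maximal dimension of a bad simplex of $K$ and $N$ is the number of bad $d$-simplices.

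For the inductive step, let $\sigma < K$ be a bad simplex of maximal dimension $d$, and set $p = \dim f(\sigma) \leq d$. Since $f(L^{(0)}) \subset Y^{(0)}$, no vertex of $\sigma$ lies in $L$, so $\sigma$ is an interior simplex of the PL $k$-disc $|K|$ and $|\Lk_K(\sigma)|$ is PL homeomorphic to $S^{k-d-1}$. Maximality of $\sigma$ means that for every vertex $w \in \Lk_K(\sigma)$ the simplex $\sigma \cup \{w\}$ is not bad, forcing $f(w) \in Y^{(0)}$; hence $f|_{\Lk_K(\sigma)}$ lands in $Y \cap \Lk_X(f(\sigma))$. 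The target is $(n-p-1)$-connected by hypothesis, and therefore also $(k-d-1)$-connected since $k-d \leq n-p$ follows from $k \leq n$ and $p \leq d$; so $f|_{\Lk_K(\sigma)}$ extends to a continuous map $g : C|\Lk_K(\sigma)| \to |Y \cap \Lk_X(f(\sigma))|$. Relative simplicial approximation then provides a triangulation $\tilde K$ of the cone extending that of $\Lk_K(\sigma)$, together with a simplicial map $\tilde g : \tilde K \to Y \cap \Lk_X(f(\sigma))$ agreeing with $f|_{\Lk_K(\sigma)}$ on the boundary; in particular every interior vertex of $\tilde K$ is sent into $Y^{(0)}$.

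Using the PL identification $|\sigma * \Lk_K(\sigma)| \cong |\partial \sigma * C\Lk_K(\sigma)|$ I retriangulate the closed star of $\sigma$ as $\partial \sigma * \tilde K$, producing a triangulation $K'$ of $D^k$ agreeing with $K$ outside the open star of $\sigma$, and define $f' : K' \to X$ to equal $f$ off $\st(\sigma)$ and the simplicial join of $f|_{\partial \sigma}$ and $\tilde g$ inside. Both $f|_{\st(\sigma)}$ and $f'|_{\st(\sigma)}$ take values in the contractible subcomplex $|\st_X(f(\sigma))|$ and coincide on $|\partial \sigma * \Lk_K(\sigma)|$, so $f \simeq f'$ rel $|L|$. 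Every simplex of $K'$ lying in $\st(\sigma)$ has the form $\rho * \mu$ with $\rho$ a (possibly empty) face of $\partial \sigma$ and $\mu$ a (possibly empty) simplex of $\tilde K$, and it is bad only when $\mu = \emptyset$, since any nonempty $\mu$ contains a vertex mapping into $Y^{(0)}$; therefore the only bad simplices of $K'$ in $\st(\sigma)$ are nonempty faces of $\partial \sigma$, all of dimension $\leq d-1$, while the old bad $d$-simplex $\sigma$ has disappeared. Nothing has changed outside $\st(\sigma)$, so the lexicographic quantity $(d,N)$ strictly decreases, and in finitely many iterations no bad simplex remains. The main subtlety is the bookkeeping to rule out the creation of a new bad simplex of dimension $\geq d$ during the modification; this is ensured precisely by arranging, via relative simplicial approximation, that every new interior vertex of $\tilde K$ maps into $Y^{(0)}$.
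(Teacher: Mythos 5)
Your proof is correct and follows essentially the same argument as the paper's: pick a bad simplex of maximal dimension, use the connectivity hypothesis to cone off its link inside $|Y \cap \Lk_X(f(\sigma))|$, and re-glue, so that the lexicographic count of bad simplices decreases. You are somewhat more careful than the paper's terse version (explicitly distinguishing $d = \dim\sigma$ from $p = \dim f(\sigma)$ and noting why $k-d-1 \le n-p-1$, explicitly invoking relative simplicial approximation to keep the modified map simplicial, and stating the lexicographic induction parameter), but the method is identical.
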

\begin{proof}
  This is very similar to the proof of
  Theorem~\ref{thm:simplex-wise-injective}.  Let $k \leq n$ and
  consider a map $h: (I^k, \partial I^k) \to (|X|,|Y|)$ which is
  simplicial with respect to some PL triangulation of $I^k$.  Let
  $\sigma < I^k$ be a $p$-simplex such that $h(\sigma) \subset
  |X|\setminus |Y|$.  If $p$ is maximal with this property, we will
  have $h(\Lk(\sigma)) \subset |Y| \cap \Lk_X(h(\sigma))$, since
  otherwise we could make $p$ larger by joining $\sigma$ with a vertex
  $v \in \Lk(\sigma)$ such that $h(v) \nin Y$ or $h(v) \in
  h(\sigma)$.

  Now, $\Lk(\sigma) \approx S^{k-p-1}$ since $\sigma$ is a
  $p$-simplex, and $|Y| \cap \Lk_X(h(\sigma))$ is assumed
  $(n-p-1)$-connected, so $h\vert_{\Lk(\sigma)}$ extends over the cone
  of $\Lk(\sigma)$.  Then modify $h$ on the ball
  \begin{equation*}
    \sigma \ast \Lk(\sigma) \approx (\partial \sigma) \ast C \Lk(\sigma)
    \subset I^k
  \end{equation*}
  by replacing it with the join of $h \vert_{\partial \sigma}$ and
  some map $C \Lk(\sigma) \to |Y| \cap \Lk_X(h(\sigma))$ extending
  $h\vert_{\Lk(\sigma)}$.  As in the proof of
  Theorem~\ref{thm:simplex-wise-injective}, the modified map
  $(I^k, \partial I^k) \to (|X|,|Y|)$ is homotopic to the old one (on
  the ball where the modification takes place, both maps have image in
  the contractible set $h(\sigma) \ast \Lk_X(h(\sigma))$), and has
  strictly fewer $p$-simplices mapping to $|X|\setminus |Y|$.
\end{proof}

\subsection{Serre microfibrations}
\label{sec:serre-micr}

Let us recall from \cite{WeissCatClass} that a map $p: E \to B$ is called a
\emph{Serre microfibration} if for any $k$ and any commutative diagram
\begin{equation*}
  \xymatrix{
    \{0\} \times D^k \ar[r]^-f \ar[d] & E \ar[d]^p\\
    [0,1] \times D^k \ar[r]^-h & B}
\end{equation*}
there exists an $\epsilon > 0$ and a map $H: [0,\epsilon]\times D^k
\to E$ with $H(0,x) = f(x)$ and $p \circ H(t,x) = h(t,x)$ for all $x
\in D^k$ and $t \in [0,\epsilon]$.  

Examples of Serre microfibrations include Serre fibrations and submersions of manifolds; if $p: E \to B$ is a Serre microfibration then $p \vert_U: U \to B$ is too, for any open $U \subset E$.

The microfibration condition implies that if
$(X,A)$ is a finite CW pair then any map $X \to B$ may be lifted in a
neighbourhood of $A$, extending any prescribed lift over $A$. It also
implies the following useful observation: suppose $(Y,X)$ is a finite CW
pair and we are given a lifting problem
\begin{equation}\label{eq:1}
    \begin{aligned}
      \xymatrix{
        X \ar[r]^-f \ar[d] & E \ar[d]^p\\
        Y \ar[r]^-F & B.}
    \end{aligned}
\end{equation}
If there exists a map $G : Y \to E$ lifting $F$ and so that
$G\vert_X$ is fibrewise homotopic to $f$, then there is also a lift $H$ of
$F$ so that $H\vert_X=f$. To see this, choose a fibrewise homotopy
$\varphi : [0,1] \times X \to E$ from $G\vert_X$ to $f$, let $J =
([0,1] \times X) \cup (\{0\} \times Y) \subset [0,1] \times Y$ and
write $\varphi \cup G : J \to E$ for the map induced by $\varphi$ and
$G$. The following diagram is then commutative
  \begin{equation*}
    \xymatrix{
      J \ar[rr]^-{\varphi \cup G} \ar[d] & &  E \ar[d]^p\\
      [0,1] \times Y \ar[r]^-{\pi_Y}  & Y \ar[r]^-{F} & B, }
  \end{equation*}
and by the microfibration property there is a lift $g : U \to E$ defined on an open neighbourhood $U$ of $J$. Let $\phi : Y \to [0,1]$ be a continuous function with graph inside $U$ and so that $X \subset \phi^{-1}(1)$. Then we set $H(y) = g(\phi(y),y)$; this is a lift of $F$ as $g$ is a lift of $F \circ \pi_Y$, and if $y \in X$ then $\phi(y)=1$ and so $H(y) = g(1,y) = f(y)$, as required.

Weiss proved in \cite[Lemma 2.2]{WeissCatClass} that
if $f: E \to B$ is a Serre microfibration with weakly contractible
fibres (i.e.\ $f^{-1}(b)$ is weakly contractible for all $b \in B$),
then $f$ is in fact a Serre fibration and hence a weak homotopy equivalence.
We shall need the following generalisation, whose proof is essentially
the same as Weiss'.
\begin{proposition}\label{prop:Weiss-lemma}
  Let $p: E \to B$ be a Serre microfibration such that $p^{-1}(b)$ is
  $n$-connected for all $b \in B$.  Then the homotopy fibres of $p$
  are also $n$-connected, i.e.\ the map $p$ is $(n+1)$-connected.
\end{proposition}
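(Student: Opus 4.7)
My plan is to imitate Weiss's proof of Lemma~2.2 in \cite{WeissCatClass}, keeping careful track of dimensions in order to handle $n$-connected, rather than weakly contractible, fibres. By the standard characterisation of connectivity, it suffices to show that any commutative square
\[
\xymatrix{
S^{k-1} \ar[r]^-{f} \ar[d] & E \ar[d]^{p} \\
D^k \ar[r]^-{F} \ar@{-->}[ur]^{\tilde{F}} & B,
}
\]
with $k \leq n+1$ admits a dashed lift $\tilde{F}$ satisfying $p\tilde{F} = F$ and $\tilde{F}|_{S^{k-1}} = f$. By the useful observation in the paragraph preceding the proposition, it is enough to produce $\tilde{F}$ with $p\tilde{F} = F$ and $\tilde{F}|_{S^{k-1}}$ merely fibrewise homotopic to $f$.

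To construct $\tilde{F}$, I would triangulate $D^k$ extending a triangulation of $S^{k-1}$ and build $\tilde{F}$ by extending $f$ over simplices one at a time. The $0$-skeleton lifts by nonemptiness of fibres. Given a $j$-simplex $\sigma$ with $j \leq n+1$ and a partial lift $g : \partial \sigma \to E$ of $F|_{\partial \sigma}$, the extension step proceeds by contracting $\sigma$ to an interior point $x_0$: setting $b_0 = F(x_0)$, picking $e_0 \in p^{-1}(b_0)$, and identifying $\sigma$ with the cone $C(\partial\sigma)$ gives a homotopy $h : \partial\sigma \times I \to B$ from $F|_{\partial\sigma}$ to the constant map at $b_0$. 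Iterating the microfibration property, and at each stage adjusting lifts by fibrewise homotopies to keep them compatible along overlaps (using fibre connectivity to patch local lifts together), one constructs a lift $H : \partial\sigma \times I \to E$ of $h$ starting at $g$, exactly as in Weiss's proof. The endpoint $H_1 : \partial\sigma \cong S^{j-1} \to p^{-1}(b_0)$ then represents a class in $\pi_{j-1}$ of the $n$-connected fibre, which vanishes since $j - 1 \leq n$. Concatenating $H$ with such a nullhomotopy gives a map that factors through $\sigma \cong C(\partial\sigma)$, and one further microfibration lift corrects a parametrisation discrepancy to yield $\tilde{g} : \sigma \to E$ with $p\tilde{g} = F|_\sigma$ and $\tilde{g}|_{\partial\sigma}$ fibrewise homotopic to $g$.

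The main obstacle is the iterated microfibration lifting used to produce $H$ from $g$: the microfibration property gives only local lifts, so one must combine Weiss's compactness-and-iteration argument with fibre connectivity of intermediate dimensions to piece lifts together consistently along the interval. The $n$-connectedness of fibres thus enters in two places: throughout the iteration (to glue local lifts whenever obstructions lie below dimension $n$) and at the final step (to kill the obstruction class in $\pi_{j-1}(p^{-1}(b_0))$ via $j - 1 \leq n$). Once these steps go through, cell-by-cell extension completes the proof.
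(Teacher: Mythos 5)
Your plan is a genuinely different route from the paper's, and it has a real gap at the step you defer to ``Weiss's compactness-and-iteration argument.'' Producing $H : \partial\sigma \times I \to E$ lifting $h$ with prescribed initial value $g$, for $\partial\sigma$ a sphere of dimension $j-1 \leq n$, \emph{is} the homotopy lifting property for complexes of dimension $\leq n$, and establishing it from the microfibration axiom is where essentially all the work in the paper's proof of Proposition~\ref{prop:Weiss-lemma} goes. A naive forward-in-time iteration of the microfibration property can stall (the successive $\epsilon$'s need not sum to $1$), and the Lebesgue-number patching argument you gesture at only directly handles the restricted problem in which the map $\partial\sigma \times I \to B$ factors through $I$, which yours does not. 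The paper instead shows that $p^I : E^I \to B^I$ is again a microfibration with $(n-1)$-connected fibres, iterates this, and then uses nonemptiness of the fibres of $p^{I^{k+1}}$ together with path-connectedness of the fibres of $p^{I^k}$ to obtain the genuine HLP in dimensions $\leq n$. That bootstrap through the path-space map is the missing idea; ``exactly as in Weiss's proof'' does not supply it, since Weiss's contractible-fibre hypothesis hides all the dimension counting that is now load-bearing.

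A secondary point: your final ``parametrisation correction'' (passing from a lift of $F\circ\psi$, with $\psi$ a non-injective degree-one self-map of $\sigma$ collapsing an inner cone, to a lift of $F|_\sigma$) is itself a homotopy lifting problem over $\sigma$, i.e.\ in dimension $j$. For the top simplices when $k=n+1$ this is $j=n+1$, which is not covered by any HLP you can establish. So your method cannot in general produce a strict lift with $p\tilde F = F$; it produces $\tilde F$ with $p\tilde F$ only homotopic to $F$ rel $S^{k-1}$. That is in fact all $(n+1)$-connectedness requires, but then the target of the argument should be stated that way, and ``one further microfibration lift'' substantially understates what that correction involves. The paper's route via the $n$-connectedness of $p^{-1}(b)\hookrightarrow\mathrm{hofib}_b(p)$ sidesteps both the cell-by-cell bookkeeping and this parametrisation issue, and only ever needs the HLP for $k \leq n$.
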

\begin{proof}
  Let us first prove that $p^I: E^I \to B^I$ is a Serre microfibration
  with $(n-1)$-connected fibres, where $X^I = \mathrm{Map}([0,1],X)$
  is the space of (unbased) paths in $X$, equipped with the
  compact-open topology.  Using the mapping space adjunction, it is
  obvious that $p^I$ is a Serre microfibration, and showing the
  connectivity of its fibres amounts to proving that any diagram of
  the form
  \begin{equation*}
    \xymatrix{
       [0,1] \times \partial D^k \ar[rr] \ar[d] & & E\ar[d]^p\\
      [0,1] \times D^k \ar[r]^-{\text{proj}}& [0,1] \ar[r] & B }
  \end{equation*}
  with $k \leq n$ admits a diagonal $h: [0,1] \times D^k \to E$.
  Since fibres of $p$ are $(k-1)$-connected (in fact $k$-connected),
  such a diagonal can be found on each $\{a\} \times D^k$, and by the
  microfibration property these lifts extend to a neighbourhood.  By
  the Lebesgue number lemma we may therefore find an integer $N \gg 0$
  and lifts $h_i: [(i-1)/N,i/N]\times D^k \to E$ for $i = 1, \dots,
  N$.  The two restrictions $h_i, h_{i+1}: \{i/N\} \times D^k \to E$
  agree on $\{i/N\} \times \partial D^k$ and map into the same fibre
  of $p$.  Since these fibres are $k$-connected, the restrictions of
  $h_i$ and $h_{i+1}$ are homotopic relative to $\{i/N\}
  \times \partial D^k$ as maps into the fibre, and we may use
  diagram~\eqref{eq:1} with $Y = [i/N,(i+1)/N] \times D^k$ and $X =
  (\{i/N\}\times D^k) \cup ([i/N,(i+1)/N] \times \partial D^k)$ to
  inductively replace $h_{i+1}$ with a homotopy which can be
  concatenated with $h_i$.  The concatenation of the $h_i$'s then
  gives the required diagonal.

  Let us now prove that for all $k \leq n$, any lifting diagram
  \begin{align*}
    \begin{aligned}
      \xymatrix{
        \{0\} \times I^k \ar[r]^-f \ar[d] & E \ar[d]^p\\
        [0,1] \times I^k \ar@{..>}[ur]^H\ar[r]^-h & B }
    \end{aligned}
  \end{align*}
  admits a diagonal map $H$ making the diagram commutative.  To see
  this, we first use that fibres of the map $p^{I^{k+1}}: E^{I^{k+1}}
  \to B^{I^{k+1}}$ are non-empty (in fact $(n-k-1)$-connected) to find
  a diagonal $G$ making the lower triangle commute.  The restriction
  of $G$ to $\{0\} \times I^k$ need not agree with $f$, but they lie
  in the same fiber of $p^{I^k}: E^{I^k} \to B^{I^k}$.  Since this map
  has path-connected fibres, these are fibrewise homotopic, and hence
  we may apply~\eqref{eq:1} to replace $G$ with a lift $H$ making both
  triangles commute.

By a standard argument (see \cite[p.\ 375]{Spanier}) the above homotopy lifting property implies that $p: E \to B$ has the homotopy lifting property with respect to all CW complexes of dimension $\leq n$. By another standard argument (\cite[p.\ 376]{Spanier}) this implies that for any $e \in E$ with $b=p(e)$ the map $p_*: \pi_i(E, p^{-1}(b), e) \to \pi_i(B, \{b\}, b)$ is surjective for $i \leq n+1$ and injective for $i \leq n$. As $p^{-1}(b)$ is $n$-connected, the long exact sequence for the pair $(E, p^{-1}(b))$ shows that $\pi_i(E, e) \to \pi_i(E, p^{-1}(b), e)$ is surjective for $i \leq n+1$ and injective for $i \leq n$, so it follows that $p_*: \pi_i(E, e) \to \pi_i(B, b)$ is surjective for $i \leq n+1$ and injective for $i \leq n$ for all basepoints, i.e.\ $p$ is $(n+1)$-connected.
\end{proof}

\subsection{Semisimplicial sets and spaces}\label{sec:connectivity}

Let $\Delta_\mathrm{inj}^*$ be the category whose objects are the
ordered sets $[p] = (0 < \dots < p)$ with $p \geq -1$, and whose
morphisms are the injective order preserving functions.  An augmented
semisimplicial set is a contravariant functor $X$ from
$\Delta_\mathrm{inj}^*$ to the category of sets.  As usual, such a
functor is specified by the sets $X_p = X([p])$ and face maps $d_i:
X_p \to X_{p-1}$ for $i = 0, \dots, p$.  A (non-augmented)
semisimplicial set is a functor defined on the full subcategory
$\Delta_\mathrm{inj}$ on the objects with $p \geq 0$, and is the same thing as is sometimes called a ``$\Delta$-complex''.  Semisimplicial
spaces are defined similarly.  

Let us briefly discuss the relationship between simplicial complexes
and semisimplicial sets.  To any simplicial complex $K$ there is an
associated semisimplicial set $K_\bullet$, whose $p$-simplices are the
injective simplicial maps $\Delta^p \to K$, i.e.\ ordered
$(p+1)$-tuples of distinct vertices in $K$ spanning a $p$-simplex.  There is a
natural surjection $|K_\bullet| \to |K|$, and any choice of total
order on the set of vertices of $K$ induces a splitting $|K| \to
|K_\bullet|$.  In particular, $|K|$ is at least as connected as
$|K_\bullet|$.

We shall use the following well known
result.

\begin{proposition}
  \label{prop:connectivity-of-realisation}
  Let $f_\bullet : X_\bullet \to Y_\bullet$ be a map of semisimplicial
  spaces such that $f_p: X_p \to Y_p$ is $(n-p)$-connected for all
  $p$.  Then $|f_\bullet|: |X_\bullet| \to |Y_\bullet|$ is
  $n$-connected.
\end{proposition}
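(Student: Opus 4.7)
The plan is a skeletal induction on the filtration
$$\vert X_\bullet \vert^{(0)} \subset \vert X_\bullet \vert^{(1)} \subset \vert X_\bullet \vert^{(2)} \subset \cdots \subset \vert X_\bullet \vert,$$
where $\vert X_\bullet \vert^{(p)}$ denotes the image of $\coprod_{q \leq p} X_q \times \Delta^q$ in $\vert X_\bullet \vert$, and similarly for $Y$. The inductive claim will be that the restriction $\vert f_\bullet \vert^{(p)} : \vert X_\bullet \vert^{(p)} \to \vert Y_\bullet \vert^{(p)}$ is $n$-connected for every $p \geq 0$. The base case $p = 0$ is precisely the hypothesis that $f_0$ is $n$-connected. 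Since the inclusions $\vert X_\bullet \vert^{(p-1)} \hookrightarrow \vert X_\bullet \vert^{(p)}$ are closed cofibrations and any compact subset of the realization lies in some finite skeleton, the desired statement about $\vert f_\bullet \vert$ follows from the inductive claim by passing to the colimit.

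For the inductive step, I would use the standard pushout description
$$\vert X_\bullet \vert^{(p)} = (X_p \times \Delta^p) \cup_{X_p \times \partial \Delta^p} \vert X_\bullet \vert^{(p-1)},$$
which is a homotopy pushout because the inclusion $X_p \times \partial \Delta^p \hookrightarrow X_p \times \Delta^p$ is a closed cofibration, together with the analogous formula for $Y$. Applying $f_\bullet$ yields a map of homotopy pushout squares in which the two left-hand maps (products of $f_p$ with the identity on $\Delta^p$ or $\partial \Delta^p$) are $(n-p)$-connected, while the upper-right map $\vert f_\bullet \vert^{(p-1)}$ is $n$-connected by the inductive hypothesis.

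The conclusion will come from a gluing argument for these pushouts. By excision along the cofibration, the cofiber of $\vert X_\bullet \vert^{(p-1)} \hookrightarrow \vert X_\bullet \vert^{(p)}$ is identified with that of $X_p \times \partial\Delta^p \hookrightarrow X_p \times \Delta^p$, namely the unreduced $p$-fold suspension $\Sigma^p(X_p)_+$. The induced map of cofibers is therefore $\Sigma^p(f_p)_+$, which is $((n-p)+p) = n$-connected since $p$-fold suspension raises connectivity by exactly $p$. Combined with the $n$-connectivity of $\vert f_\bullet \vert^{(p-1)}$, via the ladder of long exact sequences of the pairs $(\vert X_\bullet \vert^{(p)}, \vert X_\bullet \vert^{(p-1)})$ and $(\vert Y_\bullet \vert^{(p)}, \vert Y_\bullet \vert^{(p-1)})$ and a five-lemma, this will give the desired $n$-connectivity of $\vert f_\bullet \vert^{(p)}$.

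The main technical obstacle is the passage from cofiber connectivity to relative-pair connectivity in the absence of simple-connectivity assumptions, since this step relies implicitly on a Blakers--Massey type excision statement. I would handle it either by invoking Blakers--Massey in the relevant range, by reducing to CW complexes via functorial CW approximation, or most efficiently by appealing directly to a cube lemma for homotopy pushouts that extracts the connectivity of the induced map on pushouts from the connectivities of the three constituent maps in one stroke. Either way, once the skeletal bookkeeping is in place, the argument is routine.
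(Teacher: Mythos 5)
The paper gives no proof of this proposition (it is stated as well known with an immediate \qed), so there is no internal argument to compare against; a careful treatment can be found, for instance, in Ebert and Randal-Williams, \emph{Semisimplicial spaces}. Your skeletal filtration, the pushout description of each skeleton, and the identification of the cofiber of $|X_\bullet|^{(p-1)}\hookrightarrow|X_\bullet|^{(p)}$ with $(X_p)_+\wedge S^p$ are all correct, as is the observation that the induced map of cofibers is then $n$-connected.

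The gap you flag at the end is real, but your expectation that it can be dispatched ``in one stroke'' by a generic cube lemma is misplaced. The standard gluing lemma for connectivity of homotopy pushouts, applied to the cube with corners $X_p\times\partial\Delta^p$, $X_p\times\Delta^p$, $|X_\bullet|^{(p-1)}$, $|X_\bullet|^{(p)}$ and their $Y$-analogues (with vertical connectivities $n-p$, $n-p$, $n$), yields only that $|f_\bullet|^{(p)}$ is $\min\bigl((n-p)+1,\;n-p,\;n\bigr)=(n-p)$-connected. This degrades as $p$ grows and hence gives nothing about the colimit, falling far short of the claimed $n$. A direct Blakers--Massey excision comparing the pair $(|X_\bullet|^{(p)},|X_\bullet|^{(p-1)})$ to its cofiber fares no better, because the excision range depends on the connectivity of $|X_\bullet|^{(p-1)}$, which is not under control (it may only be $(-1)$-connected). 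What the generic cube lemma fails to exploit is the extra structural fact that $X_p\times\partial\Delta^p\hookrightarrow X_p\times\Delta^p$ is $(p-1)$-connected, so that the relative material introduced at filtration step $p$ sits in dimension $\geq p$; it is this that allows the $p$-fold suspension to compensate exactly for the $p$-fold drop in the connectivity of $f_p$. To make this precise one must either establish the homological version first (which really is routine via the skeletal spectral sequence) and then separately control $\pi_0$ and $\pi_1$ before invoking a local-coefficient Whitehead theorem, or else reduce to a situation of levelwise cofibrations and run a finer excision argument that accounts for the dimension of the attached relative cells. Either route works, but it is genuinely more than bookkeeping; as written, the ``cube lemma in one stroke'' step is where a careful proof must diverge from your sketch.
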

\begin{proof}
We shall use the skeleton filtration of geometric realizations, and recall that $|X_\bullet|^{(q)}$ is homeomorphic to a pushout $|X_\bullet|^{(q-1)} \leftarrow X_q \times  \partial \Delta^q \to X_q \times \Delta^q$ and similarly for $|Y_\bullet|$.
By induction on $q$ we will prove that $|X_\bullet|^{(q)} \to |Y_\bullet|^{(q)}$ is $n$-connected for all $q$.  The case $q = -1$ is vacuous, so we proceed to the induction step.
As $f_q : X_q \to Y_q$ is $(n-q)$-connected, we can find a factorisation
$$X_q \overset{g_q}\lra Z_q \overset{h_q}\lra Y_q$$
where $h_q$ is a weak homotopy equivalence, and the map $g_q$ is a relative CW-complex which only has cells of dimensions strictly greater than $n-q$. Define a new semi-simplicial space $Z_\bullet$ with $Z_q$ in degree $q$ and
$$Z_i = \begin{cases}
Y_i & \text{ if $i < q$}\\
X_i & \text { if $i > q$}
\end{cases}$$
and the evident face maps (in particular $d_i^Z : Z_q \to Z_{q-1}=Y_{q-1}$ is $d_i^Y \circ h_q$). There is then a factorisation $f_\bullet = h_\bullet \circ g_\bullet: X_\bullet \to Z_\bullet \to Y_\bullet$, and the map $|h_\bullet|^{(q)}: |Z_\bullet|^{(q)} \to |Y_\bullet|^{(q)}$ is a weak homotopy equivalence because $h_i$ is a weak homotopy equivalence for all $i \leq q$.

By induction, the map $|X_\bullet|^{(q-1)} \to |Z_\bullet|^{(q-1)} = |Y_\bullet|^{(q-1)}$ is $n$-connected and hence the map from $|X_\bullet|^{(q)}$ to the pushout of $|Z_\bullet|^{(q-1)} \leftarrow X_q \times  \partial \Delta^q \to X_q \times \Delta^q$ is also $n$-connected.  Since $|Z_\bullet|^{(q)}$ is obtained from this pushout by attaching cells of dimension strictly greater than $n$, namely $\Delta^q$ times the relative cells of $(Z_q,X_q)$,
the map $|g_\bullet|^{(q)}$ is the composition of an $n$-connected map and a relative CW complex with cells of dimension strictly greater than $n$, and hence $n$-connected.
\end{proof}

The following is the main result of this section.

\begin{proposition}\label{prop:semisimplicial-Serre-mic-fib}
  Let $Y_\bullet$ be a semisimplicial set, and $Z$ be a Hausdorff
  space. Let $X_\bullet \subset Y_\bullet \times Z$ be a
  sub-semisimplicial space which in each degree is an open
  subset. Then $\pi: |X_\bullet| \to Z$ is a Serre microfibration.
\end{proposition}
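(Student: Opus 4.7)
The plan is a direct construction using local coordinates on $|X_\bullet|$ coming from the openness hypothesis and the discreteness of the sets $Y_p$.

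First I would record the following local structure of $|X_\bullet|$: every point has a unique non-degenerate representative $[(\sigma, t)]$ with $\sigma \in X_p$ and $t \in \mathrm{int}(\Delta^p)$. Writing $\sigma = (y, z) \in Y_p \times Z$, the discreteness of $Y_p$ and openness of $X_p$ yield an open $V \subset Z$ with $z \in V$ and $\{y\} \times V \subset X_p$. For any open ball $B \subset \mathrm{int}(\Delta^p)$ around $t$, the set $\{[((y, z'), t')] : z' \in V,\ t' \in B\}$ is then an open neighbourhood of $[(\sigma, t)]$ in $|X_\bullet|$ homeomorphic to $V \times B$.

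Given a lifting problem
\begin{equation*}
\xymatrix{
\{0\} \times D^k \ar[r]^-{f} \ar[d] & |X_\bullet| \ar[d]^{\pi} \\
[0,1] \times D^k \ar[r]^-{h} & Z,
}
\end{equation*}
for each $x_0 \in D^k$ I write $f(x_0) = [((y_0, z_0), t_0)]$ in non-degenerate form with $(y_0, z_0) \in X_{p_0}$, choose a product neighbourhood $V_0 \times B_0$ as above with $\{y_0\} \times V_0 \subset X_{p_0}$, and by continuity of $f$ pick an open $W_{x_0} \ni x_0$ with compact closure whose image under $f$ is contained in the corresponding neighbourhood of $f(x_0)$. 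Uniqueness of non-degenerate representatives forces $p(x) = p_0$ and $y(x) = y_0$ to be constant on $\bar W_{x_0}$, while the barycentric coordinate $t(x) \in B_0$ depends continuously on $x$. Continuity of $h$ on the compact set $\bar W_{x_0}$, together with $h(0, x) \in V_0$ and the openness of $V_0$, yields some $\epsilon_{x_0} > 0$ with $h([0,\epsilon_{x_0}] \times \bar W_{x_0}) \subset V_0$. The formula
\[
F_{x_0}(s, x) := [((y_0, h(s, x)), t(x))]
\]
then defines a continuous lift of $h$ on $[0,\epsilon_{x_0}] \times W_{x_0}$ agreeing with $f$ at $s = 0$.

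To finish, I patch: on any overlap $W_{x_0} \cap W_{x_1}$ and any $s$ in the common time interval, the two values $F_{x_0}(s, x)$ and $F_{x_1}(s, x)$ have the same projection $h(s, x)$ to $Z$, the same barycentric coordinate $t(x)$, and the same simplex label $(p_0, y_0) = (p_1, y_1)$; the last equality is forced by applying uniqueness of non-degenerate representatives to $f(x) = F_{x_0}(0, x) = F_{x_1}(0, x)$. Hence the local lifts agree on overlaps. By compactness of $D^k$, a finite subcover $\{W_{x_i}\}$ and $\epsilon = \min_i \epsilon_{x_i}$ assemble them into a global continuous lift $H \colon [0,\epsilon] \times D^k \to |X_\bullet|$ with $H(0, \cdot) = f$ and $\pi \circ H = h|_{[0,\epsilon] \times D^k}$, which is the required diagonal. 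The only nontrivial ingredient is the uniqueness-of-non-degenerate-representatives argument, which is precisely what makes the local construction canonical enough to patch; once that is in hand the microfibration property is immediate.
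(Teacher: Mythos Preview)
There is a genuine gap: the set $\{[((y,z'),t')] : z' \in V,\ t' \in B\}$ that you claim is an open neighbourhood of $[(\sigma,t)]$ in $|X_\bullet|$ is \emph{not} open in general.  To see why, recall that a subset of $|X_\bullet|$ is open precisely when its preimage in each $X_n \times \Delta^n$ is open.  If $y \in Y_p$ happens to be a face of some higher simplex $\tau \in Y_{n}$ (say $\alpha^*\tau = y$ for an injection $\alpha\colon [p]\to[n]$) and there exists $z' \in V$ with $(\tau,z') \in X_n$, then the preimage of your set in $X_n \times \Delta^n$ contains the point $((\tau,z'),\delta_\alpha(t'))$, which lies in $X_n \times \partial\Delta^n$.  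The preimage is therefore nonempty but contained in $X_n \times \partial\Delta^n$, hence not open.  A concrete instance: take $Y_\bullet$ the standard $1$-simplex, $Z=\bR$, $X_\bullet = Y_\bullet \times Z$, and $y=a$ a vertex; then your ``neighbourhood'' is $\{0\}\times V \subset [0,1]\times\bR$, which is not open.  Consequently you cannot in general find an open $W_{x_0}\subset D^k$ with $f(W_{x_0})$ inside such a set, and the rest of the argument (constancy of $p(x)$ and $y(x)$, continuity of $t(x)$) collapses.

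The paper confronts exactly this difficulty.  Rather than an open cover, it uses the \emph{closed} cover of $D^k$ by the sets $\overline{C}_{\sigma_i}$, where $C_{\sigma_i} = (p\circ f)^{-1}(\{\sigma_i\}\times\Int(\Delta^{n_i}))$.  The local lifts $\widetilde F_i(s,x) = (\sigma_i; F(s,x); t(x))$ are then defined on $[0,\epsilon]\times\overline{C}_{\sigma_i}$; your formula for the lift and your overlap argument are essentially the same as the paper's.  The real work, and the place where the Hausdorff hypothesis on $Z$ is actually used, is the Claim that $\pi\circ f$ sends the \emph{closure} $\overline{C}_{\sigma_i}$ into the open set $Z_{\sigma_i}$.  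This is what replaces your (incorrect) open-neighbourhood picture, and it is not obvious: its proof is a somewhat delicate point-set argument showing that a certain subset of $|X_\bullet|$ is closed by checking its preimage in each $\{\tau\}\times Z_\tau \times \Delta^{|\tau|}$.
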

\begin{proof}
Let us write $p : |X_\bullet| \to |Y_\bullet \times Z| \to |Y_\bullet|$ for the map induced on geometric realisations by the inclusion and projection.
For $\sigma \in Y_n$, let us write $Z_\sigma \subset Z$ for the open
  subset defined by $(\{\sigma\} \times Z) \cap X_n = \{\sigma\}
  \times Z_\sigma$.  Points in $|X_\bullet|$ are described by data
$$(\sigma \in Y_n ;\, z \in Z_\sigma;\, (t_0, \ldots, t_n) \in \Delta^n)$$
up to the evident relation when some $t_i$ is zero, but we emphasise
that the continuous, injective map $\iota = p \times \pi : |X_\bullet|
\hookrightarrow |Y_\bullet| \times Z$ will not generally be a
homeomorphism onto its image, as the quotient of a subspace is not always a subspace of the quotient.
(For example, let $Y_0 = \{0,1\}$, $Y_1 = \{\iota\}$, $d_0 \iota = 0$, $d_1 \iota = 1$,
and $Y_i = \emptyset$ for $i > 1$,  $Z=[0,1]$, $X_0 = (\{0\} \times (0,1]) \cup (\{1\} \times [0,1])$ and $X_1 = \{\iota\} \times (0,1]$.  Then the image of $|X_\bullet| \to |Y_\bullet| \times Z \cong [0,1] \times [0,1]$ is $([0,1] \times (0,1]) \cup (\{1\} \times [0,1])$ but the inverse map is not continuous at $(1,0)$.  In fact
$|X_\bullet|$ is not first countable, so not homeomorphic to any subspace of $[0,1] \times [0,1]$.) This is not a problem: in fact we shall \emph{make use} of the topology on $|X_\bullet|$ being finer than the subspace topology.

Suppose now given a lifting problem
  \begin{equation*}
    \xymatrix{
       \{0\} \times D^k \ar[r]^-{f} \ar[d] &  \vert X_\bullet \vert\ar[d]^{\pi}\\
      [0,1] \times D^k \ar[r]^-{F} & Z. }
  \end{equation*}
  The composition $D^k \overset{f}\to |X_\bullet| \overset{p}\to
  |Y_\bullet|$ is continuous, so the image of $D^k$ is compact and
  hence contained in a finite subcomplex, and it intersects finitely
  many open simplices $\{\sigma_i\} \times \Int(\Delta^{n_i})\subset
  |Y_\bullet|$.  The sets $C_{\sigma_i} = (p \circ f)^{-1}(\{\sigma_i\}
  \times \Int(\Delta^{n_i}))$ then cover $D^k$, and their closures
  $\overline{C}_{\sigma_i}$ give a finite cover of $D^k$ by closed
  sets. Let us write $f\vert_{{C}_{\sigma_i}}(x) =
  (\sigma_i;z(x);t(x))$, with $z(x) \in Z_{\sigma_i} \subset Z$ and
  $t(x) = (t_0(x), \ldots, t_{n_i}(x)) \in \Int (\Delta^{n_i})$.

Certainly $\pi \circ f$ sends the set $C_{\sigma_i}$ into the open set
$Z_{\sigma_i}$, but it fact the following stronger property is true.

\vspace{1ex}

\noindent \textbf{Claim.} The map $\pi \circ f$ sends the set $\overline{C}_{\sigma_i}$ into $Z_{\sigma_i}$.

\begin{proof}[Proof of claim]
We consider a sequence $(x^j) \in C_{\sigma_i}$, $j \in \bN$, converging to a point $x \in
\overline{C}_{\sigma_i} \subset D^k$ and we shall verify that $z= \pi\circ f(x)
\in Z_{\sigma_i}$.

As $f$ is continuous, the sequence $f(x^j) =
(\sigma_i;z(x^j);t(x^j)) \in |X_\bullet|$ converges to $f(x)$ and,
passing to a subsequence, we may assume that the $t(x^j)$ converge to
a point $t\in\Delta^{n_i}$.  The
subset
\begin{equation*}
  A = \{f(x^j)\,\, \vert\,\, j \in \bN\} \subset |X_\bullet|
\end{equation*}
is contained in $\pi^{-1}(Z_{\sigma_i})$ and has $f(x)$ as a limit
point in $|X_\bullet|$, so if $z = \pi(f(x)) \not\in Z_{\sigma_i}$ then
the set $A$ is not closed in $|X_\bullet|$.  

For a contradiction we
will show that $A$ is closed, by proving that its inverse image in
$\coprod_\tau \{\tau\} \times Z_\tau \times \Delta^{|\tau|}$ is
closed, where the coproduct is over all simplices $\tau \in \coprod_n
Y_n$.  The inverse image in $\{\sigma_i\} \times Z_{\sigma_i} \times
\Delta^{n_i}$ is
\begin{equation*}
  B = \{(\sigma_i;\, z(x^j);\, t(x^j))\,\, \vert\,\, j \in \bN\},
\end{equation*}
which is closed (since $Z$ is Hausdorff, taking the closure in
$\{\sigma_i\} \times Z \times \Delta^{n_i}$ adjoins only the point
$(\sigma_i; z; t)$, which by assumption is outside $\{\sigma_i\} \times
Z_{\sigma_i} \times \Delta^{n_i}$).  If $\sigma_i = \theta^*(\tau)$
for a morphism $\theta \in \Delta_{\mathrm{inj}}$, we have $Z_\tau
\subset Z_{\sigma_i}$ and hence $B \cap (\{\sigma_i\} \times Z_\tau
\times \Delta^{|\sigma_i|})$ is closed in $\{\sigma_i\} \times Z_\tau
\times \Delta^{|\sigma_i|}$ so applying $\theta_*: \Delta^{|\sigma_i|}
\to \Delta^{|\tau|}$ gives a closed subset $B_\theta \subset
\{\tau\} \times Z_\tau \times \Delta^{|\tau|}$.  The inverse image of
$A$ in $\{\tau\} \times Z_\tau \times \Delta^{|\tau|}$ is the union of
the $B_\theta$ over the finitely many $\theta$ with $\theta^*(\tau) =
\sigma_i$, and is hence closed.
\end{proof}

We have a continuous map $F_i = F\vert_{[0,1] \times \overline{C}_{\sigma_i}} : [0,1] \times \overline{C}_{\sigma_i} \to
Z$ and by the claim $F_i^{-1}(Z_{\sigma_i})$ is an open neighbourhood of the
compact set $\{0\} \times \overline{C}_{\sigma_i}$, so there is an
$\epsilon_i > 0$ such that $F_i([0,\epsilon_i] \times
\overline{C}_{\sigma_i}) \subset Z_{\sigma_i}$. We set $\epsilon =
\min_i (\epsilon_i)$ and define the lift
$$\widetilde{F}_i(s, x) = (\sigma_i;F_i(s,x);t(x)) : [0,\epsilon]
\times \overline{C}_{\sigma_i} \lra \{\sigma_i\} \times Z_{\sigma_i} \times \Delta^{n_i} \lra |X_\bullet|,$$
which is clearly continuous. The functions $\widetilde{F}_i$ and
$\widetilde{F}_j$ agree where they are both defined, and so these glue
to give a continuous lift $\widetilde{F}$ as required.
\end{proof}

\begin{corollary}\label{cor:serre-microf-connectivity}
  Let $Z$, $Y_\bullet$, and $X_\bullet$ be as in Proposition
  \ref{prop:semisimplicial-Serre-mic-fib}.  For $z \in Z$, let
  $X_\bullet(z) \subset Y_\bullet$ be the sub-semisimplicial set
  defined by $X_\bullet \cap (Y_\bullet \times \{z\}) = X_\bullet(z)
  \times \{z\}$ and suppose that $|X_\bullet(z)|$ is $n$-connected for all
  $z \in Z$.  Then the map $\pi: |X_\bullet| \to Z$ is
  $(n+1)$-connected.
\end{corollary}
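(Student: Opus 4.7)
The plan is to deduce Corollary \ref{cor:serre-microf-connectivity} immediately by combining the two preceding results: Proposition \ref{prop:semisimplicial-Serre-mic-fib} tells us that $\pi : |X_\bullet| \to Z$ is a Serre microfibration, and Proposition \ref{prop:Weiss-lemma} tells us that a Serre microfibration with $n$-connected point-set fibres is $(n+1)$-connected. So the only thing to check is that the hypothesis of Proposition \ref{prop:Weiss-lemma} is met, i.e.\ that the actual fibre $\pi^{-1}(z) \subset |X_\bullet|$ is $n$-connected for every $z \in Z$.

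First I would unravel the definition of the geometric realisation. A point in $|X_\bullet|$ is represented by data $(\sigma \in Y_n;\, z' \in Z;\, t \in \Delta^n)$ with $(\sigma,z') \in X_n$, modulo the standard face-degeneracy relations. The projection $\pi$ sends such a point to $z'$, so its fibre over $z$ consists exactly of those triples with $z' = z$, and $(\sigma,z) \in X_n$ is equivalent to $\sigma \in X_\bullet(z)$. The map $|X_\bullet(z)| \to \pi^{-1}(z)$ sending $(\sigma,t) \mapsto (\sigma,z,t)$ is therefore a continuous bijection. To see it is actually a homeomorphism, I would note that $|X_\bullet(z)|$ is the realisation of a semisimplicial set (so carries the CW topology), while the subspace topology on $\pi^{-1}(z) \subset |X_\bullet|$ is also determined simplex-by-simplex; since each closed simplex $\{\sigma\} \times \{z\} \times \Delta^n$ of $\pi^{-1}(z)$ sits inside the closed simplex $\{\sigma\} \times Z_\sigma \times \Delta^n$ of $|X_\bullet|$ as the obvious copy of $\Delta^n$, the two topologies agree.

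With this identification, the hypothesis that $|X_\bullet(z)|$ is $n$-connected for all $z$ translates to the point-set fibres of the Serre microfibration $\pi$ being $n$-connected. Proposition \ref{prop:Weiss-lemma} then gives that $\pi$ is $(n+1)$-connected, which is the desired conclusion.

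I do not anticipate any real obstacle: the work has already been done in Propositions \ref{prop:semisimplicial-Serre-mic-fib} and \ref{prop:Weiss-lemma}, and the only mild subtlety is the verification that $\pi^{-1}(z)$ really is homeomorphic (not merely bijective) to $|X_\bullet(z)|$. Once that point is made, the corollary is immediate.
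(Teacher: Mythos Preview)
Your approach is exactly that of the paper: combine Proposition~\ref{prop:semisimplicial-Serre-mic-fib} and Proposition~\ref{prop:Weiss-lemma}, with the only content being the identification $|X_\bullet(z)| \cong \pi^{-1}(z)$ as spaces. Your argument for this homeomorphism is slightly loose, however: the claim that the subspace topology on $\pi^{-1}(z)$ is ``determined simplex-by-simplex'' is precisely what is at stake, and checking it on individual closed simplices does not by itself control how they glue. The paper handles this point more cleanly by noting that $X_\bullet(z) \subset Y_\bullet$ is an inclusion of semisimplicial \emph{sets}, so $|X_\bullet(z)| \to |Y_\bullet|$ is an embedding; since this factors as $|X_\bullet(z)| \to |X_\bullet| \to |Y_\bullet|$ with both maps continuous, the first map must already be an embedding, and its image is visibly $\pi^{-1}(z)$.
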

\begin{proof}
  This follows by combining Propositions~\ref{prop:Weiss-lemma}
  and~\ref{prop:semisimplicial-Serre-mic-fib}, once we prove that
  $|X_\bullet(z)|$ is homeomorphic to $\pi^{-1}(z)$ (in the subspace
  topology from $|X_\bullet|$).  Since $X_\bullet(z)\subset
  Y_\bullet$, the composition $|X_\bullet(z)| \to |X_\bullet| \to
  |Y_\bullet|$ is a homeomorphism onto its image.  It follows that
  $|X_\bullet(z)| \to |X_\bullet|$ is a homeomorphism onto its image,
  which is easily seen to be $\pi^{-1}(z)$.
\end{proof}


\section{Algebra}\label{sec:algebra}

We fix $\epsilon = \pm 1$ and let $\Lambda \subset \bZ$ be a subgroup satisfying
$$\{a - \epsilon {a} \,\,\vert\,\, a \in \bZ \} \subset \Lambda \subset \{a \in \bZ \,\,\vert\,\, a + \epsilon {a} =0\}.$$
Following Bak (\cite{Bak, Bak2}), we call such a pair $(\epsilon,
\Lambda)$ a \emph{form parameter}.  Since we work over the ground ring
$\bZ$, there are only three options for $(\epsilon,\Lambda)$, namely
$(+1, \{0\})$, $(-1,2\bZ)$ and $(-1,\bZ)$.  An $(\epsilon,\Lambda)$-\emph{quadratic module} is a triple $\sM = (M, \lambda,
\mu)$ where $M$ is a $\bZ$-module, $\lambda: M \otimes M \to \bZ$ is
bilinear and satisfies $\lambda(x,y) = \epsilon \lambda(y,x)$, and $\mu : M \to
\bZ/\Lambda$ is a quadratic form whose associated bilinear form is
$\lambda$ reduced modulo $\Lambda$.  By this we mean that $\mu$ is a function such that
\begin{enumerate}[(i)]
\item $\mu(a \cdot x) = a^2 \cdot \mu(x)$ for $a \in \bZ$,
\item $\mu(x+y) - \mu(x) - \mu(y) \equiv \lambda (x, y) \mod \Lambda$.
\end{enumerate}
We say the quadratic module $\sM = (M,\lambda,\mu)$ is \emph{non-degenerate} if the map
\begin{align*}
  \lambda^\vee : M &\lra M^\vee\\
  x &\longmapsto \lambda(-, x)
\end{align*}
is an isomorphism.
If $\sM = (M,\lambda_M,\mu_M)$ and $\sN = (N,\lambda_N,\mu_N)$ are quadratic modules with the same form parameter, then their (orthogonal) \emph{direct sum} is $\sM \oplus \sN = (M \oplus N,\lambda_\oplus,\mu_\oplus)$ where $\lambda_\oplus(m + n,m'+n') = \lambda_M(m,m') + \lambda_N(n+n')$ and $\mu_\oplus(m+n) = \mu_M(m) + \mu_N(n)$.
A \emph{morphism} $f: \sM \to \sN$ is a homomorphism $f: M \to N$ with the properties that
$\lambda_N \circ (f \otimes f) = \lambda_M$ and 
$\mu_M = \mu_N \circ f$.  If $\sM$ is non-degenerate, any such
morphism is 
split injective because
\begin{equation*}
  M \overset{f}\lra N \overset{\lambda_N^\vee}\lra N^\vee \overset{f^*}\lra M^\vee
\end{equation*}
is an isomorphism, and in fact induces an isomorphism $\sN \cong \sM \oplus
(f(\sM))^\perp$ of $(\epsilon,\Lambda)$-quadratic modules.

The \emph{hyperbolic module} $\sH$ is the non-degenerate
$(\epsilon,\Lambda)$-quadratic module with underlying abelian group free of rank 2 on two basis elements $e$ and $f$, and the unique quadratic module structure with $\lambda_\sH(e,f) = 1$, $\lambda_\sH(e,e) = \lambda_\sH(f,f) = 0$, and $\mu_\sH(e) = \mu_\sH(f) = 0$.
We write $\sH^g$ for the orthogonal direct sum of $g$ copies of $\sH$ and define the
\emph{Witt index} of an $(\epsilon, \Lambda)$-quadratic module $\sM$ as
\begin{equation*}
  g(\sM) = \sup\{g \in \bN\,\,\vert\,\, \text{there exists a morphism
    $\sH^g \to \sM$} \}.
\end{equation*}
The Witt index obviously satisfies $g(\sM \oplus \sH) \geq g(\sM) + 1$, and
we shall also consider the \emph{stable Witt index} defined by
\begin{equation}\label{eq:2}
  \overline{g}(\sM) = \sup \{g(\sM \oplus \sH^{k}) - k \mid k \geq 0\}.
\end{equation}
This satisfies $\overline{g}(\sM \oplus \sH) = \overline{g}(\sM) +1$ for all $\sM$.

\begin{definition}
  Let $(\epsilon,\Lambda)$ be a form parameter.  For a quadratic
  module $\sM$, let $K^a(\sM)$ be the simplicial complex
  whose vertices are morphisms $h:\sH \to \sM$ of quadratic modules.  The
  set $\{h_0, \dots, h_p\}$ is a $p$-simplex if the submodules $h_i(\sH)
  \subset \sM$ are orthogonal with respect to $\lambda$ (we impose no additional
  condition on the quadratic forms).
\end{definition}

The complex $K^a(\sM)$ is almost the same as one considered by Charney
\cite{Charney}, which she proves to be highly connected when $\sM =
\sH^{g}$.  We shall need a connectivity theorem for more general $\sM$,
assuming only that $\overline{g}(\sM) \geq g$.  In particular, we do not
wish to assume $\sM$ is non-degenerate (or even that the underlying
$\bZ$-module is free).  In Section~\ref{sec:proof-theor-refthm:c} we
shall give a self-contained proof of the following generalisation of
Charney's result.

\begin{theorem}\label{thm:Charney}
  Let $g \in \bN$ and let $\sM$ be a quadratic module with
  $\overline{g}(\sM) \geq g$. Then the geometric realisation $|K^a(\sM)|$ is $\lfloor
  \tfrac{g-4}{2} \rfloor$-connected, and $\lCM(K^a(\sM)) \geq \lfloor
  \tfrac{g-1} 2 \rfloor$.
\end{theorem}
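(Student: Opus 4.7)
My plan is to argue by induction on $g$, establishing both the connectivity statement and the local Cohen--Macaulay statement simultaneously for all quadratic modules $M$ with $\overline{g}(M)\geq g$. The crucial structural fact is that for any $p$-simplex $\sigma=\{h_0,\dots,h_p\}$ of $K^a(M)$, the direct sum $h_0\oplus\cdots\oplus h_p : H^{p+1}\to M$ is a morphism from the non-degenerate quadratic module $H^{p+1}$. By the canonical splitting property recorded in Section~\ref{sec:algebra}, this gives an orthogonal decomposition $M\cong H^{p+1}\oplus M'$, and the link of $\sigma$ in $K^a(M)$ is canonically identified with $K^a(M')$. The identity $\overline{g}(N\oplus H)=\overline{g}(N)+1$ then yields $\overline{g}(M')=\overline{g}(M)-(p+1)\geq g-p-1$.

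Granting the connectivity bound inductively for smaller stable Witt index, each such link is $\lfloor(g-p-5)/2\rfloor$-connected. Since $\lfloor(g-p-5)/2\rfloor\geq\lfloor(g-1)/2\rfloor - p - 2$ for every $p\geq 0$, the local Cohen--Macaulay bound $\lCM(K^a(M))\geq\lfloor(g-1)/2\rfloor$ is a formal consequence of the connectivity statement, so the $\lCM$ claim essentially comes for free.

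The real content is therefore the connectivity bound. The base cases $g\leq 3$ reduce to either nothing at all or non-emptiness of $K^a(M)$ when $\overline{g}(M)\geq 2$, the latter extracted from a splitting $M\oplus H^k\cong H^{k+2}\oplus N$ supplied by the definition of stable Witt index. For the inductive step, given $\phi : S^k\to|K^a(M)|$ with $k\leq\lfloor(g-4)/2\rfloor$, I would first make $\phi$ simplicial. Using the stable Witt index, choose $m\geq 0$ with $g(M\oplus H^m)\geq g+m$; the canonical $H^m$ inside $M\oplus H^m$ then provides vertices orthogonal to every vertex in the image of $\phi$, so the composite $S^k\to|K^a(M)|\to|K^a(M\oplus H^m)|$ admits an easy apex-style nullhomotopy by a join construction. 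The remaining task is to convert this auxiliary nullhomotopy into one living inside $K^a(M)$ by exploiting the splitting $M\oplus H^m\cong H^{m+g}\oplus N$ together with the inductively established $\lCM$ bound and an application of Theorem~\ref{thm:simplex-wise-injective} to make the extension simplexwise injective.

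The main obstacle is precisely this final reduction from $K^a(M\oplus H^m)$ back to $K^a(M)$. The apex argument producing a nullhomotopy in the stabilised complex is cheap, but individual vertices introduced in the process are morphisms $H\to M\oplus H^m$ and need not factor through $M$. The careful surgery needed to massage them so that they do (equivalently, so that every intermediate vertex is a morphism $H\to M$) is where the stable rather than ordinary Witt index hypothesis becomes genuinely necessary, and it is the most delicate portion of the argument.
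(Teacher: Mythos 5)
Your preliminary setup matches the paper's: induction on $g$; the $\lCM$ bound is deduced from the connectivity bound via the identification $\Lk(\sigma)\cong K^a(M')$ for $M'=(\bigoplus h_i(H))^\perp$ and the identity $\overline{g}(M')=\overline{g}(M)-p-1$; and the base cases $g\leq 5$ reduce to non-emptiness and connectedness, which Proposition~\ref{prop:non-empty-and-connected} handles. Up to this point the reasoning is correct and agrees with the paper.

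The inductive step for the connectivity bound is where your route diverges and where there is a genuine gap, which you yourself flag. The ``apex'' nullhomotopy in $|K^a(M\oplus H^m)|$ is cheap (as you say, coning to a vertex of the appended $H$ works with $m=1$), but the descent to $K^a(M)$ is not a routine application of Theorem~\ref{thm:simplex-wise-injective} -- that theorem improves a nullhomotopy to a simplexwise-injective one within the same complex; it does not change the target complex. The natural tool for descent is Proposition~\ref{prop:epi} applied to the full subcomplex $K^a(M)\subset K^a(M\oplus H^m)$, but this falls short by a constant: for a $p$-simplex $\sigma$ with no vertex in $K^a(M)$, the intersection $K^a(M)\cap\Lk(\sigma)$ is $K^a\bigl(M\cap(\sum\sigma_i(H))^\perp\bigr)$, and $M\cap(\sum\sigma_i(H))^\perp$ is the intersection of kernels of $2(p+1)$ linear functionals on $M$. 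By Corollary~\ref{cor:kernel} this only guarantees $\overline{g}\geq g-2(p+1)$, so by induction the link intersection is $\bigl\lfloor(g-6)/2\bigr\rfloor-p-1$-connected, which is exactly one below the required $\bigl\lfloor(g-4)/2\bigr\rfloor-p-1$.

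The paper's proof avoids this by never stabilising: it picks one vertex $h:H\to M$, sets $M'=h(H)^\perp$, and factors $M'\hookrightarrow M'\oplus\bZ e\hookrightarrow M$ through the \emph{half-hyperbolic} submodule $h(e)^\perp$. Each of the two resulting inclusions $|K^a(M')|\hookrightarrow|K^a(M'\oplus\bZ e)|\hookrightarrow|K^a(M)|$ loses only one from the Witt index at a time (the first via a retraction onto $K^a(M')$, the second via a single linear functional $\lambda(h(e),-)$), so Proposition~\ref{prop:epi} applies at each step to show $n$-connectivity with $n=\lfloor(g-4)/2\rfloor$; the composite factors through the star of $h$ and is therefore nullhomotopic, forcing $K^a(M)$ to be $n$-connected. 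This two-step factorisation through a Lagrangian half of a hyperbolic plane, rather than through the whole of $h(H)$, is precisely the ingredient your proposal is missing.
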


Before embarking on the proof, let us deduce two consequences of the path-connectedness of $|K^a(\sM)|$.

\begin{proposition}[Transitivity]\label{propcor:AlgebraicTransitivity}
  If $|K^a(\sM)|$ is path-connected and $h_0, h_1 : \sH \to \sM$ are
  morphisms of quadratic modules, then there is an isomorphism of
  quadratic modules $f : \sM \to \sM$ such that $h_1 = f \circ h_0$.
\end{proposition}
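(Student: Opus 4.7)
The plan is to use path-connectedness of $|K^a(M)|$ to reduce the problem to the case where $h_0$ and $h_1$ span an edge of $K^a(M)$, and then to produce the required automorphism by swapping two orthogonal copies of $H$ sitting inside $M$.

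First I would invoke path-connectedness to obtain a sequence of vertices $h_0 = v_0, v_1, \ldots, v_n = h_1$ in $K^a(M)$ such that each successive pair $\{v_i, v_{i+1}\}$ spans a $1$-simplex. It then suffices, for each such edge, to construct an isomorphism $f_i : M \to M$ of quadratic modules with $f_i \circ v_i = v_{i+1}$, since then $f := f_{n-1} \circ \cdots \circ f_0$ will satisfy $f \circ h_0 = h_1$.

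For a single edge $\{v, v'\}$, the defining condition gives $v(H) \perp v'(H)$ with respect to $\lambda$. Since $H$ is non-degenerate, the observation recorded just after the definition of morphisms of quadratic modules provides a canonical splitting $M \cong v(H) \oplus v(H)^\perp$; by the orthogonality condition, $v'$ factors through the summand $v(H)^\perp$, and applying the splitting once more to $v' : H \to v(H)^\perp$ yields $v(H)^\perp \cong v'(H) \oplus N$ with $N = (v(H) \oplus v'(H))^\perp$. Thus $M \cong v(H) \oplus v'(H) \oplus N$, where the first two summands are identified with $H$ via $v$ and $v'$ respectively.

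Finally I would define $f_i$ to interchange these two summands, sending $v(x) \mapsto v'(x)$ and $v'(x) \mapsto v(x)$ for $x \in H$, and to act as the identity on $N$. Since both summands are identified with the common quadratic module $H$ via form-preserving maps and are mutually $\lambda$-orthogonal, this swap preserves both $\lambda$ and $\mu$; it is its own inverse, hence an isomorphism of quadratic modules, and satisfies $f_i \circ v = v'$ by construction. The only nontrivial input is the canonical splitting of a morphism out of a non-degenerate quadratic module, and the orthogonality condition along an edge of $K^a(M)$ is designed to supply exactly this, so I do not expect any substantial obstacle beyond assembling these pieces.
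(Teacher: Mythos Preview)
Your proposal is correct and follows essentially the same approach as the paper: reduce to adjacent vertices via path-connectedness, then swap the two orthogonal copies of $H$ inside the decomposition $M \cong v(H) \oplus v'(H) \oplus N$. The paper phrases the reduction slightly differently (observing that ``differing by an automorphism'' is an equivalence relation and that adjacent vertices are equivalent), but the content is the same.
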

\begin{proof}
  Suppose first that $h_0$ and $h_1$ are orthogonal. Then there is an
  orthogonal decomposition
  \begin{equation*}
    \sM \cong h_0(\sH) \oplus h_1(\sH) \oplus \sM'
  \end{equation*}
  and so an evident automorphism of quadratic modules which swaps the
  $h_i(\sH)$.  Now, the relation between morphisms $e : \sH \to \sM$ of
  differing by an automorphism is an equivalence relation, and we have
  just shown that adjacent vertices in $K^a(\sM)$ are equivalent.  If
  the complex is path-connected, then all vertices are equivalent.
\end{proof}

\begin{proposition}[Cancellation]\label{propcor:AlgebraicCancellation}
  Suppose that $\sM$ and $\sN$ are quadratic modules and there is an
  isomorphism $\sM \oplus \sH \cong \sN \oplus \sH$.  If $|K^a(\sM \oplus \sH)|$
  is path-connected, then there is also an isomorphism $\sM \cong \sN$.
\end{proposition}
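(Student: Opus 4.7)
The plan is to reduce cancellation to the transitivity statement of Proposition \ref{propcor:AlgebraicTransitivity}. Fix an isomorphism $\varphi : M \oplus H \xrightarrow{\cong} N \oplus H$ and write $\iota_M : H \hookrightarrow M \oplus H$ and $\iota_N : H \hookrightarrow N \oplus H$ for the canonical inclusions of the hyperbolic summand. These are morphisms of quadratic modules, and I have two morphisms $H \to N \oplus H$ at my disposal, namely $\iota_N$ and $\varphi \circ \iota_M$.

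Now $\varphi$ induces a simplicial isomorphism $K^a(M \oplus H) \cong K^a(N \oplus H)$, so the hypothesis that $|K^a(M \oplus H)|$ is path-connected gives that $|K^a(N \oplus H)|$ is path-connected as well. Applying Proposition \ref{propcor:AlgebraicTransitivity} to the two morphisms $\iota_N$ and $\varphi \circ \iota_M$ into $N \oplus H$, I obtain an automorphism $f : N \oplus H \to N \oplus H$ of quadratic modules satisfying $f \circ \iota_N = \varphi \circ \iota_M$. Set $\psi = f^{-1} \circ \varphi : M \oplus H \to N \oplus H$; this is an isomorphism of quadratic modules carrying $\iota_M(H)$ onto $\iota_N(H)$.

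Since any isomorphism of quadratic modules sends the orthogonal complement of a submodule onto the orthogonal complement of its image, $\psi$ sends $\iota_M(H)^\perp$ onto $\iota_N(H)^\perp$. But in the orthogonal direct sum $M \oplus H$ the orthogonal complement of $\iota_M(H)$ is precisely $M$ (as observed in the discussion of split injectivity after the definition of morphisms of quadratic modules), and similarly $\iota_N(H)^\perp = N$. Restricting $\psi$ therefore yields the required isomorphism $M \cong N$ of quadratic modules.

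The only substantive input is the transitivity proposition, which is already in hand; the rest is bookkeeping with orthogonal complements. If there is any subtlety, it is in checking that the restriction of $\psi$ respects both $\lambda$ and $\mu$, but this is immediate because $\psi$ does and the quadratic structure on $M$ is by definition inherited from $M \oplus H$ via the orthogonal decomposition.
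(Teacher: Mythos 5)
Your proof is correct and takes essentially the same route as the paper: fix an isomorphism $\varphi : M \oplus H \to N \oplus H$, compare the two hyperbolic morphisms $\iota_N$ and $\varphi \circ \iota_M$ into $N \oplus H$ via the transitivity proposition, and deduce that their orthogonal complements $N$ and $M$ are isomorphic. You have merely spelled out a couple of steps (transferring connectivity along $\varphi$, identifying $\iota_M(H)^\perp$ with $M$) that the paper leaves implicit.
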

\begin{proof}
  An isomorphism $\varphi : \sM \oplus \sH \to \sN \oplus \sH$ gives a
  morphism $\varphi\vert_\sH : \sH \to \sN \oplus \sH$ of quadratic modules, and
  we also have the standard inclusion $\iota : \sH \to \sN \oplus \sH$.  By
  Proposition~\ref{propcor:AlgebraicTransitivity}, these differ by an
  automorphism of $\sN \oplus \sH$, so in particular their orthogonal
  complements are isomorphic.
\end{proof}

By Theorem~\ref{thm:Charney}, the complex $K^a(\sM)$ is path-connected
provided $\overline{g}(\sM) \geq 4$.  As long as $\overline{g}(\sM) \geq
3$, Proposition \ref{propcor:AlgebraicCancellation} therefore gives
the implication $(\sM \oplus \sH \cong \sN \oplus \sH) \Rightarrow (\sM \cong
\sN)$.  It follows that as long as $\overline{g}(\sM) \geq 3$ we have $g(\sM
\oplus \sH) = g(\sM) + 1$ and hence $g(\sM) = \overline{g}(\sM)$, but for the
inductive proof of Theorem~\ref{thm:Charney} it is more convenient to
work with $\overline{g}$.

\section{Proof of Theorem \ref{thm:Charney}} \label{sec:proof-theor-refthm:c}

\begin{proposition}\label{prop:trans}
  Let $\Aut(\sH^{n+1})$ act on $\sH^{n+1}$, and consider the orbits of
  elements of $\sH\oplus 0 \subset \sH^{n+1}$.  Then we have
  \begin{align*}
    \Aut(\sH^{n+1}) \cdot (\sH \oplus 0) = \sH^{n+1}.
  \end{align*}
\end{proposition}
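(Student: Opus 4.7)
\emph{Plan.}
I would prove the proposition by induction on $n$.

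The case $n = 0$ is trivial: $H \oplus 0 = H$ is already all of $H^{n+1}$. For the inductive step with $n \geq 1$, decompose $H^{n+1} = H \oplus H^n$ and write $x = (x_0, x_1)$ with $x_0 \in H$ and $x_1 \in H^n$. Applying the inductive hypothesis to $x_1 \in H^n$ yields $\psi \in \Aut(H^n)$ and $y_1 \in H \oplus 0 \subset H^n$ with $\psi(y_1) = x_1$. Then $\id_H \oplus \psi \in \Aut(H^{n+1})$ carries $(x_0, y_1) \in H^2 \oplus 0 \subset H^{n+1}$ to $x$, reducing everything to the base case $H^2$.

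For the base case, given $y \in H^2$, first factor $y = d\tilde y$ with $\tilde y$ primitive. Since $H \oplus 0$ is a submodule closed under scalar multiplication, it suffices to exhibit $\tilde y$ in $\varphi(H \oplus 0)$ for some $\varphi$. Primitivity of $\tilde y$ implies that $\lambda(\tilde y, -) : H^2 \to \bZ$ is surjective, so there exists $w_0 \in H^2$ with $\lambda(\tilde y, w_0) = 1$. I would then adjust $w_0$ along the orthogonal complement $(\tilde y)^\perp$ to obtain a $w$ with $\lambda(\tilde y, w) = 1$ and $\mu(w)$ equal to any prescribed value in $\bZ/\Lambda$. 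Given such a $w$, choose coprime integers $(\alpha, \beta)$ with $\alpha\beta \equiv \mu(\tilde y) \pmod \Lambda$ (always possible, e.g.\ $(1, \mu(\tilde y))$), complete to integers $\gamma, \delta$ with $\alpha\delta - \beta\gamma = 1$, and set $u = \delta\tilde y - \beta w$ and $v = -\gamma\tilde y + \alpha w$. A direct computation, using that $\mu(w) = 0$ can be arranged together with the relation $\alpha\beta \equiv \mu(\tilde y)$, shows that $(u, v)$ is a hyperbolic pair with $\tilde y = \alpha u + \beta v \in \gen\{u, v\}$. This produces a morphism $H \to H^2$ of quadratic modules whose image (a direct summand by the canonical splitting recalled in Section~\ref{sec:algebra}) contains $\tilde y$.

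The main obstacle is the $\mu$-adjustment of $w$: one must show that the quadratic function $r \mapsto \mu(w_0 + r) \pmod \Lambda$ attains every value in $\bZ/\Lambda$ as $r$ ranges over the rank-$3$ submodule $(\tilde y)^\perp \subset H^2$. I expect this to reduce to a short case analysis on the three form parameters $(+1, \{0\})$, $(-1, 2\bZ)$, $(-1, \bZ)$, using the non-degeneracy of $\lambda$ on $H^2$ together with the structure of its restriction modulo the radical on $(\tilde y)^\perp$. The case $(+1, \{0\})$ will be the most delicate, since it requires solving the exact integer condition $\mu(w) = 0$ rather than a mere congruence.
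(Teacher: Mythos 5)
Your route is genuinely different from the paper's. The paper treats the two form parameters separately: for $(+1,\{0\})$ it simply cites Wall's transitivity theorem on unimodular vectors of a fixed length, and for $(-1,2\bZ)$ it performs a hands-on matrix reduction in $H^2$ (using $\Aut(H)\times\Aut(H)$ plus two explicit shears), followed by induction. You instead aim to prove, uniformly, that every primitive $\tilde y \in H^2$ lies in the image of a morphism $H \to H^2$ of quadratic modules; this amounts to re-deriving the relevant case of Wall's theorem rather than citing it. Your reduction from $H^{n+1}$ to $H^2$ via $\id_H\oplus\psi$, and the factorization $v = d\tilde y$ with $\tilde y$ primitive, are both fine and parallel the paper's strategy.

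That said, there are real gaps. The central one is the one you flag yourself: producing $w$ with $\lambda(\tilde y,w)=1$ and $\mu(w)=0$. For the form parameter $(+1,\{0\})$ this is an exact integer condition, and it is not clear that the affine rank-$3$ family $\{w_0+r : r\in(\tilde y)^\perp\}$ contains a $\mu$-zero; one needs a structural analysis of $(\tilde y)^\perp$ (for instance, exhibiting an isotropic $r_0\in(\tilde y)^\perp$ with $\lambda(w_0,r_0)=\pm1$ so that $\mu(w_0+kr_0)=\mu(w_0)\pm k$ sweeps out $\bZ$). This is precisely the nontrivial content the paper outsources to Wall, and calling it ``a short case analysis'' underestimates it --- without it the argument does not close. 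Second, after producing a hyperbolic pair $(u,v)$ with $\tilde y\in\gen\{u,v\}$, you need an automorphism of $H^2$ carrying $H\oplus 0$ onto $\gen\{u,v\}$. The canonical splitting gives $H^2 \cong \gen\{u,v\}\oplus\gen\{u,v\}^\perp$, but it is an extra fact that $\gen\{u,v\}^\perp\cong H$: true for $\epsilon=-1$ by additivity of the Arf invariant, and for $\epsilon=+1$ by the classification of rank-$2$ indefinite even unimodular lattices --- you assume this silently. A smaller point: for general $(\alpha,\beta,\gamma,\delta)$ with $\alpha\delta-\beta\gamma=1$ and $\alpha\beta=\mu(\tilde y)$, your $u,v$ are not a hyperbolic pair (for $\epsilon=+1$ one finds $\lambda(u,v)=1-2\beta^2\gamma^2$ and $\mu(u)=\delta\gamma\beta^2$); the computation does work for the specific choice $(\alpha,\beta,\gamma,\delta)=(1,\mu(\tilde y),0,1)$, so this is a matter of phrasing rather than a fatal error, but as written the ``direct computation'' claim is false.
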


\begin{proof}
  We consider the form parameters $(+1,\{0\})$ and $(-1,2\bZ)$
  separately.  The case $(-1,\bZ)$ follows from the case $(-1,2\bZ)$, as the automorphism group is larger.  Recall that a vector $v =
  (a_0,b_0, \dots, a_n,b_n) \in \sH^{n+1}$ is called \emph{unimodular}
  if its coordinates have no common divisor.  Any $v \in \sH^{n+1}$ can
  be written as $v = d v'$ with $d \in \bN$ and $v' \in \sH^{n+1}$
  unimodular, so it suffices to prove that for any unimodular $v \in
  \sH^{n+1}$ there exists $\phi \in \Aut(\sH^{n+1})$ with $\phi(v) \in \sH
  \oplus 0$.

  In the case of form parameter $(+1,\{0\})$, this follows from
  \cite[Theorem 1]{WallTrans}, which asserts that $\Aut(\sH^{n+1})$ acts
  transitively on unimodular vectors of a given length.  Therefore,
  any unimodular vector $v \in \sH^{n+1}$ is in the same orbit as
  $(1,a,0,\dots, 0) \in \sH\oplus 0$ for $a = (\lambda(v,v))/2 =
  \mu(v)\in \bZ$.

  The case $(-1,2\bZ)$ can be proved in a manner similar to
  \cite[Theorem 1]{WallTrans}. First,
  in $\Aut(\sH)$ we have the transformations $(a,b) \mapsto \pm (b,-a)$,
  so any orbit has a representative with $0 \leq b \leq |a|$.  For $0
  < b < a$, the transformation $(a,b) \mapsto (a - 2b,b)$ will
  decrease the number $\max(|a|,|b|)$ and for $0 < b < -a$ the inverse
  transformation will do the same, so inductively we see that any
  orbit has a representative of the form $(a,0)$ or $(a,a)$ for some
  integer $a \geq 0$.  It follows that under $\Aut(\sH) \times \Aut(\sH)
  \leq \Aut(\sH^2)$ acting on $\sH^2$, the orbit of a unimodular vector
  has a representative of the form $v = (a,b,c,d)$ with $b\in
  \{0,a\}$, $d \in \{0,c\}$ and $\gcd(a,c)=1$.  On such a
  representative we then use the transformation
  \begin{equation*}
    (a,b,c,d) \longmapsto (a,b+c,c,d+a)
  \end{equation*}
  and since $\gcd(a,b+c)=1 = \gcd(c,d+a)$, we can use $\Aut(\sH) \times
  \Aut(\sH)$ to get to a representative with $a=c=1$ and $b,d \in
  \{0,1\}$. We can act on this representative 
  $(1,b,1,d)$ 
by the element of $\Aut(\sH^2)$ given  by right multiplication by the matrix
	$$
	\begin{pmatrix}
    1       & d & -d & 1  \\
    0       & 1 & 0 & 0  \\
    0       & -d & 0 & -1  \\
    0       & 1 & 1 & 0
\end{pmatrix}
	$$
to get the representative $(1,b+d,0,0) \in \sH \oplus 0 \leq \sH^2$.
This proves the case $n=1$, and the general case follows from this by induction.
\end{proof}

\begin{corollary}\label{cor:kernel}
  Let $\sM$ be a quadratic module with $g(\sM) \geq g$ and let $\ell: M
  \to \bZ$ be linear.  Then the quadratic form $\mathsf{K} = (\Ker(\ell), \lambda\vert_{\Ker(\ell)}, \mu\vert_{\Ker(\ell)})$ satisfies $g(\mathsf{K}) \geq g-1$.  Similarly if
  $\overline{g}(\sM) \geq g$ then $\overline{g}(\mathsf{K}) \geq g-1$.
\end{corollary}
\begin{proof}
  We can find a morphism $\phi: \sH^g \to \sM$.  By non-degeneracy of the
  form on $\sH^g$, the composite $\ell \circ \phi \in \Hom(\sH^g,\bZ)$ is of
  the form $\lambda(x,-)$.  By Proposition~\ref{prop:trans} we can,
  after precomposing $\phi$ with an automorphism, assume $x \in \sH
  \oplus 0 \subset \sH \oplus \sH^{g-1}$, and hence $0 \oplus \sH^{g-1}
  \subset \Ker(\ell)$, so $\phi$ restricts to a morphism $\sH^{g-1} \to  \mathsf{K}$.  
The claim about the stable Witt index follows from the
  unstable by considering $\sM \oplus \sH^k$.
\end{proof}

We may deduce the first non-trivial cases of Theorem~\ref{thm:Charney}
from this corollary.
\begin{proposition}\label{prop:non-empty-and-connected}
  If $\overline{g}(\sM) \geq 2$, then $K^a(\sM) \neq \emptyset$, and if
  $\overline{g}(\sM) \geq 4$ then $K^a(\sM)$ is path-connected.
\end{proposition}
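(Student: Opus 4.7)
The plan is to combine Corollary~\ref{cor:kernel} with a ``degenerate module descent'': if $D$ is an isotropic submodule of $M \oplus H^k$ (meaning $\mu|_D = 0$ and $\lambda|_{D \times D} = 0$) that is orthogonal to $M$, then for any morphism $\psi : H \to M \oplus D$ the $M$-components $m_e, m_f$ of $\psi(e), \psi(f)$ automatically define a morphism $H \to M$, because the isotropy and orthogonality of $D$ force the $D$-parts of $\psi(e), \psi(f)$ to contribute nothing to $\mu(\psi(e))$, $\mu(\psi(f))$, or $\lambda(\psi(e), \psi(f))$.

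For non-emptiness, given $\overline{g}(M) \geq 2$, I first pick $k \geq 0$ and a morphism $\phi : H^{2+k} \to M \oplus H^k$. Next I pull back to $H^{2+k}$ the $k$ linear functionals $\ell_i : M \oplus H^k \to \bZ$ that record the $e_i$-coordinate of the $H^k$-factor, for $i = 1, \dots, k$. By $k$-fold application of Corollary~\ref{cor:kernel}, the submodule $N = \bigcap_{i=1}^k \Ker(\ell_i \circ \phi) \subset H^{2+k}$ has $g(N) \geq (2+k) - k = 2$. The restriction $\phi|_N$ is an injective morphism into $M' := M \oplus \bZ\langle f_1, \dots, f_k\rangle$, so $g(M') \geq 2$, and the descent above produces a morphism $H \to M$, giving $K^a(M) \neq \emptyset$.

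For connectedness, given $\overline{g}(M) \geq 4$ and two vertices $h_0, h_1 : H \to M$, the aim is a common neighbour $k \in K^a(M)$ with $k \perp h_0$ and $k \perp h_1$, which will give a length-two path $h_0, k, h_1$. I plan to set up the analogous picture: choose $k \geq 0$ and $\phi : H^{4+k} \to M \oplus H^k$ witnessing $g(M \oplus H^k) \geq 4+k$, and apply Corollary~\ref{cor:kernel} with $k+4$ functionals---the $k$ coordinate ones as before, together with the four orthogonality functionals $\lambda(-, h_0(e))$, $\lambda(-, h_0(f))$, $\lambda(-, h_1(e))$, $\lambda(-, h_1(f))$ pulled back via $\phi$---so that the descent produces a morphism $H \to (h_0(H) + h_1(H))^\perp \cap M$.

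The main obstacle is that this count gives only $g \geq (4+k) - (k+4) = 0$ on the common kernel in $H^{4+k}$, one short of the $g \geq 1$ needed to extract a morphism from $H$. Closing this one-unit gap is the heart of the matter. I expect it is handled by a case split on the rank of $h_0(H) + h_1(H)$ in $M$: when this rank is at most $3$, only three of the four orthogonality functionals are independent and the count closes to $g \geq 1$; in the rank-$4$ case, $h_0(H) + h_1(H)$ typically forms a non-degenerate sub-$H^2$ of $M$ which splits off as an orthogonal summand, so that its orthogonal complement inherits $\overline{g} \geq 2$ and supplies a common neighbour via the non-emptiness case applied to it. Any remaining degenerate configurations should be handled by a short argument using Proposition~\ref{prop:trans} to reduce to a standard model.
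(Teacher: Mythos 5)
Your non-emptiness argument is correct and takes a genuinely different route from the paper's. The paper proves connectedness first under the unstable hypothesis $g(M)\geq 4$, and then reduces the stable hypotheses $\overline{g}(M)\geq 2$ and $\overline{g}(M)\geq 4$ to unstable ones by repeated application of the Cancellation proposition (which itself depends on the connectivity already established). Your ``degenerate module descent'' --- that a morphism $H^g \to M\oplus D$, with $D$ totally isotropic and orthogonal to $M$, projects to a morphism $H^g\to M$ --- accomplishes the stable-to-unstable reduction directly, without Cancellation and without invoking connectivity. In fact it shows $g(M)=\overline{g}(M)$ unconditionally, which the paper only records for $\overline{g}(M)\geq 3$; this is a real simplification.

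The connectedness argument, however, has a genuine gap, and you have located it: trimming $H^{4+k}$ by the $k$ coordinate functionals and the four orthogonality functionals $\lambda(-,h_j(x))\circ\phi$ leaves a guaranteed Witt index of only $(4+k)-(k+4)=0$. The proposed rank-based case split does not close it. In the rank-$4$ case your plan rests on $h_0(H)+h_1(H)$ being a non-degenerate copy of $H^2$ that splits off, and this fails in general. For instance in $M=H^4$ with the usual basis, take $h_0$ the first inclusion and $h_1(e)=e_1+e_2$, $h_1(f)=f_1+e_3$; one checks $h_1$ is a morphism, and $h_0(H)+h_1(H)=\bZ\langle e_1,f_1,e_2,e_3\rangle$ has rank $4$ but the restricted form has a rank-$2$ radical spanned by $e_2,e_3$. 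So ``typically'' is not good enough, and the residual ``short argument'' is where the entire content would have to live.

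The fix is to use your own descent lemma more aggressively and to break the symmetry between $h_0$ and $h_1$, exactly as the paper does. Descent gives $g(M)=\overline{g}(M)\geq 4$. Now for the given $h_0$, write $M=h_0(H)\oplus h_0(H)^\perp$; then $\overline{g}(h_0(H)^\perp)=\overline{g}(M)-1\geq 3$, and another application of descent gives $g(h_0(H)^\perp)\geq 3$. Trimming $h_0(H)^\perp$ by only the two functionals $\lambda(-,h_1(e))$ and $\lambda(-,h_1(f))$ and applying Corollary~\ref{cor:kernel} twice gives $g(h_0(H)^\perp\cap h_1(H)^\perp)\geq 1$, producing a common neighbour and a length-two path from $h_0$ to $h_1$. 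The point is that passing to $h_0(H)^\perp$ costs one unit of Witt index (an orthogonal summand splits off), not two; treating all four orthogonality conditions symmetrically as linear functionals needlessly loses one more unit, which is precisely the unit you are short.
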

\begin{proof}
  We consider the second case first.  Let us first
  make the stronger assumption that the (unstable) Witt index is $g(\sM)
  \geq 4$.  Then there exists an $h_0: \sH \to \sM$ with $g(h_0(\sH)^\perp)
  \geq 3$.  Any $h: \sH \to \sM$ then gives rise to a map of $\bZ$-modules
  \begin{equation*}
    h_0(\sH)^\perp \lra \sM \lra h(\sH)
  \end{equation*}
  where the first map is the inclusion and the second is orthogonal
  projection.  The kernel of this map is $h_0(\sH)^\perp \cap
  h(\sH)^\perp$. This is the intersection of the kernels of two linear maps on $h_0(\sH)^\perp$, so by Corollary~\ref{cor:kernel} we have
  $g(h_0(\sH)^\perp \cap h(\sH)^\perp) \geq 1$ and we can find an $h_1 \in
  K^a(h_0(\sH)^\perp \cap h(\sH)^\perp)$.
Then $\{h_0,h_1\}$ and
  $\{h_1,h\}$ are 1-simplices in $K^a(\sM)$, so there is a path (of length at most
  2) from any vertex to $h_0$.

  The general case $\overline{g}(\sM) \geq 4$ can be reduced to this by
  an argument as in the last paragraph of Section~\ref{sec:algebra}.
  Indeed, we can write $\sM \oplus \sH^k \cong \sN \oplus \sH^k$ for some
  integer $k \geq 0$ and quadratic module $\sN$ with $g(\sN) \geq 4$ and
  use the connectivity of $K^a(\sN \oplus \sH^j)$ for all $j \geq 0$ to
  inductively deduce from
  Proposition~\ref{propcor:AlgebraicCancellation} that $\sM \cong \sN$ and
  in particular $g(\sM) = g(\sN) \geq 4$.

  Similarly, if $\overline{g}(\sM) \geq 2$ we can write $\sM \oplus \sH^k
  \cong \sN \oplus \sH^{k}$ with $g(\sN) \geq 2$, and inductively use
  Proposition~\ref{propcor:AlgebraicCancellation} to see $\sM \oplus \sH
  \cong \sN \oplus \sH$.  As in the first part of the proof, $\sM$ is
  isomorphic to the intersection of the kernels of two linear maps $\sN
  \oplus \sH \to \bZ$.  By Corollary~\ref{cor:kernel} the Witt index
  drops by at most one for each linear map, so the Witt index of $\sM$
  is at least 1.
\end{proof}

\begin{proof}[Proof of Theorem~\ref{thm:Charney}]
  We proceed by induction on $g$.  At each stage of the induction, the
  statement $\lCM(K^a(\sM)) \geq \lfloor \tfrac{g-1}{2}\rfloor$ follows easily
  from the induction hypothesis.  Indeed, a $p$-simplex $\sigma =
  \{h_0, \ldots, h_p\} < K^a(\sM)$ induces (after choosing an ordering
  of its vertices) a canonical splitting $\sM \cong \sM' \oplus \sH^{p+1}$,
  where $\sM' = (\bigoplus h_i (\sH))^\perp \subset \sM$.  We then have an
  isomorphism of simplicial complexes $\Lk(\sigma) \cong K^a(\sM')$ and
  we have $\overline{g}(\sM') = \overline{g}(\sM) - p-1$.  By induction,
  $\Lk(\sigma)$ is then $\lfloor \tfrac{(g-p-1)-4}{2} \rfloor$-connected, and
  by the inequality
  \begin{equation*}
    \big\lfloor \tfrac{g-p-1-4}2\big\rfloor \geq \big\lfloor
    \tfrac{g-5}2\big\rfloor - p =
    \big\lfloor \tfrac{g-1}2\big\rfloor -p -2
  \end{equation*}
  we see that the link of any $p$-simplex is $(\lfloor
  \tfrac{g-1}{2}\rfloor -p -2)$-connected as required.

  It remains to prove the statement about connectivity of $K^a(\sM)$.
  The connectivity statement is void for $g\leq 1$.  For $g=2,3$ we
  assert that $|K^a(\sM)| \neq \emptyset$ and for $g=4,5$ we assert that
  $|K^a(\sM)|$ is path-connected; both are covered by
  Proposition~\ref{prop:non-empty-and-connected}.  For the induction
  step, let us assume that Theorem~\ref{thm:Charney} holds up to
  $g-1$, and let $\sM$ be a quadratic module with $\overline{g}(\sM) \geq
  g$.  By Proposition~\ref{prop:non-empty-and-connected} we have
  $K^a(\sM) \neq \emptyset$ so we may pick some $h: \sH \to \sM$.  If we
  write $\sM' = h(\sH)^\perp \subset \sM$, we have $\sM \cong \sM' \oplus \sH$
  and $h(e)^\perp = \sM' \oplus \bZ e$.  The inclusion $\sM'
  \hookrightarrow \sM$ may then be factored as $\sM' \hookrightarrow \sM'
  \oplus \bZ e \hookrightarrow \sM$ and we have an induced factorisation
  \begin{equation}\label{eq:inc}
    \xymatrix{
      \vert K^a(\sM') \vert \ar@{^(->}[r]^-{\CircNum{1}} &
      \vert K^a(\sM' \oplus \bZ e) \vert \ar@{^(->}[r]^-{\CircNum{2}}
      & \vert K^a(\sM) \vert.
    }
  \end{equation}

  We now wish to show that Proposition~\ref{prop:epi} applies to the
  maps $\CircNum{1}$ and $\CircNum{2}$, both of which are inclusions
  of full subcomplexes, with $n = \lfloor \frac{g-4}2 \rfloor$.  For
  $\CircNum{1}$, we use the projection $\pi: \sM' \oplus \bZ e \to
  \sM'$.  The summand $\bZ e$ has trivial quadratic structure and pairs
  trivially with anything in $\sM' \oplus \bZ e$, so $\pi$ is
  a morphism of quadratic modules and hence induces a retraction $\pi:
  K^a(\sM' \oplus \bZ e) \to K^a(\sM')$.  For any $p$-simplex $\sigma <
  K^a(\sM' \oplus \bZ e)$ we have
  \begin{equation*}
    K^a(\sM') \cap \Lk_{K^a(\sM' \oplus \bZ e)}(\sigma) =
    \Lk_{K^a(\sM')}(\pi(\sigma)),
  \end{equation*}
  and to apply Proposition~\ref{prop:epi} we must show that this
  simplicial complex is $(n-p-1)$-connected.  The splitting $\sM \cong
  \sM' \oplus \sH$ shows that $\overline{g}(\sM') \geq g-1$, so by induction
  we have $\lCM(K^a(\sM')) \geq \lfloor \frac{g-2}2\rfloor$, and therefore
  the link of the $p$-simplex $\pi(\sigma)$ is $(\lfloor
  \frac{g-2}2 \rfloor -p-2)$-connected.  But $\lfloor \frac{g-2}2
  \rfloor -p-2 = n-p-1$.

  For $\CircNum{2}$, we first note that $\sM' \oplus \bZ e \subset \sM$ is
  exactly the orthogonal complement $h(e)^\perp \subset \sM$.  For a
  $p$-simplex $\sigma = \{h_0, \dots, h_p\} \subset K^a(\sM)$ we write
  $\sM'' = (\sum h_i(\sH))^\perp \subset \sM$ and have
  \begin{equation*}
    K^a(\sM' \oplus \bZ e) \cap \Lk_{K^a(\sM)}(\sigma) = K^a(\sM'' \cap
    h(e)^\perp).
  \end{equation*}
  The isomorphism $\sM \cong \sM'' \oplus \sH^{p+1}$ shows that
  $\overline{g}(\sM'') \geq g-p-1$, and passing to the kernel of the
  linear functional $\lambda(h(e),-)\vert_{M''}$ reduces stable Witt
  index by at most one by Corollary~\ref{cor:kernel}, so we have
  $\overline{g}(\sM'' \cap h(e)^\perp) \geq g-p-2$.  By induction, the
  connectivity of $K^a(\sM'' \cap h(e)^\perp)$ is therefore at least
  $\big\lfloor \tfrac{(g-p-2)-4} 2\big\rfloor \geq n - p -1$.
  
  We have shown that both inclusions $\CircNum{1}$ and $\CircNum{2}$
  satisfy the hypothesis of Proposition~\ref{prop:epi} and therefore
  these maps are $n$-connected.  The composition factors through the
  star of the vertex given by $h$, and is therefore nullhomotopic.  This
  implies that $K^a(\sM)$ is $n$-connected, finishing the induction
  step.
\end{proof}


\section{Topology}\label{sec:Top}

Recall that in Section~\ref{sec:intr-stat-results} we defined the
manifold $W_{g,1} = D^{2n} \# g(S^n \times S^n)$ and that for a path-connected compact $2n$-manifold $W$ we defined $g(W)$ to be the maximal $g \in \bN$ for which there
exists an embedding $W_{g,1} \hookrightarrow W$. In analogy with~\eqref{eq:2} we define the \emph{stable genus} of $W$ to be
\begin{equation*}
  \overline{g}(W) = \max\{g(W \# k(S^n \times S^n)) - k\mid k \in \bN\}.
\end{equation*}
Notice that $k \mapsto g(W \# k(S^n \times S^n)) - k$ is non-decreasing and
bounded above by $b_n(W)/2$. In particular, the maximum is
well-defined.

It will be convenient to have available the following small
modification of the manifold $W_{1,1}$. Firstly, we may choose an embedding $W_{1,1} \hookrightarrow S^n \times S^n$ with complement an open disc lying in $D^n_+ \times D^n_+$, the product of the two upper hemispheres, and from now on we shall implicitly use this embedding identify $W_{1,1}$ with a subset of $S^n \times S^n$. 
Let $\Hmfld$ denote the manifold
obtained from $W_{1,1}$ by gluing $[-1,0] \times D^{2n-1}$ to $\partial W_{1,1}$ along an orientation preserving
embedding
\begin{equation*}
  \{-1\} \times D^{2n-1} \lra \partial W_{1,1},
\end{equation*}
which we also choose once and for all.  This gluing of course does not
change the diffeomorphism type (after smoothing corners), so $\Hmfld$ is
diffeomorphic to $W_{1,1}$, but contains a standard embedded $[-1,0]
\times D^{2n-1} \subset \Hmfld$. When we discuss embeddings of $\Hmfld$ into a
manifold with boundary $W$, we shall always insist that $\{0\} \times
D^{2n-1}$ is sent into $\partial W$, and that the rest of $\Hmfld$ is sent
into the interior of $W$.

We shall also need a \emph{core} $C \subset \Hmfld$, defined as
follows.  Let $x_0 \in S^n$ be a basepoint.  Let $S^n \vee S^n = (S^n
\times \{x_0\}) \cup (\{x_0\} \times S^n) \subset S^n \times S^n$,
which we may suppose is contained in $\Int(W_{1,1})$.  Choose an
embedded path $\gamma$ in $\Hmfld$
from $(x_0, -x_0)$ to $(0,0) \in
[-1,0] \times D^{2n-1}$ whose interior does not intersect $S^n \vee
S^n$, and whose image agrees with $[-1,0] \times \{0\}$ inside $[-1,0]
\times D^{2n-1}$, and let
\begin{equation*}
  C = (S^n \vee S^n) \cup \mathrm{Im}(\gamma) \cup (\{0\} \times D^{2n-1})
  \subset \Hmfld.
\end{equation*}
The manifold $\Hmfld$ is depicted together with $C \subset H$ in Figure
\ref{fig:1}.
\begin{figure}[h]
  \begin{center}
    \includegraphics[bb=00 0 181 78]{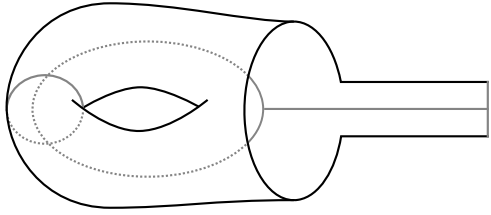}
  \end{center}
  \caption{The manifold $\Hmfld$ in the case $2n=2$, with the core $C$
    indicated in gray.}
  \label{fig:1}
\end{figure}

\begin{definition}\label{defn:K-p}
  Let $W$ be a compact manifold, equipped with (the germ of) an
  embedding $c: (-\delta, 0] \times \R^{2n-1} \to W$ for some $\delta
  > 0$, such that $c^{-1}(\partial W) = \{0\} \times \R^{2n-1}$.  Two
  embeddings $c$ and $c'$ define the same germ if they agree after
  making $\delta$ smaller.
  \begin{enumerate}[(i)]
  \item\label{item:5} Let $K_0(W) = K_0(W,c)$ be the space of pairs $(t,\phi)$, where $t \in \R$ and $\phi: \Hmfld \to W$ is an embedding whose
    restriction to $(-1,0] \times D^{2n-1} \subset \Hmfld$ satisfies
    that there exists an $\epsilon \in (0,\delta)$ such that
    \begin{equation*}
      \phi(s,p) = c(s, p + te_1)
    \end{equation*}
    for all $s \in (-\epsilon,0]$ and all $p \in D^{2n-1}$.  Here, $e_1 \in
    \R^{2n-1}$ denotes the first basis vector.
  \item Let $K_p(W) \subset (K_0(W))^{p+1}$ consist of those tuples
    $((t_0, \phi_0), \dots, (t_p, \phi_p))$ satisfying that $t_0 <
    \dots < t_p$ and that the embeddings $\phi_i$ have disjoint cores, i.e.\ the sets $\phi_i(C)$ are disjoint.
  \item Topologise $K_p(W)$ using the $C^\infty$-topology on the space
    of embeddings and let $K_p^\delta(W)$ be the same set considered
    as a discrete topological space.
  \item The assignments $[p] \mapsto K_p(W)$ and $[p] \mapsto
    K_p^\delta(W)$ define semisimplicial spaces, where the face map
    $d_i$ forgets $(t_i,\phi_i)$.
  \item Let $K^\delta(W)$ be the simplicial complex with vertices
    $K^\delta_0(W)$, and where the (unordered) set $\{(t_0,\phi_0),
    \dots, (t_p,\phi_p)\}$ is a $p$-simplex if, when written with $t_0
    < \dots < t_p$, it satisfies $((t_0, \phi_0), \dots, (t_p,
    \phi_p)) \in K_p^\delta(W)$.
  \end{enumerate}
  We shall often denote a vertex $(t,\phi)$ simply by $\phi$, since
  $t$ is determined by $\phi$.  Since a $p$-simplex of
  $K_\bullet^\delta(W)$ is determined by its (unordered) set of
  vertices, there is a natural homeomorphism $|K_\bullet^\delta(W)| \cong
  |K^\delta(W)|$.
\end{definition}

We wish to associate to each simply-connected $2n$-manifold a
quadratic module with form parameter $(1, \{0\})$ if $n$ is even and $(-1,
2\bZ)$ if $n$ is odd. Essentially, we take the group of immersed
framed $n$-spheres in $W$, with pairing given by the intersection
form, and quadratic form given by counting self-intersections. We shall often have to work with framings, and for a $d$-manifold $M$ we let $\Fr(M)$ denote the frame bundle of $M$, i.e.\ the (total space of the) principal $GL_d(\bR)$-bundle associated to the tangent bundle $TM$.

In the following definition we shall use the \emph{standard framing} $\xi_{S^n \times D^n}$ of $S^n \times D^n$, induced by the embedding
\begin{equation}\label{eq:StdEmbeding}
\begin{aligned}
S^n \times D^n &\lra \bR^{n+1} \times \bR^{n-1} = \bR^{2n}\\
(x; y_1, y_2, \ldots, y_n) &\longmapsto (2e^{-\frac{y_1}{2}} x ; y_2, y_3, \ldots, y_n).
\end{aligned}
\end{equation}
This standard framing at $(0,0,\ldots,0,-1;0) \in S^n \times D^n$ gives a point $b_{S^n \times D^n} \in \Fr(S^n \times D^n)$.

\begin{definition}\label{defn:ImmSphereQuadMod}
  Let $W$ be a compact manifold of dimension $2n$,
  equipped with a \emph{framed basepoint} i.e.\ a point $b_W \in
  \Fr(W)$, and an orientation compatible with $b_W$.
\begin{enumerate}[(i)]
\item 

Let $I\hatfr_n(W) = I\hatfr_n(W,b_W)$ denote the set of regular homotopy
  classes of immersions $i: S^n \times D^n \looparrowright W$ equipped
  with a path in $\Fr(W)$ from $Di(b_{S^n \times D^n})$ to $b_W$. Smale--Hirsch immersion theory \cite[Section 5]{Hirsch} identifies this set with the homotopy group $\pi_n(\Fr(W), b_W)$ of
  the frame bundle of $W$. The (abelian) group structure this induces
  on $I\hatfr_n(W)$ corresponds to a connect-sum operation. 
  
\item Let $\lambda : I\hatfr_n(W) \otimes
  I\hatfr_n(W) \to H_n(W;\bZ) \otimes H_n(W;\bZ) \to \bZ$ be
  the map which applies the homological intersection pairing to the cores of a pair of immersed framed spheres.

\item Let
$$\mu : I\hatfr_n(W) \lra \begin{cases}
\bZ & \text{$n$ even}\\
\bZ/2 & \text{$n$ odd}
\end{cases}$$
be the function which counts (signed, if $n$ is even) self-intersections of the core of a framed immersion (once it is perturbed to be self-transverse).
\end{enumerate}
\end{definition}

\begin{lemma}\label{lem:QuadMod}
For any smooth simply-connected $2n$-manifold $W$ equipped with a framed basepoint $b_W$ the data $\mathsf{I}\hatfr_n(W) = (I\hatfr_n(W), \lambda, \mu)$ is a quadratic
  module with form parameter $(1, \{0\})$ if $n$ is even and $(-1,
  2\bZ)$ if $n$ is odd.
\end{lemma}
\begin{proof}
Let $E \to W$ be the $n$th Stiefel bundle associated to the tangent bundle of $W$; there is a map $\phi: \Fr(W) \to E$ given by sending a frame to its first $n$ vectors, and this determines a basepoint $\phi(b_W) \in E$. Choosing a framing $\xi$ of $T_{(0,0,\ldots, 0,-1)}S^n$, Wall \cite[Section 5]{Wall} uses Smale--Hirsch immersion theory to identify the homotopy group $\pi_n(E, \phi(b_W))$ with the set $I_n(W)$ of regular homotopy classes of immersions $i: S^n \looparrowright W$ equipped with a path in $E$ from $Di(\xi)$ to $\phi(b_W)$.

The homomorphism $\pi_n(\phi) : \pi_n(\Fr(W), b_W) \to \pi_n(E, \phi(b_W))$ is thus identified with the map $u: I\hatfr_n(W) \to I_n(W)$ which restricts an immersion of $S^n \times D^n$ to $S^n \times \{0\}$. (In Wall's identification $I_n(W) \cong \pi_n(E)$ the element $0 \in \pi_n(E)$ corresponds to an embedding $S^n \hookrightarrow \bR^{2n} \subset W$ with trivial normal bundle. Our choice \eqref{eq:StdEmbeding} is compatible with this.)

Our functions $\lambda$ and $\mu$ factor through $u$, and Wall shows \cite[Theorem 5.2]{Wall} that $(I_n(W), \lambda, \mu)$ is a quadratic module with the appropriate form parameter.
%
\end{proof}

We remark that the bilinear form $\lambda$ can in general be quite far
from non-degenerate.  Let us also remark that 
 the
basepoint $b_W \in \mathrm{Fr}(W)$ and the paths from $Di(b_{S^n \times D^n})$ are used only for defining the addition
on the abelian group $I\hatfr_n(W)$, neither $\lambda$ nor
$\mu$ depends on this data.

For the manifold
$\Hmfld = ([-1,0] \times D^{2n-1}) \cup_{\{-1\} \times D^{2n-1}}W_{1,1}$ we
choose the framed basepoint $b_H$ given by the Euclidean framing of
$T_{(0,0)} ([-1,0] \times D^{2n-1})$, i.e.\ the framing induced by the
inclusion $[-1,0] \times D^{2n-1} \subset \R^{2n}$.  We define
canonical elements $e,f \in I\hatfr_n(\Hmfld)$ in the following
way. There are embeddings
\begin{equation}\label{eq:barE}
\begin{aligned}
\bar{e} : S^n \times D^n &\lra W_{1,1} \subset S^n \times S^n \subset \bR^{n+1} \times \bR^{n+1}\\
(x,y) &\longmapsto \left(x; \tfrac{y}{2}, - \sqrt{1- \vert \tfrac{y}{2}\vert^2}\right)
\end{aligned}
\end{equation}
and
\begin{equation}\label{eq:barF}
\begin{aligned}
\bar{f} : S^n \times D^n &\lra W_{1,1} \subset S^n \times S^n \subset \bR^{n+1} \times \bR^{n+1}\\
(x,y) &\longmapsto \left(-\tfrac{y}{2}, - \sqrt{1- \vert \tfrac{y}{2}\vert^2}; x\right)
\end{aligned}
\end{equation}
which are orientation preserving. These may be considered as embeddings into $\Hmfld$. 

The embedding $\bar{e}$, together with a choice of path in $\Fr(\Hmfld)$ from $D\bar{e}(b_{S^n \times D^n})$ to $b_H$, defines an element $e \in I\hatfr_n(\Hmfld)$ and $\bar{f}$ similarly defines an element $f \in I\hatfr_n(\Hmfld)$. These elements satisfy
\begin{equation*}
  \lambda(e,e) = \lambda(f,f) = 0 \quad\quad \lambda(e,f)=1 \quad\quad
  \mu(e)=\mu(f)=0,
\end{equation*}
and so determine a morphism of quadratic modules $\sH \to \mathsf{I}\hatfr_n(W)$. (This morphism depends on the choice of paths in $\Fr(\Hmfld)$ made above
and 
is used to define the map \eqref{eq:7} below. 
Once
that map has been used to prove
 that its domain is highly-connected, 
this choice plays no further role. The ambiguity in this choice 
arises at the end of the proof of Lemma \ref{lemthm:conn-K-delta}.)

\begin{remark}\label{rem:HFraming}
For use in Section \ref{sec:TS}, let us point out that there exists a framing  of $\Hmfld$ which is homotopic to the standard framing on the images of $\bar{e}$ and $\bar{f}$, and extends the Euclidean framing on $\{0\} \times D^{2n-1} \subset [-1,0] \times D^{2n-1} \subset \Hmfld$. To see this, note that the standard framings on $\bar{e}(S^n \times D^n)$ and $\bar{f}(S^n \times D^n)$ are homotopic when restricted to the (contractible) intersection of these subsets, as both embeddings are orientation preserving. This allows us to construct the framing on $W_{1,1} \subset \Hmfld$, and because $[-1,0] \times D^{2n-1}$ is glued to $W_{1,1}$ along an orientation preserving map this framing may be extended to one of $\Hmfld$ agreeing with the Euclidean framing on $\{0\} \times D^{2n-1}$.
\end{remark}

If $(W_0, b_{W_0})$ and $(W, b_W)$ are manifolds with framed basepoints, a
\emph{morphism} $(e_0, \sigma_0) : (W_0, b_{W_0}) \to (W, b_W)$ consists of a
(codimension 0) embedding $e_0 : W_0 \hookrightarrow W$ and a path
$\sigma_0 : De_0(b_{W_0}) \leadsto b_W \in \Fr(W)$. Such a morphism
induces a homomorphism 
of quadratic modules $(e_0, \sigma_0)_* : \sI\hatfr_n(W_0)
\to \sI\hatfr_n(W)$, since
$\lambda$ and $\mu$ are computed by counting
intersections which may be done in either manifold. Furthermore, if
$(e_1, \sigma_1) : (W_1, b_{W_1}) \to (W, b_W)$ is another morphism such that $e_1$ is disjoint from $e_0$
(up to isotopy), then $(e_0, \sigma_0)_*$ and $(e_1, \sigma_1)_*$ have
orthogonal images in $\mathsf{I}\hatfr_n(W)$.

For a manifold $W$ with distinguished chart $c : (-1,0] \times \bR^{2n-1} \hookrightarrow W$ we choose $b_W$ to be induced by $Dc$ and the Euclidean framing of $T_{(0,0)}((-1,0] \times \bR^{2n-1})$. Then an embedding $\phi : \Hmfld \hookrightarrow W$ representing a vertex of $K^\delta(W)$ has a canonical homotopy class of path $\sigma_\phi$ from $\phi \circ b_H$ to $b_W$ (as the manifolds $\phi([-1,0] \times D^{2n-1})$ and $c((-1,0] \times \bR^{2n-1})$ are both contractible and framed). Thus $\phi$ gives a
hyperbolic submodule
\begin{equation*}
  \sH \lra\mathsf{I}\hatfr_n(\Hmfld) \overset{(\phi, \sigma_\phi)_*}\lra
  \mathsf{I}\hatfr_n(W)
\end{equation*}
and disjoint embeddings give orthogonal hyperbolic submodules, which
defines a map of simplicial complexes
\begin{equation}\label{eq:7}
  K^\delta(W) \lra K^a(\mathsf{I}\hatfr_n(W)).
\end{equation}
We will use this map to compute the connectivity of $\vert
K_\bullet^\delta(W)\vert = |K^\delta(W)|$. The proof of the following lemma, and its generalisation in Section~\ref{sec:TS} to the presence of tangential structures, make essential use of the Whitney trick.

\begin{lemma}\label{lemthm:conn-K-delta}
If $(W, b_W)$ as in Lemma \ref{lem:QuadMod} has dimension $2n \geq 6$ and is  simply-connected, then the space $|K^\delta_\bullet(W)|$ is $\lfloor \tfrac{\overline{g}(W)-4}{2} \rfloor$-connected.  
\end{lemma}
\begin{proof}
  For brevity we
  shall just write $K^\delta \to K^a$ for the map \eqref{eq:7} and
  write
  $\overline{g} = \overline{g}(\mathsf{I}\hatfr_n(W))$.
  We have $\overline{g}(W) \leq \overline{g}$, so it suffices to show
  that $|K^\delta_\bullet(W)|$ is
  $\lfloor \tfrac{\overline{g}-4}{2} \rfloor$-connected.

  Let $k \leq (\overline{g}-4)/{2}$ and consider a map $h: \partial
  I^{k+1} \to |K^\delta|$, which we may assume is simplicial with
  respect to some PL triangulation $\partial I^{k+1} \approx |L|$.  By Theorem~\ref{thm:Charney}, the
  composition $\partial I^{k+1} \to |K^\delta| \to |K^a|$ is
  nullhomotopic and so extends to a map $\overline{h}: I^{k+1} \to
  |K^a|$. By Theorem~\ref{thm:Charney}, we also have $\lCM(K^a) \geq
  \lfloor \tfrac{\overline{g}-1}{2}\rfloor \geq k+1$, so by
  Theorem~\ref{thm:simplex-wise-injective} we may find a triangulation $I^{k+1} \approx |K|$ extending $L$ such that the star of each vertex $v \in K \setminus L$ intersects $L$ in a single simplex, and change $\overline{h}$ by a homotopy relative to $\partial I^{k+1}$ so that $\overline{h}(\Lk(v)) \subset \Lk(\overline{h}(v))$ for each vertex $v \in K \setminus L$. We will prove that $\overline{h}: I^{k+1} \to |K^a|$ lifts to a nullhomotopy $F : I^{k+1} \to |K^\delta|$ of $h$.

  Choose an enumeration of the
  vertices in $K$ as $v_1, \dots, v_N$ such that the vertices in $L$ come before
  the vertices in $K \setminus L$. For each $v_i \in L$ the vertex $h(v_i) = F(v_i) \in K^\delta$ is given by an embedding $j_i: \Hmfld \to W$. For $v_i \in K \setminus L$ we shall inductively pick lifts of
   $\overline{h}(v_i) \in K^a$ to a
  vertex $F(v_i) \in K^\delta$ given by an embedding $j_i: \Hmfld \to W$
  satisfying
  \begin{enumerate}[(i)]
  \item\label{item:9} if $v_i,v_k$ are adjacent vertices in $K$ with $k < i$ then
    $j_i(C) \cap j_k(C) = \emptyset$,
  \item\label{item:10} for $v_i \in K \setminus L$ the core $j_i(C)$
    is in general position with $j_k(C)$ for $k < i$.
  \end{enumerate}
  Suppose $\overline{h}(v_1), \dots, \overline{h}(v_{i-1})$ have been
  lifted to $j_1, \dots, j_{i-1}$ satisfying~(\ref{item:9})
  and~(\ref{item:10}).  Then $v_i \in K\setminus L$ gives a morphism
  of quadratic modules
  $\overline{h}(v_i) = \phi: \sH \to \mathsf{I}\hatfr_n(W)$ which we wish
  to lift to an embedding $j_i$ satisfying the two properties. The
  element $\phi(e)$ is represented by an immersion
  $x : S^n \times D^n \looparrowright W$, which has $\mu(x)=0$, along with a path in $\Fr(W)$ from $Dx(b_{S^n \times D^n})$ to $b_W$. As $W$ is simply-connected and of dimension at least 6 we may use the
  Whitney trick as in \cite[Theorem 5.2]{Wall} to replace $x$ by an
  embedding $j(e) : S^n \times D^n \hookrightarrow W$. Similarly,
  $\phi(f)$ can be represented by an embedding
  $j(f) : S^n \times D^n \hookrightarrow W$, along with another path in $\Fr(W)$.

  As $\lambda(\phi(e), \phi(f))=1$, these two embeddings have algebraic
  intersection number 1. We may again use the Whitney trick to isotope
  the embeddings $j(e)$ and $j(f)$ so that their cores
  $S^n \times \{0\}$ intersect transversely in precisely one point,
  and so obtain an embedding of the plumbing of $S^n \times D^n$ and
  $D^n \times S^n$, which is diffeomorphic to $W_{1,1} \subset \Hmfld$.  We then use the framed path from $Dx(b_{S^n \times D^n})$ to $b_W$ to
  extend to the remaining $[-1,0]\times D^{2n-1} \subset \Hmfld$, 
	giving an embedding
  $j : \Hmfld \hookrightarrow W$.  Setting $j_i = j$ may not
  satisfy~(\ref{item:9}) and (\ref{item:10}), but after a small
  perturbation it will satisfy~(\ref{item:10}).

  It remains to explain how to achieve
  that $j_1, \dots, j_i$ satisfy~(\ref{item:9}).  If
  $v_k \in \Lk(v_i)$ is an already lifted vertex (i.e.\ $k < i$), we
  must ensure that $j_i(C) \cap j_k(C) = \emptyset$.  Since they are in
  general position and have algebraic intersection numbers zero (as
  $\overline{h}$ is simplexwise injective so $\{v_i,v_k\}$ maps to a
  1-simplex in $K^a$) we may use the Whitney trick to replace $j_i$ by
  an embedding satisfying $j_i(C) \cap j_k(C) = \emptyset$.  The
  necessary Whitney discs may be chosen disjoint from $j_m(C)$, for all $m < i$ such that $m \neq k$ and either $v_m \in \Lk(v_i)$ or $v_m \in K \setminus L$, again since
  all such $v_m$ are in general position with each other.  Then the
  Whitney trick will not create new intersections and after finitely
  many such Whitney tricks we will have
  $j_i(C) \cap j_k(C) = \emptyset$ whenever $v_k \in \Lk(v_i)$ and
  $k < i$, ensuring that the lifts $j_1, \dots, j_i$
  satisfy~(\ref{item:9}) and~(\ref{item:10}).

  In finitely many steps, we arrive at a lift of
  $\overline{h}: I^{n+1} \to K^a$ to a nullhomotopy of
  $h: \partial I^{n+1} \to K^\delta$, as desired.  (Strictly speaking, we may not have lifted the chosen nullhomotopy
  $\overline{h}: I^{k+1} \to |K^a|$.  The data of an element in
  $I\hatfr_n(W)$ includes a path in $\Fr(W)$.  If $W$ is spin,
  there will be two choices of such paths, related by a ``spin flip'',
  and we have only lifted the $\phi(e)$ and $\phi(f)$ up to spin flip.  Thus
  instead of lifting $\overline{h}: I^{k+1} \to |K^a|$ we may have
  lifted another nullhomotopy, related to $\overline{h}$ by spin
  flips on some vertices.)
\end{proof}

Finally, we compare $|K_\bullet^\delta(W)|$ and $|K_\bullet(W)|$.  The
bisemisimplicial space in Definition~\ref{defn:D-bullet-bullet} below
will be used to leverage the known connectivity of
$|K_\bullet^\delta(W)|$ to prove the following theorem, which is
the main result of this section.

\begin{theorem}\label{thm:high-conn}
If $W$ is a compact simply-connected manifold of dimension $2n \geq 6$ equipped with a framed basepoint, then the space $|K_\bullet(W)|$ is $\lfloor
  \tfrac{\overline{g}(W)-4}{2}\rfloor$-connected.
\end{theorem}

The semisimplicial space $K_\bullet(W)$ has an analogue in the case $2n=2$, which has been considered by Hatcher--Vogtmann \cite{HV} and shown to be highly connected.

\begin{definition}\label{defn:D-bullet-bullet}
  With $W$ and $c$ as in Definition~\ref{defn:K-p}, let $D_{p,q} =
  K_{p+q+1}(W)$, topologised as a subspace of $K_p(W) \times
  K^\delta_q(W)$.  This is a bisemisimplicial space, equipped with
  augmentations
  \begin{align*}
    D_{p,q} &\overset{\epsilon}\lra K_p(W)\\
    D_{p,q} &\overset{\delta}\lra K_q^\delta(W).
  \end{align*}
\end{definition}
\begin{lemma}\label{lem:homotopy-commute}
  Let $\iota: K_\bullet^\delta(W) \to K_\bullet(W)$ denote the
  identity map.  Then
  \begin{equation*}
    |\iota| \circ |\delta| \simeq |\epsilon|: |D_{\bullet,
      \bullet}| \lra |K_\bullet(W)|.
  \end{equation*}
\end{lemma}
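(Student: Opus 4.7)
The plan is to construct an explicit homotopy by exploiting the underlying-set identification $D_{p,q} = K_{p+q+1}(W)$. Under this identification, the bisemisimplicial structure on $D_{\bullet,\bullet}$ is the one obtained by splitting the simplicial face maps of $K_\bullet(W)$: the horizontal face $d_i^h$ agrees with $d_i$ for $0 \leq i \leq p$, and the vertical face $d_j^v$ agrees with $d_{p+1+j}$ for $0 \leq j \leq q$. Consequently $\epsilon$ is the iterated ``last-face'' map (projecting $\sigma \in K_{p+q+1}(W)$ onto its initial $p$-face, i.e., the first $p+1$ entries) and $\delta$ is the iterated ``first-face'' map (projecting onto the terminal $q$-face).

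I would then introduce the affine inclusions $\alpha : \Delta^p \hookrightarrow \Delta^{p+q+1}$ and $\beta : \Delta^q \hookrightarrow \Delta^{p+q+1}$ onto the initial and terminal faces, respectively, and define the candidate homotopy by
\begin{equation*}
\Psi\big([\sigma, s, t], u\big) \;=\; \big[\sigma,\; (1-u)\alpha(s) + u\beta(t)\big] \;\in\; |K_\bullet(W)|,
\end{equation*}
for $\sigma \in D_{p,q} = K_{p+q+1}(W)$, $s \in \Delta^p$, $t \in \Delta^q$, and $u \in [0,1]$. Continuity on each piece $D_{p,q} \times \Delta^p \times \Delta^q \times [0,1]$ is automatic since the identity $D_{p,q} \to K_{p+q+1}(W)$ is continuous (the subspace topology from $K_p(W) \times K_q^\delta(W)$ is finer than the $C^\infty$ topology).

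Next I would verify that $\Psi$ descends to a well-defined, continuous map on $|D_{\bullet,\bullet}| \times [0,1]$. Using the simplicial relation $[d_i \sigma, x] = [\sigma, \delta^i(x)]$ in $|K_\bullet(W)|$, compatibility with the bisemisimplicial face relations reduces to four coface-map identities: $\delta^i \circ \alpha = \alpha \circ \delta^i$ and $\delta^i \circ \beta = \beta$ for horizontal faces $0 \leq i \leq p$, together with the symmetric identities $\delta^{p+1+j} \circ \alpha = \alpha$ and $\delta^{p+1+j} \circ \beta = \beta \circ \delta^j$ for vertical faces. All four are immediate from the coordinate description of the coface maps. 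At $u = 0$ the homotopy evaluates to $[\sigma, \alpha(s)] = [\epsilon(\sigma), s] = |\epsilon|([\sigma, s, t])$, and at $u = 1$ to $[\sigma, \beta(t)] = [\delta(\sigma), t] = |\iota \circ \delta|([\sigma, s, t])$, using that $\iota$ is the identity on the underlying sets of simplices.

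I expect no real obstacle in this proof: everything is formal once $\Psi$ has been written down. The one place where care is required is the bookkeeping that matches horizontal versus vertical bisemisimplicial faces with the correct ranges of simplicial face maps on $K_{p+q+1}(W)$, and tracks how the coface maps $\delta^i$ act on the affine inclusions $\alpha$ and $\beta$.
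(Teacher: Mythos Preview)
Your proposal is correct and takes essentially the same approach as the paper. The paper writes the homotopy as $(r,s,t,(x,y)) \mapsto ((rs,(1-r)t),(x,\iota y)) \in \Delta^{p+q+1} \times K_{p+q+1}(W)$, which is exactly your formula $(1-u)\alpha(s) + u\beta(t)$ with $r = 1-u$; you are in fact more explicit than the paper in writing out the coface identities needed to check that the pieces glue.
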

\begin{proof}
  For each $p$ and $q$ there is a homotopy
  \begin{align*}
    [0,1] \times \Delta^p \times \Delta^q \times D_{p,q} \lra
    \Delta^{p+q+1} \times K_{p+q+1}(W)\\
    (r,s,t,x,y)) \longmapsto ((rs,(1-r)t),(x,\iota y)),
  \end{align*}
  where we write $(x,y) \in D_{p,q} \subset K_p(W)\times
  K_q^\delta(W)$ and $(x,\iota y) \in K_{p+q+1}(W) \subset K_p(W)
  \times K_q(W)$ and $(rs,(1-r)t) = (rs_0,\dots, rs_p,(1-r)t_0, \dots,
  (1-r)t_q) \in \Delta^{p+q+1}$.  These homotopies glue to a homotopy
  $[0,1] \times |D_{\bullet,\bullet}| \to |K_\bullet(W)|$ which
  starts at $|\iota| \circ |\delta|$ and ends at $|\epsilon|$.
\end{proof}

\begin{proof}[Proof of Theorem \ref{thm:high-conn}]
  Let us write $\overline{g} = \overline{g}(W) \leq
  \overline{g}(\mathsf{I}\hatfr_n(W))$.  We will apply
  Corollary \ref{cor:serre-microf-connectivity} with $Z = K_p(W)$,
  $Y_\bullet= K_\bullet^\delta(W)$ and $X_\bullet = D_{p,
    \bullet}$. For $z = ((t_0,\phi_0), \dots, (t_p,\phi_p)) \in
  K_p(W)$, we shall write $W_z \subset W$ for the complement of the
  $\phi_i(C)$. The realisation of the semisimplicial subset
  $X_\bullet(z) \subset Y_\bullet = K_\bullet^\delta(W)$ is
  homeomorphic to the full subcomplex $F(z) \subset K^\delta(W)$ on
  those $(t, \phi)$ such that $\phi(C) \subset W_z$ and $t > t_p$. The
  map of simplicial complexes~\eqref{eq:7} restricts to a map
  \begin{equation*}
    F(z) \lra K^a(\mathsf{I}\hatfr_n(W_z)).
  \end{equation*}
  We have $\mathsf{I}\hatfr_n(W_z) \oplus \sH^{p+1} \cong
  \mathsf{I}\hatfr_n(W)$, so
  $\overline{g}(\mathsf{I}\hatfr_n(W_z)) =
  \overline{g}(\mathsf{I}\hatfr_n(W))-p-1$,
  and hence by Theorem~\ref{thm:Charney} the target is $\lfloor
  \tfrac{\overline{g}-4-p-1}{2}\rfloor$-connected. The argument of Lemma \ref{lemthm:conn-K-delta} shows that $F(z)$ is
  also $\lfloor \tfrac{\overline{g}-4-p-1}{2}\rfloor$-connected.

  By Proposition~\ref{prop:semisimplicial-Serre-mic-fib}, the map $\vert\epsilon_p\vert : \vert D_{p, \bullet}\vert \to K_p(W)$ is a Serre microfibration, and we have just
  shown that it has $\lfloor \tfrac{\overline{g}-p-5}{2}\rfloor$-connected
  fibres, so by Proposition~\ref{prop:Weiss-lemma} it is $\lfloor
  \tfrac{\overline{g}-p-3}{2}\rfloor$-connected.  Since $\lfloor
  \tfrac{\overline{g}-p-3}{2}\rfloor \geq \lfloor \tfrac{\overline{g}-3}{2}\rfloor - p$, we deduce by Proposition~\ref{prop:connectivity-of-realisation}
  that the map $|D_{\bullet,\bullet}| \to |K_\bullet(W)|$ is $\lfloor
  \tfrac{\overline{g} -3}{2} \rfloor$-connected.  But up to homotopy it
  factors through the $\lfloor \tfrac{\overline{g}-4}{2}\rfloor$-connected
  space $|K_\bullet^\delta(W)|$, and therefore $|K_\bullet(W)|$ is
  $\lfloor \tfrac{\overline{g}-4}{2}\rfloor$-connected, too.
\end{proof}

Finally, we define the semisimplicial space which will shall use in the following section.

\begin{definition}\label{defn:Kbar}
Let $\overline{K}_\bullet(W)
\subset K_\bullet(W)$ denote the sub-semisimplicial space with $p$-simplices those tuples of
embeddings which are disjoint.  (Recall that in $K_\bullet(W)$ we only ask for the
embeddings to have disjoint cores.)
\end{definition}

\begin{corollary}\label{cor:KOverlineConn}
  The space $|\overline{K}_\bullet(W)|$ is
  $\lfloor\frac{\overline{g}(W)-4}2\rfloor$-connected.
\end{corollary}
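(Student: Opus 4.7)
The plan is to show that for every $p \geq 0$ the levelwise inclusion $\iota_p : \overline{K}_p(W) \hookrightarrow K_p(W)$ is a weak homotopy equivalence. Combined with Proposition \ref{prop:connectivity-of-realisation}, this immediately yields that $|\overline{K}_\bullet(W)| \to |K_\bullet(W)|$ is a weak equivalence, and the conclusion then follows from Theorem \ref{thm:high-conn}. Note that $\overline{K}_p(W)$ is an \emph{open} subset of $K_p(W)$, since the condition that the images of a tuple of embeddings be pairwise disjoint is open in the $C^\infty$-topology.

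The key construction is a \emph{shrinking isotopy}: a smooth family of self-embeddings $\psi_s : H \hookrightarrow H$ for $s \in [0,1)$ such that $\psi_0 = \id_H$, $\psi_s$ is the identity on some fixed collar $(-\eta, 0] \times D^{2n-1} \subset H$ of the boundary piece, $\psi_s$ fixes the core $C$ pointwise, and $\psi_s(H)$ is contained in a prescribed arbitrarily small open neighbourhood of $C$ for $s$ close to $1$. Such a family exists because $H$ is a regular neighbourhood of $C$ relative to the boundary collar, so one can simply scale the transverse directions to $C$ towards zero. Post-composition $(\phi_i)_{i=0}^p \mapsto (\phi_i \circ \psi_s)_{i=0}^p$ then defines a continuous self-map of $K_p(W)$: the boundary standardisation of Definition \ref{defn:K-p}\ref{item:5} is preserved since $\psi_s$ is the identity on a collar, and the cores $\phi_i(\psi_s(C)) = \phi_i(C)$ remain pairwise disjoint.

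Given a relative map $f : (D^k, S^{k-1}) \to (K_p(W), \overline{K}_p(W))$, set $U := f^{-1}(\overline{K}_p(W))$, which is an open neighbourhood of $S^{k-1}$ in $D^k$. By compactness of $D^k$, the function $x \mapsto \min_{i \neq j}\, \mathrm{dist}(\phi_i(x)(C), \phi_j(x)(C))$ attains a positive lower bound $\delta_0 > 0$, and one may then choose $s_0 \in [0,1)$ such that for every $x \in D^k$ and every $i$ the set $\phi_i(x)(\psi_{s_0}(H))$ lies in the $\delta_0/3$-neighbourhood of $\phi_i(x)(C)$; the $p+1$ such sets are then pairwise disjoint for every $x$. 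Pick a smooth bump function $\beta : D^k \to [0,1]$ with $\beta \equiv 0$ on a smaller neighbourhood of $S^{k-1}$ contained in $U$, and $\beta \equiv 1$ on the complement of $U$, and define
\begin{equation*}
  H_t(x) := f(x) \circ \psi_{t\,\beta(x)\,s_0}, \qquad t \in [0,1].
\end{equation*}
Then $H_0 = f$, $H_t$ is constant on $S^{k-1}$ since $\beta$ vanishes there, and $H_1$ takes values in $\overline{K}_p(W)$: for $x \notin U$ we have $\beta(x) = 1$ and disjointness follows from the choice of $s_0$, while for $x \in U$ the embeddings $\phi_i(x)$ are already pairwise disjoint and shrinking by any $\psi_s$ preserves this. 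Hence $\iota_p$ is a weak equivalence.

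The main technical obstacle is the construction of $\psi_s$ with all four properties simultaneously: being a genuine smooth embedding of $H$, being the identity on the boundary collar, fixing the whole core $C$ (including its $(2n{-}1)$-dimensional boundary disc $\{0\} \times D^{2n-1}$ and the wedge $S^n \vee S^n$ at which $C$ is singular) pointwise, and having image shrinking to any prescribed neighbourhood of $C$. Once this family is in hand, the remainder is a soft relative-lifting argument using only openness of $\overline{K}_p(W)$ and compactness of $D^k$.
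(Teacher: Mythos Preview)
Your proof is correct and takes essentially the same approach as the paper: both show that the inclusion $\overline{K}_p(W) \hookrightarrow K_p(W)$ is a levelwise weak homotopy equivalence by precomposing with a shrinking self-isotopy of $H$ towards its core $C$, then invoke Theorem~\ref{thm:high-conn}. One minor caveat: asking $\psi_s$ to be the identity on a \emph{fixed} collar $(-\eta,0]\times D^{2n-1}$ is incompatible with $\psi_s(H)$ eventually lying in \emph{arbitrarily} small neighbourhoods of $C$ (the collar itself will not), but this is easily repaired by letting the collar shrink with $s$---the paper sidesteps the issue by requiring its $\rho_t$ to be the identity on some $t$-dependent neighbourhood of $C$.
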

\begin{proof}
  We may choose an isotopy of embeddings $\rho_t : \Hmfld \to \Hmfld$, defined
  for $t \in [0,\infty)$, which starts at the identity, eventually has
  image inside any given neighbourhood of $C$, and which for each $t$
  is the identity on some neighbourhood of $C$.  Precomposing with the
  isotopy $\rho_t$, any tuple of embeddings with disjoint cores
  eventually become disjoint.  It follows that the inclusion is a
  levelwise weak homotopy equivalence.
\end{proof}


\section{Resolutions of moduli spaces}\label{sec:Resolutions}

In the introduction we defined $\MM(X)$ as the classifying space $B\Diff_\partial(X)$ of the group of diffeomorphisms of $X$ fixing its boundary. In this section we will describe a specific point-set model for this classifying space, together with a simplicial resolution. 
We then use the spectral sequence arising from this resolution to prove Theorem~\ref{thm:main}.

\begin{definition}
For a $2n$-manifold $X$ with boundary $P$ and collar $c :
  (-\infty,0] \times P \hookrightarrow X$, and an $\epsilon >0$, let
  $\Emb_\epsilon(X, (-\infty,0] \times \bR^\infty)$ denote the space,
  in the $C^\infty$-topology, of those embeddings $e$ such that $e
  \circ c(t, x) = (t, x)$ as long as $t \in (-\epsilon,0]$, and let
  \begin{equation}
    \label{eq:6}
    \mathcal{E}(X)= \colim_{\epsilon \to 0} \Emb_\epsilon(X, (-\infty,0] \times \bR^\infty).
  \end{equation}
	The space $\mathcal{E}(X)$ has a (free) action of
$\Diff_\partial(X)$ by precomposition, and we write
\begin{equation*}
  \mathcal{M}(X) = \mathcal{E}(X)/\Diff_\partial(X).
\end{equation*}

Two elements of $\mathcal{E}(X)$ are in the same orbit if and
only if they have the same image, so as a set, $\mathcal{M}(X)$ is the set of
submanifolds $M \subset (-\infty,0] \times \R^\infty$ such that
  \begin{enumerate}[(i)]
  \item\label{item:6} $M \cap (\{0\} \times \bR^\infty) = \{0\} \times
    P$ and $M$ contains $(-\epsilon,0] \times P$ for some $\epsilon >
    0$,

  \item\label{item:7} the boundary of $M$ is precisely $\{0\} \times
    P$,

  \item\label{item:8} $M$ is diffeomorphic to $X$ relative to
    $P$.
  \end{enumerate}
	(The underlying set of $\mathcal{M}(X)$ depends on the specified
identification $\partial X \cong P \subset \R^\infty$.) 
\end{definition}

By \cite{MR613004} the quotient map $\mathcal{E}(X) \to \mathcal{E}(X)/\Diff_\partial(X)$ has slices and hence is a principal $\Diff_\partial(X)$-bundle.  Since
$\mathcal{E}(X)$ is weakly contractible by Whitney's embedding theorem, the
quotient space $\mathcal{M}(X)$ is a model for $B\Diff_\partial(X) = \MM(X)$. In this model for $\MM(X)$, the map \eqref{eq:GluingMap} which glues
on a cobordism $K$ is modelled using a choice of
collared embedding $K \hookrightarrow [-1,0] \times \bR^\infty$ such that $K
\cap (\{-1\} \times \bR^\infty) = P$.  Then the gluing map is
\begin{equation*}
  \begin{aligned}
    - \cup K : \mathcal{M}(X) &\lra \mathcal{M}(X \cup_{P} K)\\
    M &\longmapsto (M -  e_1) \cup K,
  \end{aligned}
\end{equation*}
that is, translation by one unit in the first coordinate
direction followed by union of submanifolds of $(-\infty, 0] \times \bR^\infty$.

Let $P$ be a closed non-empty $(2n-1)$-manifold, and let $W$ and $M$
be path-connected compact $2n$-manifolds with identified boundaries
$\partial M = P = \partial W$.  We say that $M$ and $W$ are
\emph{stably diffeomorphic} relative to $P$ if there is a
diffeomorphism
$$W \# W_g \cong M \# W_h$$
relative to $P$, for some $g, h \geq 0$.

\begin{definition}\label{defn:ModuliSpaceStab}
  Let $P \subset \bR^\infty$ be a closed non-empty $(2n-1)$-manifold,
  and let $W$ be a compact manifold, with a specified identification
  $\partial W = P$. Let
	$$\Mst = \Mst(W) = \coprod_{[T]} \mathcal{M}(T),$$
	where the union is taken over the set of compact manifolds
  with $\partial T = P$ and $T$ stably diffeomorphic to $W$ relative to
  $P$, one in each diffeomorphism class relative to $P$. The space $\Mst$ depends on $P \subset \R^\infty$ and the stable
diffeomorphism class of $W$ relative to $P = \partial W$, but we shall
suppress that from the notation.
\end{definition}

In order to formulate the analogue of Theorem \ref{thm:main} in this
model, we choose a submanifold $S \subset [-1,0] \times \bR^\infty$
with collared boundary $\partial S = \{-1,0\} \times P = S \cap
(\{-1,0\} \times \R^\infty)$, such that $S$ is diffeomorphic to $([-1,0] \times P)\#W_1$ relative to its boundary.  If $P$ is not path-connected, we also choose in which path component to perform the
connected sum.  Gluing $K = S$ then induces the self-map
\begin{equation}\label{eq:StabMap}
  \begin{aligned}
    s = - \cup S : \Mst &\lra \Mst\\
    M &\longmapsto (M - e_1) \cup S.
  \end{aligned}
\end{equation}
(As $M \cup_P S \cong M \# W_1$ relative to $P$, so $M \cup_P S$ is
stably diffeomorphic to $W$ if and only if $M$ is.) 

Let us write $\Mst_g \subset \Mst$ for the subspace of manifolds of stable genus precisely $g$, that is, those manifolds $M \in \Mst$ such that $\bar{g}(M)=g$. Then (by definition of the stable genus) $s$ restricts to a map $s : \Mst_g \to \Mst_{g+1}$.

\begin{theorem}\label{thm:Main:sec6}
  If $2n \geq 6$ and $W$ is simply-connected then the map
$$s_* : H_k(\Mst_g) \lra H_k(\Mst_{g+1})$$  
  induces an isomorphism for $2k \leq g-3$ and an epimorphism for $2k \leq g-1$.
\end{theorem}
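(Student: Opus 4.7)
The plan is to build a highly-connected semisimplicial resolution of $\mathcal{M}_g$ using the semisimplicial space $\overline{K}_\bullet$ from Section~\ref{sec:Top}, and then to induct on $g$ by comparing the resulting spectral sequences for $\mathcal{M}_g$ and $\mathcal{M}_{g+1}$ under $s$.

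First, I would define an augmented semisimplicial space $\epsilon_\bullet : X_\bullet \to \mathcal{M}$ whose $p$-simplices are pairs $(M, \sigma)$ with $M \in \mathcal{M}$ and $\sigma \in \overline{K}_p(M)$, topologised as a subspace of $\mathcal{M} \times (\bR \times \Emb(H,-))^{p+1}$; face maps forget one of the $(t_i,\phi_i)$ and the augmentation forgets $\sigma$ entirely. The fibre of $|\epsilon|$ over $M$ is $|\overline{K}_\bullet(M)|$, which by Corollary~\ref{cor:KOverlineConn} is $\lfloor(\overline{g}(M)-4)/2\rfloor$-connected. A Serre-microfibration argument in the spirit of Proposition~\ref{prop:semisimplicial-Serre-mic-fib}, now with base the Hausdorff space $\mathcal{M}$, together with Proposition~\ref{prop:Weiss-lemma}, would show that $|\epsilon|$ restricted over $\mathcal{M}_g$ is $\lfloor(g-2)/2\rfloor$-connected. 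This produces a (relative) spectral sequence with $E^1_{p,q} = H_q(X_p^{(g)})$ that computes $H_{p+q}(\mathcal{M}_g)$ in the range $p+q \leq \lfloor(g-2)/2\rfloor$, where $X_p^{(g)} := \epsilon_p^{-1}(\mathcal{M}_g)$.

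Second, I would identify the $p$-simplices. The ``cut'' operation $(M, \sigma) \mapsto M \setminus \mathrm{int}(\bigsqcup_i \phi_i(H))$, followed by the canonical collar extension, should define a locally trivial fibre bundle $c_p : X_p^{(g)} \to \mathcal{M}_{g-p-1}$ (by the usual slice theorem for spaces of embeddings), whose fibre parametrises the ordered attaching data for $(p+1)$ disjoint copies of $H$ in a fixed manifold. The stabilisation map $s$ then lifts canonically to a semisimplicial map $s_\bullet : X_\bullet^{(g)} \to X_\bullet^{(g+1)}$ sending $(M,\sigma)$ to $(M \cup S, \sigma - e_1)$, and under $c_p$ it covers the analogous stabilisation $s : \mathcal{M}_{g-p-1} \to \mathcal{M}_{g-p}$ on the bases via an equivalence on fibres.

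Third, I would close the induction. The base cases $g \leq 2$ are vacuous. For the inductive step, combining the bundle structure $c_p$ with the Serre spectral sequences and the inductive hypothesis for the map $\mathcal{M}_{g-p-1} \to \mathcal{M}_{g-p}$ yields that $s_* : H_q(X_p^{(g)}) \to H_q(X_p^{(g+1)})$ is an isomorphism for $2q \leq (g-p-1)-3$ and an epimorphism for $2q \leq (g-p-1)-1$. Comparing the two augmented spectral sequences from Step~1 term-by-term, using that both augmentations are $\lfloor(g-2)/2\rfloor$-connected, then gives by a standard zig-zag that $s_* : H_k(\mathcal{M}_g) \to H_k(\mathcal{M}_{g+1})$ is an isomorphism for $2k \leq g-3$ and an epimorphism for $2k \leq g-1$.

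The main obstacle will be Step~2: verifying the slice-theorem bundle structure on $c_p$ and, in particular, that $s_\bullet$ restricts to a fibrewise weak equivalence on the attaching-data fibres. There is a secondary bookkeeping subtlety, namely that the shift $g \mapsto g - p - 1$ from deleting $(p+1)$ handles must combine correctly with the simplicial-filtration shift on the $E^1$-page in order to deliver exactly the bounds $2k \leq g-3$ (isomorphism) and $2k \leq g-1$ (epimorphism) and not something weaker.
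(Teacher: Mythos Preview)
Your overall architecture---resolve $\mathcal{M}_g$ by $X_\bullet^{(g)}$, identify $X_p^{(g)}$ with a lower-genus moduli space, and induct---is the right one, but the way you close the induction has a genuine gap. You propose to lift $s$ to a semisimplicial map $s_\bullet : X_\bullet^{(g)} \to X_\bullet^{(g+1)}$ and compare the two spectral sequences. Under your identification $X_p^{(g)} \simeq \mathcal{M}_{g-p-1}$, the map on $E^1_{0,q}$ is $s_*: H_q(\mathcal{M}_{g-1}) \to H_q(\mathcal{M}_g)$, for which the inductive hypothesis only gives an isomorphism for $q \leq (g-4)/2$. To deduce that $|s_\bullet|_*$ (and hence $s_*$) is an isomorphism on $H_k$ for $k \leq (g-3)/2$, you would need the map on $E^1_{0,q}$ to be an isomorphism for $q \leq (g-3)/2$, which is exactly one degree beyond what the induction supplies. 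The comparison therefore only reproduces the $g-1$ range and does not bootstrap; the ``standard zig-zag'' does not close.

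The paper avoids this by working with a \emph{single} spectral sequence and recognising the stabilisation map as an internal differential rather than as a map of spectral sequences. The augmented spectral sequence of $X_\bullet \to \mathcal{M}$ (over all genera at once) has $E^1_{p,q,g+1} \cong H_q(\mathcal{M})_{g-p}$ via a gluing equivalence $\mathcal{M} \xrightarrow{\simeq} X_p$ of degree $p+1$ (the inverse direction to your ``cut'' map, which sidesteps the boundary issue you would otherwise face). The crucial extra ingredient, absent from your sketch, is that all face maps $d_0,\ldots,d_p: X_p \to X_{p-1}$ are homotopic (established via an explicit ``swap'' diffeomorphism of $W_{2,1}$); this makes the alternating sum $d^1 = \sum (-1)^i (d_i)_*$ collapse to a single copy of $s_*$ on even columns. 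In particular $d^1: E^1_{0,q,g+1} \to E^1_{-1,q,g+1}$ \emph{is} the map $s_*: H_q(\mathcal{M})_g \to H_q(\mathcal{M})_{g+1}$ we want, while the inductive hypothesis governs the $d^1$ in columns $p > 0$. Vanishing of $E^\infty$ in the range then forces this $d^1$ to be an isomorphism/epimorphism, and the induction closes with no loss.

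As a secondary remark, your Step~2 has two technical wrinkles: removing the $\phi_i(H)$ from $M$ alters $\partial M$ (each $\phi_i$ meets $\partial M$ in a disc), so the ``cut'' does not land in $\mathcal{M}_{g-p-1}$ as written; and in your $s_\bullet$, the translated embeddings $\phi_i - e_1$ hit $\{-1\}\times P$ rather than the new outgoing boundary, so they are no longer elements of $\overline{K}_p(M\cup S)$ without an extension through $S$. Both can be repaired (the paper's gluing description of $X_p \simeq \mathcal{M}$ is one clean way), but neither repair rescues the induction issue above.
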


In particular, this theorem implies that for any simply-connected
manifold $W$ with boundary $P$, the restriction
$$s: \mathcal{M}(W) \lra \mathcal{M}(W \cup_P S)$$
induces an isomorphism on homology for degrees satisfying $2* \leq
\bar{g}(W)-3$ and an epimorphism on homology
in degrees satisfying $2* \leq \bar{g}(W)-1$, which, along
with the observation that $g(W) \leq \bar{g}(W)$, establishes Theorem
\ref{thm:main}. 

Let us point out that Theorem~\ref{thm:Main:sec6} is slightly stronger
than Theorem~\ref{thm:main}, and in particular includes a non-trivial
assertion about $H_0$: Theorem~\ref{thm:Main:sec6} implies that $s$
induces an isomorphism in $H_0$ when $\overline{g} \geq 3$, which in
turn implies the following cancellation result.

\begin{corollary}\label{cor:cancellation}
  Let $P$ be a $(2n-1)$-manifold, $2n \geq 6$, and $W$ and $W'$ be two
  simply-connected manifolds with boundary $P$ such that $W \# W_g
  \approx W' \# W_g$ relative to $P$, for some $g \geq 0$. If
  $\bar{g}(W) \geq 3$, then $W \approx W'$ relative to $P$.
\end{corollary}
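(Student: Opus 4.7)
The plan is to deduce the cancellation result from Theorem~\ref{thm:Main:sec6} applied in degree $k = 0$, combined with a straightforward induction on $g$. The key observation is that the $H_0$-statement encodes a bijection on path components, and path components of $\mathcal{M}_g$ parametrise diffeomorphism classes rel $P$ in the relevant stable diffeomorphism class.

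First, I would specialise Theorem~\ref{thm:Main:sec6} to $k = 0$: for $g \geq 3$, the stabilisation map $s_* : H_0(\mathcal{M}_g) \to H_0(\mathcal{M}_{g+1})$ is an isomorphism. Since $H_0(-;\bZ)$ is the free abelian group on $\pi_0$, this forces $s : \pi_0(\mathcal{M}_g) \to \pi_0(\mathcal{M}_{g+1})$ to be a bijection. Unwinding the point-set model of $\mathcal{M}$, path components of $\mathcal{M}_g$ are in bijection with diffeomorphism classes rel $P$ of manifolds $M$ with $\partial M = P$, stably diffeomorphic to $W$ rel $P$, and with $\bar{g}(M) = g$. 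Thus the $H_0$-statement translates into the following injectivity statement: if $M$ and $M'$ are two such manifolds, both of stable genus $g \geq 3$, and $M \#_\partial W_1 \approx M' \#_\partial W_1$ rel $P$ (where $\#_\partial$ denotes the boundary connect sum operation realised by the stabilisation map), then already $M \approx M'$ rel $P$.

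Two bookkeeping facts about stable genus are then needed. First, because $s$ maps $\mathcal{M}_g$ into $\mathcal{M}_{g+1}$ by definition, one has $\bar{g}(X \#_\partial W_1) = \bar{g}(X) + 1$ for any $X$ in the stable diffeomorphism class. Second, iterating this and using $W \# W_g \approx W' \# W_g$ gives $\bar{g}(W') + g = \bar{g}(W' \# W_g) = \bar{g}(W \# W_g) = \bar{g}(W) + g$, so $\bar{g}(W') = \bar{g}(W) \geq 3$; in particular $W$ and $W'$ represent classes in the same moduli space $\mathcal{M}$.

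I would then induct on $g \geq 0$, with the base case $g = 0$ being the hypothesis itself. For the inductive step with $g \geq 1$, set $M := W \# W_{g-1}$ and $M' := W' \# W_{g-1}$; both have stable genus $\bar{g}(W) + g - 1 \geq 3$, and the hypothesis $W \# W_g \approx W' \# W_g$ rel $P$ reads as $M \#_\partial W_1 \approx M' \#_\partial W_1$ rel $P$. The injectivity statement from the first step then yields $M \approx M'$ rel $P$, i.e.\ $W \# W_{g-1} \approx W' \# W_{g-1}$ rel $P$, and the inductive hypothesis concludes that $W \approx W'$ rel $P$. There is essentially no obstacle here beyond recognising that Theorem~\ref{thm:Main:sec6} in degree $0$ already contains the full cancellation statement; all the real work has been absorbed into that theorem.
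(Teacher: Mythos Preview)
Your proof is correct and follows essentially the same approach as the paper: both deduce the result from Theorem~\ref{thm:Main:sec6} at $k=0$, interpreting the $H_0$-isomorphism as injectivity on $\pi_0$. The only cosmetic difference is that the paper applies the $g$-fold iterate $s^g : \mathcal{M}_h \to \mathcal{M}_{h+g}$ in one go (each factor being an $H_0$-isomorphism, hence so is the composite), whereas you unwind this as an explicit induction on $g$; the mathematical content is identical.
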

\begin{proof}[Proof, using Theorem~\ref{thm:Main:sec6}]
  Choose an embedding $P \subset \bR^\infty$, and let $\Mst$ be the
  space of manifolds stably diffeomorphic to $W$ as in Definition
  \ref{defn:ModuliSpaceStab}. The manifolds $W$ and $W'$ determine path
  components $[W]$ and $[W']$ of $\Mst$. As $W \# W_g \approx W' \#
  W_g$, $W$ and $W'$ have the same stable genus, say $\overline{g}(W)
  = \overline{g}(W') = h \geq 3$.  Thus they represent classes $[W], [W'] \in
  H_0(\Mst_h)$.  Theorem \ref{thm:Main:sec6} implies
  that the $g$-fold stabilisation map $s^g: \Mst_h \to \Mst_{h+g}$ induces an
  isomorphism $H_0(\Mst_h) \to H_0(\Mst_{h+g})$, so $s^g([W]) =
  [W\#W_g] = [W'\# W_g] = s^g([W'])$ implies $[W] = [W']$.  Therefore
  $W$ and $W'$ are in the same path component of $\Mst_h$, i.e.\ they are
  diffeomorphic relative to $P$.
\end{proof}

\begin{remark}\label{rem:kreck}
  Kreck (\cite[Theorem D]{Kreck}) has proved a cancellation result
  similar to Corollary~\ref{cor:cancellation} with the assumption
  $\overline{g}(W) \geq 3$ weakened to $g(W) \geq 1$ if $n$ is even
  and $g(W) \geq 0$ if $n$ is odd. (The hypotheses of Kreck's theorem are equivalent to ours, by \cite[Theorem C]{Kreck}.)
  Theorem~\ref{thm:Main:sec6} could
  perhaps be viewed as a ``higher-homology analogue'' of Kreck's
  cancellation result.
\end{remark}

As an immediate consequence of the above cancellation result, we may
deduce that  if $W$ is simply-connected and $2n \geq 6$, then
$\bar{g}(W)=g(W)$ as long as $\bar{g}(W) \geq 3$.  (The result of Kreck mentioned in Remark~\ref{rem:kreck} implies that, for $W$ simply-connected and of dimension $2n \geq 6$, $g(W)$ and $\bar{g}(W)$ are equal without assuming that $\bar{g}(W) \geq 3$.)

\subsection{Graded spaces}\label{sec:GradedSpaces}

In order to prove Theorem \ref{thm:Main:sec6}, it is convenient to treat all genera at once. In order to do so we will work directly with $\Mst$ instead of the individual spaces $\Mst_g$, and the following language will be convenient to keep track of things.

\begin{definition}
  A \emph{graded space} is a pair
  $(X, h_X)$ of a space $X$ and a continuous map $h_X : X \to \bN$. A
  (degree zero) map $f : (X, h_X) \to (Y, h_Y)$ of graded spaces is a
  continuous map $f : X \to Y$ such that $h_Y \circ f =
  h_X$.
  Similarly, a degree $k$ map is a continuous map $f : X \to Y$ such
  that $h_Y \circ f = h_X + k$. 

The homology of a graded space $(X,h_X)$ acquires an extra grading
$H_i(X) = \bigoplus_{n \geq 0} H_i(X)_n$, where $H_i(X)_n =
H_i(h_X^{-1}(n))$.  Maps of graded spaces respect this additional
grading (a degree $k$ map of graded spaces induces a map with a shift
of $k$). For a function $c : \bN \to \bN$, we say a degree $k$ map $f:
(X, h_X) \to (Y, h_Y)$ is \emph{$c$-connected} if for every $g$ the map $h_X^{-1}(g) \to
  h_Y^{-1}(g+k)$ is $c(g)$-connected.
\end{definition}

To put our definitions above in this framework, the stable genus defines a grading $\bar{g} : \Mst \to \bN$ on the space $\Mst$, and the stabilisation map \eqref{eq:StabMap} has degree 1 with respect to this grading.

\subsection{A semisimplicial resolution}\label{sec:resolution}

Recall that $S \subset [-1,0] \times \bR^\infty$ is a submanfold with collared boundary $\partial S = \{-1,0\} \times P$ which is diffeomorphic to $([-1,0] \times P) \# W_1$ relative to its boundary, so we may as well suppose that it is obtained from $[-1,0] \times P \subset [-1,0] \times
\bR^\infty$ by forming the ambient connect-sum with a disjoint copy of
$W_1$ along the disc $([-1,0] \times \partial c)(B_{1/4}(-1/2, 0, \ldots, 0))$, for some coordinate patch $\partial c : \bR^{2n-1}
\hookrightarrow P$.

  The coordinate patch $\partial c$ induces for each $M \in \Mst$ an embedding $c: (-\delta,0] \times \bR^{2n-1} \hookrightarrow M$ for some $\delta >
  0$ using the collar structure of $M$.


\begin{definition}
We define a semisimplicial space $X_\bullet = X_\bullet(W, P)$. Let
  $X_p$ be the set of pairs $(M,\phi)$ with $M \in \Mst$ and $\phi \in
  \overline{K}_p(M)$ (described in Definition \ref{defn:Kbar}),
  topologised as $\coprod_{[T]} (\mathcal{E}(T) \times \overline{K}_p(T))/\Diff_\partial(T)$, where the union is
  taken over 
  compact manifolds $T$ with $\partial T = P$ and $T$ stably diffeomorphic to $W$ relative to $P$, one in each diffeomorphism class relative to $P$. The
  face maps of the semisimplicial spaces $\overline{K}_\bullet(M)$
  induce face maps $d_i : X_p \to X_{p-1}$, so that $X_\bullet$ has
  the structure of a semisimplicial space augmented over $X_{-1} =
  \Mst$. By composing the augmentation map with $\bar{g} : \Mst \to
  \bN$, the augmented semisimplicial space $X_\bullet$ is a
  semisimplicial graded space.
\end{definition}

Theorem~\ref{thm:Main:sec6} follows by a rather standard spectral sequence argument (see \cite[Theorem IV.3.1]{Maazen} for an early reference) from the following properties of this resolution.

\begin{proposition}\label{prop:MainProperties}\mbox{}
\begin{enumerate}[(i)]
\item\label{it:prop:1} If $W$ is simply-connected, then the map $\vert X_\bullet \vert \to \Mst$,
  considered as a map of graded spaces, is
  $\lfloor\frac{g-2}{2}\rfloor$-connected.

\item\label{it:prop:2} For each $p \geq 0$ there is a commutative diagram
    \begin{equation}\label{eq:5}
      \begin{aligned}
      \xymatrix{
        {\Mst} \ar[d]_{s} \ar[r]^-{g_p}
        &
        X_p \ar[d]^{d_p}
        \\
        {\Mst}\ar[r]^-{g_{p-1}}
        & 
        X_{p-1}
      }
      \end{aligned}
    \end{equation}
    where $g_p$ is a weak homotopy equivalence of degree $p+1$, and
    $g_{p-1}$ is a weak homotopy equivalence of degree $p$.

\item\label{it:prop:3} The face maps $d_0, \dots, d_p: X_p \to X_{p-1}$ are all homotopic when precomposed with a CW approximation to $X_p$ (i.e.\ they are equal in the homotopy category).
\end{enumerate}
\end{proposition}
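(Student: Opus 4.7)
The plan is to establish the three parts in sequence, with (ii) being the crux; parts (i) and (iii) then follow fairly formally.

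\textbf{For (i),} I would exploit the Borel-construction structure of the resolution. On each path component $\mathcal{M}(W') \subset \mathcal{M}$, the augmented semisimplicial space $X_\bullet$ has the form $\mathcal{E}(W') \times_{\Diff_\partial(W')} \overline{K}_\bullet(W')$, so after geometric realisation $|X_\bullet| \to \mathcal{M}(W')$ is a fibre bundle with fibre $|\overline{K}_\bullet(W')|$. Since $W$ is simply-connected and $2n \geq 6$, any $W'$ stably diffeomorphic to $W$ is also simply-connected, so Corollary \ref{cor:KOverlineConn} shows this fibre is $\lfloor (\overline{g}(W') - 4)/2 \rfloor$-connected. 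On the genus-$g$ piece this gives fibre connectivity $\lfloor (g-4)/2 \rfloor$, which upgrades to connectivity $\lfloor (g-2)/2 \rfloor$ of the total map, as required.

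\textbf{For (ii),} I would define $g_p(M) = (s^{p+1}(M), (\phi_0^{\mathrm{std}}, \ldots, \phi_p^{\mathrm{std}}))$, where each $\phi_i^{\mathrm{std}}$ is the canonical embedding $H \hookrightarrow s^{p+1}(M)$ whose $W_{1,1}$-part coincides with the canonical connect summand of the $i$-th copy of $S$ in the iterated stabilisation, extended through the boundary collar and the distinguished chart $\partial c$ at a pre-chosen $t$-coordinate $t_i$ (with $t_0 < \cdots < t_p$ fixed once and for all). This is continuous, of degree $p+1$, and the commutativity of \eqref{eq:5} is immediate: deleting the outermost standard embedding from $g_p(M)$ leaves the standard $(p-1)$-simplex attached to the inner $p$ copies of $S$ inside $s^{p+1}(M) = s^p(s(M))$, which is exactly $g_{p-1}(s(M))$.

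The weak-equivalence assertion is the heart of the argument. Restricting over the component $\mathcal{M}(W')$ with $W' = M \cup (p+1)S$, the Borel construction identifies $X_p|_{\mathcal{M}(W')}$ with $\mathcal{E}(W') \times_{\Diff_\partial(W')} \overline{K}_p(W')$, and $g_p$ restricted to $\mathcal{M}(M)$ lands in the Borel construction of the $\Diff_\partial(W')$-orbit of $\phi^{\mathrm{std}}$. The stabiliser of $\phi^{\mathrm{std}}$ is canonically $\Diff_\partial(M) \subset \Diff_\partial(W')$ (diffeomorphisms of $M$ extended by the identity on the standard handles), so the Borel construction of this orbit is $B\Diff_\partial(M) = \mathcal{M}(M)$, and $g_p$ is a homeomorphism of $\mathcal{M}(M)$ onto this subspace. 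What remains, and what I expect to be the main obstacle, is to show that the inclusion of this orbit into $\overline{K}_p(W')$ is a weak equivalence — equivalently, that $\Diff_\partial(W')$ acts weakly transitively on $\overline{K}_p(W')$ with the stated stabiliser. I would combine the algebraic transitivity of Proposition \ref{propcor:AlgebraicTransitivity} applied to $\mathcal{I}\hatfr_n(W')$ with parametrised versions of the Whitney trick, the existence of tubular neighbourhoods, and the isotopy extension theorem, in the style of the proof of Lemma \ref{lemthm:conn-K-delta}, to promote algebraic transitivity to transitivity up to isotopy; the hypotheses that $W'$ is simply-connected and $2n \geq 6$ are what make these handle-theoretic techniques available.

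\textbf{For (iii),} having (ii) at our disposal, it suffices to show that $d_i \circ g_p \simeq d_j \circ g_p$ as maps $\mathcal{M} \to X_{p-1}$, because any CW approximation of $X_p$ then factors up to homotopy through $g_p$ composed with a CW approximation of $\mathcal{M}$. Both composites have underlying manifold $s^{p+1}(M)$ and differ only in which of the $p+1$ standard embeddings is deleted. A homotopy is provided by an ambient isotopy of $(-\infty,0] \times \bR^\infty$, constructed using the extra room in the $\bR^\infty$-direction, that swaps the $i$-th and $j$-th copies of $S$ in $s^{p+1}(M)$ together with their embedded standard $H$'s (and their boundary $t$-coordinates on $P$); this isotopy depends only on the fixed $S$-blocks and so can be performed continuously in $M$, completing the argument.
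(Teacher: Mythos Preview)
Your treatments of (i) and (iii) are essentially correct and match the paper's: for (i) the paper also uses that $|X_\bullet| \to \mathcal{M}$ is a fibre bundle with fibre $|\overline{K}_\bullet(W')|$ and invokes Corollary~\ref{cor:KOverlineConn}; for (iii) the paper makes your ``swap two copies of $S$'' idea precise via an auxiliary space $Y_\bullet(p)$ and an explicit half--Dehn--twist diffeomorphism of $W_{2,1}$ (Lemma~\ref{lem:SigmaDiffeo}).

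Part (ii) is where there is a genuine gap. You propose to show that $\Diff_\partial(W')$ acts transitively on $\overline{K}_p(W')$ with stabiliser $\Diff_\partial(M)$, using Proposition~\ref{propcor:AlgebraicTransitivity} and the Whitney trick. But algebraic transitivity only produces an automorphism of the quadratic module $\mathcal{I}\hatfr_n(W')$; realising that automorphism by a diffeomorphism of $W'$ is a separate, non-trivial problem that nothing in Sections~\ref{sec:algebra}--\ref{sec:Top} addresses. Whitney tricks let you isotope cores with the same algebraic invariants to coincide, but they do not move one hyperbolic pair to another with different image in $H_n$. Moreover, your identification of the stabiliser as $\Diff_\partial(M)$ already presupposes that the complement of the standard handles in $(p+1)S$ is a trivial cobordism---that is the $h$-cobordism theorem, not a consequence of the tools you list.

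The paper's argument (Lemma~\ref{lem:proof:StabMap}) sidesteps the transitivity question entirely. It introduces an auxiliary space $E$ of pairs $(M,e)$ with $e: N \hookrightarrow M$ an embedding of $N \cong S_p$ relative to the outgoing boundary. The map $E \to X_p$, $(M,e) \mapsto (M, e\circ\phi_0,\dots,e\circ\phi_p)$, is a homotopy equivalence because the inclusion $([-\epsilon,0]\times P) \cup \bigsqcup_i \phi_i(H) \hookrightarrow N$ is an \emph{isotopy equivalence}: its complement is an $h$-cobordism and hence invertible. The other projection $E \to \Emb^{\partial_{\mathrm{out}}}(N,(-\infty,0]\times\R^\infty)$ is a Serre fibration over a contractible base, with fibre over the canonical inclusion weakly equivalent to $\mathcal{M}$. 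This single $h$-cobordism observation handles all components at once and is the key idea you are missing.
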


\begin{proof}[Proof of Theorem \ref{thm:Main:sec6}]
  The graded augmented semisimplicial space $X_\bullet \to \Mst$ gives
  rise to a tri-graded spectral sequence with
  \begin{equation*}
    E^1_{p,q, g} = H_q(X_p)_g \quad\text{ for }\quad p \geq -1,\, q \geq 0,\, g \geq 0,
  \end{equation*}
  where the $(p,q)$ grading is as usual, and the $g$ grading comes
  from the grading of the spaces.  The differential on $E^1$ is given
  by $d^1 = \sum_i (-1)^i (d_i)_*$, and in general $d^r$ has degree
  $(-r,r-1,0)$. Because the differentials do not change the $g$ grading, this is in fact just one spectral sequence for each $g$.
  The group $E^\infty_{p,q,g}$ is a subquotient of
  $H_{p+q+1}(\Mst,|X_\bullet|)_g$, and hence (\ref{it:prop:1}) implies
  that $E^\infty_{p,q,g} = 0$ for $p + q \leq \frac{g-4}2$.

The map
$$s_* : H_q(\Mst)_g \lra H_q(\Mst)_{g+1}$$
which we wish to show is an isomorphism for $q \leq \tfrac{g-3}{2}$ and an epimorphism for $q \leq \tfrac{g-1}{2}$ is, by (\ref{it:prop:2}), identified with $(d_0)_*: H_q(X_0)_{g+1} \to  H_q(X_{-1})_{g+1}$, which is the differential $E^1_{0,q,g+1} \to E^1_{-1,q,g+1}$ in the spectral sequence. The group $E^\infty_{-1,0, g+1}$ is the cokernel of this differential, and vanishes for $-1 \leq \tfrac{g+1-4}{2}$, so we deduce that this differential is surjective for $g \geq 1$. This proves the theorem for $g \leq 2$, providing the beginning of an induction argument.
  
Since all the face maps $d_i$ induce the same map in homology
by~(\ref{it:prop:3}), all but one of the terms in the alternating sum
in the differential $d^1 = \sum_i (-1)^i (d_i)_*: E^1_{2j,q,g+1} \to
E^1_{2j-1,q,g+1}$ cancel out, so this differential is $(d_{2j})_*:
H_q(X_{2j})_{g+1} \to H_q(X_{2j-1})_{g+1}$ which by~(\ref{it:prop:2})
is identified with the stabilisation map
$$s_* : H_q(\Mst)_{g-2j} \to H_q(\Mst)_{g-2j+1}.$$
By induction we can assume (for $j > 0$) that we have already proved the theorem for this map so for $j > 0$, the differential $d^1: E^1_{2j,q,g+1} \to E^1_{2j-1,q,g+1}$ is an epimorphism for $q \leq \frac{(g-2j)-1}2 = \frac{g-1}2 - j$ and an isomorphism for $q \leq \frac{g-3}2-j$.  In particular, all the groups $E^1_{p,q, g+1}$ in degrees where $p > 0$ and $p + q \leq \tfrac{g-1}{2}$ are killed by these differentials.

The induction hypothesis does not imply anything about $E^2_{0,q,g+1}$
and $E^2_{-1,q,g+1}$, but for degree reasons there is no room for a
$d^r$ differential for $r \geq 2$ whose target $E^r_{p,q,g+1}$ has $p + q \leq
\tfrac{g-3}{2}$.  Since $E^\infty_{p,q,g+1} = 0$ for such $(p,q,g)$,
we must have $E^2_{p,q,g+1} = 0$ for $p + q \leq \tfrac{g-3}{2}$ and
all $p \geq -1$.  It follows that $d^1: E^1_{0,q,g+1} \to
E^1_{-1,q,g+1}$ is an isomorphism if $q \leq \tfrac{g-3}{2}$ and an
epimorphism if $q-1 \leq \tfrac{g-3}{2}$.  This provides the induction
step.
\end{proof}

\subsection{Proof of Proposition \ref{prop:MainProperties}}
Part (\ref{it:prop:1}) is proved in Lemma~\ref{lem:proof:1} below.
Lemmas~\ref{lem:proof:2}, \ref{lem:proof:3a} and \ref{lem:proof:3b}
below, 
the latter two of which rely on Lemma~\ref{lem:proof:StabMap}, will
establish (\ref{it:prop:2}) and (\ref{it:prop:3}).

\begin{lemma}\label{lem:proof:1}
  If $W$ is simply-connected, then the map $\vert \epsilon \vert : \vert X_\bullet \vert \to \Mst$, considered as a map of graded spaces, is $\lfloor\frac{g-2}{2}\rfloor$-connected.
\end{lemma}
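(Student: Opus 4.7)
The plan is to exhibit $|\epsilon|$, restricted over each path component $\mathcal{M}(X) \subset \mathcal{M}$, as a fiber bundle with fiber $|\overline{K}_\bullet(X)|$, and then import the connectivity estimate already proved in Corollary~\ref{cor:KOverlineConn}. The simply-connectedness hypothesis on $W$ enters only through that corollary.

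First I would identify the restriction of $X_\bullet$ over $\mathcal{M}(X)$ with the Borel construction $\mathcal{E}(X) \times_{\Diff_\partial(X)} \overline{K}_\bullet(X)$. A pair $(M,\phi) \in X_p$ with $M \in \mathcal{M}(X)$ lifts, uniquely modulo the diagonal $\Diff_\partial(X)$-action, to a pair $(e, \tilde{\phi}) \in \mathcal{E}(X) \times \overline{K}_p(X)$ with $e(X) = M$ and $e_*(\tilde\phi) = \phi$; this assignment is compatible with face maps. Since $\mathcal{E}(X) \to \mathcal{M}(X)$ admits local slices by \cite{MR613004} and is therefore a principal $\Diff_\partial(X)$-bundle, and since geometric realisation commutes with Borel constructions applied to a semisimplicial space with fixed group action, the restriction of $|\epsilon|$ over $\mathcal{M}(X)$ is the associated bundle
$$\mathcal{E}(X) \times_{\Diff_\partial(X)} |\overline{K}_\bullet(X)| \lra \mathcal{M}(X),$$
which is a genuine fiber bundle with fiber $|\overline{K}_\bullet(X)|$, in particular a Serre fibration.

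Next I would verify that Corollary~\ref{cor:KOverlineConn} applies to every $X$ occurring as such a path component. Any such $X$ is stably diffeomorphic to $W$, meaning $X \# W_h \approx W \# W_g$ for some $g,h$; since $W$ and each $W_k$ are simply-connected and $2n \geq 6$, van Kampen applied to the connected sum decomposition forces $\pi_1(X) = 1$. Therefore $|\overline{K}_\bullet(X)|$ is $\lfloor \tfrac{\overline{g}(X)-4}{2} \rfloor$-connected by Corollary~\ref{cor:KOverlineConn}. Restricting over $\mathcal{M}_g = \bigsqcup_{[X]\,:\,\overline{g}(X)=g} \mathcal{M}(X)$, the map $|\epsilon|$ is thus a Serre fibration with $\lfloor \tfrac{g-4}{2}\rfloor$-connected fibers, so the long exact sequence of the fibration yields that $|\epsilon|^{-1}(\mathcal{M}_g) \to \mathcal{M}_g$ is $\lfloor \tfrac{g-4}{2} \rfloor + 1 = \lfloor \tfrac{g-2}{2}\rfloor$-connected, which is the desired statement for graded spaces.

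The only genuinely delicate step is the bundle identification in the first paragraph: making precise that $|X_\bullet|$ really is an \emph{honest} fiber bundle (rather than merely weakly equivalent to one) over each $\mathcal{M}(X)$, which rests on the principality of $\mathcal{E}(X) \to \mathcal{M}(X)$ and the commutation of geometric realization with Borel constructions. Once that is in place, the conclusion is purely formal, since all the substantial topological content lives in Corollary~\ref{cor:KOverlineConn}.
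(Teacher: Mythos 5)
Your proposal is correct and is essentially the paper's own proof: both identify $|\epsilon|$ (over each path component, or equivalently over a uniform local trivialization of $\coprod_{[X]}\mathcal{E}(X)\to\mathcal{M}$) as a fiber bundle with fiber $|\overline{K}_\bullet(X)|$, invoke Corollary~\ref{cor:KOverlineConn}, and conclude via the long exact sequence of the fibration. The paper phrases the bundle identification in terms of uniformly local trivializations for all the $X_p$ rather than explicitly as a Borel construction, but this is the same argument.
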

\begin{proof}
  The quotient map $\coprod_{[T]} \mathcal{E}(T) \to \Mst$ is a locally trivial
  fibre bundle, so all of the associated maps $X_p = \coprod_{[T]} (\mathcal{E}(T) \times \overline{K}_p(T))/\Diff_\partial(T) \to \Mst$ are too,
  and moreover $\Mst$ has a cover by open sets on which $X_p \to \Mst$
  is locally trivial for every $p$. Working in compactly generated
  spaces, it follows that $\vert \epsilon \vert$ is again a locally
  trivial fibre bundle, with fibre $\vert \overline{K}_\bullet(M)
  \vert$ over $M \in \Mst$. As we
  have supposed that $W$ is simply-connected and of dimension $2n \geq
  6$, $M$ is too, so this space is $\lfloor
  \tfrac{\overline{g}(M)-4}{2}\rfloor$-connected. The claim follows
  from the long exact sequence on homotopy groups.
\end{proof}

The manifold $S \subset [-1,0] \times \R^\infty$ used to define the
stabilisation map~\eqref{eq:StabMap} is a cobordism from $P$ to $P$, and
we shall need to consider its iterates.  Let us write
\begin{equation*}
  S_p = \bigcup_{i=0}^{p} (S - i \cdot e_1) \subset [-(p+1),0] \times \R^\infty
\end{equation*}
for the $(p+1)$-fold iteration, a manifold diffeomorphic to the
(ambient) connected sum of $[-(p+1),0] \times P$ and $p+1$ copies of
$W_1$. In this notation $S_0 = S$, and we will use these interchangeably.

\begin{definition}\label{defn:Yspace}
  Let $Y_{-1}(p)$ denote the set of
  submanifolds $N \subset [-(p+1),0] \times \bR^\infty$ such that
  \begin{enumerate}[(i)]
  \item $\partial N = N \cap (\{-(p+1),0\} \times \bR^\infty) =
    \{-(p+1),0\} \times P$, and the sets $(-\epsilon, 0] \times P$ and $[-(p+1), -(p+1)+\epsilon) \times P$ are contained in $N$ for some
    $\epsilon > 0$,

  \item $N$ is diffeomorphic to $S_p$ relative to its boundary.
  \end{enumerate}
  We topologise this space as a quotient space of $\Emb^\partial(S_p,
  [-(p+1),0] \times \bR^\infty)$, which is given the colimit topology analogously to \eqref{eq:6}.
\end{definition}

Any element $N \in Y_{-1}(p)$ gives rise to a map $\Mst \to \Mst$
defined in analogy with \eqref{eq:StabMap}, and there is a
continuous function $h: \Mst \times Y_{-1}(p) \to \Mst$ given by
\begin{equation*}
  h (M , N) = (M-(p+1)\cdot e_1) \cup N.
\end{equation*}
The manifold $S_p$ itself gives a point in $Y_{-1}(p)$, and $h(-,S_p)$
agrees with the $(p+1)$-fold iterate of \eqref{eq:StabMap}, but it
will be important to consider stabilisation with other manifolds.

The coordinate patch $c: \R^{2n-1} \to P$ induces a coordinate patch
$\R^{2n-1} \to \{0\} \times P \subset \partial N$ for any $N \in
Y_{-1}(p)$.  We may then define an augmented semisimplicial space
$Y_\bullet(p) \to Y_{-1}(p)$ in complete analogy with $X_\bullet \to
\Mst$: an element of $Y_q(p)$ is a pair $(N,x)$ where $N \in Y_{-1}(p)$
and $x = (\phi_0, \dots, \phi_q) \in K_q(N)$.  The map $h: \Mst \times
Y_{-1}(p) \to \Mst$ defined above extends to a map of semisimplicial
spaces $h: \Mst \times Y_\bullet(p) \to X_\bullet$ defined by
\begin{align*}
  h : \Mst \times Y_{q}(p) & \lra X_q\\
  (M ; N, \phi_0, \ldots, \phi_q) & \longmapsto ((M-(p+1)\cdot e_1) \cup
  N, \phi_0, \ldots, \phi_q).
\end{align*}

\begin{lemma}\label{lem:proof:StabMap}
  For any $y \in Y_p(p)$, the map $h(-,y): \Mst \to X_p$ is a weak homotopy  equivalence.
\end{lemma}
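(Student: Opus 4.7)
The plan is to construct an explicit homotopy inverse $\rho : X_p \to \mathcal{M}$ to $h(-,y)$ by extracting a canonical stabilising collar around the tuple of embeddings, and to verify both composites are homotopic to the identity via parametrised isotopy extension.

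First, I would construct $\rho$ as follows. For $(M', \vec{\psi}) \in X_p$, the compact set $\vec{\psi}(H) \cup c((-\epsilon, 0] \times \bR^{2n-1})$ admits a regular neighbourhood $U \subset M'$ which is diffeomorphic relative to $\partial M'$ to an element of $Y_{-1}(p)$ (i.e.\ to $S_p$): such a neighbourhood is obtained by thickening the $(p+1)$ copies of $\vec{\psi}(H)$ together with a small collar of $\partial M'$ around the attaching discs. By uniqueness of regular neighbourhoods and the contractibility of $\mathcal{E}(S_p)$, the space of admissible $U$'s is weakly contractible, so one obtains a well-defined (up to contractible choice) continuous map $\rho : X_p \to \mathcal{M}$ sending $(M', \vec{\psi})$ to $(M' \setminus \mathrm{int}(U)) + (p+1) e_1$, the complement translated into $(-\infty,0] \times \R^\infty$.

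Second, the composite $\rho \circ h(-, y)$ is canonically isotopic to $\mathrm{id}_{\mathcal{M}}$: for $M \in \mathcal{M}$, the element $h(M, y) = ((M - (p+1)e_1) \cup N, \vec{\phi})$ admits $N$ itself as the canonical choice of regular neighbourhood, whose complement is precisely $M - (p+1)e_1$.

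Third, to exhibit a homotopy $h(-, y) \circ \rho \simeq \mathrm{id}_{X_p}$, I would invoke parametrised isotopy extension. Given $(M', \vec{\psi}) \in X_p$ with extracted collar $U$, there is a diffeomorphism $(U, \vec{\psi}|_U) \cong (N, \vec{\phi})$ rel the attaching discs in $\partial M'$, and this diffeomorphism extends to an ambient isotopy of $M'$ carrying $U$ onto the standard copy of $N$ at the top and $\vec{\psi}$ onto $\vec{\phi}$. Continuity of this isotopy in the parameter $(M', \vec{\psi})$ follows from the contractibility of the relevant spaces of marked stabilising collars.

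The main obstacle is the parametrised uniqueness statement in the third step: showing that the space of ``marked stabilising collars'' inside a varying pair $(M', \vec{\psi}) \in X_p$ is contractible. This requires combining contractibility of embedding spaces of $S_p$ into ambient manifolds (à la Cerf--Palais) with the transitivity, up to isotopy, of the $\Diff_\partial(S_p)$-action on $K_p(S_p)$, which in turn uses the Whitney trick within each connect-summand to bring any tuple $\vec{\psi}|_U$ into a standard form matching $\vec{\phi}$.
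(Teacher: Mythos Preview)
Your strategy is in the right spirit, but there is a genuine gap at exactly the point you flag as the ``main obstacle'', and the fix you propose does not work.

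First, a minor correction: the set $\vec{\psi}(H) \cup c((-\epsilon,0]\times\bR^{2n-1})$ does not meet all of $\partial M' = P$, only the coordinate patch, so neither does any regular neighbourhood of it; hence the complement cannot have boundary $P$ and will not lie in $\mathcal{M}$. You need to include a full collar $(-\epsilon,0]\times P$. This is easy to repair.

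The real issue is your proposed approach to the main obstacle. You suggest proving that the space of marked stabilising collars is contractible via ``transitivity, up to isotopy, of the $\Diff_\partial(S_p)$-action on $K_p(S_p)$, which in turn uses the Whitney trick within each connect-summand''. There are two problems. First, this presupposes that your neighbourhood $U$ is already diffeomorphic to $S_p$, which is precisely what needs to be established; the transitivity statement says nothing about identifying an abstract $U$ with $S_p$. Second, the Whitney trick requires the ambient manifold to be simply connected, but $S_p \cong ([-(p+1),0]\times P)\#(p+1)W_1$ has the same fundamental group as $P$, and only $W$ (not $P$) is assumed simply connected.

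The paper avoids both difficulties by a different and cleaner mechanism. It introduces the intermediate space $E$ of pairs $(M, e)$ with $e: N \hookrightarrow M$ an embedding relative to the outgoing boundary, and observes that the inclusion
\[
  \big([-\epsilon,0]\times P\big) \cup \Big(\coprod_{i=0}^p \phi_i(H)\Big) \lra N
\]
has as complement an $h$-cobordism, which is therefore \emph{invertible} (no simple-connectivity needed for invertibility). This makes the inclusion an isotopy equivalence, so restriction induces a homotopy equivalence $E \to X_p$. On the other side, $E \to \Emb^{\partial_{\mathrm{out}}}(N,(-\infty,0]\times\bR^\infty)$ is a Serre fibration (parametrised isotopy extension) over a contractible base, with fibre weakly equivalent to $\mathcal{M}$. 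The $h$-cobordism/invertibility observation is the key idea your proposal is missing; once you have it, there is no need to construct an explicit inverse $\rho$ at all.
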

\begin{proof}
  For a point $y = (N, \phi_0, \ldots, \phi_p) \in Y_p(p)$ we
  consider the space $E$ consisting of pairs of an $M \in \Mst$ and an
  embedding $e: N \hookrightarrow M$ sending $\partial_\mathrm{out} N = \{0\} \times P$ to $P \subset M$ by $(0,p) \mapsto p$. We topologise $E$ as a subspace of $\Mst \times
  \Emb^{\partial_\mathrm{out}}(N, (-\infty, 0] \times \bR^\infty)$. As the
  inclusion
$$([-\epsilon, 0] \times P) \cup \left(\bigcup_{i=0}^p \phi_i(\Hmfld)\right) \lra N,$$
which is only defined for small enough $\epsilon$, has an inverse up to isotopy through embeddings,
we deduce that the map
\begin{align*}
E & \lra X_p\\
(M, e) & \longmapsto (M, e \circ \phi_0, \ldots, e \circ \phi_p)
\end{align*}
is a homotopy equivalence. On the other hand, the map
\begin{align*}
E & \lra \Emb^{\partial_\mathrm{out}}(N, (-\infty,0] \times \bR^\infty)\\
(M, e) & \longmapsto e
\end{align*}
is locally trivial (the map is $\Diff_\partial((-\infty,0] \times \bR^{\infty-1})$-equivariant, and by the parametrised isotopy extension theorem \cite[2.2.1 Th{\'e}or{\`e}me 5]{Cerf} the base is ``localement retractile" \cite[p.\ 239]{Cerf} with respect to this action, so by \cite[0.4.4 Lemme 2]{Cerf} the map is locally trivial)
over a contractible base space. The fibre over the canonical embedding $N \subset [-(p+1),0] \times \bR^\infty$ is the space of those $M \in \Mst$ which contain $N$, which is clearly weakly homotopy equivalent to $\Mst$.
\end{proof}

\begin{lemma}\label{lem:proof:2}
  Let $\phi_0, \dots, \phi_p: \Hmfld \to S_p$ be embeddings satisfying that
  the tuples $x_p = (S_p; \phi_0, \dots, \phi_p)$ and $x_{p-1} =
  (S_{p-1}; \phi_0, \dots, \phi_{p-1})$ define elements $x_p \in
  Y_p(p)$ and $x_{p-1} \in Y_{p-1}(p-1)$.  Then the diagram
    \begin{equation}\label{eq:12}
      \begin{aligned}
      \xymatrix{
        {\Mst} \ar[d]_-{s} \ar[rr]^-{h(-,x_p)}
        & &
        X_p \ar[d]^{d_p}
        \\
        {\Mst }\ar[rr]^-{h(-,x_{p-1})}
        & &
        X_{p-1}
      }
      \end{aligned}
    \end{equation}
    commutes.  Embeddings $\phi_i$ with this property
    exist.
\end{lemma}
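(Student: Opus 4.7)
The plan is to verify commutativity by unwinding the definitions of the two composites and comparing the underlying manifolds and the tuples of embeddings, and then to construct the required embeddings by a geometric positioning argument using the decomposition $S_p = S_{p-1} \cup (S - p\cdot e_1)$.

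First I would compute the two composites explicitly. Going right-then-down, a point $M \in \mathcal{M}$ is sent to $((M-(p+1)\cdot e_1)\cup S_p,\,\phi_0,\dots,\phi_p)$ and then $d_p$ deletes the last embedding, yielding $((M-(p+1)\cdot e_1)\cup S_p,\,\phi_0,\dots,\phi_{p-1})$. Going down-then-right, $M$ is first stabilised to $(M-e_1)\cup S$, then shifted by $p\cdot e_1$ and unioned with $S_{p-1}$, producing $\big((M-(p+1)\cdot e_1)\cup (S - p\cdot e_1)\cup S_{p-1},\,\phi_0,\dots,\phi_{p-1}\big)$. The definition $S_p = \bigcup_{i=0}^p(S-i\cdot e_1) = (S-p\cdot e_1)\cup S_{p-1}$ makes the underlying submanifolds of $(-\infty,0]\times \bR^\infty$ literally equal. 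For the tuples of embeddings to agree, the only thing needed is that each $\phi_i$ for $i<p$ has image inside $S_{p-1}\subset S_p$, so that the embedding $\phi_i\colon H\to S_{p-1}$ representing the $Y_{p-1}(p-1)$-simplex coincides, after postcomposition with $S_{p-1}\hookrightarrow S_p$, with the embedding $\phi_i\colon H\to S_p$ used in $x_p$. This containment is precisely the hypothesis that $x_{p-1}=(S_{p-1};\phi_0,\dots,\phi_{p-1})$ is an element of $Y_{p-1}(p-1)$, since it forces each $\phi_i(H)$ to lie in $S_{p-1}$. One also uses that the outgoing boundary collar of $S_p$ restricts to the outgoing boundary collar of $S_{p-1}$ (both coming from $\partial c$ at $\{0\}\times P$), so the collar condition in Definition~\ref{defn:K-p}(\ref{item:5}) is compatible under the inclusion.

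Next I would construct embeddings $\phi_0,\dots,\phi_p$ realising both simplices simultaneously. By construction, $S_p$ is obtained from $[-(p+1),0]\times P$ by $p+1$ ambient connect sums with copies of $W_1$, one in each layer $S-i\cdot e_1$, and each connect sum is performed along a small disc in the distinguished chart $[-(p+1),0]\times c$ centred at $(-\tfrac12-i,0,\dots,0)$. For $i=0,\dots,p$ I would place an embedded copy $\phi_i(W_{1,1})$ of $W_{1,1}$ inside the $i$-th layer $S-i\cdot e_1\subset S_p$, near its connect-sum disc, and attach to it the standard cylinder $[-1,0]\times D^{2n-1}$ by an embedded tube running through the outgoing collars of the layers $S-j\cdot e_1$ with $j<i$, ending at the outgoing boundary $\{0\}\times P$ at the point $t_i\cdot e_1$, choosing $t_0<\dots<t_p$. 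For $i<p$ the entire embedding stays inside $S_{p-1}$, because both the $W_{1,1}$ part and the connecting tube lie in layers of index $<p$; for $i=p$ the embedding uses the last layer as well. General position (the cores have dimension $n$ and $S_p$ has dimension $2n$, with $2n\geq 6$) allows us to make the cores pairwise disjoint. This produces an element of $K_p(S_p)$ with the desired property, establishing that $(S_p;\phi_0,\dots,\phi_p)\in Y_p(p)$ and $(S_{p-1};\phi_0,\dots,\phi_{p-1})\in Y_{p-1}(p-1)$ simultaneously.

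No step is really hard; the only mild subtlety is bookkeeping of the various collars and shifts to see that the two composites land in literally the same submanifold of $(-\infty,0]\times\bR^\infty$ with literally the same labelled embeddings, rather than merely in diffeomorphic data. This is handled by using the point-set model of $\mathcal{M}$ from Definition~\ref{defn:ModuliSpace} (where $M\cup_PS$ is a set-theoretic union of submanifolds) and by arranging the tubes of the $\phi_i$ to respect the layerwise decomposition of $S_p$.
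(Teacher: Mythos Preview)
Your proof is correct and follows essentially the same approach as the paper: the commutativity is verified by the same direct unwinding (the paper writes out the identical chain of equalities $d_p(h(M,x_p)) = h(s(M),x_{p-1})$), and the existence is established by the same construction of placing the $W_{1,1}$-part of $\phi_i$ in the $i$th layer of $S_p$ and extending by a thickened path to the coordinate patch at the outgoing boundary. The only point you omit is the paper's parenthetical remark that if the path component of $P$ is orientable and the orientations induced by $\phi'$ and $c$ disagree, one should first precompose $\phi'$ with an orientation-reversing diffeomorphism of $W_{1,1}$ so that the thickening of the path to an embedding of $[-1,0]\times D^{2n-1}$ is possible.
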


\begin{proof}
  For the commutativity of the diagram, we just calculate
  \begin{align*}
    d_p(h(M, x_p)) &= d_p((M - (p+1)\cdot e_1) \cup S_{p} , \phi_0, \ldots, \phi_p)\\
    &=((M - (p+1)\cdot e_1) \cup S_{p} , \phi_0, \ldots, \phi_{p-1})\\
    &=(((M - e_1) \cup S - p \cdot e_1) \cup S_{p-1} , \phi_0, \ldots, \phi_{p-1})\\
    &=h(s(M), x_{p-1}).
  \end{align*}

  For the existence, we first note that the canonical
  embedding
  $\phi': W_{1,1} \hookrightarrow S$ induces, for each $i = 0, \dots,
  p$, an embedding
  \begin{equation*}
    \phi_i' = \phi' - i\cdot e_1: W_{1,1} \to S_p
  \end{equation*}
  with image in $S_p \cap ([-(i+1),-i] \times \R^\infty)$.  We
  extend these to disjoint embeddings $\phi_i$ of $\Hmfld$ by choosing a
  path from each $\phi'_i(\partial W_{1,1})$ to the point $c(0; 3i, 0,
  \ldots)$ in the coordinate patch $c$ and thickening it up.
  (Strictly speaking, this may not be possible if the path component
  of $P$ is orientable and the orientations induced by $\phi'$ and $c$
  are not compatible.  If this is the case we first change $\phi'$ by
  precomposing with an orientation-reversing diffeomorphism of
  $W_{1,1}$.)  These paths may clearly be chosen disjointly, and for
  $i < p$ they may be chosen with image in $S_{p-1}$.
\end{proof}

To establish property~(\ref{it:prop:3}) of
Proposition~\ref{prop:MainProperties} we must produce a homotopy of
maps into $X_{p-1}$.  The homotopy will be constructed in
Lemma~\ref{lem:proof:3a} and~\ref{lem:proof:3b} using the explicit
diffeomorphism provided by the following lemma.
\begin{lemma}\label{lem:SigmaDiffeo}
  If $i_0, i_1 : W_{1,1} \hookrightarrow W_{2,1}$ are disjoint
  orientation preserving embeddings into the interior of $W_{2,1}$
  then there is a diffeomorphism $\sigma$ of $W_{2,1}$ which restricts
  to the identity on the boundary and satisfies $\sigma \circ i_j =
  i_{1-j}$ for $j=0,1$.
\end{lemma}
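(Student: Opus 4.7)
The plan is to find an embedded copy $N$ of $W_{2,1}$ inside $W_{2,1}$ that contains $i_0(W_{1,1}) \cup i_1(W_{1,1})$ and admits a concrete swap diffeomorphism, and then to show that the complement $W_{2,1} \setminus \Int(N)$ is a trivial collar, so that the swap extends across it to give the identity on $\partial W_{2,1}$.

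First I would fix a basepoint $p \in \partial W_{1,1}$ and choose a smoothly embedded arc $\alpha : [0,1] \to W_{2,1}$ with $\alpha(0) = i_0(p)$, $\alpha(1) = i_1(p)$, interior disjoint from $i_0(W_{1,1}) \cup i_1(W_{1,1})$, and meeting both boundaries transversally; such an arc exists by path-connectedness of the complement (assured by general position, since $2n \geq 6$). Let $\psi : D^{2n-1} \hookrightarrow \partial W_{1,1}$ be an orientation-preserving embedding centered at $p$, and let $r : D^{2n-1} \to D^{2n-1}$ be reflection in the first coordinate. Thicken $\alpha$ to an embedded $1$-handle $T = [-1,1] \times D^{2n-1}$ attached via $\phi_0 := i_0 \circ \psi$ at $\{-1\} \times D^{2n-1}$ and $\phi_1 := i_1 \circ \psi \circ r$ at $\{+1\} \times D^{2n-1}$. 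The opposing orientations of $\phi_0$ and $\phi_1$ (because $r$ is orientation-reversing) ensure that $N := i_0(W_{1,1}) \cup T \cup i_1(W_{1,1})$ is an oriented submanifold of $W_{2,1}$, abstractly diffeomorphic to the boundary connect sum $W_{1,1} \natural W_{1,1} \cong W_{2,1}$.

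Next I would argue that the inclusion $N \hookrightarrow W_{2,1}$ is a weak equivalence. Both are simply connected and have homology concentrated in degrees $0$ and $n$, so it suffices to check the map on $H_n$. Non-degeneracy of the hyperbolic intersection form on $H_n(W_{1,1}) \cong H$ makes each $(i_j)_* : H_n(W_{1,1}) \to H_n(W_{2,1})$ injective, and the disjointness of the two embeddings forces the images to be orthogonal under the intersection pairing on $H_n(W_{2,1}) \cong H^{\oplus 2}$. The resulting embedding of non-degenerate forms $H \oplus H \hookrightarrow H^{\oplus 2}$ between modules of equal rank must be an isomorphism. By excision, $W_{2,1} \setminus \Int(N)$ is an $h$-cobordism between two copies of $S^{2n-1}$; by the $h$-cobordism theorem ($2n \geq 6$ and simple connectivity), it is diffeomorphic to $S^{2n-1} \times [0,1]$, with $\partial N$ corresponding to $S^{2n-1} \times \{0\}$.

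Then I would define the swap $\tau_N : N \to N$ by $\tau_N(x) = i_{1-j} \circ i_j^{-1}(x)$ for $x \in i_j(W_{1,1})$ and $\tau_N(t, z) = (-t, r(z))$ for $(t, z) \in T$. The identity $\phi_1 = i_1 \circ \psi \circ r$ ensures the pieces agree on the attaching disks, and the reflection $r$ cancels the sign change in $t$, so $\tau_N$ is an orientation-preserving diffeomorphism satisfying $\tau_N \circ i_j = i_{1-j}$. The restriction $\tau_N|_{\partial N}$ is the ``swap of summands'' diffeomorphism of $\partial N = \partial W_{1,1} \# \partial W_{1,1} \cong S^{2n-1}$; realising $\partial N$ as the unit sphere in $\bR^{2n}$ with the two cap regions at antipodal points, this swap becomes a $180^\circ$ rotation in a coordinate two-plane, hence lies in the path-connected group $\SO(2n)$. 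An isotopy $\rho_s \in \Diff^+(S^{2n-1})$ from $\tau_N|_{\partial N}$ to the identity then extends $\tau_N$ over the collar by $(y, s) \mapsto (\rho_s(y), s)$, yielding the desired diffeomorphism $\sigma$ of $W_{2,1}$. The main obstacle is the $h$-cobordism step: showing that $N \hookrightarrow W_{2,1}$ is a weak equivalence rests entirely on the intersection-form argument, and without the non-degeneracy of the hyperbolic form together with disjointness, the complement could carry middle-dimensional homology and the construction would fail.
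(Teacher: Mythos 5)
Your argument is correct, but it takes a genuinely different route from the paper's. You build the boundary connect-sum $N$ of $i_0(W_{1,1})$ and $i_1(W_{1,1})$, show via the intersection-form argument and the $h$-cobordism theorem that $W_{2,1}\setminus\Int(N)$ is a collar, and then isotope the boundary swap $\tau_N|_{\partial N}$ to the identity across that collar. The paper instead builds the same kind of submanifold $\Sigma$ (a large ball with the two copies of $W_{1,1}$ glued in) together with an \emph{explicit} diffeomorphism: two small ``half Dehn twists'' around the two summands followed by one undoing ``half Dehn twist'' on the big ball. This composite is already the identity on $\partial\Sigma$, so it simply extends by the identity --- no $h$-cobordism theorem and no isotopy of sphere diffeomorphisms is ever needed. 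In fact the $h$-cobordism step in your argument is dispensable even on its own terms: once $\tau_N|_{\partial N}$ is known to be isotopic to the identity, you can extend $\tau_N$ over a collar neighbourhood of $\partial N$ inside $W_{2,1}\setminus\Int(N)$ (such a collar always exists) and then by the identity on the rest of $W_{2,1}$, so the intersection-form computation is not actually required.

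The genuinely delicate point in your proof is the claim that $\tau_N|_{\partial N}$ is isotopic to the identity, and the justification (``this swap becomes a $180^\circ$ rotation'') is a bit quick. With the natural parametrisations of the two cap discs (coming from $i_0^{-1}$ and $i_1^{-1}$), your $\tau_N$ carries one cap to the other by the \emph{identity} of $D^{2n-1}$, whereas a literal $180^\circ$ rotation in a coordinate two-plane carries one cap to the other via the reflection $r$. One can fix this by re-parametrising one of the cap discs by $r$; with that choice $\tau_N|_{\partial N}$ really does become the linear involution $(y,t)\mapsto (r(y),-t)$ of $\partial(D^{2n-1}\times[-1,1])\cong S^{2n-1}$ and so lies in $\SO(2n)$. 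This extra care matters because $\pi_0\Diff^+(S^{2n-1})\cong\Theta_{2n}$ is nontrivial in general, so an orientation-preserving diffeomorphism of $S^{2n-1}$ is not automatically isotopic to the identity. The paper's cancelling-twist construction is designed precisely to avoid ever having to make such a verification.
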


\begin{proof}
  Let $\Sigma$ be the manifold obtained from the ball $B_{12}(0)
  \subset \bR^{2n}$ by cutting out the interiors of each of $B_1(\pm
  3e_1)$ and gluing in copies of $W_{1,1}$ along the boundaries (which
  are both canonically identified with $S^{2n-1}$). We will first
  construct a diffeomorphism $\sigma$ of $\Sigma$ which swaps the two
  copies of $W_{1,1}$, and then show that there is an embedding $e :
  \Sigma \hookrightarrow W_{2,1}$ restricting to $i_0$ and $i_1$ on
  the two copies of $W_{1,1}$. The required diffeomorphism is obtained
  by extending $\sigma$ from $\Sigma$ to $W_{2,1}$ by the identity.

  For $t \in [0,1]$, let $A(t)\in SO(2)$ be the rotation by $t \cdot
  \pi$, and let $B(t) \in SO(2n)$ be the block-diagonal matrix
  $\mathrm{diag}(A(t),I)$.  Let $r: [0,\infty) \to [0,1]$ be a smooth
  function with $r^{-1}(1) = [0,1]$ and $r^{-1}(0) = [2,\infty)$.
  Then the formula
  \begin{equation*}
    f(x) = (B\circ r(|x|))(x)
  \end{equation*}
  defines a ``half Dehn twist" diffeomorphism of $\bR^{2n}$, supported
  in $2D^{2n}$. Inside $D^{2n}$ it just rotates by $\pi$ in the first
  two coordinate directions. The function $g(x) = f(x+3e_1) +
  f(x-3e_1) - x$ does two of those half Dehn twists, supported in the
  balls $B_2(\pm 3 e_1)$, and the function $h(x) = 6f(x/6)$ does a
  half Dehn twist supported in the ball $B_{12}(0)$.  Finally, the map
  $\hat{\sigma} = h^{-1} \circ g$ is a diffeomorphism of $\bR^{2n}$
  supported in $B_{12}(0)$ which swaps the two balls $B_1(\pm 3e_1)$:
  on $B_1(-3e_1)$ it agrees with the map ``$+ 6e_1$" and vice
  versa. Hence $\hat{\sigma}$ restricts to a diffeomorphism $\sigma$
  of $\Sigma$ which is the identity on the boundary and swaps the two
  copies of $W_{1,1}$.

  The given $i_0$ and $i_1$ give an embedding into $W_{2,1}$ from the
  subspace $W_{1,1} \amalg W_{1,1} \subset \Sigma$.  To extend to an
  embedding $\Sigma \hookrightarrow W_{2,1}$, it suffices to extend to
  a neighbourhood of $(W_{1,1} \amalg W_{1,1}) \cup [-2e_1,2e_1]
  \subset \Sigma$.  The manifold $K = W_{2,1} \setminus
  \cup_{j=0,1}\mathrm{int}(i_j(W_{1,1}))$ is path-connected, so an
  extension over $[-2e_1,2e_1]$ exists.  This extension may be
  thickened to an embedding of a neighbourhood inside $\Sigma$ as we
  have assumed that the $i_j$ are both orientation preserving.
\end{proof}

\begin{lemma}\label{lem:proof:3a}
Let $y \in Y_p(p)$ be of the form $y = (S_p; \phi_0, \dots, \phi_p)$, and be such that
  the two elements of $Y_{p-1}(p)$ defined by
  \begin{align*}
    d_i(y) &= (S_p; \phi_0, \ldots, \hat{\phi}_i, \ldots, \phi_p)\\
    d_{i+1}(y) &= (S_p; \phi_0, \ldots, \hat{\phi}_{i+1}, \ldots,
    \phi_p)
  \end{align*}
  are in the same path component of $Y_{p-1}(p)$.  Then the two
  compositions
  \begin{equation*}
    \xymatrix{
      {\Mst} \ar[r]^-{h(-, y)}& X_p 
      \ar@/^/[r]^{d_i} \ar@/_/[r]_{d_{i+1}} 
      & X_{p-1}
    }
  \end{equation*}
  are homotopic.  
\end{lemma}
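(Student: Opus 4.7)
The plan is to use the naturality of the stabilisation map $h$ with respect to the face maps of the semisimplicial structure on $Y_\bullet(p)$, which will immediately reduce the desired homotopy to a path in the space $Y_{p-1}(p)$.

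First I would observe that for any $y \in Y_p(p)$ the definitions of $h$ and of the face maps give the identities
\begin{equation*}
d_i \circ h(-, y) \;=\; h(-, d_i(y)) \quad\text{and}\quad d_{i+1} \circ h(-, y) \;=\; h(-, d_{i+1}(y))
\end{equation*}
as maps $\mathcal{M} \to X_{p-1}$, where on the right we interpret $h$ as a continuous map
\begin{equation*}
h : \mathcal{M} \times Y_{p-1}(p) \lra X_{p-1}, \qquad (M; N, \psi_0, \ldots, \psi_{p-1}) \longmapsto \big((M-(p+1)e_1)\cup N, \psi_0, \ldots, \psi_{p-1}\big).
\end{equation*}
This identity is essentially by inspection: both operations fix the underlying manifold $(M-(p+1)e_1) \cup S_p$ and simply delete one of the embeddings $\phi_i$ or $\phi_{i+1}$ from the tuple.

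Next, by hypothesis $d_i(y)$ and $d_{i+1}(y)$ lie in the same path component of $Y_{p-1}(p)$, so I would choose a path $\gamma: [0,1] \to Y_{p-1}(p)$ with $\gamma(0) = d_i(y)$ and $\gamma(1) = d_{i+1}(y)$. Composing with $h$ gives a continuous map
\begin{equation*}
H : [0,1] \times \mathcal{M} \lra X_{p-1}, \qquad (t, M) \longmapsto h(M, \gamma(t)),
\end{equation*}
whose restrictions at $t=0$ and $t=1$ are $h(-,d_i(y)) = d_i \circ h(-,y)$ and $h(-,d_{i+1}(y)) = d_{i+1} \circ h(-,y)$ respectively. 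This is precisely the required homotopy.

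There is no real obstacle here: the content is entirely packaged into the assumption that the two vertices $d_i(y), d_{i+1}(y) \in Y_{p-1}(p)$ lie in the same path component, which is exactly what will be verified in the companion Lemma~\ref{lem:proof:3b} (by producing the explicit diffeomorphism of $S_p$ from Lemma~\ref{lem:SigmaDiffeo} that interchanges $\phi_i$ and $\phi_{i+1}$ while preserving the remaining embeddings and the boundary). The only point worth checking carefully in the writeup is that the face map $d_i$ on $X_\bullet$ and the face map $d_i$ on $Y_\bullet(p)$ really commute with $h$ on the nose, which is immediate from the formula for $h$ since it leaves the ordering of the surviving embeddings untouched.
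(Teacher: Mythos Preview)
Your proof is correct and is essentially identical to the paper's own one-line argument, which simply picks a path $\gamma$ from $d_i(y)$ to $d_{i+1}(y)$ and defines the homotopy as $s \mapsto h(-,\gamma(s))$. You have just made explicit the identity $d_j \circ h(-,y) = h(-,d_j(y))$ that the paper uses implicitly.
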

\begin{proof}
  If we pick a path $\gamma: [0,1] \to Y_{p-1}(p)$ with $\gamma(0) =
  d_i(y)$ and $\gamma(1) = d_{i+1}(y)$, then a homotopy can be defined as
  $s \mapsto h(-,\gamma(s))$.
\end{proof}

\begin{lemma}\label{lem:proof:3b}
  For each $i \in \{0, \dots, p-1\}$, there exists a $y \in Y_p(p)$ of the form $y=(S_p; \phi_0, \dots, \phi_p)$ 
such that $d_i(y)$ and $d_{i+1}(y)$ are in the same path component of $Y_{p-1}(p)$.
\end{lemma}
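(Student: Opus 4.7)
The plan is to construct $y = (S_p; \phi_0,\dots,\phi_p)$ so that the $W_{1,1}$-parts of $\phi_i$ and $\phi_{i+1}$ lie inside a common sub-region of $S_p$ diffeomorphic to $W_{2,1}$, and then produce the required path using Lemma \ref{lem:SigmaDiffeo}. Concretely I would fix an orientation-preserving embedding $\iota : W_{2,1} \hookrightarrow \Int(S_p) \cap ([-(i+2),-i] \times \bR^\infty)$ whose image contains the two $W_{1,1}$ connected-summands of $S_p$ sitting in the $i$-th and $(i+1)$-th copies of $S$, and let $\alpha_0, \alpha_1 : W_{1,1} \hookrightarrow W_{2,1}$ be two standard disjoint orientation-preserving inclusions. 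Define $\phi_i$ and $\phi_{i+1}$ so that their $W_{1,1}$-parts are $\iota\circ\alpha_0$ and $\iota\circ\alpha_1$, with tubes attached at positions $t_i < t_{i+1}$ in the coordinate patch on $\{0\}\times P$; for $j \notin \{i,i+1\}$ take $\phi_j$ exactly as in the proof of Lemma \ref{lem:proof:2}, arranging by general position that $\phi_j(H) \cap \iota(W_{2,1}) = \emptyset$ and that all cores are mutually disjoint.

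To produce a path from $d_i(y)$ to $d_{i+1}(y)$ in $Y_{p-1}(p)$, I would use Lemma \ref{lem:SigmaDiffeo} to obtain a diffeomorphism $\sigma$ of $W_{2,1}$ rel boundary with $\sigma\circ\alpha_j = \alpha_{1-j}$, and extend by the identity outside $\iota(W_{2,1})$ to a diffeomorphism $\Sigma \in \Diff_\partial(S_p)$. The quotient map $\Emb^\partial(S_p, [-(p+1),0]\times\bR^\infty) \times K_{p-1}(S_p) \to Y_{p-1}(p)$ is a principal $\Diff_\partial(S_p)$-bundle, so it suffices to construct a path in the source from $(\id_{S_p},\, (\phi_0,\dots,\hat{\phi}_i,\dots,\phi_p))$ to $(\Sigma,\, \Sigma^{-1}(\phi_0,\dots,\hat{\phi}_{i+1},\dots,\phi_p))$. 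For the first coordinate I would use a path from $\id$ to $\Sigma$ in $\Emb^\partial(S_p, [-(p+1),0]\times\bR^\infty)$, which exists because this space is weakly contractible, by the standard argument for embeddings into the infinite-dimensional ambient $\bR^\infty$.

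For the tuple coordinate, since $\phi_j(H) \cap \iota(W_{2,1}) = \emptyset$ for $j \notin \{i,i+1\}$ the diffeomorphism $\Sigma$ fixes each such $\phi_j$, so only the $i$-th slot requires a nontrivial interpolation---from $\phi_{i+1}$ to $\Sigma^{-1}\phi_i$. By construction these two embeddings have the \emph{same} $W_{1,1}$-part $\iota\circ\alpha_1$ and differ only in the tube: the former is attached at $t_{i+1}$, the latter at $t_i$. The interpolation simply slides the tube's boundary attachment along $\{0\}\times P$ from $t_{i+1}$ to $t_i$ while perturbing the body of the tube in general position to remain disjoint from the cores of the other $\phi_j$'s. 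The main technical nuisance will be verifying that this interpolated tuple genuinely stays in $K_{p-1}(S_p)$ throughout: the sliding position must remain in $(t_{i-1}, t_{i+2})$, which is automatic since both endpoints lie there, and the moving core must avoid all other cores, which is handled by general position as the cores are $n$-dimensional in a $2n$-dimensional ambient with $2n \geq 6$. The conceptual key is that $\Sigma$ is realized by an ambient isotopy in the infinite-dimensional $\bR^\infty$, bypassing the fact that $\Sigma$ need not be isotopic to the identity within $\Diff_\partial(S_p)$ itself.
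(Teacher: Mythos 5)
Your argument is correct and takes essentially the same approach as the paper's: construct the swapping diffeomorphism $\Sigma$ from Lemma~\ref{lem:SigmaDiffeo} supported in a $W_{2,1}$ neighbourhood of the $i$-th and $(i+1)$-th $W_{1,1}$'s, realize it by an ambient isotopy (which exists because embeddings into $\bR^\infty$ are weakly contractible), and observe that the other $\phi_j$ are untouched while a final tube-sliding isotopy in general position closes the loop. The only cosmetic differences are that you phrase the first step via the principal $\Diff_\partial(S_p)$-bundle rather than lifting an explicit ambient isotopy of $S_p$, and that you fix both $\phi_i$ and $\phi_{i+1}$ in advance and then argue the slide exists, whereas the paper first fixes $\phi_{i+1}$ and then \emph{chooses} $\phi_i$ isotopic to $\sigma_i\circ\phi_{i+1}$, thereby making that last isotopy available by fiat rather than by a general-position argument.
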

\begin{proof}
  Let us first note that for any isotopy of embeddings $h_t: S_p \to
  [-(p+1),0] \times \R^\infty$, constant near $\partial S_p$, starting
  at the canonical inclusion $\iota$ and ending at $\iota \circ
  \tau$ for a diffeomorphism $\tau: S_p \to S_p$, we get a loop $t
  \mapsto h_t(S_p) \in Y_{-1}(p)$ covered by the path
  \begin{equation}\label{eq:8}
    t \longmapsto (h_t(S_p); h_t \circ \phi_0, \dots, h_t \circ \phi_q)
  \end{equation}
  in $Y_q(p)$, starting at any given $(S_p; \phi_0, \dots, \phi_q) \in
  Y_q(p)$ and ending at $(S_p; \tau \circ \phi_0, \dots, \tau \circ
  \phi_q)$.

  For any $\tau \in \Diff_\partial(S_p)$, we may use Whitney's
  embedding theorem to choose an isotopy $h_t$ from $\iota$ to $\iota
  \circ \tau$, and this isotopy is unique up to isotopy.  It follows
  that the path~\eqref{eq:8} depends only on $\tau$, up to homotopy
  relative to $\partial I$.

  We then apply Lemma~\ref{lem:SigmaDiffeo} in the following way.  The
  images of the embeddings $\phi_i', \phi_{i+1}': W_{1,1} \to S_p$
  from the proof of Lemma \ref{lem:proof:2} may be enlarged and then
  joined by a thickened path to obtain a submanifold $B_i \subset S_p
  \cap ((-i-2,-i)\times \R^\infty)$ diffeomorphic to $W_{2,1}$ and
  disjoint from the images of $\phi_j'$ for $j \not\in\{i,i+1\}$.
  Applying Lemma~\ref{lem:SigmaDiffeo} to the embeddings $\phi_i',
  \phi_{i+1}': W_{1,1} \hookrightarrow B_i \cong W_{2,1}$, we obtain a
  diffeomorphism $\sigma_i: S_p \to S_p$ supported in $B_i$,
  satisfying $\sigma_i \circ \phi_i' = \phi_{i+1}'$ and $\sigma_i \circ
  \phi_{i+1}' = \phi_i'$.

  We first choose the $\phi_j$ for $j \neq \{i,i+1\}$ as in the proof of
  Lemma~\ref{lem:proof:2}, starting with $\phi'_j: W_{1,1} \to S_p$
  and extending by a thickening of a path to the point $c(0; 3j, 0,
  \ldots)$ in the coordinate patch $c$.  In doing so, we ensure that
  these paths are disjoint from each other and from $B_i$.  We then
  extend $\phi_{i+1}'$ to $\phi_{i+1}$, disjoint from previously
  chosen $\phi_j$.  The construction~\eqref{eq:8} now gives a path
  from $(S_p; \phi_0, \dots, \hat{\phi}_i, \dots, \phi_p)$ to the
  element $(S_p; \phi_0, \dots, \phi_{i-1}, \sigma_i \circ \phi_{i+1},
  \phi_{i+2}, \dots, \phi_p)$.  We can then choose an extension
  $\phi_i$ of $\phi_i'$ which is isotopic to $\sigma_i \circ \phi_{i+1}$, by an isotopy $g_t$ disjoint from previously chosen $\phi_j$.

  For $y = (S_p; \phi_0, \dots, \phi_p)$, the path~\eqref{eq:8}
  concatenated with the path arising from the isotopy $g_t$ starts at
  $d_i(y)$ and ends at $d_{i+1}(y)$.
\end{proof}

\begin{proof}[Proof of Proposition~\ref{prop:MainProperties}]
  Lemma~\ref{lem:proof:1} proves~(\ref{it:prop:1}).
  For~(\ref{it:prop:2}), we apply Lemma~\ref{lem:proof:2} and define the
  maps $g_p$ and $g_{p-1}$ of~\eqref{eq:5} as the maps $h(-,x_p)$ and
  $h(-,x_{p-1})$ of~\eqref{eq:12}.  These are weak homotopy equivalences by
  Lemma~\ref{lem:proof:StabMap}.

  Finally, for each $i \in \{0, \dots, p-1\}$
  Lemmas~\ref{lem:proof:3a} and \ref{lem:proof:3b} imply the existence
  of a diagram
  \begin{equation*}
    \xymatrix{
      {\Mst} \ar[r]^-{h(-, y)}_-\simeq& X_p 
      \ar@/^/[r]^{d_i} \ar@/_/[r]_{d_{i+1}} 
      & X_{p-1}
    }
  \end{equation*}
  where the first map is a weak homotopy equivalence by
  Lemma~\ref{lem:proof:StabMap}, and the two compositions $d_i \circ
  h(-,y)$ and $d_{i+1} \circ h(-,y)$ are homotopic.  As any CW approximation of $X_p$ may be lifted up to homotopy through the weak homotopy equivalence $h(-,y) : \Mst \to X_p$, it follows that the
  face maps $d_i, d_{i+1}: X_p \to X_{p-1}$ become homotopic when precomposed with a CW approximation of $X_p$, establishing~(\ref{it:prop:3}).
\end{proof}


\section{Tangential structures and abelian coefficient systems}\label{sec:TS}

In this section we shall improve Theorem \ref{thm:Main:sec6} in two ways. Firstly we shall consider moduli spaces of manifolds equipped with extra structure, and secondly we shall consider homology with coefficients in certain local coefficient systems.

Recall that a \emph{tangential structure} is a map $\theta : B \to BO(2n)$ with $B$ path-connected,
and a \emph{$\theta$-structure} on a $2n$-manifold $W$ is a bundle map
(fiberwise linear isomorphism) $\hat{\ell}_W : TW \to \theta^*\gamma_{2n}$, with underlying map $\ell_W : W \to B$.  We
shall write $\Bun^\theta(W)$ for the space of all such maps.  If $W$
has boundary $P$ equipped with a collar $(-\epsilon,0] \times P \to
W$, then the collar induces an isomorphism $\epsilon^1 \oplus TP \cong TW
\vert_P$.  If we fix a $\theta$-structure $\hat{\ell}_P : \epsilon^1 \oplus
TP \to \theta^*\gamma_{2n}$, then we may consider the subspace
$\Bun_\partial^\theta(W ; \hat{\ell}_P)\subset \Bun^\theta(W)$ consisting of bundle maps $\hat{\ell}_W : TW \to \theta^*\gamma_{2n}$ which restrict to $\hat{\ell}_P$ over the
boundary. Precomposition with the derivative of a diffeomorphism
induces an action of the group $\Diff_\partial(W)$ on the space
$\Bun_\partial^\theta(W; \hat{\ell}_P)$.

In Section \ref{sec:Resolutions}, we considered $\partial W \cong P \subset \R^\infty$ and used the
model $E\Diff_\partial(W) = \mathcal{E}(W)$, defined as the embedding space~\eqref{eq:6}, to construct the point set model $\mathcal{M}(W)
\simeq \MM(W) = B\Diff_\partial(W)$.  In this section, we shall need a similar
model for the homotopy quotient (alias Borel construction)
$\MM^\theta(W;\hat{\ell}_P) = \Bun_\partial^\theta(W;\hat{\ell}_P) \hcoker \Diff_\partial(W)$. We let
$$\mathcal{M}^\theta(W ; \hat{\ell}_{P}) = (\mathcal{E}(W) \times \Bun_\partial^\theta(W;\hat{\ell}_P))/\Diff_\partial(W).$$
As a set this may be described as pairs $(M,\hat{\ell})$ where $M
\in \mathcal{M}(W)$, and hence in particular $\partial M = \partial W = \{0\}
\times P$, and $\hat{\ell} : TM \to \theta^*\gamma_{2n}$ is a bundle map with
$\hat{\ell}\vert_{\partial M} = \hat{\ell}_{P}$.


Just as in Section \ref{sec:Resolutions}, it is convenient in this
section to consider all manifolds stably diffeomorphic to $W$ in one
go, so in analogy with Definition \ref{defn:ModuliSpaceStab} we define the
following larger moduli space of $\theta$-manifolds.

\begin{definition}\label{Defn:7.1}
  Let $P \subset \bR^\infty$ be a $(2n-1)$-manifold, equipped with a $\theta$-structure $\hat{\ell}_P$, and let $W$ be a manifold with boundary $P$. Let 
	$$\mathcal{M}^{\mathrm{st}, \theta}(\hat{\ell}_P) = \mathcal{M}^{\mathrm{st}, \theta}(W;\hat{\ell}_P) = \coprod_{[T]} \mathcal{M}^\theta(T ; \hat{\ell}_P),$$
where the union is taken over the set of compact manifolds with
$\partial T = P$ and $T$ stably diffeomorphic to $W$, one in each
diffeomorphism class relative to $P$. As a set this may be described as pairs
  of a $2n$-dimensional submanifold $M \subset (-\infty, 0] \times
  \bR^\infty$ and a $\theta$-structure $\hat{\ell}_M : TM \to
  \theta^*\gamma_{2n}$ such that
\begin{enumerate}[(i)]
\item $M \in \Mst(W)$,

\item $\hat{\ell}_M\vert_{P} = \hat{\ell}_P$.
\end{enumerate}
\end{definition}

The first difference which arises in the presence on $\theta$-structures is that there are potentially many $\theta$-structures on the cobordism $S \cong ([-1,0] \times P) \# W_{1}$, and furthermore these may restrict to different $\theta$-structures on $\{-1\} \times P$ and $\{0\} \times P$. We therefore require a discussion of the types of $\theta$-structure that $S$ should be allowed.

\begin{definition}\label{defn:7.2}
  Choose once and for all a bundle map $\tau: \bR^{2n} \to
  \theta^*\gamma_{2n}$ from the trivial $2n$-dimensional vector bundle over
  a point, or what is the same thing a basepoint $\tau \in
  \Fr(\theta^*\gamma_{2n})$. This determines a canonical $\theta$-structure
  on any framed $2n$-manifold (or $(2n-1)$-manifold); if $X$ is a  framed manifold we denote this $\theta$-structure by $\hat{\ell}^\tau_X$.
  
In \eqref{eq:StdEmbeding} we have defined a specific embedding $S^n \times D^n \hookrightarrow \bR^{2n}$, and hence obtained a framing $\xi_{S^n \times D^n}$ of $S^n \times D^n$. We will say that a $\theta$-structure on $S^n \times D^n$ is \emph{standard} if it is homotopic to $\hat{\ell}^\tau_{S^n \times D^n}$.

In \eqref{eq:barE} and \eqref{eq:barF} we defined embeddings $\overline{e},\overline{f}:
S^n \times D^n \to W_{1,1}$, and hence we obtain embeddings
\begin{equation*}
  \overline{e}_1, \overline{f}_1, \ldots, \overline{e}_g, \overline{f}_g : S^n \times D^n \lra W_{g,1}.
\end{equation*}
Let us say that a $\theta$-structure $\hat{\ell} : TW_{g,1} \to
\theta^*\gamma_{2n}$ on $W_{g,1}$ is \emph{standard} if all the pulled-back
structures $\overline{e}_i^*\hat{\ell}$ and $\overline{f}_i^*\hat{\ell}$ on $S^n \times D^n$ are standard.
\end{definition}

\begin{remark}\label{rem:StdVsAdmissible}
In Definition \ref{defn:admissible} we said that a $\theta$-structure $\hat{\ell}$ on $W_{1,1}$ was \emph{admissible} if there are orientation-preserving embeddings ${e},{f}: S^n \times D^n \to W_{1,1}$ with cores intersecting transversely in one point, such that each of the $\theta$-structures $e^*\hat{\ell}$ and $f^*\hat{\ell}$ on $S^n \times D^n$ extend to $\bR^{2n}$ for some orientation-preserving embeddings $S^n \times D^n \hookrightarrow \bR^{2n}$. We will now explain that if $\hat{\ell}$ is admissible then in fact there is an embedding $\phi : W_{1,1} \hookrightarrow W_{1,1}$ such that $\phi^*\hat{\ell}$ is standard.

This is a consequence of the following claim: if a $\theta$-structure $\hat{\ell}'$ on $S^n \times D^n$ extends to $\bR^{2n}$ for some orientation-preserving embedding $i : S^n \times D^n \hookrightarrow \bR^{2n}$, then there is a diffeomorphism $\varphi$ of $S^n \times D^n$ such that $\varphi^*\hat{\ell}'$ extends to $\bR^{2n}$ for the embedding \eqref{eq:StdEmbeding}.

We may isotope $i$ to have image disjoint from $\{0\} \times \bR^{n-1} \subset \bR^{n+1} \times \bR^{n-1} = \bR^{2n}$, by general position, and we may further ensure that $i$ has linking number 1 with this submanifold, by connect-summing the core of $i$ with a sphere isotopic to $S^n \times \{0\} \subset \bR^{n+1} \times \bR^{n-1}$. Then we may isotope $i$ to an embedding into the image $A$ of \eqref{eq:StdEmbeding} and $i : S^n \times D^n \hookrightarrow A$ will be a homotopy equivalence. The region $A \setminus \mathrm{int}\ i(S^n \times D^n)$ will therefore be a simply-connected $h$-cobordism, and hence $i$ is further isotopic to a diffeomorphism onto $A$. This proves the claim.
\end{remark}

We now choose a $\theta$-structure $\hat{\ell}_S$ on the cobordism $S  \cong ([-1,0] \times P) \# W_{1}$
 which is standard when pulled back along the canonical embedding $\phi' : W_{1,1} \to S$. Let us write $\hat{\ell}_P$ for its restriction to $\{0\} \times P \subset S$, and $\hat{\ell}'_P$ for its restriction to $\{-1\} \times P \subset S$. We therefore obtain a map
\begin{equation}\label{eq:StabMapTheta}
\begin{aligned}
  s = - \cup (S, \hat{\ell}_S) : \mathcal{M}^{\mathrm{st},\theta}(W;\hat{\ell}_P') &\lra \mathcal{M}^{\mathrm{st},\theta}(W;\hat{\ell}_P)\\
  (M, \hat{\ell}_M) &\longmapsto ((M - e_1) \cup S, \hat{\ell}_M \cup \hat{\ell}_S).
\end{aligned}
\end{equation}

Before stating our main result, we must define the analogue of the
function $\overline{g}$ for $\theta$-manifolds.  The na{\"i}ve definition, to define $\overline{g}(M,\hat{\ell}_M)$ as $\overline{g}(M)$, is not as
well behaved as the following: first define the \emph{$\theta$-genus} to be
$$g^\theta(M, \hat{\ell}_M) = \max\left\{g \in \bN \,\,\bigg|\,\, \parbox{18em}{there are $g$ disjoint copies of $W_{1,1}$ in $M$,\\ each with standard $\theta$-structure}\right\},$$
(by Remark \ref{rem:StdVsAdmissible} this agrees with the definition in the introduction) and then define the \emph{stable $\theta$-genus} to be
$$\bar{g}^\theta(M, \hat{\ell}_M) = \max\{g^\theta((M, \hat{\ell}_M) \natural k(W_{1,1}, \hat{\ell}_{W_{1,1}}))-k\,\,|\,\, k \in \bN \},$$
where the boundary connect-sum is formed with $k$ copies of $W_{1,1}$ each equipped with a standard $\theta$-structure $\hat{\ell}_{W_{1,1}}$. We use the function $\overline{g}^\theta$ is used to grade the spaces $\mathcal{M}^{\mathrm{st},\theta}(W;\hat{\ell}_P)$ and $\mathcal{M}^{\mathrm{st},\theta}(W;\hat{\ell}_P')$, so that the stabilisation map $s : \mathcal{M}^{\mathrm{st},\theta}(W;\hat{\ell}_P) \to \mathcal{M}^{\mathrm{st},\theta}(W;\hat{\ell}'_P)$ has degree 1. Our main theorem about this map is completely analogous to Theorem \ref{thm:Main:sec6}, but has a strong and a weak form, depending on whether $\theta$ has the following property.

\begin{definition}
  A tangential structure $\theta : B \to BO(2n)$ is \emph{spherical}
  if any $\theta$-structure on $D^{2n}$ extends to $S^{2n}$.
\end{definition}

This is a condition that we introduced in \cite[\S 5.1]{GR-W2}, and we
will refer there for some of its basic properties. Many tangential
structures of interest satisfy this condition, including all of those
which are pulled back from a fibration over $BO(2n+1)$, such as orientations and spin structures. Another example is $BU(3) \to BO(6)$, as $S^6$ admits an almost-complex structure, but in higher dimensions almost-complex structures are \emph{not} spherical.
 A notable tangential structure which is not spherical is that of a framing (corresponding to $EO(2n) \to BO(2n)$).

Finally, let us introduce a class of local coefficient systems. The spaces we consider are usually
disconnected and have no preferred basepoint, so local coefficients
are best defined as functors from the fundamental groupoid to the
category of abelian groups (or as bundles of abelian groups). Then an \emph{abelian coefficient system} is a coefficient system which has trivial monodromy along all commutators: in other words it has trivial monodromy along all nullhomologous loops.  Given a local coefficient system $\mathcal{L}$ on $\mathcal{M}^{\mathrm{st},\theta}(W;\hat{\ell}_P)$, we can define twisted homology with coefficients in $\mathcal{L}$, and $s$ induces a map
\begin{equation}\label{eq:3}
s_* :  H_k(\mathcal{M}^{\mathrm{st},\theta}(W;\hat{\ell}'_P); s^* \mathcal{L})_g \lra H_k(\mathcal{M}^{\mathrm{st},\theta}(W;\hat{\ell}_P); \mathcal{L})_{g+1}
\end{equation}
of twisted homology groups, where the subscripts denote the extra grading (as in Section \ref{sec:GradedSpaces}) on the homology of the graded spaces $(\mathcal{M}^{\mathrm{st},\theta}(W;\hat{\ell}'_P),\bar{g}^\theta)$ and $(\mathcal{M}^{\mathrm{st},\theta}(W;\hat{\ell}_P), \bar{g}^\theta)$.

\begin{theorem}\label{thm:main-theta}
  Suppose that $2n \geq 6$, and that $W$ is simply-connected.
  \begin{enumerate}[(i)]
  \item\label{it:main-theta:2} If $\mathcal{L}$ is abelian then the
    stabilisation map~\eqref{eq:3} is an epimorphism for
    $3k \leq g-1$ and an isomorphism for
    $3k \leq g-4$.
  \item\label{it:main-theta:1} If $\theta$ is spherical and
    $\mathcal{L}$ is constant, then the stabilisation map~\eqref{eq:3}
    is an epimorphism for $2k \leq g-1$ and an isomorphism
    for $2k \leq g-3$.
\end{enumerate}
\end{theorem}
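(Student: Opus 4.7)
The plan is to run the spectral sequence argument of Section~\ref{sec:Resolutions} in a $\theta$-enriched setting. First I would construct an augmented semisimplicial graded space $X^\theta_\bullet \to \mathcal{M}^\theta(\hat{\ell}_P)$ whose $p$-simplices are tuples $(M, \hat{\ell}_M; (t_0,\phi_0), \ldots, (t_p, \phi_p))$ with $((t_i,\phi_i)) \in \overline{K}_p(M)$ and each pullback $\phi_i^*\hat{\ell}_M$ a standard $\theta$-structure on $H$ in the sense of Definition~\ref{defn:7.2}. This is graded by the stable $\theta$-genus $\bar{g}^\theta$, with respect to which the stabilisation map~\eqref{eq:StabMapTheta} has degree one. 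The $\theta$-structure $\hat{\ell}_S$ extends to iterated cobordisms $S_p$, and the canonical embeddings $\phi_j' : W_{1,1} \hookrightarrow S$ have standard pullback by construction, so the arguments of Section~\ref{sec:resolution} adapt to provide the analogue of Proposition~\ref{prop:MainProperties}(\ref{it:prop:2}).

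Next I would establish a $\theta$-version of Theorem~\ref{thm:high-conn}, showing that $|X^\theta_\bullet| \to \mathcal{M}^\theta(\hat{\ell}_P)$ is $\lfloor(\bar{g}^\theta-2)/2\rfloor$-connected in case~(ii) and roughly $\lfloor(\bar{g}^\theta-2)/3\rfloor$-connected in case~(i). The core input is Lemma~\ref{lemthm:conn-K-delta}: given an immersed framed sphere, the Whitney trick produces an embedded one, and we must further arrange that its pulled-back $\theta$-structure is standard. When $\theta$ is spherical, any $\theta$-structure on $S^n \times D^n \subset \R^{2n}$ can be modified to the standard one by absorbing the discrepancy into an auxiliary $\theta$-structured $2n$-sphere (as in \cite[\S 5.1]{GR-W2}), which yields the same connectivity as in the untwisted setting. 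In the general case, admissibility of a simplex requires only that the $\theta$-structure extend to $\R^{2n}$ after some modification of the embedding, so one interposes a secondary bisemisimplicial resolution tracking the choice of such a modification; each extra layer costs a unit of connectivity, accounting for the factor of $3$ in case~(i).

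The spectral sequence then runs exactly as in the proof of Theorem~\ref{thm:Main:sec6}. In case~(ii), constant coefficients and Lemma~\ref{lem:proof:3a} give that all face maps induce equal maps on homology of the $X^\theta_p$'s, so the alternating-sum differential collapses to a single copy of $s_*$ and the argument of Section~\ref{sec:Resolutions} yields the stated range. In case~(i), the homotopies produced by Lemma~\ref{lem:proof:3a} realize loops in $\mathcal{M}^\theta(\hat{\ell}_P)$ traced out by the swap diffeomorphism $\sigma$ of Lemma~\ref{lem:SigmaDiffeo}; these loops must have trivial monodromy on $\mathcal{L}$ for the argument to proceed. The explicit description of $\sigma$ as a composition of half Dehn twists exhibits it as a commutator in $\pi_0 \Diff_\partial(W_{2,1})$, so the induced loops in $\mathcal{M}^\theta(\hat{\ell}_P)$ are nullhomologous, and the abelian hypothesis on $\mathcal{L}$ guarantees trivial monodromy. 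Combined with the weaker connectivity from the previous step, this yields the range $3k \leq g-4$ for isomorphism and $3k \leq g-1$ for epimorphism.

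The main obstacle will be the connectivity analysis in case~(i): one must construct the secondary resolution tracking $\theta$-structure discrepancies and verify that its total connectivity loss is exactly the single unit needed to convert the $2$ into a $3$. A secondary challenge is the verification that the swap diffeomorphism of Lemma~\ref{lem:SigmaDiffeo} is a commutator in the appropriate mapping class group, rather than merely up to isotopy in some larger space; this is essential to invoke the abelian hypothesis on $\mathcal{L}$.
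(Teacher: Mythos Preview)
Your proposal misdiagnoses where the factor of $3$ in case~(i) comes from, and in doing so misses the main new idea. In the paper, the resolution $X^\theta_\bullet \to \mathcal{M}^\theta(\hat{\ell}_P)$ has the \emph{same} connectivity $\lfloor(\bar g^\theta-4)/2\rfloor$ in both cases; there is no secondary resolution and no loss of connectivity for non-spherical $\theta$. (The vertices carry not just a standardness condition but an actual path $\nu$ of $\theta$-structures from $\phi^*\hat{\ell}_M$ to a fixed standard structure $\hat{\ell}_H(t)$; this is what makes the connectivity argument go through uniformly via Lemma~\ref{lem:StrOnH}.) The difference between the ranges in (i) and (ii) arises entirely inside the spectral sequence argument.

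The gap is this: for non-spherical $\theta$, the swap $\sigma$ of Lemma~\ref{lem:SigmaDiffeo} need not preserve the $\theta$-structure on $W_{2,1}$ up to homotopy relative to the boundary --- Lemma~\ref{lem:SymmetricThetaStr} genuinely requires sphericality --- so the homotopy $d_i \simeq d_{i+1}$ you invoke simply does not exist. The paper's repair (Lemma~\ref{lem:BraidMove}) is to work instead in $W_{3,1}$ with the diffeomorphism $\rho = [\sigma_{12},\sigma_{23}]^{-1}$, a commutator of swaps; being a commutator, $\rho$ preserves any framing-induced $\theta$-structure up to homotopy rel boundary for \emph{every} $\theta$. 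This yields only $d_i d_{p+1} \simeq d_{i+1} d_{p+1}$, so in the spectral sequence the face maps agree merely on the image of one further stabilisation, and the induction consumes an extra unit of genus at each step --- that is the source of the $3$.

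Your treatment of the abelian hypothesis also needs correction. The loop whose monodromy must vanish is the one produced by $\rho$ (not $\sigma$), and the paper does not claim it is nullhomologous in $\mathcal{M}^\theta(W_{3,1};-)$: part~(\ref{it:Braid:4}) of Lemma~\ref{lem:BraidMove} shows it becomes nullhomologous only after stabilising to $W_{5,1}$, by writing the $3$-cycle $(123)$ as a commutator in $\Sigma_5$. Your assertion that $\sigma$ is a commutator in $\pi_0\Diff_\partial(W_{2,1})$ is neither established nor sufficient.
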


In particular, if $\hat{\ell}_W \in \Bun^\theta_\partial(W; \hat{\ell}_P')$ and we write $\mathcal{M}^\theta(W, \hat{\ell}_W) \subset \mathcal{M}^\theta(W; \hat{\ell}_P') \subset \mathcal{M}^{\mathrm{st},\theta}(W;\hat{\ell}_P')$ for the path component containing $(W, \hat{\ell}_W)$, then \eqref{eq:StabMapTheta} restricts to a map
$$s : \mathcal{M}^\theta(W, \hat{\ell}_W) \lra \mathcal{M}^\theta(W \cup_P S, \hat{\ell}_W \cup \hat{\ell}_S)$$
and it follows from Theorem \ref{thm:main-theta} that this is an epimorphism or isomorphism on homology with (abelian) coefficients in a range of degrees depending on $\bar{g}^\theta(W, \hat{\ell}_W)$. This, and the fact that $g^\theta(W, \hat{\ell}_W) \leq \bar{g}^\theta(W, \hat{\ell}_W)$, proves Theorem \ref{thm:mainTheta}.

We shall describe those aspects of the proof of
Theorem~\ref{thm:main-theta} which differ from the proof of Theorem
\ref{thm:Main:sec6}. There are two main differences. Firstly a
slightly more elaborate analogue of $\overline{K}_\bullet(M)$ is
required to incorporate information about $\theta$-structures, and we
must also develop some basic tools for dealing with
$\theta$-structures on the manifolds $W_{g,1}$. Secondly, we must take
care that loops swept out by certain homotopies which we construct are
nullhomologous.

\subsection{$\theta$-structures on $W_{g,1}$ and $\Hmfld$}

Recall that $\Hmfld$ is the manifold obtained from $W_{1,1}$ by gluing on $[-1,0] \times D^{2n-1}$ along an orientation preserving embedding $\{-1\} \times D^{2n-1} \hookrightarrow \partial W_{1,1}$. In Remark \ref{rem:HFraming} we have explained that there is a framing of $\Hmfld$ which is standard on the image of $\bar{e}$ and $\bar{f}$ and extends the Euclidean framing on $\{0\} \times D^{2n-1} \subset \Hmfld$. We choose such a framing, and call it $\xi_H$. The associated $\theta$-structure $\hat{\ell}^\tau_H$ is therefore standard (in the sense of Definition \ref{defn:7.2}) when restricted to $W_{1,1} \subset \Hmfld$. We let $\xi_{W_{1,1}}$ be the framing on $W_{1,1} \subset \Hmfld$ induced by $\xi_H$, and $\xi_{W_{g,1}}$ be the framing on $W_{g,1}$ induced by boundary connect sum of $g$ copies of $(W_{1,1}, \xi_{W_{1,1}})$.

\begin{lemma}\label{lem:StrOnW11}
  The space of standard $\theta$-structures on $W_{1,1}$ (not fixed on
  the boundary) is path-connected.
\end{lemma}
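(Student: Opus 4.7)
The plan is to use the framing $\xi_{W_{1,1}}$ to convert the problem into a $\pi_0$-computation for a mapping space. Since $\xi_{W_{1,1}}$ trivialises $TW_{1,1}$, a bundle map $TW_{1,1} \to \theta^*\gamma$ is the same data as a map $W_{1,1} \to \Fr(\theta^*\gamma)$, yielding a homeomorphism $\Bun^\theta(W_{1,1}) \cong \Map(W_{1,1}, \Fr(\theta^*\gamma))$ under which $\hat{\ell}^\tau$ corresponds to the constant map at $\tau$. By Remark \ref{rem:HFraming} the pulled-back framings $\bar{e}^*\xi_{W_{1,1}}$ and $\bar{f}^*\xi_{W_{1,1}}$ are homotopic to $\xi_{S^n \times D^n}$, so the condition that $\bar{e}^*\hat{\ell}$ and $\bar{f}^*\hat{\ell}$ be standard translates into the following statement about the corresponding map $\hat{f}: W_{1,1} \to \Fr(\theta^*\gamma)$: each of the restrictions $\hat{f}|_{\bar{e}(S^n \times \{0\})}$ and $\hat{f}|_{\bar{f}(S^n \times \{0\})}$ is unbased nullhomotopic in $\Fr(\theta^*\gamma)$.

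Next, I would exploit that $\bar{e}(S^n \times \{0\}) \cup \bar{f}(S^n \times \{0\}) \hookrightarrow W_{1,1}$ is a homotopy equivalence onto a copy of $S^n \vee S^n$, so $\Map(W_{1,1}, \Fr(\theta^*\gamma)) \simeq \Map(S^n \vee S^n, \Fr(\theta^*\gamma))$. The evaluation fibration at the wedge point,
$$\Omega^n \Fr(\theta^*\gamma) \times \Omega^n \Fr(\theta^*\gamma) \lra \Map(S^n \vee S^n, \Fr(\theta^*\gamma)) \lra \Fr(\theta^*\gamma),$$
identifies $\pi_0$ of the total space with pairs $([y],(\alpha,\beta))$ where $[y] \in \pi_0 \Fr(\theta^*\gamma)$ and $(\alpha,\beta) \in \pi_n(\Fr(\theta^*\gamma),y)^2/\pi_1(\Fr(\theta^*\gamma),y)$. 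For a standard structure the evaluation at the wedge point lies in the component of $\tau$ (visible by evaluating a nullhomotopy of $\hat{f}|_{\bar{e}(S^n \times \{0\})}$ at the wedge point), while each spherical component $\alpha,\beta$ lies in the $\pi_1$-orbit of $0 \in \pi_n$, which is the singleton $\{0\}$ since $0$ is fixed by any group action. Hence any two standard structures represent the same class in $\pi_0 \Map(W_{1,1}, \Fr(\theta^*\gamma))$ and are joined by a path of maps, which translates back to a path of $\theta$-structures.

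The main obstacle is the basepoint bookkeeping between the unbased condition ``standard'' and the based description of the mapping space via the fibration above; the key observation that resolves this is that the $\pi_1$-action fixes $0 \in \pi_n$, so unbased and based nullhomotopy coincide for the trivial element. A secondary point to confirm is that the constructed path remains inside the standard locus throughout, but this is automatic: standardness is the preimage of a point under the continuous map $\Bun^\theta(W_{1,1}) \to \pi_0 \Bun^\theta(S^n \times D^n)^2$ induced by $(\bar{e}^*,\bar{f}^*)$, hence is a union of path components of $\Bun^\theta(W_{1,1})$.
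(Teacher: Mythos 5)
Your proof is correct, and takes a genuinely different route from the paper's. The paper argues cell-by-cell: after restricting to $E \cup F$ with $E = \bar{e}(S^n \times D^n)$ and $F = \bar{f}(S^n \times D^n)$, it first homotopes the two structures to agree on $E$, then shows they become homotopic rel $E\cap F$ over $F$ by noting that both restrictions, being standard, extend over $\bR^{2n}$. You instead trivialise $TW_{1,1}$ by the framing $\xi_{W_{1,1}}$ and compute $\pi_0 \Map(S^n \vee S^n, \Fr(\theta^*\gamma))$ directly via the evaluation fibration, with the key observation that $0 \in \pi_n$ is fixed by the $\pi_1$-action, so unbased nullhomotopy of each wedge factor already forces the based class to vanish. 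Both arguments are obstruction theory in disguise, but yours is more systematic and makes the based/unbased bookkeeping explicit, whereas the paper's is a shorter local extension argument. One phrasing point worth tightening: the translation of standardness to ``unbased nullhomotopic'' discards the requirement of landing in the component of $\tau$. You recover that requirement in the next sentence, but the justification given there (evaluating a nullhomotopy at the wedge point) only locates the image in the component of that nullhomotopy's constant value; to conclude it is the component of $\tau$ you need the full strength of standardness, namely that the map is homotopic to the constant at $\tau$ itself, not merely nullhomotopic.
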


\begin{proof}
  Let $\hat{\ell}$ and $\hat{\ell}'$ be two standard $\theta$-structures on
  $W_{1,1}$. Writing $E = \bar{e}(S^n \times D^n)$ and $F = \bar{f}(S^n \times D^n)$, the inclusion $E \cup F \hookrightarrow W_{1,1}$ is an isotopy equivalence, so it is enough
  to verify that they are homotopic when restricted to this
  subspace. The restrictions $\hat{\ell}'\vert_{E}$ and $\hat{\ell}\vert_{E}$ are both standard, so in particular are homotopic: choosing such a homotopy and extending to $E \cup F$, we see that we may change $\hat{\ell}$ and $\hat{\ell}'$ by homotopies so as to suppose that they are equal on $E$.
  
  Now the restrictions $\hat{\ell}'\vert_{F}$ and $\hat{\ell}\vert_{F}$ are equal on the contractible subspace $E \cap F \subset F$, and as they are standard they both extend over the contractible space $\bR^{2n}$ under the embedding \eqref{eq:StdEmbeding} (precomposed with $\bar{f}^{-1}$). Hence they are homotopic relative to $E \cap F$.
\end{proof}

\begin{lemma}\label{lem:StrOnH}
The space of $\theta$-structures on $\Hmfld$ which are standard on $W_{1,1} \subset \Hmfld$ and induced by the framing on $\{0\} \times D^{2n-1} \subset \Hmfld$ is path-connected.
\end{lemma}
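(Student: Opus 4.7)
The plan is to use the framing $\xi_H$ from Remark~\ref{rem:HFraming} to translate the statement into a homotopy-theoretic fact about mapping spaces. Since $H$ is framed, composition with $\xi_H$ yields a homeomorphism
$$\Bun^\theta(H) \cong \Map(H, \Fr(\theta^*\gamma))$$
sending a bundle map $\hat\ell$ to the map $\phi: H \to \Fr(\theta^*\gamma)$ with $\phi(x) = \hat\ell \circ \xi_H(x)$. Writing $D = \{0\} \times D^{2n-1}$, I will verify under this identification that the condition of being induced by the Euclidean framing on $D$ becomes $\phi|_D = \mathrm{const}_\tau$ (since $\xi_H|_D$ is, by construction, the Euclidean framing), and that the condition of being standard on $W_{1,1}$ becomes $\phi|_{W_{1,1}}$ being null-homotopic. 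This latter translation uses that $\bar e^*\xi_H$ and $\bar f^*\xi_H$ are each homotopic to the standard framing $\xi_{S^n \times D^n}$ by Remark~\ref{rem:HFraming}, so each pulled-back structure is standard iff the corresponding map from $S^n \times D^n$ is null-homotopic, together with the fact that $\bar e(S^n \times \{0\}) \cup \bar f(S^n \times \{0\}) = S^n \vee S^n$ is a deformation retract of $W_{1,1}$.

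Next, I will observe that the inclusion $W_{1,1} \hookrightarrow H$ is a homotopy equivalence, obtained by deformation-retracting the collar $[-1,0] \times D^{2n-1}$ onto $\{-1\} \times D^{2n-1} \subset \partial W_{1,1}$. Hence the restriction map $\Map(H, \Fr(\theta^*\gamma)) \to \Map(W_{1,1}, \Fr(\theta^*\gamma))$ is a weak equivalence, so $\phi|_{W_{1,1}}$ is null-homotopic if and only if $\phi$ itself is. The space I wish to show is path-connected is therefore identified with the fibre over $\mathrm{const}_\tau$ of the Serre fibration
$$\mathrm{res} : \Map(H, \Fr(\theta^*\gamma))_0 \lra \Map(D, \Fr(\theta^*\gamma)),$$
where the subscript $0$ denotes the connected component of $\mathrm{const}_\tau$.

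Finally, I will invoke the long exact sequence of this fibration. Since $D$ is contractible, $\Map(D, \Fr(\theta^*\gamma)) \simeq \Fr(\theta^*\gamma)$ via evaluation, and it suffices to prove that the induced map
$$\pi_1(\Map(H, \Fr(\theta^*\gamma))_0, \mathrm{const}_\tau) \lra \pi_1(\Fr(\theta^*\gamma), \tau)$$
is surjective. Given any $\alpha \in \pi_1(\Fr(\theta^*\gamma), \tau)$, the loop $t \mapsto \mathrm{const}_{\alpha(t)}$ of constant-valued maps $H \to \Fr(\theta^*\gamma)$ lies in $\Map(H, \Fr(\theta^*\gamma))_0$ based at $\mathrm{const}_\tau$, and its restriction to $D$ recovers $\alpha$, which proves surjectivity. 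The main technical obstacle will be to carefully confirm the translation of the ``standard on $W_{1,1}$'' condition, which requires tracking the compatibility of the various framings involved ($\xi_H$, its restriction $\xi_{W_{1,1}}$, the standard framings $\xi_{S^n \times D^n}$, and the Euclidean framing on $D$) using the defining properties of $\xi_H$ in Remark~\ref{rem:HFraming}.
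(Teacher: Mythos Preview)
Your argument is correct. Both your proof and the paper's rest on the same key observation: the framing $\xi_H$ lets one act on $\theta$-structures by loops in $\Fr(\theta^*\gamma)$, and this kills the obstruction to fixing the boundary behaviour. The packaging differs, however. The paper argues constructively: it picks any path $\rho$ from $\hat\ell$ to $\hat\ell^\tau_H$ (invoking Lemma~\ref{lem:StrOnW11} for this), reads off the defect at the boundary point as a loop $\gamma$ in $\Fr(\theta^*\gamma)$, and then concatenates $\rho$ with the loop of $\theta$-structures induced by $\gamma^{-1}$ via the framing to cancel that defect. You instead translate everything via $\xi_H$ into a statement about the fibre of the restriction fibration $\Map(H,\Fr(\theta^*\gamma))_0 \to \Map(D,\Fr(\theta^*\gamma))$ and invoke the long exact sequence, proving surjectivity on $\pi_1$ using constant-valued loops. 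Your route is more conceptual and, notably, does not need Lemma~\ref{lem:StrOnW11} as a separate input (your translation of ``standard on $W_{1,1}$'' as ``$\phi|_{W_{1,1}}$ nullhomotopic'' absorbs it), whereas the paper's route is more explicit about the actual path.
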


\begin{proof}
Let $\hat{\ell}$ be a $\theta$-structure on $\Hmfld$ which is standard on $W_{1,1}$ and restricts to
$$\epsilon^1 \oplus TD^{2n-1} \lra \bR^{2n} \overset{\tau}\lra \theta^*\gamma_{2n}$$
on $\{0\} \times D^{2n-1} \subset \Hmfld$.  It is enough to show that there is a path of bundle maps from $\hat{\ell}$ to $\hat{\ell}^\tau_H$ which is constant over $\{0\} \times D^{2n-1}$. 

As the inclusion $W_{1,1} \hookrightarrow \Hmfld$ is an isotopy
  equivalence, by the previous lemma the $\theta$-structures $\hat{\ell}$
  and $\hat{\ell}^\tau_H$ are homotopic, and all that remains is to show
  that this homotopy may be taken to be constant over $\{0\} \times
  D^{2n-1}$. To do this, we first choose any path $\rho$ of $\theta$-structures
  from $\hat{\ell}$ to $\hat{\ell}^\tau_H$, and denote by $\gamma$ the loop in $\Fr(\theta^*\gamma_{2n})$ (based at $\tau$) obtained by restriction to $(0,0) \in \{0\} \times D^{2n-1}$. If this loop were nullhomotopic then by homotopy
  lifting along the restriction map we could modify $\rho$ to a path
  which is constant over $\{0\} \times D^{2n-1}$. 
  
  As $\gamma$ is not
  in general nullhomotopic, we compose $\rho$ with the loop based at
  $\hat{\ell}^\tau_H$ given by the framing $\xi$ and the loop of $\theta$-structures $\gamma^{-1}$ (which
  determines a loop of $\theta$-structures for any framed manifold) to
  obtain a new path $\rho'$. The loop given by restricting this path
  to $(0,0) \in \{0\} \times D^{2n-1}$ is now $\gamma \cdot
  \gamma^{-1}$, so nullhomotopic, and the construction described above
  now applies.
\end{proof}

\begin{corollary}\label{cor:ThetaStrOnWg}
  The space of standard
  $\theta$-structures on $W_{g,1}$ (not fixed on
  the boundary) is path-connected.
\end{corollary}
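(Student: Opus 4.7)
The plan is to proceed by induction on $g$, with base case $g=1$ given by Lemma \ref{lem:StrOnW11}. For the inductive step at $g \geq 2$, I would decompose $W_{g,1} = W_{g-1,1}' \cup_D H_g$, where $H_g \cong H$ is an embedded copy whose $W_{1,1}$-part contains $\bar{e}_g$ and $\bar{f}_g$ and whose outgoing end $\{0\} \times D^{2n-1}$ lies in $\partial W_{g,1}$, and $W_{g-1,1}' \cong W_{g-1,1}$ is the closure of the complement (chosen to contain the remaining embeddings $\bar{e}_i, \bar{f}_i$ for $i < g$ disjoint from the attaching disc $D = \partial H_g \setminus \{0\} \times D^{2n-1}$).

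Given two standard $\theta$-structures $\hat{\ell}_0, \hat{\ell}_1$ on $W_{g,1}$, I would first homotope each so that its restriction to $\{0\} \times D^{2n-1}$ equals the framing-induced $\hat{\ell}^\tau$. This is possible because the two restrictions $\hat{\ell}_i\vert_{\{0\} \times D^{2n-1}}$ and $\hat{\ell}^\tau\vert_{\{0\} \times D^{2n-1}}$ lie in the same path component of $\Bun^\theta(\{0\} \times D^{2n-1})$ (standardness of $\hat{\ell}_i$ and path-connectedness of $W_{g,1}$ force the relevant values into the $\tau$-component of $\Fr(\theta^*\gamma)$), and the restriction map $\Bun^\theta(W_{g,1}) \to \Bun^\theta(\{0\} \times D^{2n-1})$ is a Serre fibration, so the path lifts.

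With the $\{0\} \times D^{2n-1}$-parts matched, Lemma \ref{lem:StrOnH} applies to $H_g$ and produces a path between $\hat{\ell}_0\vert_{H_g}$ and $\hat{\ell}_1\vert_{H_g}$ through $\theta$-structures on $H_g$ which are standard on $W_{1,1}$ and equal to $\hat{\ell}^\tau$ on $\{0\} \times D^{2n-1}$. Using the homotopy extension property for the cofibration $H_g \hookrightarrow W_{g,1}$, I would extend this path to a homotopy of $\theta$-structures on $W_{g,1}$ starting at $\hat{\ell}_0$ and ending at some $\hat{\ell}_0'$ with $\hat{\ell}_0'\vert_{H_g} = \hat{\ell}_1\vert_{H_g}$. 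The extension can be arranged to be supported in a small collar neighborhood of $H_g$ disjoint from the cores of $\bar{e}_i, \bar{f}_i$ for $i < g$, so $\hat{\ell}_0'$ remains standard.

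Now $\hat{\ell}_0'$ and $\hat{\ell}_1$ agree on $H_g$, and in particular on $D$, so they differ only on $W_{g-1,1}'$. Applying the inductive hypothesis to $W_{g-1,1}' \cong W_{g-1,1}$ yields a path between their restrictions, which one extends by the constant homotopy on $H_g$ to complete the argument. The main obstacle is precisely this last boundary compatibility on $D$: the inductive statement as given permits the boundary to vary freely, whereas here we need a path fixing $D$. The cleanest resolution is to prove by the same induction the slightly stronger statement in which the $\theta$-structure is fixed on a prescribed disc of $\partial W_{g,1}$ (with base case supplied directly by Lemma \ref{lem:StrOnH}); the unrestricted corollary then follows by allowing the fixed value on that disc to vary through the path-connected component of $\Bun^\theta(D)$ and lifting through the restriction fibration as in the first step.
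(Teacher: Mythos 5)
Your proposal is correct and identifies the right technical obstacle; it differs from the paper's proof in organization rather than in the key input (both rest on Lemma~\ref{lem:StrOnH}). The paper uses a star decomposition rather than a nested one: it presents $W_{g,1}$ as a single central disc $D^{2n}$ with $g$ disjoint copies of $H$ glued to $\partial D^{2n}$ along their ends $\{0\}\times D^{2n-1}$. After first homotoping the two structures to agree on the contractible central disc (standardness of both structures pins them to the $\tau$-component of $\Fr(\theta^*\gamma)$, so their restrictions to $D^{2n}$ are homotopic, and one extends the homotopy over $W_{g,1}$), Lemma~\ref{lem:StrOnH} is applied to all $g$ copies of $H$ at once, each relative to its attaching disc in $\partial D^{2n}$; the resulting $g$ homotopies have pairwise disjoint supports, meeting only along the now-fixed $D^{2n}$, and so glue with no further work. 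In contrast, your inductive formulation peels off one copy of $H$ at a time, and --- as you correctly observe --- the induction only closes if the statement is upgraded to fix a boundary disc (with Lemma~\ref{lem:StrOnH} itself as the base case and the unrestricted corollary recovered via the restriction fibration). Both routes are sound; the paper's decomposition buys a proof with no strengthening and no induction, since the anchor is the interior disc $D^{2n}$ rather than a piece of the boundary, whereas yours buys a uniform inductive scheme at the cost of tracking the boundary condition.
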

\begin{proof}
  $W_{g,1}$ is diffeomorphic to a manifold obtained from $D^{2n}$
  by gluing $g$ copies of $H$ to its boundary along $\{0\} \times
  D^{2n-1}$, and rounding corners. Hence this claim follows from the previous lemma, by
  first making any two standard $\theta$-structures be equal on the
  $D^{2n}$.
\end{proof}

\begin{lemma}\label{lem:ThetaStrOnW1}
  If $\theta$ is spherical, any standard $\theta$-structure on
  $W_{1,1}$ extends to the closed manifold $W_{1,1} \cup_{\partial W_{1,1}} D^{2n} \approx W_1$.
\end{lemma}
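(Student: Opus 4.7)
By Corollary~\ref{cor:ThetaStrOnWg}, the space of standard $\theta$-structures on $W_{1,1}$ is path-connected, so it would suffice to exhibit a single standard $\theta$-structure that extends to $W_1 \approx W_{1,1} \cup_{\partial W_{1,1}} D^{2n}$. The plan is to use the structure $\hat{\ell}^\tau_{W_{1,1}}$ induced by the framing $\xi_{W_{1,1}}$ via $\tau$ (which is standard by Remark~\ref{rem:HFraming}), and to handle the extension by obstruction theory.

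After fixing a trivialization of $TW_1|_{D^{2n}}$, the obstruction to extending $\hat{\ell}^\tau_{W_{1,1}}$ across $D^{2n}$ should be a class $[\alpha \cdot \tau] \in \pi_{2n-1}(F)$, where $F$ is the homotopy fibre of $\theta$, $\alpha \in \pi_{2n-1}(O(2n))$ is the clutching element comparing $\xi_{W_{1,1}}|_{\partial W_{1,1}}$ with the chosen trivialization on $\partial D^{2n}$, and $(-) \cdot \tau$ is the homomorphism $\pi_{2n-1}(O(2n)) \to \pi_{2n-1}(F)$ coming from the monodromy action of $O(2n)$ on $F$ in the fibration $F \to B \to BO(2n)$. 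The sphericality hypothesis, applied to $S^{2n} = D^{2n}_+ \cup D^{2n}_-$ with the standard framed $\theta$-structure on $D^{2n}_+$, gives precisely that the analogous class $[\alpha' \cdot \tau] = 0$, where $\alpha' \in \pi_{2n-1}(O(2n))$ is the clutching of $TS^{2n}$. Thus it remains to express $\alpha$ as an integer multiple of $\alpha'$.

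Since both $TW_1$ and $TS^{2n}$ are stably parallelizable, $\alpha$ and $\alpha'$ will lie in the kernel of the stabilization $\pi_{2n-1}(O(2n)) \to \pi_{2n-1}(O)$. Using the fibre sequences $O(2n+k) \to O(2n+k+1) \to S^{2n+k}$ and the vanishing $\pi_{2n}(S^{2n+k}) = 0$ for $k \geq 1$, this kernel is identified with the image of $\partial : \pi_{2n}(S^{2n}) \cong \bZ \to \pi_{2n-1}(O(2n))$. The composition $\bZ \xrightarrow{\partial} \pi_{2n-1}(O(2n)) \to \pi_{2n-1}(S^{2n-1}) = \bZ$ coming from $O(2n-1) \to O(2n) \to S^{2n-1}$ sends $\partial(j)$ to the Euler number of the associated rank-$2n$ bundle on $S^{2n}$; in particular $\alpha' = \partial(1)$ has Euler number $\chi(S^{2n}) = 2$, while $\alpha$ has Euler number $\chi(W_1) = (1+(-1)^n)^2 \in \{0, 4\}$. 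Hence $\alpha = k \alpha'$ for $k = \chi(W_1)/2 \in \{0, 2\}$, so $[\alpha \cdot \tau] = k\, [\alpha' \cdot \tau] = 0$.

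The hard part will be the algebraic identifications of the last paragraph---the identification of $\ker(\pi_{2n-1}(O(2n)) \to \pi_{2n-1}(O))$ with $\mathrm{im}(\partial)$, and the Euler class formula via $\partial$---which are classical Freudenthal-style stability facts for $\pi_*(O(k))$ but require care to pin down precisely in the relevant range.
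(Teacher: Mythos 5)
Your approach is genuinely different from the paper's: you work entirely unstably, by obstruction theory against the fibre $F$ of $\theta$, reducing the question to an Euler--number computation; the paper instead stabilises to $\bar\theta : \bar B \to BO(2n+1)$, shows that \emph{every} framing of $\epsilon^1 \oplus TW_{1,1}$ extends over $W_1$ (using that Whitehead products vanish in the $H$-space $SO(2n+1)$), and then destabilises via the $2n$-cartesian square that encodes sphericality. An appealing feature of your route is that it uses sphericality only in the raw form of the definition (extension from $D^{2n}$ to $S^{2n}$), never through the $2n$-cartesian diagram. The facts you flag as ``the hard part'' are correct and standard: $\ker(\pi_{2n-1}(O(2n)) \to \pi_{2n-1}(O)) = \mathrm{im}(\partial)$, and the composite $\pi_{2n}(S^{2n}) \xrightarrow{\partial} \pi_{2n-1}(O(2n)) \to \pi_{2n-1}(S^{2n-1})$ is multiplication by $\chi(S^{2n})=2$.

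The step you did \emph{not} flag is the one with a genuine gap: ``since $TW_1$ is stably parallelisable, $\alpha$ lies in the kernel of stabilisation.'' This does not follow directly, because $\alpha$ is the clutching element for the decomposition $W_1 = W_{1,1} \cup_{S^{2n-1}} D^{2n}$ in which the first piece is \emph{not} a disc. Equivalently: $\alpha$ classifies a bundle on $S^{2n} = W_1/W_{1,1}$ whose pullback along the collapse $q : W_1 \to S^{2n}$ is $TW_1$, but stable triviality of $TW_1$ gives stable triviality of that bundle only if $q^* : \widetilde{KO}(S^{2n}) \to \widetilde{KO}(W_1)$ is injective --- or, equivalently, only if the chosen framing $\xi_{W_{1,1}}$ extends \emph{stably} over $W_1$. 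This is precisely where the Whitehead--product input comes in: the attaching map of the top cell of $W_1$ is $[\iota_1,\iota_2]$, and one needs that a map $W_{1,1} \to O(N)$ restricts to a nullhomotopic map on $\partial W_{1,1}$ (Whitehead products vanish in $H$-spaces), or, in the $KO$ version, that $\Sigma[\iota_1,\iota_2] \simeq 0$. Without this input $\alpha$ could a priori differ from $(\chi(W_1)/2)\,\alpha'$ by a class with zero Euler number but nontrivial stable class, and your argument would not conclude. So the proof is correct in outline, but it must absorb essentially the same Whitehead--product lemma that is the engine of the paper's proof; as written, the proposal asserts the needed fact without justification and misidentifies where the work lies.
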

\begin{proof}
By Lemma \ref{lem:StrOnW11} it is enough to construct any $\theta$-structure on $W_1$ which restricts to a standard $\theta$-structure on $W_{1,1}$. Let $\xi$ denote the framing of $W_{1,1} \subset H$ constructed above, and $\hat{\ell}^\tau_{W_{1,1}}$ the $\theta$-structure associated to it.

Any framing $\zeta$ of $\epsilon^1 \oplus TW_{1,1}$ induces over the boundary a framing $\zeta_\partial$ of the  bundle  $\epsilon^1 \oplus TW\vert_{\partial W_{1,1}} \cong \epsilon^2 \oplus TS^{2n-1}$. If the framing $\zeta$ is changed by a map $\psi : W_{1,1} \to SO(2n+1)$ corresponding to elements $\alpha, \beta \in \pi_n(SO(2n+1))$ then $\zeta_\partial$ is changed by $[\alpha, \beta] \in \pi_{2n-1}(SO(2n+1))$, which is trivial as Whitehead products vanish in the homotopy groups of any $H$-space. Thus up to homotopy the framing $\zeta_\partial$ of $\epsilon^2 \oplus TS^{2n-1}$ is independent of $\zeta$, and hence extends over $D^{2n}$ for all $\zeta$ if it does for one. As $\epsilon^1 \oplus TW_1$ admits a framing, $\zeta_\partial$ must extend over $D^{2n}$ and it follows that any framing $\zeta$ of $\epsilon^1 \oplus TW_{1,1}$ extends over $W_1$. In particular the framing $\epsilon^1 \oplus \xi$ of $\epsilon^1 \oplus TW_{1,1}$ extends to a framing $\xi'$ of $\epsilon^1 \oplus TW_1$.

In \cite[Lemma 5.6]{GR-W2} we have shown that if $\theta$ is spherical then there is a commutative diagram
\begin{equation*}
\xymatrix{
B \ar[d]^-\theta \ar[r] & \bar{B} \ar[d]^-{\bar{\theta}}\\
BO(2n) \ar[r] & BO(2n+1)
}
\end{equation*}
which is $2n$-cartesian i.e.\ the induced map from $B$ to the homotopy pullback is $2n$-connected. Hence every $\bar{\theta}$-structure on $\epsilon^1 \oplus TW_1$ arises up to homotopy from a $\theta$-structure on $TW_1$. In particular, the $\bar{\theta}$-structure on $\epsilon^1 \oplus TW_1$ associated to (the stabilisation of) $\tau$ and the framing $\xi'$ gives a $\theta$-structure $\hat{\ell}_{W_1}$ on $W_1$. The $\theta$-structures $\hat{\ell}_{W_1} \vert_{W_{1,1}}$ and $\hat{\ell}^\tau_{W_{1,1}}$ become homotopic as $\bar{\theta}$-structures (they are both associated to the framing $\epsilon^1 \oplus \xi$), but $W_{1,1}$ only has cells of dimension $\leq n$ and the diagram is $2n$-cartesian, so they are also homotopic as $\theta$-structures: thus $\hat{\ell}_{W_1} \vert_{W_{1,1}}$ is standard.
\end{proof}

Recall that in the proof of Lemma \ref{lem:SigmaDiffeo} we constructed a particular
manifold $\Sigma$ diffeomorphic to $W_{2,1}$ containing two canonical copies of
$W_{1,1}$, and we constructed a diffeomorphism $\sigma \in
\Diff(\Sigma, \partial)$ interchanging these two copies.

\begin{lemma}\label{lem:SymmetricThetaStr}
If $\theta$ is spherical then any standard $\theta$-structure $\hat{\ell}$ on $\Sigma$ satisfies $\sigma^*\hat{\ell} \simeq \hat{\ell}$ relative to $\partial \Sigma$.
\end{lemma}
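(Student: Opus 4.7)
Our strategy is to first normalize $\hat{\ell}$ on the two canonical copies of $W_{1,1}$ in $\Sigma$ so that $\sigma^*\hat{\ell}$ and $\hat{\ell}$ automatically agree there, and then to construct the required rel-$\partial\Sigma$ homotopy on the complement by capping $\partial\Sigma$ off with a disc and exploiting the spherical hypothesis.

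\emph{Normalization.} Let $E_\pm \subset \Sigma$ denote the two canonical copies of $W_{1,1}$ coming from the construction of $\Sigma$ in Lemma~\ref{lem:SigmaDiffeo}, and fix a reference standard $\theta$-structure $\hat{\ell}_0$ on $W_{1,1}$. By two applications of Corollary~\ref{cor:ThetaStrOnWg}, performed inside collar neighbourhoods of $E_+$ and $E_-$ separately and extended trivially to the rest of $\Sigma$, we may change $\hat{\ell}$ by a homotopy (which can in fact be chosen constant on $\partial\Sigma$) so that $\hat{\ell}|_{E_\pm} = \hat{\ell}_0$ under the canonical identifications $E_\pm \cong W_{1,1}$. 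The explicit formula $\hat\sigma|_{B_1(\pm 3e_1)}\colon x \mapsto x \mp 6 e_1$ from Lemma~\ref{lem:SigmaDiffeo} shows that $\sigma$ sends $E_+$ onto $E_-$ via the canonical identification of the two pieces (the gluings of $\partial B_1(\pm 3 e_1)$ to $\partial W_{1,1}$ being the same), so after this normalization $\sigma^*\hat{\ell} = \hat{\ell}$ on $E_+ \sqcup E_-$.

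\emph{Extension to $W_2$ and reduction to a cylinder.} The argument of Lemma~\ref{lem:ThetaStrOnW1} extends verbatim to $W_{2,1}$ in place of $W_{1,1}$: the relevant Whitehead products in $\pi_{2n-1}(SO(2n+1))$ still vanish, so any framing of $\epsilon^1 \oplus T W_{2,1}$ induces a canonical boundary framing extending over $D^{2n}$, and sphericality then upgrades this to a $\theta$-structure extension. Hence $\hat{\ell}$ extends to a $\theta$-structure $\hat{\ell}^+$ on $W_2 = \Sigma \cup_{\partial\Sigma} D^{2n}$, and extending $\sigma$ by the identity on $D^{2n}$ gives $\tilde\sigma\colon W_2 \to W_2$ with $\tilde\sigma^*\hat{\ell}^+ = \hat{\ell}^+$ on the cap $D^{2n}$ and on $E_\pm$. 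The remaining comparison lives on $A \cup D^{2n}$, where $A = \Sigma \setminus \mathrm{int}(E_+ \sqcup E_-)$; topologically, $A \cup D^{2n}$ is $S^{2n}$ minus two disjoint open balls, i.e.\ a cylinder $S^{2n-1}\times[0,1]$ with boundary $\partial E_+ \sqcup \partial E_-$. A rel-boundary homotopy on this cylinder, glued with the constant homotopies on $E_\pm \sqcup D^{2n}$, would give a homotopy on $W_2$ constant on $D^{2n}$, which restricts on $\Sigma$ to the desired rel-$\partial\Sigma$ homotopy.

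\emph{Main obstacle.} The step that carries the real content is the construction of the rel-boundary homotopy on the cylinder. The two $\theta$-structures $\hat{\ell}^+|_{A\cup D^{2n}}$ and $\tilde\sigma^*\hat{\ell}^+|_{A\cup D^{2n}}$ agree on $\partial E_+ \sqcup \partial E_-$, and under the identification $A \cup D^{2n} \cong S^{2n-1} \times [0,1]$ they correspond to two paths in $\mathrm{Bun}^\theta(S^{2n-1})$ between the same endpoints; their difference is a loop $\gamma \in \pi_1(\mathrm{Bun}^\theta(S^{2n-1}),\hat{\ell}|_{\partial E_+})$, and a rel-boundary homotopy exists iff $[\gamma]=0$. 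Sphericality is exactly what makes this true: the $2n$-cartesian diagram of \cite[Lemma 5.6]{GR-W2} renders the stabilization map $\mathrm{Bun}^\theta(S^{2n-1}) \to \mathrm{Bun}^{\bar\theta}(\epsilon^1\oplus TS^{2n-1})$ a $2n$-equivalence, so it suffices to check the vanishing of the stabilized loop. The loop $\gamma$ arises from the rotational contributions in the decomposition $\hat\sigma = h^{-1} \circ g$, and these are commutators of elements in $\pi_n(SO(2n+1))$, which vanish in the stable orthogonal group; tracking this through the cartesian square gives $[\gamma]=0$ and completes the proof.
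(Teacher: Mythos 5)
Your ``Normalization'' step is fine, and parallels what the paper does: the goal is to arrange that $\sigma^*\hat{\ell}$ agrees with $\hat{\ell}$ on the two copies of $W_{1,1}$ and on $\partial\Sigma$, so that only the complementary piece $A$ remains. The ``Extension to $W_2$'' step is also sound. The gap is in the ``Main obstacle'' paragraph, and it is a genuine one, in two respects.

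First, the reduction to the stabilised loop is not valid. The $2n$-cartesian square makes $\Fr(\theta^*\gamma)\to\Fr(\bar\theta^*\gamma)$ into a $2n$-connected map, but $\mathrm{Bun}^\theta(\epsilon^1\oplus TS^{2n-1})\simeq\mathrm{Map}(S^{2n-1},\Fr(\theta^*\gamma))$ is a mapping space out of a $(2n-1)$-complex, so the induced map of mapping spaces is only $(2n-(2n-1))=1$-connected, \emph{not} a $2n$-equivalence as you assert. A $1$-connected map is only surjective on $\pi_1$, so vanishing of the stabilised class does not imply vanishing of $[\gamma]$. Second, and more fundamentally, the claimed identification of $[\gamma]$ with ``commutators of elements in $\pi_n(SO(2n+1))$'' is unsupported: the obstruction lives in (a twisted extension of) $\pi_1(\Fr(\theta^*\gamma))$ by $\pi_{2n}(\Fr(\theta^*\gamma))$, and no computation is offered tying this to $\pi_n(SO(2n+1))$; this appears to be a conflation with Lemma~\ref{lem:ThetaStrOnW1}, where Whitehead products in $\pi_n(SO(2n+1))$ do appear, but for an unrelated reason. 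Moreover, since you only normalised $\hat{\ell}$ on $E_\pm$ and left $\hat{\ell}|_A$ arbitrary, the loop $\gamma$ in fact depends on the uncontrolled restriction $\hat{\ell}|_A$, so there is no reason for a uniform conclusion.

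The paper avoids obstruction theory at this point entirely, and sphericality plays no role in the final step. Instead, sphericality is used (via Lemma~\ref{lem:ThetaStrOnW1} and \cite[Proposition~5.8]{GR-W2}) to produce a specific symmetric standard structure $\hat{\ell}'$ on $\Sigma$ whose restriction to $A$ is $\hat{\ell}^\tau_A$, the structure induced by the Euclidean framing of $A\subset\bR^{2n}$. With that choice in hand, the remaining claim $\sigma^*\hat{\ell}^\tau_A\simeq\hat{\ell}^\tau_A$ rel $\partial A$ is proved by an explicit construction: the isotopy $\hat\sigma_s$ (replacing $A(t)$ by $A(st)$ in Lemma~\ref{lem:SigmaDiffeo}) gives, for each $x$, a path $s\mapsto D\hat\sigma_s(x)\in GL(\bR^{2n})$ from the identity to $D\hat\sigma(x)$, constant outside $\Int(A)$, which is precisely the required homotopy of framings. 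If you want your argument to go through, the right move is to also normalise $\hat{\ell}|_A$ to the Euclidean-framing structure (this is exactly what sphericality buys, via the connected-sum construction) and then use the explicit isotopy on $A$; capping off to a cylinder and computing $\pi_1$ is not needed and, as attempted, does not close.
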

\begin{proof}
  We shall use some basic properties of spherical $\theta$-structures
  which we have developed in \cite[\S 5.1]{GR-W2}. Let us first prove the lemma for a special, highly symmetric, standard $\theta$-structure $\hat{\ell}'$ on $\Sigma$, which we shall construct using the special properties of spherical
  tangential structures.
	

  Recall from the proof of Lemma \ref{lem:SigmaDiffeo} that we
  constructed $\Sigma$ by starting with $B_{12}(0) \subset \bR^{2n}$,
  forming 
  $A = B_{12}(0) \setminus \mathrm{int}({B}_1(\pm 3e_1))$, and
  gluing in copies of $W_{1,1}$ along each $\partial B_1(\pm
  3e_1)$. The manifold $A$ has a canonical Euclidean framing, and we let
  $\hat{\ell}^\tau_A$ be the $\theta$-structure associated with this
  framing; it restricts to the same $\theta$-structure on each of
  $\partial B_1(\pm 3e_1)$. The $\theta$-structure
  $\hat{\ell}^\tau_A\vert_{\partial B_1(3e_1)}$ on $S^{2n-1} = \partial
  W_{1,1}$ extends to a $\theta$-structure $\hat{\ell}^\tau_{D^{2n}}$ on
  $D^{2n}$ (as the framing extends), and by Lemma
  \ref{lem:ThetaStrOnW1} there exists a $\theta$-structure
  $\hat{\ell}_{W_1}$ on $W_1$ which is standard on $W_{1,1}$. By
  \cite[Proposition 5.8]{GR-W2} we can connect-sum $(D^{2n},
  \hat{\ell}^\tau_{D^{2n}})$ and $(W_{1},\hat{\ell}_{W_1})$ as $\theta$-manifolds
  to obtain a $\theta$-structure $\hat{\ell}_{W_{1,1}}$ on $W_{1,1}$ which
  is both standard and agrees with $\hat{\ell}^\tau_A\vert_{\partial
    B_1(3e_1)}$ on $S^{2n-1} = \partial W_{1,1}$. Taking
  $\hat{\ell}_{W_{1,1}}$ on \emph{both} of the glued in copies of $W_{1,1}$
  we obtain a $\theta$-structure $\hat{\ell}'$ on $\Sigma$ which is
  standard.

  By construction, $\sigma^*\hat{\ell}'$ agrees with $\hat{\ell}'$ on $\partial
  \Sigma$ and on both copies of $W_{1,1}$. Thus we must show that
  $\sigma^*\hat{\ell}^\tau_A \simeq \hat{\ell}^\tau_A$ relative to $\partial
  A$. This will be the case if $\sigma$ preserves the Euclidean
  framing of $A$ up to homotopy relative to $\partial A$: it does, and
  we now describe an explicit homotopy. It is convenient to first work
  with the diffeomorphism $\hat{\sigma}$ of $\bR^{2n}$ constructed in
  Lemma \ref{lem:SigmaDiffeo}. We can define an isotopy of
  diffeomorphisms $\hat{\sigma}_s$ for $s \in [0,1]$ by replacing
  $A(t)$ by $A(st)$ in the definition of $f$, $g$, $h$, and
  $\hat{\sigma}$.  Then $\hat{\sigma}_0 = \mathrm{Id}$,
  $\hat{\sigma}_1 = \hat{\sigma}$, and $\hat{\sigma}_s$ always
  restricts to a parallel translation on each $B_1(\pm 3e_1)$.  Then,
  for each $x \in \bR^{2n}$, we get a path $s \mapsto
  (D\hat{\sigma}_s)(x) \in \mathrm{GL}(\bR^{2n})$ from the identity
  matrix to $(D\hat{\sigma})(x)$.  This path of matrices gives a path
  from the Euclidean framing of $\bR^{2n}$ to the pullback of the
  Euclidean framing along $\hat{\sigma}$, and is constant outside
  $\Int(A)$.
	
The following argument was suggested to us by Michael Weiss.	By Corollary \ref{cor:ThetaStrOnWg} there is a homotopy $\hat{\ell} \simeq \hat{\ell}'$, not fixed on the boundary, between the given standard $\theta$-structure and the one we have constructed. Let us write $\gamma : [0,1] \to \Bun^\theta(\Sigma)$ for this path from $\hat{\ell}$ to $\hat{\ell}'$, and $\gamma' : [0,1] \to \Bun^\theta_\partial(\Sigma)$ for the path from $\hat{\ell}'$ to $\sigma^*\hat{\ell}'$ relative to the boundary constructed above. The concatenated path
$$\hat{\ell} \overset{\gamma}\lra \hat{\ell}' \overset{\gamma'}\lra \sigma^*\hat{\ell}' \overset{\sigma^* \gamma^{-1}}\lra \sigma^*\hat{\ell}$$
maps under the restriction map $\rho : \Bun^\theta(\Sigma) \to \Bun^\theta(\partial \Sigma)$ to a loop based at $\hat{\ell}\vert_{\partial \Sigma}$ which is nullhomotopic. As the restriction map $\rho$ is a Serre fibration we may lift this nullhomotopy, and hence obtain a path from $\hat{\ell}$ to $\sigma^*\hat{\ell}$ relative to the boundary, as required.
\end{proof}

It remains to describe a version of the construction of $\sigma$ satisfying Lemma~\ref{lem:SymmetricThetaStr} which is suitable for a general tangential structure, without requiring the assumption that it is spherical. The following should be considered as a combined analogue of Lemmas \ref{lem:SigmaDiffeo} and \ref{lem:SymmetricThetaStr} in this case. Part (\ref{it:Braid:4}) of the lemma is only required for dealing with abelian local coefficient systems later on.

\begin{lemma}\label{lem:BraidMove}
Let $\hat{\ell}$ be a standard $\theta$-structure on $W_{3,1}$ and $i_1, i_2, i_3 : W_{1,1} \hookrightarrow W_{3,1}$ be disjoint orientation preserving embeddings into the interior of $W_{3,1}$ on which $\hat{\ell}$ is standard. Then there is a diffeomorphism $\rho$ of $W_{3,1}$ which restricts to the identity on the boundary, and which satisfies
\begin{enumerate}[(i)]

\item\label{it:Braid:2} $\rho \circ i_2 = i_1$,

\item\label{it:Braid:3} $\rho^*\hat{\ell} \simeq \hat{\ell}$
  relative to $\partial W_{3,1}$,
\item\label{it:Braid:4} the diffeomorphism $\rho$ and the homotopy
  $\gamma: \rho^*\hat{\ell} \simeq \hat{\ell}$ can be chosen so that
  the corresponding loop in
  $\mathcal{M}^\theta(W_{3,1};\hat{\ell}\vert_{\partial W_{3,1}})$ becomes
  nullhomologous in
  $\mathcal{M}^\theta(W_{5,1};\hat{\ell}\vert_{\partial W_{5,1}})$, where the
  $\theta$-structure $\hat{\ell}$ is extended to $W_{5,1}$ by forming
  the boundary connected sum with $W_{2,1}$ with a standard
  $\theta$-structure.
\end{enumerate}

\end{lemma}
\begin{proof}
By thickening up an embedded path between $i_1(W_{1,1})$ and $i_2(W_{1,1})$, we obtain an embedding $i_{12} : \Sigma \hookrightarrow W_{3,1}$ of the model $\Sigma$ of $W_{2,1}$ constructed in the proof of Lemma \ref{lem:SigmaDiffeo}. Doing the same with $i_2(W_{1,1})$ and $i_3(W_{1,1})$ gives an embedding $i_{23} : \Sigma \hookrightarrow W_{3,1}$, and if we choose the thickened paths disjointly we may suppose that the intersection of the images of $i_{12}$ and $i_{23}$ gives a regular neighbourhood of $i_2(W_{1,1})$.

The diffeomorphism $\sigma$ constructed in Lemma \ref{lem:SigmaDiffeo} can thus be extended to a diffeomorphism $\sigma_{12}$ of $W_{3,1}$ using $i_{12}$, and to another diffeomorphism $\sigma_{23}$ of $W_{3,1}$ using $i_{23}$. We then define $\rho = \sigma_{12}^{-1} \circ\sigma_{23}^{-1} \circ\sigma_{12} \circ \sigma_{23}$, and observe that (\ref{it:Braid:2}) is satisfied.

For the remaining properties, just as in the previous lemma we first prove them for a particular standard $\theta$-structure $\hat{\ell}'$. 
Let us construct a framing $\xi$ on $W_{3,1}$ such that $\rho^*\xi \simeq \xi$ relative to $\partial W_{3,1}$, and such that $\xi$ pulls back under each of $\overline{e}_1, \overline{f}_1, \ldots, \overline{e}_3, \overline{f}_3 : S^n \times D^n \to W_{g,1}$ to a framing homotopic to $\xi_{S^n \times D^n}$. Taking $\hat{\ell}'$ to be the $\theta$-structure associated to such a framing, it will be standard and property (\ref{it:Braid:3}) for $\hat{\ell}'$ will then be evident; this property for $\hat{\ell}$ then follows as in the end of the proof of Lemma \ref{lem:SymmetricThetaStr}.

Recall that we have chosen a framing $\xi_{W_{1,1}}$ of $W_{1,1}$. We shall take the framing $\xi$ on $W_{3,1}$ to be given by $\xi_{W_{1,1}}$ on each $i_j(W_{1,1})$, and extend by a choice of framing $\xi_0$, which we shall be slightly more specific about shortly, on the set
$$A= W_{3,1} \setminus \Int\left(\bigcup_{j=1}^3 i_j(W_{1,1})\right) \approx D^{2n} \setminus \Int\left(\coprod_{j=1}^3 D^{2n}\right).$$

The diffeomorphisms $\sigma_{12}$, $\sigma_{23}$, and $\rho$ of $W_{3,1}$ induce diffeomorphisms $\hat{\sigma}_{12}$, $\hat{\sigma}_{23}$, and $\hat{\rho}$ of $A$ which permute the boundaries $i_j(\partial W_{1,1})$ and fix the boundary $\partial W_{3,1}$. If $\xi$ is a framing of $A$ agreeing with $\xi_0$ on $\partial A$, we shall write $[\xi]$ for its class modulo homotopy of framings fixed on $\partial A$.  We shall study the effect of the diffeomorphisms $\sigma_{12}$, $\sigma_{23}$ and $\rho$ and their compositions on the set of such $[\xi]$.
The framings $\hat{\sigma}_{23}^*[\xi_0]$ and $[\xi_0]$ differ by the homotopy class of a map
$$\delta_{23} : (A, \partial A) \lra (SO(2n), *),$$
and we similarly define $\delta_{12}$. We claim that $\delta_{23}$ and $\delta_{12}$ are homotopic to maps with support inside a ball $D^{2n} \subset A$. As $A$ may be obtained from $\partial A$ by attaching three embedded and normally framed 1-cells and a $2n$-cell, this is equivalent to saying that $\delta_{23}$ and $\delta_{12}$ are nullhomotopic when restricted to each of the three 1-cells. If we take the 1-cells to be attached to the same point of $\partial W_{3,1}$ and equivalent points of the $i_j(\partial W_{1,1})$ as shown in Figure \ref{fig:2}, and choose $\xi_0$ so that its restrictions to each of these 1-cells are homotopic relative to the end points of the 1-cells (when the restrictions of $TA$ to the 1-cells are identified using the normal framings), then it is clear that the framings $\hat{\sigma}_{23}^*\xi_0$ and $\xi_0$ are homotopic on the 1-cells, because up to isotopy the diffeomorphism $\hat\sigma_{23}$ simply permutes these 1-cells and their normal framings (this is not true for $n=1$ where they are ``braided'', but we have assumed $2n \geq 6$).  This shows that $\delta_{23}$ is homotopic to a map supported inside a ball, and the same argument applies for $\delta_{12}$.

\begin{figure}[h]
  \begin{center}
    \includegraphics[bb=0 0 88 88]{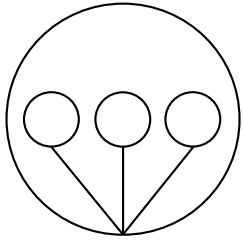}
  \end{center}
  \caption{The manifold $A$, obtained from $\partial A$ by attaching three 1-cells and a $2n$-cell.}
  \label{fig:2}
\end{figure}

In particular the support of $\delta_{12}$ can be made disjoint from the support of $\hat{\sigma}_{23}$, from which it follows that the homotopy classes of framings $\hat{\sigma}_{23}^*\hat{\sigma}_{12}^*[\xi_0]$ and $[\xi_0]$ differ by the product $[\delta_{23}] \cdot [\delta_{12}]$ of the commuting elements
$$[\delta_{23}], [\delta_{12}] \in [(A, \partial A), (SO(2n),*)].$$
Continuing in this way, we find that $\hat{\rho}^*[\xi_0]$ and $[\xi_0]$ differ by $[\delta_{23}] \cdot [\delta_{12}] \cdot [\delta_{23}]^{-1} \cdot [\delta_{12}]^{-1} = 0 \in [(A, \partial A), (SO(2n),*)]$. Therefore there is a homotopy of framings $\hat{\gamma} : \hat{\rho}^*\xi_0 \simeq \xi_0$ relative to $\partial A$, and gluing in three copies of $(W_{1,1}, \xi_{W_{1,1}})$ gives a homotopy $\gamma: \rho^*\xi \simeq \xi$ relative to $\partial W_{3,1}$, as required.

To establish property (\ref{it:Braid:4}), let $A'= D^{2n} \setminus \Int\left(\coprod_{j=1}^5 D^{2n}\right)$ be given a cell structure relative to $\partial A'$ analogous to that of Figure~\ref{fig:2}, with five normally framed 1-cells and a $2n$-cell. Let it be given a framing $\xi'_0$ which agrees with $\xi_{W_{1,1}}\vert_{\partial W_{1,1}}$ on each of the boundaries of the small discs, and such that the framings induced on each of the five 1-cells are homotopic as before. For each triple $a,b,c \in \{1,2,3,4,5\}$ we obtain an embedding $i_{abc}: A \to A'$ sending the boundaries labeled $1,2,3$ to those labeled $a,b,c$ respectively, and sending the three 1-cells of $A$ to the three corresponding 1-cells. Under this embedding, the diffeomorphism $\rho$ of $A$ constructed above extends to a diffeomorphism $\hat{\rho}_{abc}$ of $A'$, and there is a corresponding homotopy $\hat{\gamma}_{abc} : \hat{\rho}_{abc}^*\xi'_0 \simeq \xi'_0$.

In the symmetric group $\Sigma_5$ we have
$$(123) = (253)^{-1}(412)^{-1}(253)(412)$$
and hence the diffeomorphism of $A'$ defined as
$$\hat \psi = (\hat{\rho}_{412}^{-1}\hat{\rho}_{253}^{-1}\hat{\rho}_{412}\hat{\rho}_{253})\hat{\rho}_{123}$$
restricts to the identity on $\partial A'$.  The corresponding paths of framings $\hat{\gamma}_{abc}$ may be glued to induce a path of framings $\hat \gamma: \hat \psi^* \xi_0' \simeq \xi_0'$, which together defines a loop $(\hat \psi,\hat \gamma)$ the space $\mathcal{M}^{\mathrm{fr}}(A';\xi'_0\vert_{\partial A'})$, where $\mathrm{fr}: EO(2n) \to BO(2n)$ corresponds to the structure of a framing.  The loop is based at a point determined by the framing $\xi'_0$ on $A'$.
Now the five 1-cells of $A'$ must be fixed up to isotopy by $\hat{\psi}$, under our assumption $2n \geq 6$, so we may isotope $\hat{\psi}$ (and homotope $\hat{\gamma}$) so that they are fixed. Furthermore, the path $\hat{\gamma}$ gives a trivialisation of the differential $D\hat{\psi}$ along each 1-cell, so we may isotope $\hat{\psi}$ so that it is the identity on the normal bundle of each of the 1-cells, and homotope $\hat{\gamma}$ so that it is constant over each 1-cell. We have therefore changed the loop $(\hat{\psi}, \hat{\gamma})$ by a homotopy so that it gives a loop in $\mathcal{M}^{\mathrm{fr}}(A';\xi'_0\vert_{\partial A'})$ which is supported in a disc, which we may take to be inside $i_{123}(A)$ but disjoint from its 1-cells. Gluing five copies of $(W_{1,1}, \xi_{W_{1,1}})$ in to form $(W_{5,1}, \xi')$, we obtain loops $x_{abc} = (\rho_{abc},\gamma_{abc})$ in $\mathcal{M}^{\mathrm{fr}}(W_{5,1};\xi'\vert_{\partial W_{5,1}})$ based at $\xi'$, and hence elements $x_{abc} = [(\rho_{abc},\gamma_{abc})] \in \pi_1(\mathcal{M}^{\mathrm{fr}}(W_{5,1};\xi'\vert_{\partial W_{5,1}}), \xi')$.  We find that the commutator $[x_{253},x_{412}]$ satisfies
$$[(\rho_{123}, \gamma_{123})] = [x_{253},x_{412}] \cdot [(\psi, \gamma)] \in \pi_1(\mathcal{M}^{\mathrm{fr}}(W_{5,1};\xi'\vert_{\partial W_{5,1}}), \xi').$$
Hence we may re-choose $(\rho_{123},\gamma_{123})$ homotopic to the concatenation $(\rho_{123}, \gamma_{123})\cdot (\psi, \gamma)^{-1}$ representing an element of $\pi_1(\mathcal{M}^{\mathrm{fr}}(W_{3,1};\xi\vert_{\partial W_{3,1}}), \xi)$ which becomes a commutator in $\pi_1(\mathcal{M}^{\mathrm{fr}}(W_{5,1};\xi'\vert_{\partial W_{5,1}}),\xi')$ and hence nullhomologous. As usual, property (\ref{it:Braid:4}) for the standard $\theta$-structure $\hat{\ell}'$ coming from taking the $\theta$-structure induced by the framing follows.

To deduce property (\ref{it:Braid:4}) for $\hat{\ell}$, note that a choice of homotopy $\hat{\ell} \simeq \hat{\ell}'$ restricted to the boundary gives a $\theta$-structure on the cylinder $S^{2n-1} \times [0,1]$ restricting to $\hat{\ell}\vert_{\partial W_{3,1}}$ and $\hat{\ell}'\vert_{\partial W_{3,1}}$. Gluing on this cylinder to $W_{3,1}$, and identifying the resulting manifold with $W_{3,1}$ again, gives a homotopy equivalence
$$\mathcal{M}^\theta(W_{3,1};\hat{\ell}\vert_{\partial W_{3,1}}) \lra \mathcal{M}^\theta(W_{3,1};\hat{\ell}'\vert_{\partial W_{3,1}}),$$
and by the same method a compatible homotopy equivalence for $W_{5,1}$, from which property (\ref{it:Braid:4}) for $\hat{\ell}$ follows from that of $\hat{\ell}'$ proved above.
\end{proof}

Finally, we give a result which is not logically necessary for the results of this paper, but which may clarify the notion of standardness of a $\theta$-structure on $W_{g,1}$.

\begin{proposition}\label{prop:reframing}
Let $2n \geq 6$, and $\hat{\ell}$ be a $\theta$-structure on $W_{g,1}$ such that the underlying map $\ell : W_{g,1} \to B$ is nullhomotopic.
\begin{enumerate}[(i)]
\item For any $n$ there is an embedding $\phi : W_{g-1,1} \hookrightarrow W_{g,1}$ such that $\phi^*\hat{\ell}$ is standard,

\item If $n \neq 3,7$ then there is an embedding $\phi : W_{g,1} \hookrightarrow W_{g,1}$ such that $\phi^*\hat{\ell}$ is standard.
\end{enumerate}
\end{proposition}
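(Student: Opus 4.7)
The plan is to reduce to the case of framings and then realize the standard framing by modifying the embedding. Since $\ell : W_{g,1} \to B$ is nullhomotopic, we lift this nullhomotopy along $\theta$ to a homotopy of bundle maps and thereby assume that the underlying map of $\hat{\ell}$ is constantly $\theta(\tau) \in B$; then $\hat{\ell} \simeq \hat{\ell}^\tau_\xi$ for some framing $\xi$ of $TW_{g,1}$. The condition that $\phi^*\hat{\ell}$ be standard translates into the condition that $\bar{e}_i^*(\phi^*\xi)$ and $\bar{f}_i^*(\phi^*\xi)$ be homotopic to the framing $\xi_{S^n \times D^n}$ of \eqref{eq:StdEmbeding}.

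Because $W_{g,1}$ is homotopy equivalent to a wedge of $2g$ copies of $S^n$ and $n \geq 3$, framings of $W_{g,1}$ form a torsor (up to homotopy) over $[W_{g,1}, \SO(2n)] \cong \pi_n(\SO(2n))^{2g}$. Taking $\xi_{W_{g,1}}$ as origin, we identify the class of $\xi$ with a tuple $(a_1, b_1, \ldots, a_g, b_g) \in \pi_n(\SO(2n))^{2g}$ measuring the deviation of each $\bar{e}_i^*\xi$ or $\bar{f}_i^*\xi$ from the standard. The task is then to find $\phi : W_{g',1} \hookrightarrow W_{g,1}$ (with $g' = g-1$ in (i), $g' = g$ in (ii)) so that the corresponding tuple for $\phi^*\xi$ vanishes.

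The algebraic input is the action on framings of the group of isotopy classes of self-embeddings. For each $\alpha \in \pi_n(\SO(n))$, the Dehn twist $T_\alpha : (x,y) \mapsto (x, \alpha(x)\cdot y)$ on $S^n \times D^n$ extends by the identity to a self-diffeomorphism of a neighbourhood of $\bar{e}_i$ (or $\bar{f}_i$) in $W_{g,1}$ and shifts one coordinate of the tuple by the image of $\alpha$ under $\pi_n(\SO(n)) \to \pi_n(\SO(2n))$; the handle interchanges of Lemma \ref{lem:SigmaDiffeo} permute and combine coordinates. For (i), we additionally have the flexibility that $W_{g,1}$ contains a copy of $W_{1,1}$ disjoint from the image of $\phi$, so we may isotope the handle cores of $\phi$ through this spare region. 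A Whitney-trick argument using Wall's realization of framings by plumbed handles (similar in spirit to the proof of Lemma \ref{lemthm:conn-K-delta}) shows that this additional freedom realizes any element of $\pi_n(\SO(2n))$ as a shift on each of the $2(g-1)$ coordinates independently, at the price of modifying the framing on the spare handle (which is immaterial since it is not in the image of $\phi$). Hence all $(a_i, b_i)_{i < g}$ may be killed.

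For (ii) the spare handle is absent and we are confined to the subgroup of $\pi_n(\SO(2n))^{2g}$ generated by Dehn twists and handle interchanges. A computation in the long exact sequence of the fibration $\SO(n) \to \SO(n+1) \to S^n$ (together with the fact that $\pi_n(\SO(n+1)) \to \pi_n(\SO(2n))$ is an isomorphism for $n \geq 3$) identifies the relevant cokernel with the obstruction to parallelizability of $S^n$, which vanishes precisely when $n \neq 3, 7$ in the range $n \geq 3$. Under this hypothesis every tuple is realizable, and we can arrange $\phi^*\xi$ to be standard. The main obstacle of the proof will be verifying carefully that Dehn twists together with handle interchanges exhaust the expected subgroup of framing shifts, and then the bookkeeping required to relate this algebraic cokernel to the notion of standardness from Definition \ref{defn:7.2}.
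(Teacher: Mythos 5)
Your reduction to framings is the same first step as the paper's, but from there you diverge to a self-diffeomorphism/Dehn-twist approach that has several genuine problems.

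First, the claim that $\pi_n(\SO(n+1)) \to \pi_n(\SO(2n))$ is an isomorphism for $n \geq 3$ is false: from the fibration $\SO(n+1) \to \SO(n+2) \to S^{n+1}$ the map $\pi_n(\SO(n+1)) \to \pi_n(\SO(n+2))$ is surjective with kernel generated by the clutching class $[TS^{n+1}]$, which is nonzero whenever $n+1 \neq 1,3,7$ (in particular for all odd $n$ since $e(TS^{n+1})=2$). Only $\pi_n(\SO(n+2)) \to \pi_n(\SO(2n))$ is an isomorphism. Second, the sentence identifying the cokernel with ``the obstruction to parallelizability of $S^n$, which vanishes precisely when $n \neq 3,7$'' has the logic inverted: $S^n$ is parallelizable precisely for $n = 1,3,7$, so that obstruction vanishes for $n = 3,7$ and is nonzero otherwise, the opposite of what is needed. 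This strongly suggests the cokernel you are computing is not the relevant obstruction.

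More fundamentally, restricting to the group generated by Dehn twists and handle interchanges is much too small: these are self-diffeomorphisms, whereas $\phi$ may be an arbitrary embedding, and the essential geometric freedom comes from re-routing the handle cores via the Whitney trick and handle slides (Wall's realization of quadratic refinements by plumbed handles). The paper encodes exactly this flexibility by passing to the quadratic module $\mathcal{I}_n^{\mathrm{fr}}(W_{g,1},\xi')$ on $\pi_n(W_{g,1})$ and asking for a morphism from $H^{\oplus h}$: for $n$ even there is no obstruction at all since $\mu$ is determined by $\lambda$ and $\lambda \cong H^{\oplus g}$ (note your argument does not even distinguish $n$ even from $n$ odd, yet the obstruction behaves completely differently in the two cases); for $n$ odd the obstruction is the Arf invariant of $\mu$, which is killed for $n \neq 3,7$ by the Jones--Rees reframing formula combined with the vanishing of the $\bZ/2$-Hurewicz map $\pi_{n+1}(B\OO) \to H_{n+1}(B\OO;\bZ/2)$ for $n+1 \neq 1,2,4,8$. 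The numerical coincidence that the excluded dimensions are $n=3,7$ in both your account and the true one is misleading: the correct reason is the Wu-class computation via Atiyah--Hirzebruch, not parallelizability of $S^n$. Your part (i) idea of reserving a spare handle is in the spirit of the paper's observation that the Arf invariant can be concentrated in a single hyperbolic summand, but it needs to be formulated as the quadratic-module statement to carry any weight.
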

\begin{proof}
As the underlying maps $\ell$ and $\ell^\tau_{W_{g,1}}$ are homotopic, there is a path of bundle maps from $\hat{\ell}$ to an $\hat{\ell}'$ whose underlying map is equal to $\ell^\tau_{W_{g,1}}$. There is therefore a bundle isomorphism $\rho : TW_{g,1} \to TW_{g,1}$, with underlying map the identity, such that $\hat{\ell}'= \hat{\ell}^\tau_{W_{g,1}} \circ \rho$. Thus $\hat{\ell}'$ is also a $\theta$-structure associated to a framing of $W_{g,1}$, namely the framing $\xi' = \xi_{W_{g,1}} \circ \rho$. To establish the lemma it is therefore enough to show that there is a framed embedding $\phi : (W_{h,1}, \xi_{W_{h,1}}) \hookrightarrow (W_{g,1}, \xi')$, with $h$ being either $g-1$ or $g$ depending on the case.

In order to do so it is convenient to work with a variation of the quadratic module described in Definition \ref{defn:ImmSphereQuadMod}. Namely, the framing $\xi'$ is a section $\xi' : W_{g,1} \to \Fr(W_{g,1})$ of the frame bundle, and hence gives an injective homomorphism $\pi_n(W_{g,1},w) \to \pi_n(\Fr(W_{g,1}),\xi'(w)) = I_n^{\mathrm{fr}}(W_{g,1})$, where $w \in W_{g,1}$ is any chosen basepoint.  Since $W_{g,1}$ is simply-connected, any two choices of $w$ will give canonically isomorphic groups and henceforth we shall omit $w$ and $\xi'(w)$ from the notation.  In this proof (only) we shall write $\mathsf{I}_n^{\mathrm{fr}}(W_{g,1}, \xi')$ for the quadratic module given by $\pi_n(W_{g,1})$ with the bilinear form $\lambda$ and quadratic function $\mu$ induced from $\mathsf{I}_n^{\mathrm{fr}}(W_{g,1})$. By Smale--Hirsch theory, $\pi_n(W_{g,1})$ is interpreted as the set of regular homotopy classes of \emph{compatibly framed} immersions of $(S^n \times D^n, \xi_{S^n \times D^n})$ into $(W_{g,1}, \xi')$.

Now, if $\sH^{\oplus h} \to \mathsf{I}_n^{\mathrm{fr}}(W_{g,1}, \xi')$ is a morphism of quadratic modules, then just as in the proof of Lemma \ref{lemthm:conn-K-delta} we may use Smale--Hirsch theory and the Whitney trick to find embeddings $e_i, f_i : S^n \times D^n \hookrightarrow W_{g,1}$, for $i=1,2,\ldots,h$, which pull back $\xi'$ to $\xi_{S^n \times D^n}$, and so that the cores of the $e_i$ and $f_i$ intersect as they do in $W_{h,1}$. By plumbing these together we obtain an embedding $\phi: W_{h,1} \hookrightarrow W_{g,1}$ such that $\phi^*\xi' \simeq \xi_{W_{h,1}}$.

It therefore remains to show that the quadratic module $\mathsf{I}_n^{\mathrm{fr}}(W_{g,1}, \xi')$ admits a morphism from $\sH^{\oplus g-1}$, and from $\sH^{\oplus g}$ if $n \neq 3,7$. This will use the classification of quadratic modules with form parameter either $(1, \{0\})$ or $(-1, 2\bZ)$. First note that the bilinear form $(\pi_n(W_{g,1}), \lambda)$ is $(-1)^n$-symmetric, and non-degenerate by the Hurewicz theorem and Poincar{\'e} duality. 

If $n$ is even, the quadratic function $\mu$ satisfies $2\mu(x) =\lambda(x,x)$ so is determined by $\lambda$, and it is enough to work with $\lambda$. The symmetric form $\lambda$ is the intersection form of $W_{g,1}$, so is isomorphic to $\sH^{\oplus g}$, which establishes the lemma in this case.

If $n$ is odd then the skew-symmetric form $(\pi_n(W_{g,1}), \lambda)$ must be isomorphic to $\sH^{\oplus g}$ as a bilinear form, but $\mu$ is a potentially non-standard quadratic structure. However, such quadratic refinements of a skew-symmetric form are classified by their Arf invariant, which may be supported inside a single copy of $\sH$, leaving a morphism of quadratic forms $\sH^{\oplus g-1} \to \mathsf{I}_n^{\mathrm{fr}}(W_{g,1}, \xi')$. Finally, we claim that if $n \neq 3,7$ then the Arf invariant of $\mathsf{I}_n^{\mathrm{fr}}(W_{g,1}, \xi')$ is zero, so that it is indeed $\sH^{\oplus g}$. Let us write $\mu^{\xi'} = \mu$ for the quadratic function constructed using the framing $\xi'$, and $\mu^{\xi_{W_{g,1}}}$ for the quadratic function constructed using the standard framing. The Arf invariant of $\mu^{\xi_{W_{g,1}}}$ is certainly zero, and $\xi'$ is obtained from $\xi_{W_{g,1}}$ by reframing using $\rho$, which is the same as a map $\rho : W_{g,1} \to O(2n)$. Jones--Rees have studied the effect of reframing on the Arf invariant, and in our setting their result \cite[p.\ 144]{JonesRees} says that $\mathrm{Arf}(\mu^{\xi'}) + \mathrm{Arf}(\mu^{\xi_{W_{g,1}}}) = \mu^{\xi'}(y) \in \bZ/2$ where $y \in H^n(W_{g,1};\bZ/2)$ is the class
$$W_{g,1} \overset{\rho}\lra O(2n) \overset{\text{stab.}}\lra O \overset{-1}\lra O \overset{\Omega v_{n+1}}\lra K(\bZ/2,n),$$
and $v_{n+1} \in H^{n+1}(BO;\bZ/2)$ is the $(n+1)$st Wu class. Finally, $y=0$ for $n \neq 3,7$, as the Hurewicz map $\pi_{n+1}(BO) \to H_{n+1}(BO;\bZ/2)$ is zero for $n+1\neq1,2,4,8$ by \cite[Theorem 1]{AtiyahHirzebruch}.
\end{proof}

\begin{remark}
If $(M, \hat{\ell}_M)$ is a $\theta$-manifold containing a submanifold
diffeomorphic to $W_{g,1}$ on which the underlying map $\ell_M : M \to B$ is nullhomotopic, then it follows from Proposition \ref{prop:reframing} that $g^\theta(M, \hat{\ell}_M) \geq g-1$, and that $g^\theta(M, \hat{\ell}_M) \geq g$ if $n \neq 3,7$. In particular, if a noncompact manifold $(M, \hat{\ell}_M)$ contains infinitely many copies of $W_{1,1}$ on which $\ell_M$ is nullhomotopic, then it also contains infinitely many copies of $W_{1,1}$ with standard $\theta$-structure.

This allows us to correct a minor omission in the proof of Proposition
7.8 in \cite{GR-W2}. Indeed, assumption (iii) of \cite[Proposition
7.8]{GR-W2} now implies that $K\vert_{[i,\infty)}$ contains
infinitely many
disjoint copies of $W_{1,1}$ with standard $\theta$-structure, and the
copies of $W_{1,1}$ used in the proof of Proposition 7.8 should be
taken to have standard $\theta$-structure.
\end{remark}

\subsection{A semisimplicial resolution}

We now return to the proof of Theorem~\ref{thm:main-theta}.  We shall first define an analogue of the resolution $X_\bullet \to \Mst$ from Section~\ref{sec:resolution}.

Recall that as part of the data in Theorem~\ref{thm:main-theta} we
have specified a $\theta$-structure $\hat{\ell}_S$ on the cobordism $S \approx
([-1,0] \times P) \# W_1$ which becomes standard when pulled back along
$\phi' : W_{1,1} \hookrightarrow S$ and induces the $\theta$-structure $\hat{\ell}_P$ on $\{0\} \times P$ and $\hat{\ell}_P'$ on $\{-1\} \times P$. The coordinate patch $\partial c: \R^{2n-1} \to P$ determines a coordinate patch $ c = \mathrm{Id}\times \partial c: (-\delta,0] \times \R^{2n-1} \hookrightarrow S$ near $\{0\} \times P \subset S$.  We shall make the following assumptions on $\partial c$ and
$\hat{\ell}_S$:
\begin{itemize}
\item The embeddings $c: (-\delta,0] \times
  \R^{2n-1} \hookrightarrow S$ and $\phi': W_{1,1} \hookrightarrow S$
  have image in the same path component.
\item The $\theta$-structure on $(-\delta,0] \times \R^{2n-1}$
  obtained by pulling back $\hat{\ell}_S$ along $c$ is \emph{equal} to the structure
  determined (according to Definition~\ref{defn:7.2}) by the canonical
  framing of $(-\delta,0] \times \R^{2n-1} \subset \R^{2n}$.
\end{itemize}
After possibly changing $\hat{\ell}_S$ (and therefore $\hat{\ell}_P$) by a small
homotopy, it is always possible to choose $\partial c: \R^{2n-1} \to P$ with
these two properties.

For a compact manifold $X$ equipped with a chart $d : (-\epsilon, 0] \times \bR^{2n-1} \to X$ for some $\epsilon > 0$ such that $d^{-1}(\partial X) = \{0\} \times \bR^{2n-1}$, in Definition~\ref{defn:K-p} we have defined a semisimplicial space $K_\bullet(X) = K_\bullet(X, d)$. We now want to define an analogue of this semi-simplicial space for $\theta$-manifolds.


There is a $(0, e_0) \in K_0(S) = K_0(S,c)$, where the embedding $e_0 : H \hookrightarrow S$ given by choosing an embedded path from $W_{1,1} \subset S$ to $(0, c(0)) \in (-\delta,0] \times c(\bR^{2n-1})$ and thickening it up. We may do this so that there is an $\epsilon>0$ such that on $(-\epsilon,0] \times D^{2n-1} \subset \Hmfld$ the embedding $e_0$ is given by $e_0(s,p) = c(s,p)$. We can extend this to a 1-parameter family of embeddings $e_t : \Hmfld \to S$ for $t \in \bR$ such that on $(-\epsilon,0] \times D^{2n-1} \subset \Hmfld$ it is given by $e_t(s,p)=c(s,p+te_1)$, and hence define a 1-parameter family $\hat{\ell}_H(t) = e_t^*\hat{\ell}_S$ of $\theta$-structures on $\Hmfld$.

Recall that we write $\mathcal{M}^{\mathrm{st},\theta}(\hat{\ell}_P)$ for the space from Definition~\ref{Defn:7.1}.  This space depends on the manifold $W$, but since $W$ shall stay fixed for a while we omit this dependence from the notation.

\begin{definition}\label{defn:K-p-theta}
  Let $(M, \hat{\ell}_M) \in \mathcal{M}^{\mathrm{st},\theta}(\hat{\ell}_P)$ and  $\epsilon > 0$ be such that
  $(-\epsilon, 0] \times P \subset M$.  Then $\partial c: \R^{2n-1} \to P$
  induces a chart $(-\epsilon,0] \times \R^{2n-1} \to M$, and so we have $K_\bullet(M)$.
\begin{enumerate}[(i)]
\item Let $K_0(M, \hat{\ell}_M)$ be the space of tuples $(t, \phi, \nu)$, where $(t, \phi) \in K_0(M)$ and $\nu$ is a path in $\Bun^\theta(\Hmfld)$ from $\phi^*\hat{\ell}_M$ to $\hat{\ell}_H(t)$ which is constant over $\{0\} \times D^{2n-1} \subset \Hmfld$.

\item Let $K_p(M, \hat{\ell}_M) \subset (K_0(M, \hat{\ell}_M))^{p+1}$ consist of those tuples which give an element of $K_p(M)$ after forgetting the paths $\nu$.

\item Topologise $K_p(M, \hat{\ell}_M)$ as a subspace of $K_p(M) \times (\Bun^\theta(\Hmfld)^I)^{p+1}$, and write $K_p^\delta(M, \hat{\ell}_M)$ for the same set considered as a discrete space. The collection $K_\bullet(M, \hat{\ell}_M)$ forms a semisimplicial space.

\item Let $\overline{K}_\bullet(M, \hat{\ell}_M) \subset {K}_\bullet(M, \hat{\ell}_M)$ be the sub-semisimplicial space consisting of those tuples $((t_0, \phi_0, \nu_0), \ldots, (t_p, \phi_p, \nu_p))$ where the $\phi_i$ are disjoint.

\item Let $K^\delta(M, \hat{\ell}_M)$ be the simplicial complex with vertices $K_0^\delta(M, \hat{\ell}_M)$, and where the set $\{(t_0, \phi_0, \nu_0), \ldots, (t_p, \phi_p, \nu_p)\}$ is a $p$-simplex if $\{(t_0, \phi_0), \ldots, (t_p, \phi_p)\}$ is a $p$-simplex of $K^\delta(M)$.
\end{enumerate}
\end{definition}

The main result concerning this modified semisimplicial space is the following analogue of Corollary \ref{cor:KOverlineConn}. Note that $\bar{g}(M, \hat{\ell}_M) \leq \bar{g}(M)$, so the connectivity range in the following proposition is potentially smaller than that of Corollary \ref{cor:KOverlineConn}.

\begin{proposition}\label{prop:ThetaCxConnected}
  If $2n \geq 6$ and $M$ is simply-connected then
  $\vert\overline{K}_\bullet(M, \hat{\ell}_M)\vert$ is $\lfloor
  \tfrac{\bar{g}(M, \hat{\ell}_M)-4}{2}\rfloor$-connected.
\end{proposition}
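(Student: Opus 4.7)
The plan is to adapt the proof of Corollary \ref{cor:KOverlineConn} to the $\theta$-structured setting. That proof factored through Theorem \ref{thm:high-conn} (which used the bi-semisimplicial resolution $D_{\bullet,\bullet}$ of Definition \ref{defn:D-bullet-bullet}) and Lemma \ref{lemthm:conn-K-delta} (controlling the discretised complex $K^\delta_\bullet(W)$ via the algebraic complex $K^a$). The bi-semisimplicial step and the perturbation from $K_\bullet$ to $\overline{K}_\bullet$ adapt essentially verbatim: the forgetful map $\overline{K}_\bullet(M,\hat{\ell}_M)\to\overline{K}_\bullet(M)$ is level-wise an open inclusion once a vertex is viewed as a triple $(t,\phi,\nu)$, so Proposition \ref{prop:semisimplicial-Serre-mic-fib} still produces the relevant Serre microfibration, and the isotopy shrinking of Corollary \ref{cor:KOverlineConn} only acts on the $\phi$-component. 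So the heart of the matter is the analogue of Lemma \ref{lemthm:conn-K-delta}: the discretised complex $K^\delta(M,\hat{\ell}_M)$ (with $\phi$ and $\nu$ both given the discrete topology) is $\lfloor(\bar{g}^\theta(M,\hat{\ell}_M)-4)/2\rfloor$-connected.

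The correct algebraic model is the sub-quadratic-module
\[
\mathcal{I}^{\mathrm{st}}_n(M,\hat{\ell}_M) \;=\; \ker\bigl(\hat{\ell}_{M,*}\colon\pi_n(\Fr(M))\to\pi_n(\Fr(\theta^*\gamma))\bigr) \;\subset\; \mathcal{I}\hatfr_n(M)
\]
of framed immersion classes whose induced $\theta$-structure on $S^n\times D^n$ is homotopic to the standard one; the forms $\lambda$ and $\mu$ restrict, being purely geometric. Each standard embedding of $W_{1,1}$ into $M$ boundary-connect-summed with $k$ standardly framed copies of $W_{1,1}$ defines a morphism $H\to\mathcal{I}^{\mathrm{st}}_n$ landing in the orthogonal summand $\mathcal{I}^{\mathrm{st}}_n(M,\hat{\ell}_M)\oplus H^k$, from which one deduces $\bar{g}(\mathcal{I}^{\mathrm{st}}_n(M,\hat{\ell}_M))\geq\bar{g}^\theta(M,\hat{\ell}_M)$. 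Theorem \ref{thm:Charney} then makes $K^a(\mathcal{I}^{\mathrm{st}}_n)$ be $\lfloor(\bar{g}^\theta-4)/2\rfloor$-connected. Sending $(t,\phi,\nu)\in K^\delta_0(M,\hat{\ell}_M)$ to the hyperbolic submodule generated by $\phi_*(e),\phi_*(f)$ defines a simplicial map $K^\delta(M,\hat{\ell}_M)\to K^a(\mathcal{I}^{\mathrm{st}}_n)$, the path $\nu$ being precisely the certificate that $\phi_*(e),\phi_*(f)\in\mathcal{I}^{\mathrm{st}}_n$ and disjointness of cores giving orthogonality of the submodules.

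With this setup, the connectivity of $K^\delta(M,\hat{\ell}_M)$ follows the argument of Lemma \ref{lemthm:conn-K-delta}. Given $f\colon\partial I^{k+1}\to|K^\delta(M,\hat{\ell}_M)|$ with $k\leq\lfloor(\bar{g}^\theta-4)/2\rfloor$, project to $|K^a(\mathcal{I}^{\mathrm{st}}_n)|$, extend to a simplexwise-injective simplicial nullhomotopy on a triangulation $I^{k+1}\approx|K|$ using Theorems \ref{thm:Charney} and \ref{thm:simplex-wise-injective}, and lift back vertex by vertex. At a new vertex $v_i$, the morphism $h=\overline{f}(v_i)\colon H\to\mathcal{I}^{\mathrm{st}}_n$ has $h(e),h(f)\in\mathcal{I}^{\mathrm{st}}_n$, so the Whitney trick produces framed embeddings $j(e),j(f)\colon S^n\times D^n\hookrightarrow M$ with standard pulled-back $\theta$-structures; plumbing them and thickening an embedded arc to the boundary chart (using simple-connectivity of $M$, exactly as in Lemma \ref{lemthm:conn-K-delta}) yields a standard embedding $\phi_i\colon H\hookrightarrow M$, and Lemma \ref{lem:StrOnH} supplies a path $\nu_i$ from $\phi_i^*\hat{\ell}_M$ to $\hat{\ell}_H(t_i)$ constant on $\{0\}\times D^{2n-1}$, completing the lift to a vertex of $K^\delta(M,\hat{\ell}_M)$. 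The further Whitney moves used to disjoin cores are supported in neighbourhoods of already-chosen standard submanifolds and so preserve $\theta$-standardness of pullbacks.

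The main obstacle is the algebraic inequality $\bar{g}(\mathcal{I}^{\mathrm{st}}_n(M,\hat{\ell}_M))\geq\bar{g}^\theta(M,\hat{\ell}_M)$ together with the splitting-off of hyperbolic summands under boundary connect-sum with standardly framed $W_{1,1}$: these need care because $\mathcal{I}^{\mathrm{st}}_n$ depends nontrivially on $\hat{\ell}_M$, not merely on $M$. Once that is in hand, Theorem \ref{thm:Charney} immediately delivers the right connectivity for the algebraic complex, and the geometric lifting step is a small decoration of Lemma \ref{lemthm:conn-K-delta} in which Lemma \ref{lem:StrOnH} supplies the additional datum $\nu$ at each lifted vertex. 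As in the untwisted proof, there are mild spin-type ambiguities in the lift (one really lifts some nullhomotopy, possibly differing from $\overline{f}$ by flips on some vertices), but these do not affect the connectivity conclusion.
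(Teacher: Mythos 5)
Your proposal follows the paper's proof essentially step for step: you reduce to the analogue of Lemma~\ref{lemthm:conn-K-delta}, introduce the same kernel quadratic module $\mathcal{I}^{\mathrm{st}}_n(M,\hat{\ell}_M) = \ker(\pi_n(\Fr(M))\to\pi_n(\Fr(\theta^*\gamma)))$ (the paper's $\mathcal{I}\hatfr_n(M,\hat{\ell}_M)$), establish the inequality $\bar{g}(\mathcal{I}^{\mathrm{st}}_n(M,\hat{\ell}_M)) \geq \bar{g}^\theta(M,\hat{\ell}_M)$ via embeddings of $W_{g,1}$ with standard structure, apply Theorem~\ref{thm:Charney}, and lift vertex by vertex as in Lemma~\ref{lemthm:conn-K-delta}, with Lemma~\ref{lem:StrOnH} supplying the path datum $\nu$ at each new vertex. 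This is exactly the argument in the paper; your additional remarks (that the openness hypothesis of Proposition~\ref{prop:semisimplicial-Serre-mic-fib} is unaffected by the $\nu$-component, and that regular homotopies preserve standardness of pulled-back structures) are correct glosses on steps the paper leaves implicit rather than deviations from it.
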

\begin{proof}
  We will explain the analogue of Lemma \ref{lemthm:conn-K-delta},
  that $\vert K^\delta(M, \hat{\ell}_M)\vert$ is $\lfloor \tfrac{\bar{g}(M,
    \hat{\ell}_M)-4}{2}\rfloor$-connected: passing from this to the claim in
  the proposition is exactly as in Theorem \ref{thm:high-conn} and
  Corollary \ref{cor:KOverlineConn}.

Recall
from Definition \ref{defn:ImmSphereQuadMod} that for a manifold $M$, with framed basepoint $b_M$, $I_n\hatfr(M) = I_n\hatfr(M,b_M)$ denotes the group of regular homotopy classes of framed immersions $i : S^n \times D^n \looparrowright M$ equipped with a path in $\Fr(M)$ from $Di(b_{S^n \times D^n})$ to $b_M$. In this proof we shall write $I_n\hatfr(M, \hat{\ell}_M) \subset I_n\hatfr(M)$ be
  the subgroup of those regular homotopy classes of immersions $i : S^n \times D^n \looparrowright M$ (together with
  paths in $\mathrm{Fr}(W)$) such that the $\theta$-structure
  $\hat{\ell}_M \circ Di$ on $S^n \times D^n$ is standard.  As
  we explained in Definition~\ref{defn:ImmSphereQuadMod},
  Smale--Hirsch theory identifies $I_n\hatfr(M)$ with the
  group $\pi_n(\mathrm{Fr}(M))$; under this identification
  $I_n\hatfr(M,\hat{\ell}_M)$ corresponds to the kernel of the
  homomorphism $\pi_n(\mathrm{Fr}(M)) \to
  \pi_n(\mathrm{Fr}(\theta^*\gamma_{2n}))$ induced by $\hat{\ell}_M$. The bilinear form $\lambda$ and quadratic
  function $\mu$ restrict to this subgroup giving a quadratic module $\mathsf{I}_n\hatfr(M,\hat{\ell}_M) = (I_n\hatfr(M,\hat{\ell}_M), \lambda, \mu)$, and as in the proof of
  Lemma \ref{lemthm:conn-K-delta} we have a map of simplicial
  complexes
  \begin{equation*}
    K^\delta(M, \hat{\ell}_M) \lra K^a(\mathsf{I}_n\hatfr(M, \hat{\ell}_M)).
  \end{equation*}

  An embedding $j: W_{g,1} \hookrightarrow M$ gives rise to elements
  $e_1, f_1, \ldots, e_g, f_g \in I_n\hatfr(M)$ determining
  $g$ hyperbolic summands. If in addition $j^* \hat{\ell}_M$ is standard,
  these elements lie in the subgroup $I_n\hatfr(M, \hat{\ell}_M)$.
  Hence we have $\bar{g}(\mathsf{I}_n\hatfr(M, \hat{\ell}_M))
  \geq \bar{g}(M, \hat{\ell}_M)$, and so $\lCM(K^a(\mathsf{I}_n\hatfr(M,
  \hat{\ell}_M))) \geq \lfloor \tfrac{\bar{g}(M,
    \hat{\ell}_M)-1}{2}\rfloor$, by Theorem \ref{thm:Charney}.

  We now proceed precisely as in the proof of Lemma
  \ref{lemthm:conn-K-delta}. The procedure explained there gives a
  lift $\hat{f}:I^{k+1} \to |K^\delta(M)|$ of $I^{k+1} \overset{f}\to
  \vert K^a(\mathsf{I}_n\hatfr(M, \hat{\ell}_M)) \vert \to
  \vert K^a(\mathsf{I}_n\hatfr(M))\vert$.  To upgrade
  this to a lift  $I^{k+1} \to \vert K^\delta(M ,\hat{\ell}_M) \vert$ we use
  Lemma~\ref{lem:StrOnH} to choose for each internal vertex $v$ of
  $I^{k+1}$ with $\hat{f}(v) = (t, \phi)$ a path $\nu: I \to
  \Bun^\theta(\Hmfld)$ making $(t,\phi,\nu)$ a vertex of
  $K^\delta(M,\hat{\ell}_M)$.  Whether or not an unordered $(p+1)$-tuple of
  elements $(t_i,\phi_i,\nu_i) \in K_0^\delta(M,\hat{\ell}_M)$
  forms an element of $K_p^\delta(M,\hat{\ell}_M)$ does not depend on the
  $\nu_i$, so we have produced a lift $I^{k+1} \to
  |K^\delta(M,\hat{\ell}_M)|$.
\end{proof}

\begin{remark}\label{remark:estimating-genus}
The Smale--Hirsch lifting argument in the proof above shows that $g(M,\hat\ell_M) = g(\mathsf{I}_n\hatfr(M,\hat\ell_M))$.  When the structure map $\ell_M: M \to B$ is $n$-connected (see \cite[Section 9]{HomStabII} for a discussion of how the general situation may be reduced to this case by a Moore--Postnikov argument), the genus of $(M,\hat\ell_M)$ may be effectively estimated in the following way.  If we write $\pi_{n+1}(\mathrm{Fr}(\theta^* \gamma_{2n}),\mathrm{Fr}(M))$ for the relative homotopy groups of the mapping cylinder of $\mathrm{Fr}(M) \to \mathrm{Fr}(\theta^* \gamma_{2n})$ relative to $\mathrm{Fr}(M)$, then $I_n\hatfr(M, \hat\ell_M)$ is the image of $\pi_{n+1}(\mathrm{Fr}(\theta^* \gamma_{2n}),\mathrm{Fr}(M)) \to \pi_{n}(\mathrm{Fr}(M))$ and the group $\pi_{n+1}(\mathrm{Fr}(\theta^* \gamma_{2n}),\mathrm{Fr}(M))$ becomes a quadratic module by composing with the map to $I_n\hatfr(M, \hat\ell_M)$.  The Hurewicz theorem gives an isomorphism $\pi_{n+1}(\mathrm{Fr}(\theta^* \gamma_{2n}),\mathrm{Fr}(M)) \cong H_{n+1}(B,M)$ and hence there is an exact sequence
  \begin{equation*}
    \pi_{n+1}(\mathrm{Fr}(\theta^* \gamma_{2n}),\mathrm{Fr}(M)) \lra H_n(M) \xrightarrow{(\ell_M)_*} H_n(B) \lra 0,
  \end{equation*}
  where the first map preserves intersection pairing.  If the bilinear form $(H_n(M;\bZ),\lambda)$ contains $g = g(H_n(M),\lambda)$ orthogonal hyperbolic forms, and the abelian group $H_n(B;\bZ)$ is generated by $e$ elements, then the kernel of $(\ell_M)_*$ contains at least $g-e$ orthogonal hyperbolic forms.  Therefore $\pi_{n+1}(\mathrm{Fr}(\theta^* \gamma_{2n}),\mathrm{Fr}(M))$ and hence $\mathsf{I}_n\hatfr(M,\hat\ell_M)$ contains at least $g-e$ orthogonal hyperbolic forms when disregarding the quadratic form $\mu$.  It follows that the quadratic module contains at least $g-e-1$ orthogonal hyperbolic forms.  We obtain the estimate
  \begin{equation*}
    g(M,\hat\ell_M) \geq g(H_n(M;\bZ),\lambda) - e - 1,
  \end{equation*}
  whose right hand side can be expressed as a constant depending only
  on $n$ and $B$ plus a term depending on $M$ only through characteristic
  numbers (Euler characteristic and when $n$ is even also signature).
  If $n$ is even then $\lambda$ determines $\mu$ and the estimate can
  be improved to $g(H_n(M;\bZ),\lambda) - e$.  If $n=3$ or $n=7$ then
  the kernel of
  $I_n\hatfr(M,\hat\ell_M) \to
  H_n(M;\bZ)$
  contains an element with $\mu=1$ whence we also obtain
  $g(M,\hat\ell_M) \geq g(H_n(M;\bZ),\lambda) - e$.
\end{remark}

Using the semisimplicial space $\overline{K}_\bullet(M, \hat{\ell}_M)$, we may define an augmented
semisimplicial space $X^\theta_\bullet \to \mathcal{M}^{\mathrm{st},\theta}(\hat{\ell}_P)$ analogous to
$X_\bullet \to \Mst$ from Section~\ref{sec:resolution}. We let $X_p^\theta$ be the set of
tuples $(M, \hat{\ell}_M ; x)$ where $(M, \hat{\ell}_M) \in \mathcal{M}^{\mathrm{st},\theta}(\hat{\ell}_P)$ and $x \in
\overline{K}_p(M,\hat{\ell}_M)$. We topologise $X_p^\theta$ as a quotient
space of a subspace of
$$\coprod_{[T]} \mathcal{E}(T) \times \Bun^\theta_\partial(T ; \hat{\ell}_P) \times K_p(T) \times (\Bun^\theta(\Hmfld)^I)^{p+1},$$
where the union is taken over the set of compact manifolds with
boundary $P$ stably diffeomorphic to $W$, one in each diffeomorphism
class.  Again $\mathcal{E}(T)$ is the embedding
space model~\eqref{eq:6} for $E\Diff_\partial(T)$.  This forms an augmented
semisimplicial space
$\epsilon^\theta : X_\bullet^\theta \to \mathcal{M}^{\mathrm{st},\theta}(\hat{\ell}_P)$, as
before.

We now wish to construct augmented semisimplicial spaces analogous to the  $Y_\bullet(p) \to Y_{-1}(p)$ from Definition~\ref{defn:Yspace}. First, choose a $\theta$-structure $\hat{\ell}_{S_p}^\mathrm{std}$ on the manifold $S_p \subset [-(p+1),0] \times \bR^\infty$ which agrees with $\hat{\ell}_P$ on $\{0\} \times P$, and which on the $(p+1)$ copies of $W_{1,1}$ is standard. Let us write $\hat{\ell}_P^{(p)}$ for the induced $\theta$-structure on $\{-(p+1)\} \times P$. Choosing these inductively, we may suppose that
\begin{enumerate}[(i)]
\item $\hat{\ell}_{S_0}^\mathrm{std} = \hat{\ell}_S$, so $\hat{\ell}_P' = \hat{\ell}_P^{(0)}$, and

\item  $(S_p, \hat{\ell}_{S_p}^\mathrm{std}) \subset (S_{p+1}, \hat{\ell}_{S_{p+1}}^\mathrm{std})$, so their difference is a cobordism from $(P, \hat{\ell}_P^{(p+1)})$ to $(P, \hat{\ell}_P^{(p)})$.
\end{enumerate}

We then construct $Y^\theta_\bullet(p) \to Y_{-1}^\theta(p)$
completely analogously to $Y_\bullet(p) \to Y_{-1}(p)$ from
Definition~\ref{defn:Yspace}, but where all manifolds $N$ are equipped
with $\theta$-structures $\hat{\ell}_N$ which make them cobordisms from $(P, \hat{\ell}_P^{(p)})$ to $(P, \hat{\ell}_P)$, and the data $(t_i, \phi_i)$ are equipped with paths of
bundle maps from $\phi_i^* \hat{\ell}_N$ to $\hat{\ell}_H(t_i)$. 

We shall establish the analogues of the properties given in Proposition
\ref{prop:MainProperties} for $\epsilon^\theta: X^\theta_\bullet \to
\mathcal{M}^{\mathrm{st},\theta}(\hat{\ell}_P)$, by proving analogues of Lemmas \ref{lem:proof:1}, \ref{lem:proof:StabMap}, \ref{lem:proof:2}, and \ref{lem:proof:3b}.

\begin{lemma}\label{lem:7A}
If $W$ is simply-connected, then the map $\vert X_\bullet^\theta \vert \to \mathcal{M}^{\mathrm{st},\theta}(\hat{\ell}_P)$, considered as a map of graded spaces, is
  $\lfloor\frac{g-2}{2}\rfloor$-connected.
\end{lemma}

\begin{proof}
It is probably no longer the case that the maps $X_p^\theta \to \mathcal{M}^{\mathrm{st},\theta}(\hat{\ell}_P)$ are locally trivial. However, the geometric fibre of $\vert X_\bullet^\theta \vert \to \mathcal{M}^{\mathrm{st},\theta}(\hat{\ell}_P)$ over $(M, \hat{\ell}_M)$ is $\vert\overline{K}_\bullet(M, \hat{\ell}_M)\vert$, which is $\lfloor
  \tfrac{\bar{g}(M, \hat{\ell}_M)-4}{2}\rfloor$-connected by Proposition \ref{prop:ThetaCxConnected}, so it will be enough to show that $\vert X_\bullet^\theta \vert \to \mathcal{M}^{\mathrm{st},\theta}(\hat{\ell}_P)$ is a quasifibration.

For an augmented semisimplicial space $X_\bullet \to X_{-1}$ and a point $x \in X_{-1}$, the map
$$\vert \mathrm{hofib}_x(X_\bullet \to X_{-1}) \vert \lra \mathrm{hofib}_x(\vert X_\bullet \vert \to X_{-1})$$
is a weak homotopy equivalence. (Supposing that all constructions are formed in the category of $k$-spaces, and that $X_{-1}$ is weak Hausdorff, this follows from the fact that the functor $p^* : \mathrm{Top}/X_{-1} \to \mathrm{Top}/{P_xX_{-1}}$ induced by the path fibration $p : P_x X_{-1} \to X_{-1}$ admits a right adjoint, cf.\ \cite[Proposition 2.1.3]{MaySigurdsson}.) If each $X_p \to X_{-1}$ is a quasifibration, it therefore follows that $\vert X_\bullet \vert \to X_{-1}$ is too.

We must therefore show that each $X_p^\theta \to \mathcal{M}^{\mathrm{st},\theta}(\hat{\ell}_P)$ is a quasifibration. It is enough to show this when restricted to individual path components of the base, so consider the space $Z$ defined by the pullback square
\begin{equation*}
\xymatrix{
Z \ar[r] \ar[d]& X_p^\theta \ar[d]\\
\Emb_\partial(M, (-\infty,0] \times \bR^\infty) \times \Bun^\theta_\partial(TM;\hat{\ell}_P)  \ar[r] & \mathcal{M}^{\mathrm{st},\theta}(\hat{\ell}_P).
}
\end{equation*}
The bottom map is a principal $\Diff_\partial(M)$-bundle onto the path components which it hits. In particular it is a fibration so this square is also a homotopy pullback, and hence the map between the vertical homotopy fibres is a weak homotopy equivalence. Therefore the right-hand vertical map is a quasifibration if and only if the left-hand vertical map is. 

We may identify $Z$ as the space of tuples $(e,\hat\ell_M,x)$, where  $e : M \hookrightarrow (-\infty,0] \times \bR^\infty$ is an embedding; $\hat{\ell}_M$ is a $\theta$-structure on $M$ extending $\hat{\ell}_P$; and $x \in \overline{K}_p(M, \hat{\ell}_M)$. From this point of view, we see that the left-hand vertical map is the pullback of the map
$$\pi : Z' \lra \Bun^\theta_\partial(TM;\hat{\ell}_P)$$
where $Z'$ is the space of pairs $(\hat \ell_M,x)$, where $\hat{\ell}_M$ is a $\theta$-structure on $M$ extending $\hat{\ell}_P$ and $x \in \overline{K}_p(M, \hat{\ell}_M)$. It is therefore enough to show that $\pi$ is a Serre fibration.

But this is clear, as $\pi$ has canonical path-lifting. Concretely, to lift a path $\hat{\ell}_s$ of $\theta$-structures on $M$ starting at $(\hat{\ell}_0;(t_0, \phi_0, \nu_0), \ldots, (t_p, \phi_p, \nu_p)) \in Z'$ we let $V_i(s)$ denote the concatenation
$$c_i^* \hat{\ell}_s \overset{c_i^*\hat{\ell}_{[0,s]}}\leadsto c_i^* \hat{\ell}_0 \overset{\nu_i}\leadsto \hat{\ell}_H(t_i)$$
rescaled so that the first portion takes time $\tfrac{s}{2}$ and the last portion takes time $1-\tfrac{s}{2}$. Then $s \mapsto (\hat{\ell}_s; t_0, c_0, V_0(s), \ldots, t_p, c_p, V_p(s))$ lifts $\hat{\ell}_s$, and is continuous in all the data.
\end{proof}

The map $h$ from Section \ref{sec:Resolutions} now takes the form
$$h : \mathcal{M}^{\mathrm{st},\theta}(\hat{\ell}_P^{(p)}) \times Y_{q}^\theta(p) \lra X_q^\theta,$$
but is given by the analogous formula, accounting for $\theta$-structures and paths of $\theta$-structures.

\begin{lemma}\label{lem:7B}
  For any $y \in Y_p^\theta(p)$, the map $h(-,y): \mathcal{M}^{\mathrm{st},\theta}(\hat{\ell}_P^{(p)}) \to X^\theta_p$ is a weak homotopy  equivalence.
\end{lemma}
\begin{proof}
  Let $y = (N, \hat{\ell}_N ; (\phi_0, \nu_0), \ldots, (\phi_p, \nu_p))$. Let $E^\theta$ denote the space consisting of tuples $(M, \hat{\ell}_M ;
  e, \nu)$ where $(M, \hat{\ell}_M) \in \mathcal{M}^\theta(\hat{\ell}_P)$, $e : N \hookrightarrow M$ is
  an embedding, and $\nu$ is a path from $e^*\hat{\ell}_M$ to $\hat{\ell}_N$
  through bundle maps which are fixed over $\{0\} \times P \subset
  N$. We topologise $E^\theta$ as a subspace of $\mathcal{M}^{\mathrm{st},\theta}(\hat{\ell}_P) \times \Emb(N,
  (-\infty,0] \times \bR^\infty) \times \Bun^\theta(N)^I$. The proof is then concluded in the same way
  as the proof of Lemma~\ref{lem:proof:StabMap}, using $E^\theta$
  instead of $E$.
\end{proof}

\begin{lemma}\label{lem:7C}
Let $\phi_0, \ldots, \phi_p : \Hmfld \to S$ be embeddings and $\nu_0, \ldots, \nu_p \in \Bun^\theta(\Hmfld)^I$ be paths satisfying that the tuples
$$x_p = (S_p, \hat{\ell}_{S_p}; (\phi_0,\nu_0), \dots, (\phi_p,\nu_p)) \quad x_{p-1} = (S_{p-1}, \hat{\ell}_{S_{p-1}};(\phi_0, \nu_0), \dots,
  (\phi_{p-1},\nu_{p-1}))$$
	define elements $x_p \in
  Y^\theta_p(p)$ and $x_{p-1} \in
  Y^\theta_{p-1}(p-1)$. Then the diagram
    \begin{equation*}
      \begin{aligned}
      \xymatrix{
        {\mathcal{M}^{\mathrm{st},\theta}(\hat{\ell}_P^{(p)})} \ar[d]_{s} \ar[rr]^-{h(-, x_p)}
        & &
        X_p^\theta \ar[d]^{d_p}
        \\
        {\mathcal{M}^{\mathrm{st},\theta}(\hat{\ell}_P^{(p-1)})}\ar[rr]^-{h(-, x_{p-1})}
        & &
        X_{p-1}^\theta
      }
      \end{aligned}
    \end{equation*}
		commutes. Embeddings $\phi_i$ and paths $\nu_i$ with this property exist.
\end{lemma}
\begin{proof}
	Commutativity of the diagram in the lemma
  is proved in the same way as Lemma~\ref{lem:proof:2}.  For existence, we construct the embeddings $\phi_i$ in exactly the
  same way as Lemma~\ref{lem:proof:2} and then appeal to Lemma~\ref{lem:StrOnH} for the existence
  of the $\nu_i$.
\end{proof}

The step corresponding to Lemma~\ref{lem:proof:3b} now has two versions, depending on whether $\theta$ is spherical or not, given in Lemmas \ref{lem:A:proof:3b} and \ref{lem:NonSph}. It is the one step in the argument where the presence of $\theta$-structures adds more than bookkeeping.

\begin{lemma}\label{lem:A:proof:3b}
Suppose that $\theta$ is spherical.  For each $i \in \{0, \dots, p-1\}$ there exists an element $y \in Y_p^\theta(p)$ of the form $y = (S_p,\hat{\ell}_{S_p}; (\phi_0,\nu_0), \dots,
  (\phi_p,\nu_p))$, such that the two elements
  \begin{align*}
    d_i(y) &= (S_p,\hat{\ell}_{S_p}; (\phi_0,\nu_0), \ldots,
    \widehat{(\phi_i,\nu_i)}, \ldots, (\phi_p,\nu_p))\\
    d_{i+1}(y) &= (S_p,\hat{\ell}_{S_p}; (\phi_0,\nu_0), \ldots,
    \widehat{(\phi_{i+1},\nu_{i+1})}, \ldots, (\phi_p,\nu_p))
  \end{align*}
  are in the same path component of $Y_{p-1}^\theta(p)$.
\end{lemma}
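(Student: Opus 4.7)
The plan is to mimic the proof of Lemma~\ref{lem:proof:3b}, replacing the use of Lemma~\ref{lem:SigmaDiffeo} by the combination of Lemma~\ref{lem:SigmaDiffeo} and Lemma~\ref{lem:SymmetricThetaStr} (which is where the hypothesis that $\theta$ is spherical enters), so that the symmetry diffeomorphism $\sigma_i$ can be covered by a symmetry of the $\theta$-structure.

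First, I would choose canonical embeddings $\phi'_j : W_{1,1} \hookrightarrow S_p$ and form the subset $B_i \cong W_{2,1} \subset S_p$ together with the diffeomorphism $\sigma_i$ supported in $B_i$ swapping $\phi'_i(W_{1,1})$ and $\phi'_{i+1}(W_{1,1})$, exactly as in the proof of Lemma~\ref{lem:proof:3b}. Since $\hat{\ell}_{S_p}^{\mathrm{std}}$ restricts to a standard $\theta$-structure on $B_i$, Lemma~\ref{lem:SymmetricThetaStr} produces a homotopy (not fixed on $\partial B_i$) from $\hat{\ell}_{S_p}^{\mathrm{std}}\vert_{B_i}$ to a new $\theta$-structure $\hat{\ell}\vert_{B_i}$ satisfying $\sigma_i^*\hat{\ell}\vert_{B_i} \simeq \hat{\ell}\vert_{B_i}$ relative to $\partial B_i$. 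The induced path of $\theta$-structures on $\partial B_i$ can be extended across a small collar of $\partial B_i$ in $S_p \setminus \Int(B_i)$ and taken to be constant outside the collar, producing a homotopy of $\theta$-structures on $S_p$ fixed near $\partial S_p$, and hence a new $\theta$-structure $\hat{\ell}_{S_p}$ on $S_p$ with the original boundary conditions $\hat{\ell}_P$ and $\hat{\ell}_P^{(p)}$, still standard on each $\phi'_j(W_{1,1})$, and with $\sigma_i^*\hat{\ell}_{S_p} \simeq \hat{\ell}_{S_p}$ relative to $\partial S_p$.

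Next, I would choose the extensions $\phi_0, \dots, \phi_p$ of the $\phi'_j$ (and in particular the isotopy $g_s$ from $\sigma_i \circ \phi_{i+1}$ to $\phi_i$) exactly as in Lemma~\ref{lem:proof:3b}, and for each $j$ use Lemma~\ref{lem:StrOnH} to pick a path $\nu_j$ of $\theta$-structures on $H$ from $\phi_j^*\hat{\ell}_{S_p}$ to $\hat{\ell}_H(t_j)$ which is constant over $\{0\} \times D^{2n-1}$. This defines $y = (S_p, \hat{\ell}_{S_p}; (\phi_0, \nu_0), \dots, (\phi_p, \nu_p)) \in Y_p^\theta(p)$. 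To connect $d_i(y)$ to $d_{i+1}(y)$ in $Y_{p-1}^\theta(p)$, I would concatenate two paths. The first is the lift of construction~\eqref{eq:8} to the $\theta$-level: using the isotopy $h_t$ of $S_p$ realising $\sigma_i$ together with the homotopy $\sigma_i^*\hat{\ell}_{S_p} \simeq \hat{\ell}_{S_p}$ to close the loop of $\theta$-structures on $h_t(S_p)$, and deforming the paths $\nu_j$ for $j \neq i$ continuously so that they remain paths from $(h_t \circ \phi_j)^*\hat{\ell}$ to $\hat{\ell}_H(t_j)$. The second path arises analogously from the isotopy $g_s$ from $\sigma_i \circ \phi_{i+1}$ to $\phi_i$, where again the $\theta$-data is carried along by composing with the induced path of pullbacks.

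The main obstacle is the $\theta$-structural bookkeeping: verifying that the homotopy produced by Lemma~\ref{lem:SymmetricThetaStr} really extends across $\partial B_i$ to all of $S_p$ without disturbing the boundary conditions on $\partial S_p$, and that the paths $\nu_j$ can be deformed coherently throughout both isotopies so that the endpoints in $Y_{p-1}^\theta(p)$ genuinely match $d_i(y)$ and $d_{i+1}(y)$ on the nose. Once these technicalities are dispatched, the combinatorial structure of the argument is identical to that of Lemma~\ref{lem:proof:3b}.
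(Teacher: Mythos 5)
Your proposal is correct and takes essentially the same approach as the paper: it appeals to Lemma~\ref{lem:SymmetricThetaStr} (hence sphericity) to make $\sigma_i$ compatible with the $\theta$-structure, and then lifts the isotopies $h_t$ and $g_t$ of Lemma~\ref{lem:proof:3b} to $\theta$-level data, modifying the paths $\nu_j$ accordingly. The one organizational difference is that you bake the symmetry $\sigma_i^*\hat{\ell}_{S_p} \simeq \hat{\ell}_{S_p}$ directly into the chosen structure $\hat{\ell}_{S_p}$, whereas the paper keeps $\hat{\ell}_{S_p}$ arbitrary and concatenates three homotopies into a bridging path $k_t$ from $\hat{\ell}_{S_p}$ to $(\sigma_i^{-1})^*\hat{\ell}_{S_p}$ supported in $B_i$; these are equivalent.
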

\begin{proof}
  For any choice of $y$, the construction in the proof of
  Lemma~\ref{lem:proof:3b} gives a path starting at $d_i(y)$ and ending
  at
  \begin{equation}\label{eq:10}
    (S_p, (\sigma_i^{-1})^*\hat{\ell}_{S_p};(\sigma_i \circ \phi_0,\nu_0), \dots,
    \widehat{(\sigma_i \circ \phi_{i}, \nu_{i})}, \dots,
    (\sigma_i \circ \phi_p,\nu_p)).
  \end{equation}
  
	Recall that in the proof of Lemma \ref{lem:proof:3b} we constructed a certain submanifold $B_i \subset S_p \cap ((-i-2, -i) \times \bR^\infty)$ which is diffeomorphic to $W_{2,1}$, and which contains the images of the embeddings $\phi'_i, \phi'_{i+1} : W_{1,1} \to S_p$ and is disjoint from the images of the remaining $\phi'_i$. We also constructed a diffeomorphism $\sigma_i$ of $S_p$, supported in the interior of $B_i$, which interchanges $\phi'_i$ and $\phi'_{i+1}$.
	
  By Lemma \ref{lem:SymmetricThetaStr}, and our assumption that
  $\theta$ is spherical, there is a homotopy  $\sigma_i^*\hat{\ell} \simeq \hat{\ell}$ relative to $\partial B_i$. Gluing ($(\sigma_i^{-1})^*$ applied to) this into $S_p$ gives a path of $\theta$-structures $k_t$, constant over the complement of
  $B_i$, starting at $\hat{\ell}_{S_p}$ and ending at
  $(\sigma_i^{-1})^*\hat{\ell}_{S_p}$. 
This gives rise to a path
  from~\eqref{eq:10} to
  \begin{equation}\label{eq:11}
    (S_p, \hat{\ell}_{S_p}; (\sigma_i \circ \phi_0,\nu_0'), \dots,
    \widehat{(\sigma_i \circ \phi_{i}, \nu_{i}')}, \dots,
    (\sigma_i \circ \phi_p,\nu_p')),
  \end{equation}
  where $\nu_j'$ is the concatenation $(t \mapsto (\sigma_i \circ \phi_{j})^* k_t) \cdot \nu_{j}$.

  We have constructed for any $y$ a loop in $Y^\theta_{-1}(p)$,
  covered by a path in $Y^\theta_{p-1}(p)$, starting at $d_i(y)$ and
  ending at~\eqref{eq:11}.  If we choose the $\phi_j$ as in the proof
  of Lemma~\ref{lem:proof:3b}, we will have $(\sigma_i \circ
  \phi_j,\nu_j') = (\phi_j,\nu_j)$ for $j \not \in\{i,i+1\}$, since
  the supports of $\sigma_i$ and $k_t$ are disjoint from the image of
  these $\phi_j$.  The isotopy $g_t$ from the proof of
  Lemma~\ref{lem:proof:3b} gives a path from~\eqref{eq:11} to
  \begin{equation*}
    (S_p, \hat{\ell}_{S_p}; (\phi_0,\nu_0), \dots,
    (\phi_{i-1},\nu_{i-1}), (\phi_i, \nu_i''),
    (\phi_{i+2}, \nu_{i+2}), \dots, (\phi_p,\nu_p)),
  \end{equation*}
  where $\nu_i'' = (t \mapsto g_t^* \hat{\ell}_{S_p}) \cdot \nu'_{i+1}$.
  If we choose $\nu_j$ arbitrarily for $j \neq i$, and define $\nu_i =
  \nu_i''$, then we have constructed a path from $d_i(y)$ to $d_{i+1}(y)$.
\end{proof}

The following plays the role of Lemma \ref{lem:A:proof:3b} in the case of tangential structures which are not necessarily spherical. The additional property will be used for dealing with abelian coefficient systems.

\begin{lemma}\label{lem:NonSph}
For each $i \in \{0, \dots, p-1\}$ there exists an element $z \in Y^\theta_{p+1}(p+1)$ of the form $z = (S_{p+1},\hat{\ell}_{S_{p+1}}; (\phi_0,\nu_0), \dots,
  (\phi_{p+1},\nu_{p+1}))$, such that the two elements
  \begin{align*}
    d_id_{p+1}(z) &= (S_{p+1},\hat{\ell}_{S_{p+1}}; (\phi_0,\nu_0) \ldots,
    \widehat{(\phi_i,\nu_i)} \ldots, (\phi_{p},\nu_{p}))\\
    d_{i+1}d_{p+1}(z) &= (S_{p+1},\hat{\ell}_{S_{p+1}}; (\phi_0,\nu_0) \ldots,
    \widehat{(\phi_{i+1},\nu_{i+1})} \ldots, (\phi_{p},\nu_{p}))
  \end{align*}  
  are connected by a path $\gamma: I \to Y_{p-1}^\theta(p+1)$, with
  the additional property that the loop $\epsilon \circ \gamma:
  I \to Y^\theta_{p-1}(p+1) \to Y^\theta_{-1}(p+1)$ is such that under each map
  $$\{M, \hat{\ell}_M\} \times Y^\theta_{-1}(p+1) \lra (\mathcal{M}^{\mathrm{st},\theta}(\hat{\ell}_P^{(p+1)}))_{g-p-1} \times Y^\theta_{-1}(p+1) \overset{h}\lra (\mathcal{M}^{\mathrm{st},\theta}(\hat{\ell}_P))_{g}$$
  it is nullhomologous as long as $g \geq 5$.
\end{lemma}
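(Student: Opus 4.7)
The plan is to mimic the proof of Lemma \ref{lem:A:proof:3b}, but with the swap diffeomorphism $\sigma$ of Lemma \ref{lem:SymmetricThetaStr} (which requires $\theta$ to be spherical) replaced by the braid diffeomorphism $\rho$ of Lemma \ref{lem:BraidMove}. Since $\rho$ only satisfies $\rho \circ i_2 = i_1$ rather than fully swapping $i_1$ and $i_2$, and uses a third auxiliary embedded copy of $W_{1,1}$, we must work one level up in $S_{p+1}$ with $p+2$ embeddings, and then remove the auxiliary one via $d_{p+1}$ at the end. This explains the indexing $z \in Y^\theta_{p+1}(p+1)$.

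Concretely, I would first choose embeddings $\phi_0, \dots, \phi_{p+1} : H \hookrightarrow S_{p+1}$ exactly as in the proof of Lemma \ref{lem:proof:2}, built from the canonical $\phi'_j = \phi' - j \cdot e_1 : W_{1,1} \hookrightarrow S_{p+1}$ extended over the collar by disjoint thickened paths to the coordinate patch, and paths $\nu_j$ of $\theta$-structures via Lemma \ref{lem:StrOnH}. By thickening a pair of disjoint paths joining the $W_{1,1}$-parts $\phi'_i, \phi'_{i+1}, \phi'_{p+1}$, I would embed a submanifold $W_{3,1} \subset S_{p+1}$ disjoint from the other $\phi_j$ and apply Lemma \ref{lem:BraidMove} with $i_1 = \phi'_i$, $i_2 = \phi'_{i+1}$, $i_3 = \phi'_{p+1}$. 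Extending $\rho$ to $S_{p+1}$ by the identity, and absorbing the transition from $\hat{\ell}$ to $\hat{\ell}'$ into a homotopy of the $\nu_j$, one may assume $\hat{\ell}_{S_{p+1}}\vert_{W_{3,1}}$ is the $\hat{\ell}'$ of Lemma \ref{lem:BraidMove}, so that $\rho^*\hat{\ell}_{S_{p+1}} \simeq \hat{\ell}_{S_{p+1}}$ via a homotopy $\bar{\gamma}$ supported in $W_{3,1}$.

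The path $\gamma : I \to Y^\theta_{p-1}(p+1)$ is then constructed as in the spherical case: starting at $d_i d_{p+1}(z)$, first apply the isotopy $\rho_t$ from the identity to $\rho$ via the family~\eqref{eq:8} (which updates each embedding to $\rho_t \circ \phi_j$ and concatenates the induced change of the ambient $\theta$-structure into the $\nu_j$), and then apply $\bar{\gamma}$ to restore the ambient $\theta$-structure to $\hat{\ell}_{S_{p+1}}$. By Lemma \ref{lem:BraidMove}(ii), $\rho \circ \phi'_{i+1} = \phi'_i$ and $\rho$ fixes $\phi'_j$ for $j \notin\{i,i+1,p+1\}$, so after a further small isotopy of the collar parts of $H$ (as in Lemma \ref{lem:proof:3b}) one arrives at $d_{i+1} d_{p+1}(z)$.

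The hard part, and the reason for the hypothesis $g \geq 5$, is property (iv). The projected loop $\epsilon \circ \gamma$ in $Y^\theta_{-1}(p+1)$ is supported inside $W_{3,1} \subset S_{p+1}$ and coincides, by construction, with the loop $(\rho, \bar\gamma)$ in $\mathcal{M}^\theta(W_{3,1};\hat{\ell}'\vert_{\partial W_{3,1}})$ of Lemma \ref{lem:BraidMove}(iv). Under the gluing map $h(\{(M,\hat{\ell}_M)\} \times -)$, it becomes a loop in $(\mathcal{M}^\theta)_g$ based at $(M \cup_P S_{p+1}, \hat{\ell}_M \cup \hat{\ell}_{S_{p+1}})$ still supported in the same $W_{3,1}$. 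The target $\theta$-manifold $M \cup_P S_{p+1}$ has stable $\theta$-genus at least $g$, so when $g \geq 5$ we can choose two further disjoint copies of $W_{1,1}$ with standard $\theta$-structure, disjoint from $W_{3,1}$ and from the other $\phi_j$, enlarging $W_{3,1}$ to a $W_{5,1}$ with standard $\theta$-structure. Lemma \ref{lem:BraidMove}(iv) then asserts that the loop is a product of a commutator in $\pi_1$ of $\mathcal{M}^\theta(W_{5,1};\hat{\ell}'\vert_{\partial W_{5,1}})$ and is hence nullhomologous in $(\mathcal{M}^\theta)_g$, as required. The main bookkeeping obstacle is ensuring that $\rho$ can be arranged supported away from all other $\phi_j$ and that the path $\bar\gamma$ and the isotopy of collar parts can be combined without disturbing the disjointness of embeddings, but no further conceptual input beyond Lemma \ref{lem:BraidMove} is needed.
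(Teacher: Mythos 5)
Your proposal follows essentially the same approach as the paper's proof: pass from three to $p+2$ embeddings by working in $S_{p+1}$; build $C_i\cong W_{3,1}$ from the copies indexed $i$, $i+1$, $p+1$; replace the swap $\sigma$ by the braid $\rho$ of Lemma~\ref{lem:BraidMove}; run the three-step path (ambient isotopy to $\rho$, homotopy of $\theta$-structures supported in $C_i$, small collar isotopy); and invoke Lemma~\ref{lem:BraidMove}(\ref{it:Braid:4}) plus $g\geq 5$ for the nullhomology. Two small points where the paper is more careful than your sketch: the ambient $\theta$-structure $\hat{\ell}_{S_{p+1}}$ is fixed by the statement, so one cannot literally ``assume'' it restricts to $\hat{\ell}'$ on $C_i$ — instead one inserts a path of $\theta$-structures $k_t$ supported in $C_i$ from $\hat{\ell}_{S_{p+1}}$ to $(\rho^{-1})^*\hat{\ell}_{S_{p+1}}$, obtained by conjugating the homotopy from Lemma~\ref{lem:BraidMove}; and rather than choosing all $\phi_j$ up front and then fixing things with a ``further isotopy,'' the paper chooses $\phi_j$ for $j\notin\{i,i+1,p+1\}$ first, then $\phi_{i+1},\phi_{p+1}$, and finally $\phi_i$ as an extension of $\phi'_i$ isotopic to $\rho\circ\phi_{i+1}$ via an isotopy $g_t$ avoiding the others, so that the endpoint is exactly $d_{i+1}d_{p+1}(z)$. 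These are matters of bookkeeping rather than genuine gaps; the underlying argument is the intended one.
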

\begin{proof}
Recall from the proof of Lemma \ref{lem:proof:2} that the canonical embedding $\phi' : W_{1,1} \hookrightarrow S$ induces embeddings $\phi_j' : W_{1,1} \hookrightarrow S_{p+1}$ for $j=0, 1, \ldots, p+1$. In particular we have the three disjoint embeddings $\phi'_i, \phi'_{i+1}, \phi'_{p+1} : W_{1,1} \hookrightarrow S_{p+1}$, whose images can be thickened and joined by thickened arcs to obtain a submanifold $C_i \subset S_{p+1}$ diffeomorphic to $W_{3,1}$ and disjoint from the remaining $\phi'_j(W_{1,1})$. Applying Lemma \ref{lem:BraidMove} to the standard $\theta$-structure $\hat{\ell}_{S_p}\vert_{C_i}$ and the embeddings $\phi'_i, \phi'_{i+1}, \phi'_{p+1} : W_{1,1} \hookrightarrow C_i \cong W_{3,1}$ we obtain a diffeomorphism $\rho_i : S_{p+1} \to S_{p+1}$ supported in $C_i$, satisfying $\rho_i \circ \phi'_i = \phi'_{p+1}$, $\rho_i \circ \phi'_{i+1} = \phi'_i$, and $\rho_i \circ \phi'_{p+1} = \phi'_{i+1}$.

For any choice of $z \in Y^\theta_{p+1}(p+1)$, the construction described in the proof of Lemma \ref{lem:proof:3b} applied to the diffeomorphism $\rho_i$ gives a path starting at $d_i d_{p+1}(z)$ and ending at
\begin{equation}\label{eq:NonSph:1}
(S_{p+1},(\rho_i^{-1})^*\hat{\ell}_{S_{p+1}};(\rho_i \circ \phi_0,\nu_0), \dots,
    \widehat{(\rho_i \circ \phi_{i}, \nu_{i})}, \dots,
    (\rho_i \circ \phi_p,\nu_p)).
\end{equation}
By Lemma \ref{lem:BraidMove} (\ref{it:Braid:3}), there is a homotopy 
$\rho_i^*\hat{\ell}_{S_{p+1}}\vert_{C_i} \simeq \hat{\ell}_{S_{p+1}}\vert_{C_i}$ relative to $\partial C_i$. Gluing in ($(\rho_i^{-1})^*$ applied to) this homotopy shows that 
	there is a
  path of $\theta$-structures $k_t$, constant over the complement of
  $C_i$, starting at $\hat{\ell}_{S_{p+1}}$ and ending at
  $(\rho_i^{-1})^*\hat{\ell}_{S_{p+1}}$.  This gives rise to a path
  from \eqref{eq:NonSph:1} to
  \begin{equation}\label{eq:NonSph:2}
(S_{p+1},\hat{\ell}_{S_{p+1}};(\rho_i \circ \phi_0,\nu_0'), \dots,
    \widehat{(\rho_i \circ \phi_{i}, \nu_{i}')}, \dots,
    (\rho_i \circ \phi_p,\nu_p'))
  \end{equation}
  where $\nu_j' = ((\rho_i \circ \phi_{j})^* k_t) \cdot \nu_{j}$. We have therefore constructed a path in $Y^\theta_{p-1}(p+1)$ from $d_id_{p+1}(z)$ to \eqref{eq:NonSph:2}, covering a loop in $Y^\theta_{-1}(p+1)$.
  
We now make a particular choice of $z$.  Choose the $\phi_j$ for $j \nin \{i, i+1, p+1\}$ starting with the $\phi'_j : W_{1,1} \to S_{p+1}$ and extending by a thickening of a path to the point $c(0;3j,0,\ldots)$ in the coordinate patch $c$, ensuring that these paths are disjoint from each other and from $C_i$. For $j \not \in\{i,i+1, p+1\}$, since the supports of $\rho_i$ and $k_t$ are disjoint from the image of $\phi_j$, we have $\rho_i \circ \phi_j = \phi_j$, and $\nu_j'$ is the concatenation of $\nu_j$ with a constant path, so is homotopic to $\nu_j$. We then extend $\phi'_{i+1}$ to $\phi_{i+1}$ and $\phi'_{p+1}$ to $\phi_{p+1}$, disjointly from each other and the previously chosen $\phi_j$. Finally, we choose an extension $\phi_i$ of $\phi'_i$ so that it is isotopic to $\rho_i \circ \phi_{i+1}$ by an isotopy $g_t$ disjoint from the remaining $\phi_j$. This gives a path from \eqref{eq:NonSph:2} to 
  \begin{equation*}
    (S_{p+1}, \hat{\ell}_{S_{p+1}}; (\phi_0,\nu_0), \dots,
    (\phi_{i-1},\nu_{i-1}), (\phi_i, \nu_i''),
    (\phi_{i+2}, \nu_{i+2}), \dots, (\phi_p,\nu_p)),
  \end{equation*}
  where $\nu_i'' = (g_t^* \hat{\ell}_{S_p}) \cdot \nu'_{i+1}$. If we choose $\nu_j$ arbitrarily for $j \neq i$, and set $\nu_i = (g_t^* \hat{\ell}_{S_{p+1}}) \cdot \nu_{i+1}'$, we have constructed a path in $Y^\theta_{p-1}(p+1)$ from $d_i d_{p+1}(z)$ to $d_{i+1}d_{p+1}(z)$.


The loop $\epsilon \circ \gamma:  I \to Y^\theta_{-1}(p+1)$ we have constructed is supported inside $W_{3,1} \cong C_i \subset S_{p+1}$ and here is given by the diffeomorphism $\rho_i : C_i \to C_i$ and the homotopy $\rho_i^*\hat{\ell}_{S_{p+1}}\vert_{C_i} \simeq \hat{\ell}_{S_{p+1}}\vert_{C_i}$ provided by Lemma \ref{lem:BraidMove} (\ref{it:Braid:3}). Lemma \ref{lem:BraidMove} (\ref{it:Braid:4}) shows that this data may be chosen so that this loop becomes nullhomologous when included into any $\theta$-manifold having two additional copies of $W_{1,1}$ with standard $\theta$-structure. This establishes the additional property of this loop.
\end{proof}

\begin{proof}[Proof of Theorem~\ref{thm:main-theta}]
We will follow a similar strategy to the proof of Theorem \ref{thm:Main:sec6}, but using the augmented semisimplicial space $\epsilon^\theta : X^\theta_\bullet \to
  \mathcal{M}^{\mathrm{st},\theta}(\hat{\ell}_P) = X^\theta_{-1}$.

We first treat case (\ref{it:main-theta:1}), where $\theta$ is spherical.  
For an element $y \in Y^\theta_p(p)$ provided by Lemma~\ref{lem:A:proof:3b}, the
  argument of Lemma~\ref{lem:proof:3a} shows that the two compositions
  \begin{equation}\label{eq:SphericalSetup}
    \xymatrix{
      {\mathcal{M}^{\mathrm{st},\theta}(\hat{\ell}_P^{(p)})} \ar[r]^-{h(-, y)}_-\simeq& X^\theta_p 
      \ar@/^/[r]^{d_i} \ar@/_/[r]_{d_{i+1}} 
      & X^\theta_{p-1}
    }
  \end{equation}
  are homotopic.  
	We may then conclude that the graded augmented semisimplicial  space
  $X^\theta_\bullet \to \mathcal{M}^{\mathrm{st},\theta}(\hat{\ell}_P)$ satisfies the same properties as
  established for $X_\bullet \to \Mst$ in
  Proposition~\ref{prop:MainProperties} and therefore induces a
  spectral sequence satisfying the same formal properties as the
  spectral sequence we used in the proof of
  Theorem~\ref{thm:Main:sec6}.  The proof given there applies here
  word for word.

Let us now treat case (\ref{it:main-theta:2}), so suppose that we are given an abelian local coefficient system $\mathcal{L}$ on $\mathcal{M}^{\mathrm{st},\theta}(\hat{\ell}_P)$. This may be pulled back via the augmentation to a
  coefficient system on each $X^\theta_p$, which by abuse of notation
  we also call $\mathcal{L}$. There is a trigraded augmented spectral sequence
  \begin{equation*}
    E^1_{p,q, g} = H_q(X_p^{\theta};\mathcal{L})_g \quad \Longrightarrow \quad
    H_{p+q+1}(X_{-1}^{\theta}, |X_\bullet^{\theta}| ; \mathcal{L})_g
  \end{equation*}
  with differential $d^1 = \sum_i (-1)^i(d_i)_*$, and
  $E^\infty_{p,q,g} = 0$ if $p + q \leq (g-4)/2$. By Lemma \ref{lem:7B}, 
	for $y = (S, \hat{\ell}_S ; (\phi_0, \nu_0))$ in
  $Y^\theta_0(0)$ the map
  $$h(-,y) : \mathcal{M}^{\mathrm{st},\theta}(\hat{\ell}_P^{(0)}) \overset{\simeq}\lra X_0^\theta$$
  is a weak homotopy equivalence, which identifies the differential $d^1 : E^1_{0,*,g+1} \to E^1_{-1,*,g+1}$ with the map
    \begin{equation}\label{eq:16}
  s_* :  H_k(\mathcal{M}^{\mathrm{st},\theta}(\hat{\ell}'_P); s^* \mathcal{L})_g \lra H_k(\mathcal{M}^{\mathrm{st},\theta}(\hat{\ell}_P); \mathcal{L})_{g+1}
  \end{equation}
  we are studying (as we arranged that $\hat{\ell}_P' = \hat{\ell}_P^{(0)}$).
  
  Attempting the same induction argument as before, we may again
  conclude from the spectral sequence that the map~\eqref{eq:16} is
  surjective for $k=0$ and $g \geq 1$, proving the stated theorem for
  $g \leq 3$. We therefore suppose that $g\geq 4$, and so $g+1 \geq 5$.  Proceeding again by induction, we assume that theorem
  is proved up to $g-1$.  As before, we wish to identify the
  differential
  \begin{equation}\label{eq:4}
    d^1 = \sum_{i=0}^{2j} (-1)^i (d_i)_*: E^1_{2j,q,g+1} \lra E^1_{2j-1,q,g+1}
  \end{equation}
  with a previously determined stabilisation map 
 \begin{equation}\label{eq:99}
 s_*:H_q(\mathcal{M}^{\mathrm{st},\theta}(\hat{\ell}_P^{(2j)});s^*\mathcal{L})_{g-2j} \lra H_q(\mathcal{M}^{\mathrm{st},\theta}(\hat{\ell}_P^{(2j-1)}); \mathcal{L})_{g-2j+1}
 \end{equation} 
for $j > 0$.  As before this
  will follow if we can show that all of the face maps $(d_i)_*:
  H_q(X_{2j}^\theta;\mathcal{L})_g \to H_q(X_{2j-1}^\theta;\mathcal{L})_g$ are equal, since
  all but one term then cancel, and the remaining term is identified
  with $s_*$ by Lemma \ref{lem:7C}.

Before continuing, let us discuss what would go wrong if we just repeat the earlier argument for the $d_i$ to induce equal maps, but carry the coefficient systems along.
A homotopy between the two compositions in \eqref{eq:SphericalSetup} gives a self-homotopy of the map
$$\mathcal{M}^{\mathrm{st},\theta}(\hat{\ell}_P^{(p)}) \overset{h(-,y)}\lra  X^\theta_p \overset{\epsilon}\lra X^\theta_{-1},$$
and so a map
$H:(I/\partial I) \times \mathcal{M}^{\mathrm{st},\theta}(\hat{\ell}_P^{(p)}) \to
X_{-1}^{\theta} = \mathcal{M}^{\mathrm{st},\theta}(\hat{\ell}_P)$.
For each $x = (W,\hat{\ell}_W) \in \mathcal{M}^{\mathrm{st},\theta}(\hat{\ell}_P^{(p)})$, the loop
$s \mapsto H(s,x) \in \mathcal{M}^{\mathrm{st},\theta}(\hat{\ell}_P)$ has a potentially
non-trivial monodromy in the coefficient system $\mathcal{L}$.  These
monodromies assemble to an automorphism of the coefficient system
$h(-,y)^* \mathcal{L}$ on $\mathcal{M}^{\mathrm{st},\theta}(\hat{\ell}_P^{(p)})$, and hence
(since $h(-,y)$ is a weak homotopy equivalence) an automorphism of the
coefficient system $\mathcal{L}$ on $X_p^\theta$.  If we denote the
induced automorphism of $H_*(X_p^{\theta};\mathcal{L})$ by $\eta_*$,
the correct consequence of the homotopy between $d_i$ and $d_{i+1}$ is the
equation
  \begin{equation*}
    (d_i)_* = (d_{i+1})_* \circ \eta_*: H_*(X_p^\theta;\mathcal{L}) \lra
    H_*(X_{p-1}^\theta;\mathcal{L}).
  \end{equation*}
  If $\eta_*$ acts non-trivially on $H_*(X_p^\theta;\mathcal{L})$, the terms in
  the sum~\eqref{eq:4} no longer cancel out in pairs.

To deal with this issue, and simultaneously with non-spherical tangential structures, we shall replace the diagram~\eqref{eq:SphericalSetup} by 
  \begin{equation*}
    \xymatrix{
      {\mathcal{M}^{\mathrm{st},\theta}(\hat{\ell}_P^{(p+1)})} \ar[r]^-{h(-, z)}_-\simeq&
      X_{p+1}^{\theta} \ar[r]^{d_{p+1}} &
      X_{p}^{\theta} \ar@/^/[r]^{d_i} \ar@/_/[r]_{d_{i+1}} 
      & X_{p-1}^{\theta}
      \ar[r]^-\epsilon & X_{-1}^{\theta},
    }
  \end{equation*}
  where $z \in Y_{p+1}^\theta(p+1)$ is an element provided by
  Lemma~\ref{lem:NonSph}.  The path $\gamma: I \to
  Y^\theta_{p-1}(p+1)$ provided by the lemma then gives a homotopy
  $H': I \times \mathcal{M}^{\mathrm{st},\theta}(\hat{\ell}_P^{(p+1)}) \to X^\theta_{p-1}$ and in turn $\epsilon
  \circ H': (I/\partial I) \times \mathcal{M}^{\mathrm{st},\theta}(\hat{\ell}_P^{(p+1)}) \to X^\theta_{-1} = \mathcal{M}^{\mathrm{st},\theta}(\hat{\ell}_P)$.  If
  $\eta'_*$ denotes the automorphism of $H_*(X^\theta_{p+1};\mathcal{L})$
  induced by monodromy along the loops $s \mapsto H'(s,x)$, we now
  have
  \begin{equation*}
    (d_id_{p+1})_* = (d_{i+1}d_{p+1})_* \circ \eta'_*: H_*(X_{p+1}^\theta;\mathcal{L}) \lra H_*(X_{p-1}^\theta;\mathcal{L}).
  \end{equation*}
  By the choice made in Lemma~\ref{lem:NonSph}, the loops $s \mapsto \epsilon \circ H'(s,x) = h(x,\epsilon \circ \gamma(s))$ are all nullhomologous, as long as $h(x,\epsilon \circ \gamma(s))$ lies in grading at least $5$. Thus when $\mathcal{L}$ has trivial
  monodromy along nullhomologous loops, the automorphism $\eta'_*$ is
  the identity.  It follows that the two maps $(d_i)_*, (d_{i+1})_*:
  H_*(X^\theta_{2j};\mathcal{L}) \to H_*(X^\theta_{2j-1};\mathcal{L})$ do agree when
  restricted to the image of
  \begin{equation}\label{eq:PreStab}
(d_{p+1})_* : H_q(X_{2j+1}^\theta ; \mathcal{L})_{g+1} \lra H_q(X_{2j}^\theta ; \mathcal{L})_{g+1},
\end{equation}
and $g+1 \geq 5$. We have already established the theorem for $g \leq 3$, so need not worry about this restriction. By Lemma \ref{lem:7C} 
the map \eqref{eq:PreStab} is identified with $s_* : H_q(\mathcal{M}^{\mathrm{st},\theta}(\hat{\ell}_P^{(2j+1)}) ; s^*\mathcal{L})_{g-2j-1} \to H_q(\mathcal{M}^{\mathrm{st},\theta}(\hat{\ell}_P^{(2j)}) ; \mathcal{L})_{g-2j}$ which by inductive hypothesis is an epimorphism for $q \leq \tfrac{g-2j-2}{3}$. Hence \eqref{eq:4} and \eqref{eq:99} agree in degrees $q \leq \tfrac{g-2j-2}{3}$. Using the inductive hypothesis again, it follows that \eqref{eq:4} is an epimorphism for $q \leq \tfrac{g-2j-2}{3}$ and an isomorphism for $q \leq \tfrac{g-2j-4}{3}$. This means in particular that $E^2_{p,q,g+1}=0$ for $p > 0$ and $q \leq \tfrac{g-2p-1}{3}$, from which it follows that $d^1 : E^1_{0,q,g+1} \to E^1_{-1,q,g+1}$ is an epimorphism for $q \leq \tfrac{g-1}{3}$ and an isomorphism for $q \leq \tfrac{g-4}{3}$. This provides the inductive step.
\end{proof}


\bibliographystyle{amsalpha}
\bibliography{biblio}

\end{document}